\newtheorem{thm}{Theorem}[section]
\newtheorem{cor}[thm]{Corollary}
\newtheorem{claim}[thm]{Claim}
\newtheorem{fact}[thm]{Fact}
\newtheorem{lemma}[thm]{Lemma}
\newtheorem{prop}[thm]{Proposition}
\theoremstyle{definition}
\newtheorem{definition}[thm]{Definition}
\newtheorem{ex}[thm]{Example}
\newtheorem{remark}[thm]{Remark}
\newtheorem{question}[thm]{Question}
\def\rquotient#1#2{%
	\makeatletter
	\raise.3ex\hbox{$#1$}/\lower.3ex\hbox{$#2$}%
	\makeatother
}	
\newcommand{\subjclass}[2][2010]{%
	\let\@oldtitle\@title%
	\gdef\@title{\@oldtitle\footnotetext{#1 \emph{Mathematics subject classification.} #2}}%
}
\newcommand{\keywords}[1]{%
	\let\@@oldtitle\@title%
	\gdef\@title{\@@oldtitle\footnotetext{\emph{Key words and phrases.} #1.}}%
}
\newcommand{\Address}{{
		\bigskip
		\small
		
		\textsc{Institut Montpellierain Alexander Grothendieck, 499-554 Rue du Truel, 34090 Montpellier, France.}\par\nopagebreak
		\textit{E-mail address}: \texttt{anthony.genevois@umontpellier.fr}
\medskip

		\textsc{Institut de Math\'ematiques de Jussieu-Paris Rive Gauche, Place Aur\'elie Nemours, 75013 Paris, France.}\par\nopagebreak
		\textit{E-mail address}: \texttt{romain.tessera@imj-prg.fr}
\medskip
		
}}
\title{Asymptotic geometry of lamplighters over one-ended groups}
\date{\today}
\author{Anthony Genevois and Romain Tessera}
\subjclass{Primary 20F65. Secondary 20F69.}
\keywords{Wreath products, lamplighter groups, quasi-isometric classification}
\begin{document}

\maketitle

\begin{abstract}
\scriptsize This article is dedicated to the asymptotic geometry of wreath products $F\wr H := \left( \bigoplus_H F \right) \rtimes H$ where $F$ is a finite group and $H$ a one-ended finitely presented group. Our main result is a complete classification of these groups up to quasi-isometry. More precisely, given two finite groups $F_1,F_2$ and two finitely presented one-ended groups $H_1,H_2$, we show that $F_1 \wr H_1$ and $F_2 \wr H_2$ are quasi-isometric if and only if either (i) $H_1,H_2$ are non-amenable quasi-isometric groups and $|F_1|,|F_2|$ have the same prime divisors, or (ii) $H_1,H_2$ are amenable, $|F_1|=k^{n_1}$ and $|F_2|=k^{n_2}$ for some $k,n_1,n_2 \geq 1$, and there exists a quasi-$(n_2/n_1)$-to-one quasi-isometry $H_1 \to H_2$. The article also contains algebraic information on groups quasi-isometric to such wreath products. This can be seen as far reaching extension of a celebrated work of Eskin-Fisher-Whyte who treated the case of  $H=\mathbb{Z}$. Our approach is however fundamentally different, as it crucially exploits the assumption that $H$ is one-ended. Our central tool is a new geometric interpretation of lamplighter groups involving natural families of quasi-median spaces. 
\end{abstract}

\tableofcontents

\section{Introduction}

\noindent
One of the central ideas in geometric group theory is that a finitely generated group can be considered itself as a geometric object. This  can be done by considering one of its Cayley graphs, or more generally any geodesic metric space on which the group acts properly and cocompactly by isometries. The specific choice of metric space may be important, for instance in the concept of CAT(0) groups. But since any two such spaces are quasi-isometric, any large scale geometric property is in fact an intrinsic property of the group.
 Historically, this point of view is motivated by early results exhibiting a tight connection between large scale geometric properties of a group and its algebraic structure, such as Stallings' theorem about multi-ended groups, Gromov's theorem about groups of polynomial growth, and Gromov's theory of hyperbolic groups; even though earlier motivations can be found, including for instance small cancellation groups and Mostow's rigidity. Nowadays, the program of classifying finitely generated groups up to quasi-isometry, as popularized in \cite{GromovAsymptotic}, is well-established and very active. 

\medskip \noindent
In this article, we contribute to this program by considering \emph{wreath products} of groups. Recall that, given two groups $F$ and $H$, the wreath product $F \wr H$ is defined as the semidirect product $\left( \bigoplus_H F \right) \rtimes H$ where $H$ acts on the direct sum by permuting the coordinates. These groups are also called \emph{lamplighter groups}, a terminology coined by Jim Cannon (see \cite{MR1062874}). The family of lamplighter groups is well-known in group theory, and has been studied from various perspectives over the years, including for instance random walks \cite{MR732356, MR1905862, MR1708557}, isoperimetric profiles \cite{MR2011120}, functional analysis \cite{MR2557962}, subgroup distortion \cite{MR2811580}, Haagerup property \cite{MR2318545, MR2393636, MR2888241, LampMedian}, and Hilbert space compression \cite{MR2271228, MR2644886, MR2783928, QM}. On the one hand, lamplighter groups have an easy and explicit definition, allowing an easy access to various properties and calculations. On the other hand, these groups are sufficiently exotic, i.e. sufficiently far away from most of the well-understood classes of groups exhibited in the literature, in order to exhibit interesting behaviours. The combination of these two observations probably explains the success of lamplighter groups, and why they are often used to produce counterexamples (see for instance \cite{MR1797748, MR1800990}). 

\medskip \noindent
The first work dedicated to the classification of wreath products up to quasi-isometry seems to be \cite{MR1800990}, whose elementary observations lead to non-trivial examples of quasi-isometric groups. More explicitly, given four finitely generated groups $F_1,F_2,H_1,H_2$, we know that, if $F_1$ (resp. $H_1$) is biLipschitz equivalent to $F_2$ (resp. $H_2$), then $F_1 \wr H_1$ and $F_2 \wr H_2$ are biLipschitz equivalent. As a consequence, the wreath products $\mathbb{Z}_{60} \wr \mathbb{Z}$ and $\mathfrak{A}_5 \wr \mathbb{Z}$, where $\mathfrak{A}_5$ and $\mathbb{Z}_{60}$ are the alternating and cyclic groups of order $60$, are quasi-isometric while $\mathbb{Z}_{60} \wr \mathbb{Z}$ is solvable but $\mathfrak{A}_5 \wr \mathbb{Z}$ not virtually solvable (contrasting with Gromov's theorem about groups of polynomial growth); also, the wreath products $\mathbb{Z} \wr \mathbb{Z}$ and $\mathbb{D}_\infty \wr \mathbb{Z}$ are quasi-isometric while $\mathbb{Z} \wr \mathbb{Z}$ is torsion-free but $\mathbb{D}_\infty \wr \mathbb{Z}$ not virtually torsion-free. However, a complete classification of wreath products up to quasi-isometry in full generality seems to be out of reach right now. In this article, we focus on the following question: 

\begin{question}
Let $F_1,F_2$ be two (non-trivial) finite groups and $H_1,H_2$ two finitely generated groups. When are $F_1 \wr H_1$ and $F_2 \wr H_2$ quasi-isometric?
\end{question}

\noindent
The same question can be found in \cite{MR1786869} for the specific case $H_1=H_2= \mathbb{Z}$, and has been solved in \cite{EFWI,EFWII}. A slight modification of the classification they get is the following (see Section \ref{section:Last} for more details):

\begin{thm}[\cite{EFWI,EFWII}]\label{thm:EFW}
Let $F_1,F_2$ be two finite groups and $H_1,H_2$ two finitely generated groups. Assume that $H_1$ is two-ended. Then $F_1 \wr H_1$ and $F_2 \wr H_2$ are quasi-isometric if and only if $H_1,H_2$ are quasi-isometric and $|F_1|,|F_2|$ are powers of a common number.
\end{thm}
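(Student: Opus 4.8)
The plan is to bootstrap the case $H_1=H_2=\mathbb{Z}$ settled in \cite{EFWI,EFWII}, using the elementary stability of wreath products under biLipschitz equivalence of the factors from \cite{MR1800990}. First I would reduce the two-ended base to $\mathbb{Z}$: a finitely generated two-ended group is virtually $\mathbb{Z}$, hence contains $\mathbb{Z}$ as a finite-index subgroup; since a finitely generated group is biLipschitz equivalent to each of its finite-index subgroups, and $\mathbb{Z}\times E$ is biLipschitz equivalent to $\mathbb{Z}$ for every finite set $E$, the group $H_1$ is biLipschitz equivalent to $\mathbb{Z}$. By the biLipschitz stability of \cite{MR1800990}, $F_1\wr H_1$ is then biLipschitz equivalent to $F_1\wr\mathbb{Z}$, so it suffices to prove: $F_1\wr\mathbb{Z}$ and $F_2\wr H_2$ are quasi-isometric if and only if $H_2$ is quasi-isometric to $\mathbb{Z}$ and $|F_1|,|F_2|$ are powers of a common integer.

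For the ``if'' direction, suppose $H_2$ is quasi-isometric to $\mathbb{Z}$; then $H_2$ is two-ended (the number of ends being a quasi-isometry invariant), hence biLipschitz equivalent to $\mathbb{Z}$ as above, so $F_2\wr H_2$ is biLipschitz equivalent to $F_2\wr\mathbb{Z}$. If moreover $|F_i|=k^{n_i}$, then $F_i$ is biLipschitz equivalent to $\mathbb{Z}_k^{n_i}$ (same cardinality), so $F_i\wr\mathbb{Z}$ is biLipschitz equivalent to $\mathbb{Z}_k^{n_i}\wr\mathbb{Z}$, which is isomorphic to a finite-index subgroup of $\mathbb{Z}_k\wr\mathbb{Z}$ (regroup the $\mathbb{Z}$-indexed coordinates modulo $n_i$); hence $F_1\wr\mathbb{Z}$ and $F_2\wr\mathbb{Z}$ are both quasi-isometric to $\mathbb{Z}_k\wr\mathbb{Z}$. (This direction also follows directly from \cite{EFWI,EFWII}.)

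The content lies in the ``only if'' direction. Assume $F_1\wr\mathbb{Z}$ and $F_2\wr H_2$ are quasi-isometric. Since $F_1\wr\mathbb{Z}$ is amenable (an extension of a locally finite group by $\mathbb{Z}$) and amenability is a quasi-isometry invariant, $F_2\wr H_2$ is amenable, hence so is its quotient $H_2$; a finitely generated amenable group has at most two ends, and $H_2$ is infinite (otherwise $F_2\wr H_2$ would be finite), so the only case to exclude is $H_2$ one-ended. For this I would compare isoperimetric profiles: the profile is a quasi-isometry invariant, one has $I_{F_1\wr\mathbb{Z}}(n)\asymp 1/\log n$, and the isoperimetric estimates for wreath products (see \cite{MR2011120}) show that the profile of $F_2\wr H_2$ is, up to the usual equivalence, strictly larger than $1/\log n$ as soon as $H_2$ has super-linear growth --- which is the case if $H_2$ is one-ended, since a finitely generated group of linear growth is virtually $\mathbb{Z}$. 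This contradiction forces $H_2$ to be two-ended, hence biLipschitz equivalent to $\mathbb{Z}$, so $F_2\wr H_2$ is biLipschitz equivalent to $F_2\wr\mathbb{Z}$. Then $F_1\wr\mathbb{Z}$ and $F_2\wr\mathbb{Z}$ are quasi-isometric, \cite{EFWI,EFWII} forces $|F_1|,|F_2|$ to be powers of a common integer, and $H_1,H_2$ are both quasi-isometric to $\mathbb{Z}$, hence to each other.

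The main obstacle is exactly the step ruling out one-ended bases $H_2$: it is the only place where a genuinely large-scale geometric input is needed rather than bookkeeping around \cite{MR1800990}. It can be supplied either through the dependence of the isoperimetric profile of $F\wr H$ on the growth of $H$, as above, or by invoking the quasi-isometric rigidity of the Diestel--Leader graphs $DL(n,n)$ from \cite{EFWII} and reading off the structure of $H_2$ from the height function; I would present the isoperimetric argument, since it is shortest and isolates exactly why a higher-dimensional or non-amenable base cannot occur.
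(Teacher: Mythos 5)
Your overall strategy reaches the right statement and has the same skeleton as the paper's own treatment in Section~\ref{section:Last} (quote \cite{EFWI,EFWII} for $\mathbb{Z}$, use the fact that all two-ended groups are biLipschitz equivalent together with the stability result of \cite{MR1800990} to reduce to $\mathbb{Z}$), but it diverges at the key step, namely forcing $H_2$ to be two-ended. The paper does this via Proposition~\ref{prop:cones}: the asymptotic cones of $F\wr H$ have finite topological dimension if and only if $H$ is virtually cyclic, and finiteness of this dimension is a quasi-isometry invariant. You instead use amenability (to reduce to at most two ends) and then separate isoperimetric profiles, citing \cite{MR2011120}. This is a genuinely different invariant and it can be made to work, but be aware of what it needs: the growth gap (one-ended forces growth at least quadratic, not merely super-linear), and, more seriously, a lower bound on the F\o lner function of $F_2\wr H_2$ of the form $\succeq |F_2|^{c\,\gamma_{H_2}(cn)}$ valid for an \emph{arbitrary} one-ended base. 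Erschler's asymptotics are usually stated in terms of the F\o lner function of the base and under regularity hypotheses, so you should quote the precise lower-bound statement you need rather than the familiar $F\wr\mathbb{Z}^d$ computations. What the paper's cone argument buys is self-containedness (a one-page direct biLipschitz embedding of $[0,1]^k$ into the cone) and uniformity: it uses neither amenability nor any wreath-product isoperimetry and applies verbatim to every base of super-linear growth.

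One genuine error to repair: the claim that ``a finitely generated group is biLipschitz equivalent to each of its finite-index subgroups'' is false, and indeed is contradicted by results discussed in this very paper — see the theorem of \cite{MR2730576} quoted after Theorem~\ref{thm:QIvsBil}, and Corollary~\ref{cor:NotBiL}: a wreath product $F_2\wr\mathbb{Z}$ can sit with finite index in $F_1\wr\mathbb{Z}$ without being biLipschitz equivalent to it. Fortunately the only instance you need is true and elementary: if $H_1$ is two-ended with an infinite cyclic subgroup $\langle t\rangle$ of index $m$ and coset representatives $h_1,\dots,h_m$, then $h_i t^n\mapsto mn+i$ is a bijective quasi-isometry $H_1\to\mathbb{Z}$, hence a biLipschitz equivalence; this is exactly the (standard, unproved) assertion ``virtually infinite cyclic groups are always biLipschitz equivalent'' invoked in Section~\ref{section:Last}. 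With that repair, and a carefully quoted isoperimetric lower bound covering all one-ended bases, your proof is correct.
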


\noindent
The proof of this theorem is highly non-trivial and does not seem to provide any valuable information outside the two-ended case. More precisely, the proof is based on the observation that $\mathbb{Z}_n \wr \mathbb{Z}$, where $\mathbb{Z}_n$ denotes the cyclic group of order $n$, admits a Cayley graph isomorphic to the horospherical product of two $n$-regular trees (also known as the Diestel-Leader graph $\mathrm{DL}(n)$) and it fundamentally exploits the quotient map $\mathbb{Z}_n \wr \mathbb{Z} \twoheadrightarrow \mathbb{Z}$ thought of as a \emph{height function} (as inspired by the solvable Baumslag-Solitar groups \cite{MR1608595, MR1709862} and other finitely presented abelian-by-cyclic groups \cite{MR1768110}). When replacing $\mathbb{Z}$ with a one-ended group, for instance $\mathbb{Z}^2$, such structures do not occur any more, hence the need of a new point of view. 

\medskip \noindent
As expressed in \cite[Paragraph IV.B.44]{MR1786869}, it is reasonable to think that the geometry of wreath products such as $\mathbb{Z}_n \wr \mathbb{Z}^2$ is more complicated than the geometry of $\mathbb{Z}_n \wr \mathbb{Z}$ because the solution to the travelling salesman problem (which is closely related to the metrics in wreath products) is more complicated on $\mathbb{Z}^2$ than on $\mathbb{Z}$, where it is particularly easy. This idea is also motivated by \cite{leBoudec}, where algebraically simple groups quasi-isometric to lamplighter groups over non-abelian free groups are constructed, which contracts drastically with lamplighters over $\mathbb{Z}$. By contrast, we show in this article that lamplighter groups over one-ended finitely presented groups are even more rigid than in the case of $\mathbb{Z}$. Exploiting this rigidity allows us  to completely classify them up to quasi-isometry.

\medskip \noindent
In fact, our techniques apply to a wider class of (non-necessarily vertex-transitive) graphs, defined as follows.

\begin{definition}
Let $X$ be a graph and $n \geq 2$ an integer. The \emph{lamplighter graph} $\mathcal{L}_n(X)$ is the graph 
\begin{itemize}
	\item whose vertices are the pairs $(c,x)$ with $c : V(X) \to \mathbb{Z}_n$ a finitely supported colouring taking values in the cyclic group of order $n$ and $x \in V(X)$ a vertex;
	\item and whose edges connect $(c_1,x_1)$ and $(c_2,x_2)$ either if $c_1=c_2$ and $x_1,x_2$ are adjacent, or if $x_1=x_2$ and $c_1,c_2$ differ only at this vertex.
\end{itemize}
A \emph{leaf} is a subgraph $\{(c,x) \mid x \in X\}$ where $c \in \mathbb{Z}_n^{(X)}$ is a fixed colouring.
\end{definition}

\noindent
Observe that, given a non-trivial finite group $F$ and a group $H$ with a finite generating set $S$, the Cayley graph $\mathrm{Cayl}(F \wr H, F \cup S)$ coincides with $\mathcal{L}_{|F|}(\mathrm{Cayl}(H,S))$ and its leaves correspond to $H$-cosets, justifying the terminology and the fact that this framework subsumes the geometric study of our wreath products. 

\medskip \noindent
The main result of this paper is the following classification:

\begin{thm}\label{thm:LampRigid}
Let $n,m \geq 2$ be two integers and $X,Y$ two coarsely $1$-connected uniformly one-ended graphs of bounded degree. 
 \begin{itemize}
  	\item If $X$ is amenable, then $\mathcal{L}_{n}(X)$ and $\mathcal{L}_{m}(Y)$ are quasi-isometric if and only if there exist $k,r,s \geq 1$ such that $n=k^{r}$, $m=k^{s}$ and such that there exists a quasi-$(s/r)$-to-one quasi-isometry $X \to Y$.
	\item If $X$ is non-amenable, then $\mathcal{L}_n(X)$ and $\mathcal{L}_m(Y)$ are quasi-isometric if and only if $X,Y$ are quasi-isometric and $n,m$ have the same prime divisors.
 \end{itemize}
 \end{thm}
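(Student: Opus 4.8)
The plan is to prove the two directions separately, and within each direction to organise the argument around the quasi-median geometry of lamplighter graphs announced in the abstract.

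\medskip\noindent
\textbf{The easy direction.} First I would establish that the stated conditions are sufficient. In the amenable case, a quasi-$(s/r)$-to-one quasi-isometry $X\to Y$ should be promoted to a quasi-isometry $\mathcal{L}_{k^r}(X)\to\mathcal{L}_{k^s}(Y)$: the point is that a lamp with $k^r$ states over a vertex of $X$ carries the same information as $s$ lamps with $k^r$ states, equivalently $r$ lamps with $k^s$ states, so coarsely matching $r$ vertices of $X$ with $s$ vertices of $Y$ lets one transport colourings. One has to check that this map is a coarse embedding with coarsely dense image and that it distorts the lamplighter metric (travelling-salesman-type distance plus configuration distance) by a bounded amount; uniform one-endedness of $X$ is what guarantees that the ``tour'' cost is essentially the Steiner/TSP cost of the support, which is stable under the matching. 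In the non-amenable case the relevant input is that $\mathcal{L}_n(X)\simeq\mathcal{L}_m(X)$ whenever $n,m$ have the same prime divisors — this should follow from a biLipschitz equivalence $\mathbb{Z}_n\wr T\simeq\mathbb{Z}_m\wr T$ for regular trees (or the appropriate statement for general non-amenable $X$, using that the lamp configuration space is then ``large'' enough to absorb the change of alphabet) combined with a quasi-isometry $X\to Y$ lifting coordinate-wise.

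\medskip\noindent
\textbf{The rigidity direction.} This is the substantial half. Given a quasi-isometry $\Phi\colon\mathcal{L}_n(X)\to\mathcal{L}_m(Y)$, the first and crucial step is to show that $\Phi$ coarsely preserves the leaf structure: the image of a leaf (a copy of $X$) is at bounded Hausdorff distance from a leaf of $\mathcal{L}_m(Y)$. This is where one-endedness of $X$ is essential and is the main obstacle — in the two-ended case this fails and EFW had to work much harder. I expect the argument to go through the quasi-median structure: leaves should be characterised geometrically (e.g. as maximal subsets that are one-ended at every scale, or via the hyperplane/wall combinatorics of the quasi-median space, the complementary components of ``lamp-flip walls'' being the leaves), so that any quasi-isometry must send this canonical family to itself up to finite Hausdorff error. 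Once leaves are preserved, $\Phi$ descends to a quasi-isometry $X\to Y$, and the remaining task is bookkeeping on the fibres: over each point of $X$ we have a $\mathbb{Z}_n$-worth of lamp states, over each point of $Y$ a $\mathbb{Z}_m$-worth, and the leaf-to-leaf correspondence forces a relation between $n$, $m$ and the local multiplicity of $X\to Y$. In the amenable case a volume/Følner counting argument (how many configurations fit in a ball) yields $n^{|B_X(R)|}\asymp m^{|B_Y(R')|}$, and combining this with the induced quasi-isometry pins down $n=k^r$, $m=k^s$ and the quasi-$(s/r)$-to-one property. In the non-amenable case, where balls grow exponentially, such counting only constrains the exponential growth rates, which forces $X\simeq Y$ but leaves the alphabet size determined only up to the prime-divisor relation — matching the weaker conclusion.

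\medskip\noindent
\textbf{Where the difficulty concentrates.} The genuinely hard step is the coarse invariance of the leaf structure, i.e.\ that a quasi-isometry cannot ``mix'' the $X$-direction with the lamp-directions. My plan is to prove this by a filling/coning argument: leaves are the subsets over which the lamplighter graph is coarsely simply connected with linear (more precisely, Euclidean-type controlled) filling of loops coming from $X$, whereas loops involving lamp flips have a different filling signature; combined with uniform one-endedness this should let one recognise leaves purely metrically. An alternative, probably cleaner, route — and the one suggested by the paper's emphasis on quasi-median spaces — is to exhibit $\mathcal{L}_n(X)$ as (quasi-isometric to) a quasi-median space whose ``prisms'' in the lamp directions are uniformly finite while the ambient geometry in the leaf directions is one-ended, so that the canonical combinatorial decomposition is a quasi-isometry invariant by the general theory of such spaces; I would then quote that theory. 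Once that invariance is in hand, the rest is a careful but routine combination of the induced quasi-isometry on the base with volume-growth or Følner-function comparisons to extract the arithmetic relations between $n$ and $m$.
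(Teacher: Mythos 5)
Your overall architecture (leaf rigidity first, then arithmetic on the fibres) is the right one, and your sufficiency direction essentially matches the paper's: the amenable case is Proposition~\ref{prop:AmConverse}, built on the partition characterisation of quasi-$(s/r)$-to-one maps from Proposition~\ref{prop:kappa} (pairing $s$ vertices of $X$ with $r$ vertices of $Y$; one-endedness plays no role there), and the non-amenable case is Proposition~\ref{prop:QInonamenable}, using an $n$-to-one self-map of $X$ at bounded distance from the identity supplied by Whyte's theorem. The gaps are in the rigidity direction, and they are the two places where the paper does its real work. The leaf-rigidity step cannot be obtained by ``quoting the general theory'' of quasi-median spaces: $\mathcal{L}_n(X)$ is not itself quasi-median, and no off-the-shelf invariance theorem makes the leaf decomposition a quasi-isometry invariant. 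In the paper the quasi-median geometry only appears after a change of viewpoint and a passage to a universal cover: $\mathcal{L}_n(X)$ is realised as the graph of pointed simplices of a prism complex $W(n,X)$, whose universal cover is the quasi-median complex of the graph product $X\mathbb{Z}_n$ modulo the kernel of its abelianisation. Leaf rigidity is then the embedding theorem (Theorem~\ref{thm:FullEmbeddingThm}): every coarse embedding of a coarsely $1$-connected, uniformly one-ended graph into $\mathcal{L}_n(X)$ lands near a single leaf. Its proof needs exactly the ingredients your sketch leaves implicit: coarse $1$-connectedness of the domain to lift the embedding to the cover (after enlarging $X$ so that short loops land in prisms), the key fact that every wall upstairs has bounded image downstairs (hyperplane stabilisers have finite image in the abelianisation), and uniform one-endedness to orient each wall and trap the lifted image in a single intersection of sectors, which projects onto a leaf. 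Your alternative ``filling signature'' criterion is unsubstantiated, and the fact that your argument never invokes coarse $1$-connectedness of $X,Y$ while the conclusion genuinely fails in the multi-ended setting (Proposition~\ref{prop:NonAptolic}) shows that a soft argument of this kind cannot succeed.

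Second, even granting leaf rigidity, the ``bookkeeping on the fibres'' is neither routine nor correct as proposed. Ball-volume counting cannot see the alphabet: a quasi-isometry rescales radii, so an estimate of the form $n^{|B_X(R)|}\asymp m^{|B_Y(R')|}$ carries no information beyond the growth type -- for instance $\mathcal{L}_2(\mathbb{Z}^2)$ and $\mathcal{L}_3(\mathbb{Z}^2)$ have equivalent growth -- and ``same prime divisors'' is not a growth-rate statement at all. What pins down the numbers is exact divisibility coming from the aptolic form. One first upgrades the leaf-preserving quasi-isometry to an aptolic one at bounded distance (Theorem~\ref{thm:CosetAptolic}); this requires the non-trivial Proposition~\ref{prop:Height}, proved by the circles-of-leaves argument, showing that points with the same projection to $X$ have images with close projections to $Y$ -- it does not follow formally from leaves going to leaves. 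Then the bijection $\alpha$ between colouring spaces sends the preimage of $\mathcal{L}(\beta(A)^{+Q})$ to a union of cosets of $\mathcal{L}(A)$ (Lemmas~\ref{lem:F(A)} and~\ref{lem:cosetsF(A)}), giving divisibility relations between powers of each prime; running this prime-by-prime along a F\o lner sequence, with errors controlled by $|\partial A|$ rather than by radii, yields $n=k^r$, $m=k^s$ and that the induced map $X\to Y$ is quasi-$(s/r)$-to-one (Theorem~\ref{thm:Amenable}), while in the non-amenable case the same coset argument gives exactly the prime-divisor condition (Proposition~\ref{prop:general}). Your proposal contains no step producing these divisibility constraints, and attributing the conclusion ``$X$ quasi-isometric to $Y$'' to growth comparison is also misplaced: it follows directly from leaf preservation.
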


\noindent
We refer to Section \ref{section:prel} for the definition of \emph{quasi-$\kappa$-to-one} maps, and more specifically to Proposition \ref{prop:kappa} when $\kappa$ is rational. The algebraic counterpart of Theorem \ref{thm:LampRigid} is the following (see Section \ref{section:Proofs} for a more general version that deals with \emph{permutational wreath products}):

\begin{cor}\label{cor:LampRigid}
Let $F_1,F_2$ be two non-trivial finite groups and $H_1,H_2$ two finitely presented groups. Assume that $H_1$ is one-ended. 
 \begin{itemize}
  	\item If $H_1$ is amenable, then $F_1 \wr H_1$ and $F_2 \wr H_2$ are quasi-isometric if and only if there exist $k,n_1,n_2 \geq 1$ such that $n=k^{n_1}$, $m=k^{n_2}$ and such that there exists a quasi-$(n_2/n_1)$-to-one quasi-isometry $H_1 \to H_2$.
	\item If $H_1$ is non-amenable, then $F_1 \wr H_1$ and $F_2 \wr H_2$ are quasi-isometric if and only if $H_1,H_2$ are quasi-isometric and $n_1,n_2$ have the same prime divisors.
 \end{itemize}
 \end{cor}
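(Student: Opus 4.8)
The plan is to deduce Corollary~\ref{cor:LampRigid} from Theorem~\ref{thm:LampRigid} by translating its hypotheses and conclusion into the language of graphs. The bridge is the observation, already recorded in the excerpt, that for a non-trivial finite group $F$ and a finitely generated group $H$ with finite generating set $S$, the Cayley graph $\mathrm{Cayl}(F \wr H, F \cup S)$ is isomorphic to $\mathcal{L}_{|F|}(\mathrm{Cayl}(H,S))$, with leaves corresponding to $H$-cosets. Thus $F_1 \wr H_1$ and $F_2 \wr H_2$ are quasi-isometric if and only if $\mathcal{L}_{|F_1|}(\mathrm{Cayl}(H_1,S_1))$ and $\mathcal{L}_{|F_2|}(\mathrm{Cayl}(H_2,S_2))$ are quasi-isometric, for any choices of finite generating sets.

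First I would check that the graph-theoretic hypotheses of Theorem~\ref{thm:LampRigid} are met. A Cayley graph of a finitely generated group has bounded (indeed constant) degree. A Cayley graph of a finitely \emph{presented} group is coarsely $1$-connected (one can fill loops using the finitely many relators, uniformly), so the finite presentability of $H_1$ — and, we must note, of $H_2$, which should either be assumed or derived: since $F_2\wr H_2$ is quasi-isometric to $F_1\wr H_1$ which is finitely presented when $H_1$ is, and finite presentability is a quasi-isometry invariant, $F_2 \wr H_2$ is finitely presented, whence $H_2$ is finitely presented as a retract, and in particular its Cayley graph is coarsely $1$-connected — gives the coarse $1$-connectedness. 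One-endedness of the group $H_1$ is equivalent to uniform one-endedness of its Cayley graph (for a vertex-transitive graph the uniformity is automatic), and again one-endedness transfers to $H_2$: number of ends is a quasi-isometry invariant of $F\wr H$, and $F\wr H$ has the same number of ends as $H$ when $H$ is infinite (a lamplighter over an infinite group is one-ended precisely when the base is, since finite and two-ended bases yield one or two ends of the lamplighter but then $H_1$ would not be one-ended), so $H_2$ is one-ended. Amenability of $H_1$ is likewise equivalent to amenability (non-exponential isoperimetry in the relevant sense) of its Cayley graph, and it transfers across the quasi-isometry to $H_2$: here one uses that amenability of $F\wr H$ is equivalent to amenability of $H$ (as $F$ is finite), together with the fact that the relevant notion of amenability of a graph used in Theorem~\ref{thm:LampRigid} agrees for Cayley graphs with amenability of the group.

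With the hypotheses verified, Theorem~\ref{thm:LampRigid} applies verbatim. In the amenable case it gives that $\mathcal{L}_{|F_1|}(\mathrm{Cayl}(H_1,S_1))$ and $\mathcal{L}_{|F_2|}(\mathrm{Cayl}(H_2,S_2))$ are quasi-isometric iff there are $k,r,s\geq 1$ with $|F_1| = k^r$, $|F_2| = k^s$ and a quasi-$(s/r)$-to-one quasi-isometry $\mathrm{Cayl}(H_1,S_1)\to \mathrm{Cayl}(H_2,S_2)$; since a quasi-isometry between Cayley graphs is the same thing as a quasi-isometry between the groups, and the quasi-$(s/r)$-to-one condition is exactly the condition appearing in the statement (setting $n_1=r$, $n_2=s$), this is the first bullet. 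In the non-amenable case, $\mathcal{L}_{|F_1|}$ and $\mathcal{L}_{|F_2|}$ over the two Cayley graphs are quasi-isometric iff the Cayley graphs are quasi-isometric and $|F_1|,|F_2|$ have the same prime divisors, which is the second bullet (with $n_1=|F_1|$, $n_2=|F_2|$ — note the statement's "$n_1,n_2$ have the same prime divisors" should read "$|F_1|,|F_2|$"). The more general statement for permutational wreath products $\left(\bigoplus_{H/K} F\right)\rtimes H$ promised for Section~\ref{section:Proofs} is handled the same way: its Cayley graph with respect to $F$ and a generating set of $H$ is a lamplighter-type graph over the Schreier graph of $H/K$, which is coarsely $1$-connected and uniformly one-ended under the appropriate finiteness and one-endedness assumptions, so Theorem~\ref{thm:LampRigid} applies.

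The main obstacle is not in the application of Theorem~\ref{thm:LampRigid} but in the bookkeeping of the equivalences "property of the group $H_i$ $\Leftrightarrow$ property of its Cayley graph $\Leftrightarrow$ property transferred through the quasi-isometry", and especially in establishing that the relevant invariants (one-endedness, amenability, coarse $1$-connectedness, finite presentability) pass from $H_1$ to $H_2$ through the quasi-isometry $F_1\wr H_1 \sim F_2 \wr H_2$ — i.e. one must know $H_2$ inherits these before one can even invoke the theorem. For finite presentability and number of ends this uses standard quasi-isometry invariance plus the fact that these properties of $F\wr H$ reflect those of $H$ when $F$ is finite; for amenability one similarly uses that $F\wr H$ is amenable iff $H$ is. The only genuinely delicate point is matching the paper's graph-theoretic notion of amenability with group amenability on Cayley graphs, but this is routine once the definitions in Section~\ref{section:prel} are unwound.
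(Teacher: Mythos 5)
Your reduction to Theorem~\ref{thm:LampRigid} is the right frame, and most of the bookkeeping is fine (Cayley graphs have bounded degree, finite presentability gives coarse $1$-connectedness, vertex-transitivity makes one-endedness uniform, amenability of $F\wr H$ is equivalent to that of $H$). But the one transfer you genuinely need before the theorem can be invoked --- that $H_2$ is one-ended --- rests on a false claim. You assert that $F\wr H$ has the same number of ends as $H$ when $H$ is infinite; in fact $F\wr H$ is one-ended for \emph{every} infinite $H$ and non-trivial finite $F$: $\mathbb{Z}_2\wr\mathbb{Z}$ is one-ended although $\mathbb{Z}$ is two-ended, and $\mathbb{Z}_2\wr\mathbb{F}_2$ is one-ended although $\mathbb{F}_2$ is infinitely-ended. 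Consequently, quasi-isometry invariance of the number of ends of the wreath products tells you nothing about the ends of $H_2$, and Theorem~\ref{thm:LampRigid}, whose hypotheses require \emph{both} graphs to be uniformly one-ended, cannot yet be applied. This is exactly the non-trivial point: the paper deduces one-endedness of $H_2$ from Theorem~\ref{thm:QIfactors}, which rests on Corollary~\ref{cor:NumberOfEnds}, i.e.\ on the embedding theorem (Theorem~\ref{thm:MainThm}) together with the coning-off argument and Proposition~\ref{prop:LampGraph} on coarse simple connectivity of lamplighter graphs --- not on counting ends of the lamplighters themselves.

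A secondary error, harmless here only because the corollary assumes $H_2$ finitely presented: your derivation of that fact starts from ``$F_1\wr H_1$ is finitely presented when $H_1$ is'', which is false --- a wreath product $F\wr H$ with $F$ non-trivial and $H$ infinite is never finitely presented (Baumslag); the paper itself exploits this non-finite-presentability in the proof of Corollary~\ref{cor:PermutationW}. So drop that derivation and simply quote the hypothesis. The remainder of your argument (identifying $\mathrm{Cayl}(F\wr H, F\cup S)$ with $\mathcal{L}_{|F|}(\mathrm{Cayl}(H,S))$ and reading off both bullets of the corollary from Theorem~\ref{thm:LampRigid}) is correct once one-endedness of $H_2$ has been properly established.
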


\noindent
The dichotomy provided by Theorem \ref{thm:LampRigid} between lamplighters over amenable and non-amenable graphs is twofold. The first difference deals with the indices $n,m$. In the amenable case, we recover from Theorem~\ref{thm:LampRigid} a criterion similar to Theorem~\ref{thm:EFW}. But in the non-amenable case, more lamplighter graphs turn out to be quasi-isometric. We emphasize that this phenomenon is not specific to coarsely $1$-connected and one-ended graphs: 

\begin{prop}
Let $X,Y$ be two arbitrary non-amenable graphs with bounded degree. If $n,m \geq 2$ are two integers having the same prime divisors, then $\mathcal{L}_n(X)$ and $\mathcal{L}_m(Y)$ are quasi-isometric. 
\end{prop}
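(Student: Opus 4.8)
The plan is to reduce the statement to the special case $X = Y$ and then to a purely arithmetic statement about the cyclic groups $\mathbb{Z}_n$ and $\mathbb{Z}_m$. First I would record the elementary fact (already used implicitly in the introduction, going back to \cite{MR1800990}) that if $F$ and $F'$ are finite groups whose Cayley graphs are biLipschitz equivalent, then for any graph $X$ the lamplighter graphs $\mathcal{L}_{|F|}(X)$ and $\mathcal{L}_{|F'|}(X)$ are biLipschitz equivalent; more generally, a biLipschitz bijection $\mathbb{Z}_n^k \to \mathbb{Z}_m^\ell$ (when such a thing is compatible with the colouring structure) induces a biLipschitz equivalence $\mathcal{L}_n(X) \to \mathcal{L}_m(X)$ after replacing $X$ by a graph quasi-isometric to it. So the heart of the matter is: given two integers $n, m \geq 2$ with the same prime divisors, produce a quasi-isometry $\mathcal{L}_n(X) \to \mathcal{L}_m(X)$ that exploits non-amenability of $X$.

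The key mechanism is the following. Choose $N$ large enough that $n \mid m^N$ and $m \mid n^N$; such an $N$ exists precisely because $n$ and $m$ have the same prime divisors. Then $\mathbb{Z}_{m^N}$ contains a subgroup isomorphic to $\mathbb{Z}_n^{?}$... more cleanly: the abelian groups $\bigoplus_{\mathbb{N}} \mathbb{Z}_n$ and $\bigoplus_{\mathbb{N}} \mathbb{Z}_m$ are isomorphic (both are the direct sum, over each prime $p$ dividing $nm$, of countably many copies of each cyclic $p$-group appearing, and same-prime-divisors forces the two to have isomorphic Ulm invariants after regrouping — in fact $\bigoplus_{\mathbb{N}}\mathbb{Z}_n \cong \bigoplus_{\mathbb{N}}\mathbb{Z}_{n'}$ whenever $n,n'$ have the same prime divisors, since $\mathbb{Z}_n \hookrightarrow \bigoplus \mathbb{Z}_{n'}$ and vice versa). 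The point of non-amenability is that $X$ contains, in a coarse sense, "room to spread out": one can find in $X$ a coarsely dense collection of finite subsets that are pairwise disjoint and each of unbounded-but-controlled size, onto which the colourings can be rearranged. Concretely, I would use a Følner-type criterion in reverse — non-amenability gives a quasi-isometry of $X$ with $X$ itself "doubled up" along a coarse bijection $V(X) \sqcup V(X) \to V(X)$ (a $1$-to-$2$ coarse surjection in the terminology of the paper), because for non-amenable bounded-degree graphs the "quasi-$\kappa$-to-one" equivalence class collapses: $X$ is quasi-$\kappa$-to-one quasi-isometric to itself for every $\kappa \in (0,\infty)$. This is the non-amenable analogue of the rigidity failure highlighted just before the proposition.

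Granting that, the construction of the quasi-isometry $\mathcal{L}_n(X) \to \mathcal{L}_m(X)$ goes as follows: pick $k$ with $m \mid n^k$, use a coarse $1$-to-$k$ surjection $\phi : V(X) \to V(X)$ coming from non-amenability, and send a colouring $c : V(X) \to \mathbb{Z}_n$ to the colouring $c' : V(X) \to \mathbb{Z}_m$ obtained by bundling the $k$ values $(c(u))_{u \in \phi^{-1}(v)}$ into an element of $\mathbb{Z}_n^k$ and then mapping $\mathbb{Z}_n^k$ into $\mathbb{Z}_m$ (possible since $m \mid n^k$ makes $\mathbb{Z}_m$ a quotient-and-sub of $\mathbb{Z}_n^k$, or one enlarges $k$ further so that $|\mathbb{Z}_n^k|$ is a multiple of $|\mathbb{Z}_m|$ and uses a bijection $\mathbb{Z}_n^k \to \mathbb{Z}_m \times (\text{junk})$, absorbing the junk into yet more coarse spreading). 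One checks that lamp-moves and lamplighter-moves on the $\mathcal{L}_n(X)$ side are sent to bounded-length paths on the $\mathcal{L}_m(X)$ side and conversely, using that $\phi$ is a quasi-isometry with uniformly bounded fibres; this is a routine but slightly bookkeeping-heavy verification that the map is a quasi-isometry (coarsely Lipschitz, coarse inverse, coarsely surjective). Finally, I would allow $X$ and $Y$ to differ by first replacing $\mathcal{L}_m(Y)$ by $\mathcal{L}_m(X)$ — but wait, that is not given in the hypothesis; so in fact the statement as phrased for \emph{arbitrary} non-amenable $X,Y$ must instead go through a common "universal" model. Thus the cleanest route is: show every $\mathcal{L}_n(X)$ with $X$ non-amenable of bounded degree is quasi-isometric to $\mathcal{L}_n(X) $ "thickened", and then show all such graphs with a fixed set of prime divisors of $n$ are mutually quasi-isometric by a zig-zag, using that $X$ and $Y$ need \emph{not} be quasi-isometric here — the lamplighter construction over a non-amenable base completely forgets the base up to coarse equivalence because one can embed $Y$'s lamp-configuration space into $X$'s via a coarse injection with non-amenable complement and "hide" the discrepancy in infinitely many unused colours. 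I expect the main obstacle to be exactly this last point: rigorously proving that $\mathcal{L}_n(X) \simeq \mathcal{L}_m(Y)$ with \emph{no} relation between $X$ and $Y$, which requires a genuinely "absorptive" use of non-amenability (the lamplighter over a non-amenable graph behaves like an infinite-dimensional object whose coarse type depends only on the prime divisors of $n$), and making the coarse $\kappa$-to-one surjections interact correctly with the colouring bundling without blowing up distances.
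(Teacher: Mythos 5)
Your core mechanism is the same as the paper's (Proposition \ref{prop:QInonamenable}): non-amenability, via Whyte's theorem \cite{MR1700742}, gives an exactly $k$-to-one self-map of $X$ at bounded distance from the identity, and one bundles lamp values along its fibres. But the arithmetic step as you describe it has a real gap. If you send the $k$-tuple $(c(u))_{u\in\phi^{-1}(v)}\in\mathbb{Z}_n^k$ to $\mathbb{Z}_m$ by a quotient (using $m\mid n^k$), the induced map on colourings is far from injective: two configurations differing only at a single lamp far from the lamplighter can have the same image, so the map collapses points at arbitrarily large distance and is not a coarse embedding (recall from Proposition \ref{prop:AptoQI}(i) that for an aptolic quasi-isometry the colouring map must be a bijection). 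Your fallback, a bijection $\mathbb{Z}_n^k\to\mathbb{Z}_m\times(\text{junk})$ with the junk ``absorbed by more coarse spreading'', is precisely the point where the content lies and is not carried out. The paper avoids the issue by a prime-by-prime reduction: it suffices to show $\mathcal{L}_{mp}(X)$ and $\mathcal{L}_{mp^a}(X)$ are quasi-isometric for each prime $p$; writing $\mathbb{Z}_{mp}\cong\mathbb{Z}_m\oplus\mathbb{Z}_p$ and $\mathbb{Z}_{mp^a}\cong\mathbb{Z}_m\oplus\mathbb{Z}_p^a$, one keeps the $\mathbb{Z}_m$-coordinate at each vertex and bundles only the $\mathbb{Z}_p$-coordinates along the fibres of an exactly $a$-to-one map at bounded distance from the identity. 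This is an honest bijection on finitely supported colourings, the metric estimates go through as in your sketch, and chaining such steps over all primes handles any $n,m$ with the same prime divisors.

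Your second difficulty — handling genuinely unrelated $X$ and $Y$ — should not be pursued: the ``absorptive'' claim that the lamplighter over a non-amenable base forgets the base is false, since by Theorem \ref{thm:LampRigid} a quasi-isometry $\mathcal{L}_n(X)\to\mathcal{L}_m(Y)$ between lamplighters over coarsely $1$-connected, uniformly one-ended non-amenable graphs forces $X$ and $Y$ to be quasi-isometric. The statement should be read with $X$ and $Y$ quasi-isometric (the version proved in the body of the paper takes $Y=X$). Under that reading the reduction you first wanted is legitimate: quasi-isometric non-amenable graphs of bounded degree are biLipschitz equivalent by Theorem \ref{thm:QIvsBil}, and a biLipschitz equivalence $X\to Y$ induces a quasi-isometry $\mathcal{L}_m(X)\to\mathcal{L}_m(Y)$, after which the same-base construction above concludes.
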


\noindent
As a particular case, if $\mathbb{F}$ is a free group of finite rank $ \geq 2$, then $\mathbb{Z}_n \wr \mathbb{F}$ and $\mathbb{Z}_m \wr \mathbb{F}$ are quasi-isometric as soon as $n,m$ have the same prime divisors. See Proposition \ref{prop:QInonamenable} below for more details. Even though the proof is elementary, up to our knowledge, this fact has not been noticed before. The second difference comes from the fact that every quasi-isometry between two non-amenable graphs of bounded degree is quasi-one-to-one, i.e. it lies at finite distance from a bijective quasi-isometry \cite{MR1693847, MR1700742}, which may not be the case for amenable graphs of bounded degree \cite{MR1616135, MR1616159} or even for finitely generated amenable groups \cite{MR2730576}. (More details on this subject are given below.) This leads to a major difference between the amenable and non-amenable cases. As an illustration, because \cite{MR3318424} shows that higher rank lamplighter groups may be quasi-isometric without being biLipschitz equivalent, we deduce from Corollary~\ref{cor:LampRigid} the following somehow surprising consequence:

\begin{cor}
Let $F$ be a non-trivial finite group. There exist two finitely presented one-ended amenable groups $H_1,H_2$ that are quasi-isometric but such that $F \wr H_1, F\wr H_2$ are not quasi-isometric.
\end{cor}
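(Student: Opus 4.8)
The plan is to derive this corollary by combining Corollary~\ref{cor:LampRigid} with the existence result of \cite{MR3318424} on higher rank lamplighter groups. Recall that \cite{MR3318424} produces two finitely presented groups $G_1,G_2$ of the form $\mathbb{Z}_k \wr \Gamma$-type (``higher rank lamplighters'') that are quasi-isometric but not biLipschitz equivalent; more to the point for us, it produces one-ended amenable groups together with a quasi-isometry between them that is \emph{not} quasi-one-to-one, i.e. that is quasi-$\kappa$-to-one only for some $\kappa \neq 1$. So first I would recall precisely which groups $H_1,H_2$ from \cite{MR3318424} I am using, check that they are finitely presented, one-ended and amenable, and extract from that work a quasi-isometry $\varphi \colon H_1 \to H_2$ which is quasi-$\kappa$-to-one with $\kappa \neq 1$, while recording that every quasi-isometry $H_1 \to H_2$ is quasi-$\kappa$-to-one for this same fixed $\kappa$ (a bilipschitz-invariant ``volume ratio''; this is exactly the obstruction detected in \cite{MR3318424}).

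Next I would fix a non-trivial finite group $F$ and suppose for contradiction that $F \wr H_1$ and $F \wr H_2$ are quasi-isometric. Since $H_1$ is finitely presented, one-ended and amenable, the amenable case of Corollary~\ref{cor:LampRigid} applies: writing $|F| = |F|$ with $n_1 = n_2 = 1$, $k = |F|$, it tells us there is a quasi-$(n_2/n_1)$-to-one quasi-isometry $H_1 \to H_2$, i.e. a quasi-$1$-to-one (equivalently quasi-one-to-one) quasi-isometry $H_1 \to H_2$. But every quasi-isometry $H_1 \to H_2$ is quasi-$\kappa$-to-one for the fixed ratio $\kappa \neq 1$, and quasi-$\kappa$-to-one for two distinct values of $\kappa$ is impossible (the ``multiplicity'' is well-defined up to bounded error, so $\kappa = 1$ would force $\kappa$ and $1$ to agree). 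This contradiction shows $F \wr H_1$ and $F \wr H_2$ are not quasi-isometric, while $H_1$ and $H_2$ are quasi-isometric by construction, proving the corollary.

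The main obstacle, and the only genuinely delicate point, is bridging the language of \cite{MR3318424} --- which phrases its non-rigidity in terms of bilipschitz equivalence of lamplighter-type groups --- with the ``quasi-$\kappa$-to-one'' language used in Corollary~\ref{cor:LampRigid}. Concretely I must verify two things: that the one-ended amenable groups supplied by \cite{MR3318424} really are quasi-isometric (this is asserted there), and that no quasi-isometry between them can be quasi-one-to-one. The second point is where the input from \cite{MR3318424} is essential: their proof that the two groups are not biLipschitz equivalent in fact goes through a coarse volume/density invariant that obstructs exactly the existence of a quasi-one-to-one quasi-isometry, so I would cite the relevant statement (or, if it is only stated for biLipschitz maps, note that a quasi-one-to-one quasi-isometry between two bounded-degree graphs can be promoted to a bijective quasi-isometry, hence a biLipschitz equivalence, by \cite{MR1693847,MR1700742}-type arguments in the amenable setting, yielding the needed contradiction). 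Once this translation is in hand, the rest is a direct application of Corollary~\ref{cor:LampRigid} and the elementary fact that the quasi-multiplicity of a quasi-isometry is well-defined.
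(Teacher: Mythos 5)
Your proposal is correct and takes essentially the same route as the paper: choose for $H_1,H_2$ the higher rank lamplighter groups of \cite{MR3318424}, which are finitely presented, one-ended, amenable, and quasi-isometric but not biLipschitz equivalent; then observe that if $F\wr H_1$ and $F\wr H_2$ were quasi-isometric, Corollary \ref{cor:LampRigid} with $F_1=F_2=F$ (any factorisation $|F|=k^{n_1}=k^{n_2}$ with $|F|\geq 2$ forces $n_1=n_2$) would yield a quasi-one-to-one quasi-isometry $H_1\to H_2$, hence by Proposition \ref{prop:QIdistBij} a bijective quasi-isometry, i.e.\ a biLipschitz equivalence, a contradiction. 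Your fallback bridge from ``not biLipschitz equivalent'' to ``no quasi-one-to-one quasi-isometry'' is precisely this Proposition \ref{prop:QIdistBij} (Whyte), so nothing beyond the stated QI-but-not-biLipschitz result of \cite{MR3318424} is needed.
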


\noindent
This second aspect of the difference between lamplighters over amenable and non-amenable groups is well-illustrated by the particular case $H_1=H_2$. For every finitely generated group $H$, set
$$\kappa(H):= \{ \kappa>0 \mid \text{there exists a quasi-$\kappa$-to-one quasi-isometry $H \to H$} \}.$$
(Observe that, since two word metrics (with respect to finite generating sets) are biLipschitz equivalent, the set $\kappa(H)$ does not depend on a particular choice of a finite generating set.) Then we have:

\begin{cor}\label{cor:QIkappa}
Let $F_1,F_2$ be two finite groups and $H$ a finitely generated amenable group. Then $F_1\wr H$ and $F_2 \wr H$ are quasi-isometric if and only if there exist $k,n_1,n_2 \geq 1$ such that $|F_1|=k^{n_1}$, $|F_2|=k^{n_2}$, and $n_1/n_2 \in \kappa(H)$.
\end{cor}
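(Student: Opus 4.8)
The plan is to deduce the statement directly from Corollary~\ref{cor:LampRigid}, applied with $H_1=H_2=H$; here $H$ is taken finitely presented and one-ended so that this result applies (the two-ended case being entirely similar, with Theorem~\ref{thm:EFW} used in place of Corollary~\ref{cor:LampRigid}). The only extra ingredient is a couple of elementary structural facts about the set $\kappa(H)$.

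The key observation is that $\kappa(H)$ is a subgroup of the multiplicative group $(\mathbb{R}_{>0},\cdot)$. Indeed, the identity map $H\to H$ is a quasi-$1$-to-one quasi-isometry, so $1\in\kappa(H)$; a composition of a quasi-$\kappa_1$-to-one quasi-isometry with a quasi-$\kappa_2$-to-one quasi-isometry is quasi-$\kappa_1\kappa_2$-to-one, so $\kappa(H)$ is stable under multiplication; finally, a quasi-inverse of a quasi-$\kappa$-to-one quasi-isometry is again a quasi-isometry, and it is quasi-$(1/\kappa)$-to-one, so $\kappa(H)$ is stable under inversion. In particular, for all $a,b>0$ one has $a/b\in\kappa(H)$ if and only if $b/a\in\kappa(H)$.

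Granting this, both implications are immediate. If $F_1\wr H$ and $F_2\wr H$ are quasi-isometric, then, since $H$ is amenable, the amenable case of Corollary~\ref{cor:LampRigid} provides integers $k,n_1,n_2\geq 1$ with $|F_1|=k^{n_1}$, $|F_2|=k^{n_2}$ together with a quasi-$(n_2/n_1)$-to-one quasi-isometry $H\to H$; hence $n_2/n_1\in\kappa(H)$, and therefore $n_1/n_2\in\kappa(H)$ by the symmetry just noted. Conversely, if $k,n_1,n_2\geq 1$ satisfy $|F_1|=k^{n_1}$, $|F_2|=k^{n_2}$ and $n_1/n_2\in\kappa(H)$, then $n_2/n_1\in\kappa(H)$ as well, i.e.\ there exists a quasi-$(n_2/n_1)$-to-one quasi-isometry $H\to H$, so Corollary~\ref{cor:LampRigid} yields that $F_1\wr H$ and $F_2\wr H$ are quasi-isometric.

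The only step that requires genuine attention---and which I expect to be the main, if modest, obstacle---is the behaviour of $\kappa$ under quasi-inversion, namely that a quasi-inverse of a quasi-$\kappa$-to-one quasi-isometry is again a quasi-isometry and is quasi-$(1/\kappa)$-to-one. This should be read off from the definition of quasi-$\kappa$-to-one maps in Section~\ref{section:prel} and from Proposition~\ref{prop:kappa} (which applies since the relevant ratio $n_1/n_2$ is rational). Everything else is bookkeeping; observe in passing that the exponents are essentially determined, as $|F_1|=k^{n_1}$ and $|F_2|=k^{n_2}$ force $n_1/n_2=\log|F_1|/\log|F_2|$, so there is no ambiguity about which ratio must belong to $\kappa(H)$.
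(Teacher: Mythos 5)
Your derivation is correct and is exactly the paper's (implicit) one: the paper offers no separate argument for this corollary, treating it as the case $H_1=H_2$ of Corollary~\ref{cor:LampRigid} together with the observation, already made in the introduction, that Proposition~\ref{prop:EasyKappa} makes $\kappa(H)$ a subgroup of $(\mathbb{R}_{>0},\cdot)$ --- which is precisely how one trades $n_2/n_1$ for $n_1/n_2$. The step you flag as the only delicate one is in fact available verbatim: stability under quasi-inversion is Proposition~\ref{prop:EasyKappa}(iii) (not Proposition~\ref{prop:kappa}), and the identity is quasi-one-to-one directly from Definition~\ref{def:quasione}, so nothing remains to check there. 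One caveat on scope: as printed the statement allows any finitely generated amenable $H$, whereas your argument (like the paper's machinery) only reaches finitely presented $H$ --- one-ended via Corollary~\ref{cor:LampRigid}, two-ended via Theorem~\ref{thm:EFW} --- and the two-ended reduction is not quite ``entirely similar'': to translate ``powers of a common number'' into the condition $n_1/n_2\in\kappa(H)$ you must also verify that $\kappa(H)\supseteq\mathbb{Q}_{>0}$ for two-ended $H$ (e.g.\ identify $H$ biLipschitzly with $\mathbb{Z}$, note that $x\mapsto\lfloor\lambda x\rfloor$ is quasi-$(1/\lambda)$-to-one, and compose using Proposition~\ref{prop:EasyKappa}); the infinitely presented one-ended amenable case is covered neither by your proof nor by the paper's results, so the hypothesis should be read as finitely presented.
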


\noindent
This leads to the natural problem of investigating the structure of $\kappa(H)$ for a finitely generated amenable $H$. A first observation is that, as a consequence of Proposition \ref{prop:EasyKappa}, $\kappa(H)$ turns out to be a subgroup of the multiplicative group $\mathbb{R}_{>0}$. However, it is not clear what possible values can be taken by $\kappa(H)$. One the one hand, using homotheties in Euclidean spaces easily leads to the equality $\kappa(\mathbb{Z}^n)= \mathbb{R}_{>0}$ for every $n \geq 1$, hence:

\begin{cor}\label{cor:Abelian}
Let $F_1,F_2$ be two finite groups and $n \geq 2$ an integer. The wreath products $F_1 \wr \mathbb{Z}^n$ and $F_2 \wr \mathbb{Z}^n$ are quasi-isometric if and only if $|F_1|,|F_2|$ are powers of a common number. 
\end{cor}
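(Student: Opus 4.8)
The plan is to deduce Corollary~\ref{cor:Abelian} directly from Corollary~\ref{cor:LampRigid} (or equivalently from Corollary~\ref{cor:QIkappa}) applied with $H_1 = H_2 = \mathbb{Z}^n$, so the real content is the computation $\kappa(\mathbb{Z}^n) = \mathbb{R}_{>0}$. Note first that $\mathbb{Z}^n$ is finitely presented, one-ended for $n \geq 2$, and amenable, so the amenable case of Corollary~\ref{cor:LampRigid} applies: $F_1 \wr \mathbb{Z}^n$ and $F_2 \wr \mathbb{Z}^n$ are quasi-isometric if and only if there exist $k, n_1, n_2 \geq 1$ with $|F_1| = k^{n_1}$, $|F_2| = k^{n_2}$ and a quasi-$(n_2/n_1)$-to-one quasi-isometry $\mathbb{Z}^n \to \mathbb{Z}^n$. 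Thus it suffices to show that for every rational $\kappa > 0$ there is a quasi-$\kappa$-to-one quasi-isometry $\mathbb{Z}^n \to \mathbb{Z}^n$; together with the fact that ``$|F_1|,|F_2|$ are powers of a common number'' is exactly the condition ``$\exists k,n_1,n_2$ with $|F_1|=k^{n_1}$, $|F_2|=k^{n_2}$'', this gives the stated equivalence.

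Next I would exhibit the quasi-$\kappa$-to-one quasi-isometries. Work first in $\mathbb{R}^n$ with $n \geq 1$: for any $\lambda > 0$, the homothety $v \mapsto \lambda v$ is a biLipschitz homeomorphism of $\mathbb{R}^n$ which, on the standard integer lattice, multiplies density by $\lambda^n$; restricting it to $\mathbb{Z}^n$ and post-composing with a nearest-lattice-point map yields a quasi-isometry $\mathbb{Z}^n \to \mathbb{Z}^n$ whose fibres have size comparable to $\lambda^{-n}$ (more precisely, which is quasi-$\lambda^{-n}$-to-one in the sense of Section~\ref{section:prel}). Choosing $\lambda = \kappa^{-1/n}$ produces a quasi-$\kappa$-to-one self-quasi-isometry of $\mathbb{Z}^n$ for every real $\kappa > 0$, so in fact $\kappa(\mathbb{Z}^n) = \mathbb{R}_{>0}$, which is even stronger than needed. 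I would record this as a short lemma or simply inline it, since the required estimates --- that scaling by $\lambda$ composed with rounding is $(\max(\lambda,1/\lambda), C)$-quasi-isometric and that preimages of balls have the expected cardinality --- are routine volume counts in $\mathbb{Z}^n$ using that balls of radius $R$ have cardinality $\asymp R^n$.

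With $\kappa(\mathbb{Z}^n) = \mathbb{R}_{>0}$ in hand, the corollary follows: if $|F_1|$ and $|F_2|$ are powers of a common number, write $|F_1| = k^{n_1}$, $|F_2| = k^{n_2}$, and since $n_2/n_1 \in \mathbb{R}_{>0} = \kappa(\mathbb{Z}^n)$ there is a quasi-$(n_2/n_1)$-to-one quasi-isometry $\mathbb{Z}^n \to \mathbb{Z}^n$, whence $F_1 \wr \mathbb{Z}^n$ and $F_2 \wr \mathbb{Z}^n$ are quasi-isometric by Corollary~\ref{cor:LampRigid}. Conversely, if these wreath products are quasi-isometric, the same corollary forces $|F_1| = k^{n_1}$ and $|F_2| = k^{n_2}$ for some $k, n_1, n_2 \geq 1$, i.e. $|F_1|$ and $|F_2|$ are powers of the common number $k$. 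There is essentially no obstacle here beyond carefully matching the parenthetical in Corollary~\ref{cor:LampRigid} (the hypothesis requires $n = k^{n_1}$, $m = k^{n_2}$ where $n = |F_1|$, $m = |F_2|$ in the wreath-product translation) with the phrasing ``powers of a common number'', and making sure the quasi-$\kappa$-to-one notion from Section~\ref{section:prel} is the one realized by the rounded homothety; the mild subtlety is simply that one should allow $k = 1$ to be excluded or $n_i$ to equal $1$ so that, e.g., the trivial case $|F_1| = |F_2|$ is covered. I expect the only genuinely delicate point to be verifying that the rounded homothety is quasi-$\lambda^{-n}$-to-one in the precise technical sense used in the paper (uniform control on fibre sizes, not merely on average), but this is a standard lattice-point estimate and not a conceptual difficulty.
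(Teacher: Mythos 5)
Your proposal is correct, and its overall skeleton (reduce to Corollary~\ref{cor:QIkappa}/Corollary~\ref{cor:LampRigid} with $H=\mathbb{Z}^n$, then produce enough quasi-$\kappa$-to-one self-quasi-isometries of $\mathbb{Z}^n$) is the same as the paper's; the difference is in how those quasi-isometries are produced. The paper only proves the inclusion $\mathbb{Q}_{>0}\subset\kappa(\mathbb{Z}^n)$, which is all that is needed since the ratios $n_2/n_1$ are rational, and it does so softly: the finite-index inclusion $k\mathbb{Z}\oplus\mathbb{Z}^{n-1}\hookrightarrow\mathbb{Z}^n$ is quasi-$(1/k)$-to-one by Proposition~\ref{prop:kappa}(iii), and composing such maps with quasi-inverses of others yields every rational value by Proposition~\ref{prop:EasyKappa}, so no counting estimate has to be checked by hand. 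You instead use a rounded homothety $x\mapsto\mathrm{round}(\lambda x)$ to get the stronger equality $\kappa(\mathbb{Z}^n)=\mathbb{R}_{>0}$ in one step (this is exactly the route the introduction alludes to with ``using homotheties in Euclidean spaces''); the price is that you must verify directly, via a lattice-point/volume count with boundary error controlled by $|\partial A|$, that this map is quasi-$\lambda^{-n}$-to-one in the sense of Definition~\ref{def:quasione} -- a standard but genuine estimate that the paper's argument avoids by leaning on its ready-made propositions about finite-index inclusions and composition. Both arguments are complete in spirit; yours buys a stronger statement about $\kappa(\mathbb{Z}^n)$ (all positive reals, no composition step needed), while the paper's is shorter given the machinery already established in Section~\ref{section:prel}.
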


\noindent
Based on the same idea, it can be shown that $\kappa(H)= \mathbb{R}_{>0}$ for every uniform lattice $H$ in a Carnot group, including the Heisenberg group. On the other hand, as a consequence of Corollary \ref{cor:kappatrivial} mentioned below, $\kappa(F \wr H)= \{1\}$ for every non-trivial finite group $F$ and every finitely presented one-ended amenable group $H$ (e.g. $\mathbb{Z}^2$). Intermediate values are also possible: indeed, it follows from \cite{MR2730576} that, for every $n \geq 2$, $\kappa(\mathbb{Z}_n \wr \mathbb{Z}) = \langle \text{prime factors of $n$} \rangle \subset \mathbb{Q}$. A more detailed discussion about the sets $\kappa(\cdot)$ is available in our work \cite{Kappa}.

\medskip \noindent
Let us also mention the following funny characterisation of amenability provided by Theorem \ref{thm:LampRigid} (even though it is not clear that it can be useful in practice):

\begin{cor}
Let $X$ be a coarsely $1$-connected uniformly one-ended graph of bounded degree. Then $X$ is amenable if and only if $\mathcal{L}_6(X)$ and $\mathcal{L}_{12}(X)$ are quasi-isometric. 
\end{cor}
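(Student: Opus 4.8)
The plan is to read the statement off Theorem~\ref{thm:LampRigid}, applied with $Y := X$, $n := 6$ and $m := 12$; two elementary arithmetic observations about the integers $6$ and $12$ will do all the work. First, $6$ and $12$ have the same set of prime divisors, namely $\{2,3\}$. Second, there are no integers $k,r,s \geq 1$ with $6 = k^{r}$ and $12 = k^{s}$: since $6$ is square-free and composite, the only way to write $6 = k^{r}$ is $(k,r) = (6,1)$, and then $12 = 6^{s}$ has no solution; more robustly, by unique factorisation the equation $6^{p/q} = 12$ would force $p = 2q$ and $p = q$ at once, hence $q = 0$, so $6$ and $12$ are not both positive integral powers of any common number, integer or real. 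Thus the requirement ``$n$ and $m$ are powers of a common base'' appearing in the first bullet of Theorem~\ref{thm:LampRigid} can never be met for the pair $(n,m) = (6,12)$, whatever the accompanying self-quasi-isometry of $X$.

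Next I would run the case split provided by Theorem~\ref{thm:LampRigid}. If $X$ is non-amenable, its second bullet says that $\mathcal{L}_6(X)$ and $\mathcal{L}_{12}(X)$ are quasi-isometric if and only if $X$ is quasi-isometric to itself and $6,12$ have the same prime divisors; the former holds via the identity map and the latter by the first observation, so $\mathcal{L}_6(X)$ and $\mathcal{L}_{12}(X)$ are quasi-isometric. If $X$ is amenable, its first bullet applies, and by the second observation that bullet's condition fails, so $\mathcal{L}_6(X)$ and $\mathcal{L}_{12}(X)$ are not quasi-isometric. These cases being exhaustive, $\mathcal{L}_6(X)$ and $\mathcal{L}_{12}(X)$ are quasi-isometric precisely when $X$ is non-amenable; equivalently, $X$ is amenable if and only if $\mathcal{L}_6(X)$ and $\mathcal{L}_{12}(X)$ are not quasi-isometric, which is the announced characterisation of amenability.

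I do not expect any genuine obstacle: the statement is a formal consequence of Theorem~\ref{thm:LampRigid}. The only point worth isolating is the second arithmetic observation, which is exactly what separates the two regimes here: in the non-amenable case sharing the prime divisors $\{2,3\}$ already suffices to build a quasi-isometry $\mathcal{L}_6(X) \to \mathcal{L}_{12}(X)$, while in the amenable case Theorem~\ref{thm:LampRigid} demands additionally that $6$ and $12$ be powers of a common base, which they are not. One should also keep in mind, for the non-amenable half to be effective, that $6$ and $12$ genuinely do share all their prime divisors, so that case yields a quasi-isometry rather than an obstruction.
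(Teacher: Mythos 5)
Your argument is exactly the intended one: the corollary is a formal consequence of Theorem \ref{thm:LampRigid} applied with $Y=X$, $n=6$, $m=12$, together with the two arithmetic observations that $6$ and $12$ have the same prime divisors $\{2,3\}$ but are not both positive powers of a common number. Both observations are verified correctly, and the case split you run is precisely the one the paper implicitly relies on; the paper offers no further proof beyond the remark that $6$ and $12$ can be replaced by any pair with the same prime divisors that are not powers of a common number.

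The one point you should not have glossed over is the direction of the equivalence. What your computation establishes is that $\mathcal{L}_6(X)$ and $\mathcal{L}_{12}(X)$ are quasi-isometric precisely when $X$ is \emph{non-amenable}; equivalently, $X$ is amenable if and only if they are \emph{not} quasi-isometric. The corollary as printed asserts the opposite (``amenable if and only if quasi-isometric''), so your closing claim that your conclusion ``is the announced characterisation'' is not literally accurate: you have proved the statement with the equivalence reversed. The resolution is that the printed statement appears to be misstated --- the surrounding remark in the paper confirms that the intended assertion is exactly the one you derived from Theorem \ref{thm:LampRigid} --- but a careful write-up should flag this discrepancy explicitly rather than silently identifying the two formulations.
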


\noindent
Of course, there is nothing specific about $6$ and $12$: they can be replaced with any two numbers that have the same prime divisors but that are not powers of a common number. 

\medskip \noindent
Finally, removing the one-ended assumption on the graphs, we are still able to show the following result.
\begin{thm}\label{thm:QIfactors}
Let $F_1,F_2$ be two finite groups and $H_1,H_2$ two finitely presented groups. If $F_1 \wr H_1$ and $F_2 \wr H_2$ are quasi-isometric, then so are $H_1$ and $H_2$. 
\end{thm}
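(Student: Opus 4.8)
The plan is to recover the quasi-isometry type of $H_i$ from that of $F_i \wr H_i$ by extracting an intrinsic geometric feature of the lamplighter graph that "remembers" the base. The natural candidate is the collection of leaves: in $\mathcal{L}_{|F_i|}(\mathrm{Cayl}(H_i,S_i))$ each leaf is an isometrically embedded copy of $\mathrm{Cayl}(H_i,S_i)$, and two points of the same leaf are joined by a geodesic that never changes any lamp, while moving between two leaves that differ in a single lamp costs only a bounded detour. So the first step is to give a coarse characterization of leaves — or more precisely of the "horizontal" directions — that is invariant under quasi-isometry. A clean way to do this is via coarse separation / connectedness at infinity or via the structure of $2$-ended versus one-ended subsets, but since we are not assuming one-endedness here, I would instead argue through a more robust invariant: the \emph{divergence-type} or, better, the behaviour of balls. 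Concretely, I would try to show that a quasi-isometry $\Phi : F_1\wr H_1 \to F_2\wr H_2$ coarsely sends leaves to leaves, up to bounded Hausdorff error, and then that the induced map on the "space of leaves" is a quasi-isometry $H_1 \to H_2$.

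Making "coarsely sends leaves to leaves" precise is the crux, and I expect it to be the main obstacle. The idea I would pursue: a leaf $L$ is characterized coarsely by the property that for every $R$ there is an $R'$ such that any two points of $L$ at distance $\le R$ are connected by a path staying in the $R'$-neighbourhood of $L$ and \emph{not crossing} any "lamp-switching" edge outside a bounded set — equivalently, leaves are the maximal subsets on which the lamp configuration is coarsely locally constant. Alternatively, and perhaps more tractably, one can use that the quotient map $q_i : F_i\wr H_i \to H_i$ is Lipschitz and has the property that $q_i^{-1}(\text{ball})$ is coarsely a "product" of a ball in $H_i$ with a finite-diameter fibre only when $H_i$ is finite; for infinite $H_i$ one uses instead that the \emph{fibres of $q_i$ are the leaves} and that leaves are precisely the subsets $Z$ for which $\mathcal{L}_{|F_i|}$ restricted to a neighbourhood of $Z$ looks, at all scales, like $Z$ itself rather than like $Z$ with extra directions attached. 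I would phrase this through an \emph{ends-of-pairs} or \emph{filtered ends} argument, or through the notion of \emph{coarse fibration}, and invoke the machinery already set up in the paper (the quasi-median-space interpretation) to identify leaves coarsely — this is where I would lean heavily on the earlier sections.

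The key steps, in order, would be: (1) fix finite generating sets and work with $\mathcal{L}_{n_i}(\mathrm{Cayl}(H_i,S_i))$ where $n_i = |F_i|$; (2) establish a quasi-isometry-invariant characterization of leaves (or of horizontal geodesics) inside a lamplighter graph over an arbitrary graph of bounded degree — this is the hard analytic/combinatorial lemma; (3) deduce that $\Phi$ maps each leaf of $\mathcal{L}_{n_1}$ to within bounded Hausdorff distance of a leaf of $\mathcal{L}_{n_2}$, and that this correspondence of leaves is itself coarsely well-defined and surjective up to bounded error; (4) since the set of leaves of $\mathcal{L}_{n_i}(\mathrm{Cayl}(H_i,S_i))$, equipped with the metric "minimal number of lamp-switches plus travel between leaves", is quasi-isometric to $H_i$ (indeed each leaf is a coset of $H_i$ and adjacent-up-to-one-lamp leaves are at bounded distance), conclude that the induced map is a quasi-isometry $H_1 \to H_2$; (5) handle the degenerate case where one $H_i$ is finite separately — but then $F_i\wr H_i$ is itself finite-by-... and in fact virtually free or finite, and the statement is immediate (or vacuous, since two finite groups are quasi-isometric and, say, $F\wr H$ with $H$ finite is finite).

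The main obstacle, to restate it, is step (2): without the one-ended hypothesis we cannot use the coarse-topological rigidity of leaves that drives the proof of Theorem~\ref{thm:LampRigid}, so we need a weaker but still quasi-isometry-invariant fingerprint of the leaf structure. My best guess for the right tool is a \emph{filling}/\emph{coarse simple-connectedness relative to a subset} argument: a leaf is, coarsely, a subset relative to which the lamplighter graph deformation-retracts in each bounded region by "turning off" all lamps along geodesics, and one shows this retraction property is preserved by quasi-isometries using that $X,Y$ are coarsely $1$-connected of bounded degree. If that proves too delicate, the fallback is to detect leaves through the asymptotic cone, where a leaf becomes a bi-Lipschitz-embedded copy of the cone of $H_i$ sitting inside a "tree-graded-like" decomposition of the cone of $\mathcal{L}_{n_i}$, and a bi-Lipschitz homeomorphism of cones must respect the pieces of such a decomposition. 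Either route reduces the theorem to bookkeeping about how $\Phi$ permutes leaves.
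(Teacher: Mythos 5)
Your plan hinges on step (2)--(3): that an arbitrary quasi-isometry $F_1\wr H_1\to F_2\wr H_2$ coarsely sends leaves to leaves. This is precisely what fails outside the one-ended case, and Theorem~\ref{thm:QIfactors} makes no one-endedness assumption on $H_1,H_2$. The paper's own Example~\ref{ex:NonAptolic} / Proposition~\ref{prop:NonAptolic} exhibits, for any multi-ended base $X$ (so already for $H=\mathbb{Z}$ or a free group), a quasi-isometry $\mathcal{L}_n(X)\to\mathcal{L}_n(X)$ that splits the zero leaf across two distinct leaves along the two unbounded complementary pieces of a ball; since distinct leaves have neighbourhoods with bounded intersection, its image is not within bounded Hausdorff distance of any single leaf. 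By Theorem~\ref{thm:CosetAptolic}, leaf-preserving is equivalent to being at bounded distance from an aptolic quasi-isometry, and the paper notes that non-aptolic quasi-isometries are in fact generic in the multi-ended case. So the ``hard analytic/combinatorial lemma'' you propose (a quasi-isometry-invariant fingerprint of leaves over an arbitrary bounded-degree base, whether via relative filling or via asymptotic cones) cannot exist in the generality you need: both of your suggested routes aim at a statement that is simply false. There is also a smaller slip in step (4): the set of leaves with your metric is indexed by $\bigoplus_{H}F$ with a travelling-salesman-type distance and is not quasi-isometric to $H_i$; the correct (and sufficient) observation is that each individual leaf is isometric to $H_i$, so a leaf-preserving quasi-isometry restricted to one leaf, followed by nearest-point projection, gives the desired quasi-isometry $H_1\to H_2$ --- but this only helps where leaf-preservation holds.

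For comparison, the paper circumvents the obstruction by splitting into cases on the number of ends. First, Proposition~\ref{prop:LampGraph} and Corollary~\ref{cor:NumberOfEnds} (a coarse simple-connectivity argument after coning off leaves) show that one-endedness of the base is detected by the lamplighter, so $H_1$ and $H_2$ are one-ended simultaneously. In the one-ended case, Theorem~\ref{thm:StructureQI} yields aptolicity and Proposition~\ref{prop:AptoQI}(ii) gives the quasi-isometry of the bases --- this is the only regime where your leaf-to-leaf picture is valid. In the two-ended case amenability forces $H_2$ to be two-ended as well. In the infinitely-ended case the embedding theorem (Theorem~\ref{thm:MainThm}) is applied not to leaves but to the one-ended factors of the Stallings--Dunwoody decompositions (these are finitely presented quasi-retracts), showing each one-ended factor of $H_1$ is sent at finite Hausdorff distance from a coset of a one-ended factor of $H_2$ and conversely, and then Papasoglu--Whyte's theorem on infinitely-ended groups with quasi-isometric one-ended factors concludes. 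If you want to salvage your outline, you must add this case analysis and replace ``leaves'' by ``cosets of one-ended Stallings--Dunwoody factors'' in the multi-ended case; as written, the proposal has a genuine gap.
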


\paragraph{Quasi-isometry vs.\ biLipschitz equivalence.} An early question in geometric group theory, which can be found in \cite[1.A']{GromovAsymptotic}, asks to which extend being quasi-isometric and being biLipschitz equivalent are different. For instance, are two quasi-isometric graphs with bounded degree necessarily biLipschitz equivalent? Partial positive answers was obtained for homogeneous trees \cite{MR1326733}, and next for hyperbolic groups \cite{Bogop}, before it was realised that a positive answer holds for every non-amenable graph with bounded degree in a strong way: 

\begin{thm}[\cite{MR1693847, MR1700742}]\label{thm:QIvsBil}
Every quasi-isometry between two non-amenable graphs with bounded degree lies at finite distance from a bijection.
\end{thm}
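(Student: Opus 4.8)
The plan is to reduce the statement to a perfect-matching problem in a locally finite bipartite graph, and then to feed non-amenability into Hall's criterion. Write $f\colon X\to Y$ for the given $(L,A)$-quasi-isometry; both $X$ and $Y$ carry their graph metrics, hence are uniformly discrete and of bounded geometry: there is a function $V$ with $|B(x,r)|\le V(r)$ for every ball of radius $r$ in $X$ or $Y$. I will use two standard facts. First, $f$ is uniformly proper, i.e.\ there is $\rho$ with $d(f(x),f(x'))\le t\Rightarrow d(x,x')\le\rho(t)$; together with bounded geometry this makes the fibres of $f$, and more generally the preimages of balls of bounded radius, finite, so $f$ is ``coarsely $K$-to-one'' for some constant $K=K(L,A,V)$. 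Second, $f$ is coarsely surjective: $Y=N_A(f(X))$. Finally, recall that a bounded-geometry graph is non-amenable if and only if it satisfies a linear isoperimetric inequality, which upon iteration reads: there is $\varepsilon>0$ such that $|N_r(T)|\ge(1+\varepsilon)^r|T|$ for every finite set $T$ and every $r\ge0$; since non-amenability is a quasi-isometry invariant of bounded-geometry spaces, this holds for both $X$ and $Y$.

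Fix a parameter $R\ge A$ and form the bipartite graph $\Gamma_R$ on the vertex set $V(X)\sqcup V(Y)$, in which $x$ and $y$ are adjacent when $d_Y(f(x),y)\le R$. Bounded geometry and uniform properness make $\Gamma_R$ locally finite. A perfect matching of $\Gamma_R$ is precisely a bijection $g\colon X\to Y$ with $\sup_x d_Y(f(x),g(x))\le R<\infty$, that is, a bijection at finite distance from $f$; so it suffices to produce such a matching. By the marriage theorem for locally finite bipartite graphs (finite Hall together with a compactness/König's-lemma argument, or Rado's selection lemma), a perfect matching exists as soon as the two-sided Hall condition holds: $|N_{\Gamma_R}(S)|\ge|S|$ for every finite $S\subseteq V(X)$, and $|N_{\Gamma_R}(T)|\ge|T|$ for every finite $T\subseteq V(Y)$.

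It remains to verify Hall's condition for $R$ large. For finite $S\subseteq V(X)$ we have $N_{\Gamma_R}(S)=N^Y_R(f(S))$, so, using non-amenability of $Y$ and $|f(S)|\ge|S|/K$,
\[
|N_{\Gamma_R}(S)|\ \ge\ (1+\varepsilon)^R\,|f(S)|\ \ge\ \frac{(1+\varepsilon)^R}{K}\,|S|,
\]
which is $\ge|S|$ once $(1+\varepsilon)^R\ge K$. For finite $T\subseteq V(Y)$ we have $N_{\Gamma_R}(T)=f^{-1}\!\left(N^Y_R(T)\right)$; using coarse surjectivity, for each $z\in N^Y_{R-A}(T)$ choose $x_z\in f^{-1}(B_Y(z,A))\subseteq f^{-1}\!\left(N^Y_R(T)\right)$, and since $x_z=x_{z'}$ forces $d_Y(z,z')\le 2A$, the assignment $z\mapsto x_z$ is at most $V(2A)$-to-one, whence, by non-amenability of $Y$ again,
\[
|N_{\Gamma_R}(T)|\ \ge\ \frac{|N^Y_{R-A}(T)|}{V(2A)}\ \ge\ \frac{(1+\varepsilon)^{R-A}}{V(2A)}\,|T|,
\]
which is $\ge|T|$ once $(1+\varepsilon)^{R-A}\ge V(2A)$. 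Choosing $R$ with $(1+\varepsilon)^{R-A}\ge\max\{K,V(2A)\}$ makes both conditions hold, and the matching argument concludes.

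The main work is the verification of Hall's condition, in particular the bookkeeping on the $Y$-side, where coarse surjectivity must be converted into an explicit lower bound on preimages; the input from non-amenability (iterated isoperimetry) is exactly what absorbs the multiplicity constant $K$ and the fibre constant $V(2A)$, and it is precisely here that an amenable space would obstruct the argument. The infinite marriage theorem itself is classical. Alternatively, one may phrase the whole proof homologically: both spaces have vanishing reduced uniformly finite homology in degree $0$, and a bijective quasi-isometry exists whenever the two fundamental classes agree (in the spirit of Block--Weinberger and Whyte); but the combinatorial route above is the most self-contained.
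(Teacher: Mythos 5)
Your argument is correct. Note first that the paper does not prove this statement at all: it is imported as Theorem \ref{thm:QIvsBil} from the cited works (in particular Whyte's paper on amenability and bilipschitz equivalence), so the only meaningful comparison is with those proofs. Your route — build the locally finite bipartite graph $\Gamma_R$ joining $x$ to the points of $B_Y(f(x),R)$, check the two-sided Hall condition, and extract a perfect matching — is essentially the classical combinatorial argument underlying the cited results, and your verifications are sound: the $X$-side bound $|N_{\Gamma_R}(S)|=|N^Y_R(f(S))|\geq (1+\varepsilon)^R|S|/K$ with $K=V(LA)$ bounding fibre sizes, and the $Y$-side bound via the at most $V(2A)$-to-one selection $z\mapsto x_z$ on $N^Y_{R-A}(T)$, both only use non-amenability of $Y$ (legitimate, since non-amenability is a quasi-isometry invariant here) through the iterated isoperimetric inequality $|N_r(T)|\geq(1+\varepsilon)^r|T|$. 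The two points you pass over quickly are standard but worth being explicit about if this were written out: the passage from the finite Hall theorem to a matching saturating one side of an infinite locally finite bipartite graph (compactness/Rado), and the passage from two one-sided saturating matchings to a perfect matching (the K\"onig/Cantor--Schr\"oder--Bernstein alternating-component argument). Whyte's own proof is the homological reformulation you mention: a quasi-isometry is at bounded distance from a bijection iff it preserves the fundamental class in uniformly finite homology $H_0^{uf}$, and by Block--Weinberger $H_0^{uf}$ vanishes exactly in the non-amenable case; that formulation buys a clean obstruction theory (and explains the failure in the amenable case, cf.\ the discussion around Proposition \ref{prop:QIdistBij} and quasi-$\kappa$-to-one maps), whereas your matching argument is more elementary and self-contained.
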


\noindent
In the opposite direction, counterexamples for amenable graphs with bounded degree was constructed, for instance in \cite{MR1616135, MR1616159}. However, such graphs were not Cayley graphs, so it was natural to specify the question to finitely generated groups instead of arbitrary (amenable) graphs with bounded degree: are two quasi-isometric finitely generated groups necessarily biLipschitz equivalent? This question can be found \cite[IV.B.46(vi)]{MR1786869} for instance. It was first proved that no analogue of Theorem \ref{thm:QIvsBil} holds for amenable groups: If $G$ is a finitely generated amenable group and $H \leq G$ a proper finite-index subgroup, then the inclusion $H \hookrightarrow G$ does not lie at finite distance from a bijection \cite{MR2139686}. (This observation is also a consequence of Lemma~\ref{lem:KappaWellDefined} and Proposition~\ref{prop:EasyKappa} below.) But this does not show that there does not exist a biLipschitz equivalence $H \to G$ (that would not be at finite distance from the inclusion). The first counterexamples were constructed in \cite{MR2730576} by considering lamplighter groups over~$\mathbb{Z}$. 

\begin{thm}[\cite{MR2730576}]
If $F_1,F_2$ are two finite groups such that $|F_2|=|F_1|^k$ for some $k \geq 2$ that is not a product of prime factors of $n$, then $F_2 \wr \mathbb{Z}$ is a finite-index subgroup of $F_1 \wr \mathbb{Z}$ but the two groups are not biLipschitz equivalent.
\end{thm}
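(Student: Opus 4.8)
\medskip\noindent\emph{Proof proposal.} Write $n:=|F_1|$, so that $|F_2|=n^k$. The statement has two independent halves: an algebraic one, which is routine, and the ``not biLipschitz'' part, which carries all the weight.

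\emph{The finite-index embedding.} Here I would argue directly. Since $|F_1^k|=n^k=|F_2|$, the stability principle of \cite{MR1800990} --- a biLipschitz bijection between finite groups of the same order induces one between the associated wreath products --- gives that $F_2\wr\mathbb{Z}$ is biLipschitz equivalent to $F_1^k\wr\mathbb{Z}$. And $F_1^k\wr\mathbb{Z}$ is literally a subgroup of index $k$ in $F_1\wr\mathbb{Z}$: the partition $\mathbb{Z}=\bigsqcup_{i=0}^{k-1}(k\mathbb{Z}+i)$ yields an isomorphism of $(k\mathbb{Z})$-modules $\bigoplus_{\mathbb{Z}}F_1\cong\bigoplus_{k\mathbb{Z}}F_1^k$, hence $\bigl(\bigoplus_{\mathbb{Z}}F_1\bigr)\rtimes k\mathbb{Z}\cong F_1^k\wr k\mathbb{Z}\cong F_1^k\wr\mathbb{Z}$, and this has index $[\mathbb{Z}:k\mathbb{Z}]=k$ in $F_1\wr\mathbb{Z}$.

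\emph{Reduction of the non-biLipschitz part.} Using \cite{MR1800990} again, $F_1\wr\mathbb{Z}$ is biLipschitz to $G:=\mathbb{Z}_n\wr\mathbb{Z}$ and $F_2\wr\mathbb{Z}$ is biLipschitz to $\mathbb{Z}_{n^k}\wr\mathbb{Z}$, itself biLipschitz (via \cite{MR1800990} and $|\mathbb{Z}_n^k|=n^k$) to the index-$k$ subgroup $H:=\mathbb{Z}_n^k\wr\mathbb{Z}$ of $G$ produced above; being of finite index, $H$ and $G$ are quasi-isometric (alternatively invoke Theorem~\ref{thm:EFW}). So it suffices to prove that $G$ and $H$ are not biLipschitz equivalent when $k$ is not a product of prime divisors of $n$. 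I would argue by contradiction: given a biLipschitz equivalence $\Phi\colon\mathbb{Z}_{n^k}\wr\mathbb{Z}\to G$, compose $\Phi^{-1}$ with the biLipschitz identification $\mathbb{Z}_{n^k}\wr\mathbb{Z}\to H$ and with the inclusion $H\hookrightarrow G$ to obtain a self-quasi-isometry of $G$. Since the first two maps are coarse bijections and the inclusion of an index-$k$ subgroup is quasi-$(1/k)$-to-one (see Section~\ref{section:prel}), and since multiplicities multiply under composition, this composite is a quasi-$(1/k)$-to-one self-quasi-isometry of $G=\mathbb{Z}_n\wr\mathbb{Z}$. As $\kappa(\mathbb{Z}_n\wr\mathbb{Z})$ is a subgroup of $\mathbb{R}_{>0}$ (Proposition~\ref{prop:EasyKappa}), this forces $k\in\kappa(\mathbb{Z}_n\wr\mathbb{Z})$.

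\emph{The core estimate, which is the main obstacle.} Everything now reduces to showing $\kappa(\mathbb{Z}_n\wr\mathbb{Z})\subseteq\langle\, p\text{ prime}:p\mid n\,\rangle$: since $k\geq 2$ is, by hypothesis, not a product of prime divisors of $n$, it is not in this multiplicative subgroup of $\mathbb{Q}_{>0}$, giving the contradiction. This inclusion is the real content of \cite{MR2730576}, and the plan for it is as follows. First, identify the Cayley graph $\mathcal{L}_n(\mathbb{Z})$ of $\mathbb{Z}_n\wr\mathbb{Z}$ with the Diestel--Leader graph $\mathrm{DL}(n)$, the horocyclic product of two $(n{+}1)$-regular trees $T^{\pm}$ with common height function $h$. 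Next, invoke the Eskin--Fisher--Whyte structure theorem \cite{EFWI,EFWII}: every self-quasi-isometry of $\mathrm{DL}(n)$ is, after a possible flip of the two trees, \emph{height-respecting}, i.e.\ coarsely preserves $h$ up to an additive constant, and therefore induces self-maps of the punctured boundaries $\partial^*T^{\pm}$ --- each an $n$-adic Cantor set --- that are bi-Lipschitz up to a fixed power-of-$n$ scaling. Finally, show that if the quasi-isometry is moreover quasi-$\kappa$-to-one, then these boundary maps must send a cylinder of natural measure $n^{-d}$ to a bounded union of cylinders of total natural measure $\asymp\kappa^{\pm1}n^{-d}$, uniformly in $d$, which pins $\kappa$ into the group generated by the ratios $n^{-d}/n^{-d'}=n^{d'-d}$, namely $\langle p:p\mid n\rangle$. (Equivalently, one can run the argument through uniformly finite homology after Whyte: $G$ is a disjoint union of $k$ copies of $H$, so $[G]=k[H]$, and $G\cong_{\mathrm{biLip}}H$ would force the fundamental class of $\mathrm{DL}(n)$ to be invariant under multiplication by $k$, which the counting above excludes.) The delicate step --- and the reason this theorem is far deeper than its algebraic first half --- is making rigorous the passage from the EFW structure theorem to this quantitative control of the induced boundary maps, i.e.\ ruling out that quasi-symmetric distortion might manufacture extra multiplicities.
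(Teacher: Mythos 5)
The paper itself does not prove this statement: it is quoted from \cite{MR2730576}, with only the remark that the proof rests on the Eskin--Fisher--Whyte description of the quasi-isometries of lamplighters over $\mathbb{Z}$. So the relevant comparison is with Dymarz's argument, and your outline does follow that route. Your first two steps are correct and are nicely phrased in the language of this paper: the index-$k$ copy of $F_1^k\wr\mathbb{Z}$ inside $F_1\wr\mathbb{Z}$ (which is the right reading of the statement --- note that $F_2\wr\mathbb{Z}$ itself need not embed in $F_1\wr\mathbb{Z}$ at all, e.g.\ $\mathbb{Z}_4\wr\mathbb{Z}$ does not embed in $\mathbb{Z}_2\wr\mathbb{Z}$, so what one really proves is that $F_1\wr\mathbb{Z}$ contains an index-$k$ subgroup biLipschitz equivalent to $F_2\wr\mathbb{Z}$), and the reduction, via Propositions~\ref{prop:QIdistBij} and~\ref{prop:EasyKappa}, of the non-biLipschitz half to the inclusion $\kappa(\mathbb{Z}_n\wr\mathbb{Z})\subseteq\langle\, p \text{ prime}: p\mid n\,\rangle$. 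This is exactly how the present paper would package the result (compare Corollary~\ref{cor:QIkappa} and the remark in the introduction that $\kappa(\mathbb{Z}_n\wr\mathbb{Z})=\langle \text{prime factors of } n\rangle$ \emph{follows from} \cite{MR2730576}).

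The gap is that this inclusion --- which, as you say yourself, carries all the weight --- is described but not proved. Citing \cite{EFWI,EFWII} legitimately gives that self-quasi-isometries of $\mathrm{DL}(n)$ are height-respecting and induce biLipschitz maps of the two boundary Cantor sets, but the quantitative step, namely that a quasi-$\kappa$-to-one such quasi-isometry forces $\kappa$ into the multiplicative group generated by the primes dividing $n$ (your cylinder-measure count, or equivalently the uniformly finite homology computation in the style of Whyte), is precisely Dymarz's contribution, and it is the step you flag as ``delicate'' without carrying it out. In particular, the loose estimate that images of cylinders have ``total natural measure $\asymp\kappa^{\pm1}n^{-d}$'' does not by itself pin $\kappa$ in $\langle p: p\mid n\rangle$: a priori such measures are arbitrary sums of powers of $n$, and one must upgrade the coarse multiplicity to an exact asymptotic measure-scaling statement before the arithmetic conclusion follows. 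So as written your text is a faithful roadmap of \cite{MR2730576} --- the same route this paper points to --- rather than a proof; to close it you would have to reproduce Dymarz's counting argument in full, or cite it, in which case the theorem is being cited wholesale anyway, as the paper does.
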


\noindent
The proof of the theorem is fundamentally based on the structure of the quasi-isometry groups of lamplighter groups over $\mathbb{Z}$ as described in \cite{EFWI, EFWII}. Later, finitely presented counterexamples were obtained from higher rank lamplighter groups \cite{MR3318424}, based on the geometric picture of these groups obtained in \cite{MR2860983, MR2860984}, also based on the heavy machinery of coarse differentiation introduced in \cite{EFWI, EFWII}.

\medskip \noindent
As a consequence of our work, we obtain more elementary examples of quasi-isometric groups that are not biLipschitz equivalent:

\begin{cor}\label{cor:NotBiL}
Let $F_1,F_2$ be two non-trivial finite groups and $H$ one finitely presented amenable one-ended group. Then $F_1 \wr H$ and $F_2\wr H$ are biLipschitz equivalent if and only if $|F_1|=|F_2|$. 
\end{cor}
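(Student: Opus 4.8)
The plan is to prove the two implications separately, the forward one through the ``quasi-to-one'' bookkeeping attached to the sets $\kappa(\cdot)$. The implication ``$|F_1|=|F_2| \Rightarrow$ biLipschitz equivalence'' is the easy half: if $|F_1|=|F_2|$ then any bijection $F_1\to F_2$ is trivially a biLipschitz equivalence, and the elementary stability observation of \cite{MR1800990} recalled in the introduction — biLipschitz equivalences $F_1\to F_2$ and $H_1\to H_2$ combine into one $F_1\wr H_1\to F_2\wr H_2$ — applied with $H_1=H_2=H$ and the identity of $H$ shows that $F_1\wr H$ and $F_2\wr H$ are biLipschitz equivalent.

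For the converse, suppose $\phi\colon F_1\wr H\to F_2\wr H$ is a bijective biLipschitz map. In particular $\phi$ is a quasi-isometry, so, $H$ being finitely presented, one-ended and amenable, Corollary \ref{cor:QIkappa} (equivalently the amenable case of Corollary \ref{cor:LampRigid}) supplies integers $k,n_1,n_2\geq 1$ with $|F_1|=k^{n_1}$, $|F_2|=k^{n_2}$ and $n_1/n_2\in\kappa(H)$; since $F_1,F_2$ are non-trivial we have $k\geq 2$, and it remains to prove $n_1=n_2$. The key point is that the \emph{bijective} quasi-isometry $\phi$ is quasi-$1$-to-one, and so is $\phi^{-1}$: a bijection preserves the cardinality of every subset, hence is quasi-$1$-to-one (see also Lemma \ref{lem:KappaWellDefined}). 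On the other hand $\kappa(H)$ is a subgroup of $\mathbb{R}_{>0}$ by Proposition \ref{prop:EasyKappa}, so $n_2/n_1\in\kappa(H)$ as well; fix a quasi-$(n_2/n_1)$-to-one self-quasi-isometry $\rho$ of $H$, and feed it into the construction underlying the ``if'' direction of the amenable case of Theorem \ref{thm:LampRigid} to obtain a quasi-isometry $\psi\colon F_1\wr H\to F_2\wr H$.

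I claim $\psi$ is quasi-$(n_2/n_1)$-to-one. Indeed, parametrise a ``slab'' of $F_i\wr H$ above a Følner set $W$ of $H$ by a finitely supported $\mathbb{Z}_{|F_i|}$-colouring supported on $W$ together with a base-point in $W$: a slab of $F_2\wr H$ above $W$ has cardinality $|W|\cdot k^{n_2|W|}$, while its $\psi$-preimage is coarsely the slab of $F_1\wr H$ above $\rho^{-1}(W)$, of cardinality $\approx |\rho^{-1}(W)|\cdot k^{n_1|\rho^{-1}(W)|}$; using $|\rho^{-1}(W)|\approx (n_2/n_1)|W|$ this equals $\approx (n_2/n_1)\,|W|\cdot k^{\,n_1(n_2/n_1)|W|}=(n_2/n_1)\,|W|\cdot k^{n_2|W|}$, so the ratio with $|W|\cdot k^{n_2|W|}$ tends to $n_2/n_1$ along a Følner sequence, as claimed. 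Then $\psi\circ\phi^{-1}$ is a self-quasi-isometry of $F_2\wr H$ which is quasi-$(n_2/n_1)$-to-one, since $\phi^{-1}$ is quasi-$1$-to-one and the quasi-to-one constant is multiplicative under composition (Proposition \ref{prop:EasyKappa}). Hence $n_2/n_1\in\kappa(F_2\wr H)$, and since $\kappa(F_2\wr H)=\{1\}$ by Corollary \ref{cor:kappatrivial} we get $n_1=n_2$, i.e.\ $|F_1|=|F_2|$.

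The step requiring the most care is the verification that $\psi$ is quasi-$(n_2/n_1)$-to-one: one must track precisely how the ``vertical'' cardinality $k^{n_i\cdot(\text{base size})}$ of the lamp configurations interacts with the ``horizontal'' coarse distortion of $\rho$, which is exactly what forces the exponent $n_1\cdot(n_2/n_1)$ to collapse to $n_2$. (If the proof of Corollary \ref{cor:QIkappa} is arranged so as to record that \emph{every} quasi-isometry $F_1\wr H\to F_2\wr H$ is automatically quasi-$(n_2/n_1)$-to-one, the intermediate quasi-isometry $\psi$ can be bypassed: $\phi$ itself is then simultaneously quasi-$1$-to-one and quasi-$(n_2/n_1)$-to-one, and uniqueness of the quasi-to-one constant, Lemma \ref{lem:KappaWellDefined}, gives $n_1=n_2$ at once.) Everything else is routine bookkeeping with the definitions.
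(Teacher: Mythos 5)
Your proposal is sound in outline and rests on the same machinery as the paper, but it takes a detour, and the one step you flag as delicate is not actually proved as written. The F\o lner ``slab'' computation only estimates $|\psi^{-1}(A)|/|A|$ along a special sequence of sets, with $\approx$'s that are never quantified; Definition \ref{def:quasione} requires the inequality $\bigl|\kappa|A|-|\psi^{-1}(A)|\bigr|\leq C|\partial A|$ for \emph{every} finite $A$, and a limit of ratios along one Følner-type family does not give that (it would only pin down $\kappa$ if one already knew $\psi$ were quasi-$\kappa$-to-one for some $\kappa$). Fortunately no new argument is needed: your $\psi$ is aptolic by construction (Proposition \ref{prop:AmConverse}), so Theorem \ref{thm:Amenable} together with Lemma \ref{lem:TransQuasi} says directly that it is quasi-$(n_2/n_1)$-to-one; cite that instead of the slab heuristic and your chain $\phi$ bijective $\Rightarrow$ quasi-one-to-one, Proposition \ref{prop:EasyKappa}(ii)--(iii), $\kappa(F_2\wr H)=\{1\}$ via Corollary \ref{cor:kappatrivial}, goes through (and is not circular, since Corollary \ref{cor:kappatrivial} is proved independently of this statement).

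That said, the parenthetical ``bypass'' you mention is exactly the paper's proof, and it is both shorter and lighter: apply Theorem \ref{thm:StructureQI} to the biLipschitz map $\phi$ itself to replace it, up to bounded distance, by an aptolic quasi-isometry $\tilde q$; Theorem \ref{thm:Amenable} makes $\tilde q$ quasi-$(n_2/n_1)$-to-one, while $\phi$ being a bijection makes it quasi-one-to-one, so Proposition \ref{prop:EasyKappa}(i) and Lemma \ref{lem:KappaWellDefined} force $n_1=n_2$. This avoids invoking Corollary \ref{cor:QIkappa}, the auxiliary map $\rho$, the construction of $\psi$, and Corollary \ref{cor:kappatrivial} altogether. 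Your easy direction (combining a bijection $F_1\to F_2$ with the identity of $H$, per the stability observation of \cite{MR1800990}) is fine and matches the paper's remark that the two groups even share a Cayley graph.
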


\noindent
As a consequence, if we assume that there exist $k,n_1,n_2$ such that $|F_1|=k^{n_1}$, $|F_2|=k^{n_2}$, and $n_1/n_2 \in \kappa(H)$, then $n_1 \neq n_2$ implies that $F_1 \wr H$ and $F_2 \wr H$ are quasi-isometric but they are not biLipschitz equivalent. As an illustration, $\mathbb{Z}_4 \wr \mathbb{Z}^2$ is a finite-index subgroup of $\mathbb{Z}_2 \wr \mathbb{Z}^2$ but these two groups are not biLipschitz equivalent. We also show that Theorem~\ref{thm:QIvsBil} does not characterise non-amenable groups by constructing the first examples of amenable groups such that every auto-quasi-isometry lies at finite distance from a bijection:

\begin{cor}\label{cor:kappatrivial}
Let $F$ be a non-trivial finite group and $H$ a finitely presented amenable one-ended group. Then every quasi-isometry $F \wr H \to F\wr H$ lies at finite distance from a bijection. 
\end{cor}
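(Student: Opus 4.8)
The plan is to extract this from the rigidity that is established in the course of proving Theorem~\ref{thm:LampRigid}. Fix a finite generating set $S$ of $H$ and identify $F\wr H$ with the lamplighter graph $\mathcal{L}_{|F|}(\mathrm{Cayl}(H,S))$, whose leaves are the right cosets of $H$. Since $H$ is finitely presented and one-ended, $\mathrm{Cayl}(H,S)$ is coarsely $1$-connected and uniformly one-ended, and since $H$ is amenable, so is $\mathrm{Cayl}(H,S)$. Thus all the hypotheses needed to apply the analysis of Section~\ref{section:Proofs} to self-quasi-isometries of $F\wr H$ are in place, and we are in the amenable case of Theorem~\ref{thm:LampRigid} with $n=m=|F|$.

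Let $\Phi\colon F\wr H\to F\wr H$ be a quasi-isometry. First I would invoke the structural input behind the amenable case of Theorem~\ref{thm:LampRigid}: such a quasi-isometry coarsely preserves the leaf decomposition, descends to a quasi-isometry $\bar\Phi$ between the two base graphs, and the multiplicity of $\bar\Phi$ is pinned down by the alphabet sizes $n,m$ through relations $n=k^{r}$, $m=k^{s}$ with multiplicity $s/r$. Applying this with $n=m=|F|\geq 2$ forces $k\geq 2$, hence $r=s$, so $\bar\Phi$ is quasi-$1$-to-one and the two colour alphabets are matched without regrouping. By Proposition~\ref{prop:kappa} (or directly from the definition recalled in Section~\ref{section:prel}), being quasi-$1$-to-one means that $\bar\Phi$ lies at finite distance from a bijection of $H$.

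The remaining, and main, step is to promote this to a bijection of $F\wr H$ at finite distance from $\Phi$. Because $\Phi$ coarsely preserves leaves and a bounded neighbourhood of a leaf meets only boundedly many leaves, one first modifies $\Phi$ within bounded distance so that it carries each leaf onto a leaf, inducing on the set of leaves a bijection agreeing with $\bar\Phi$ up to bounded error; next, leaf by leaf, one straightens the ``position'' coordinate using the bijection supplied for $\bar\Phi$; finally, since the alphabets are matched without regrouping, the finitely supported colourings in paired leaves are put in bijection fibrewise, and one assembles these choices into a single bijection $F\wr H\to F\wr H$. The point to control here is uniformity: each straightening must be carried out with displacement bounded independently of the leaf, and this is exactly what one-endedness of $H$ buys (for $H=\mathbb{Z}$ it fails, which is why $\kappa(\mathbb{Z}_n\wr\mathbb{Z})\neq\{1\}$). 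This last step is the one I expect to be the main obstacle. Finally, although the non-amenable case is not part of the statement, it comes for free, since $F\wr H$ is then non-amenable and Theorem~\ref{thm:QIvsBil} applies directly.
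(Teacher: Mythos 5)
Your first half coincides with the paper's proof: identify $F\wr H$ with $\mathcal{L}_{|F|}(\mathrm{Cayl}(H,S))$, use Theorem~\ref{thm:StructureQI} to replace $\Phi$ by an aptolic quasi-isometry $\tilde\Phi\colon (c,p)\mapsto(\alpha(c),\beta(p))$, and use Theorem~\ref{thm:Amenable} with $n=m=|F|\geq 2$ to force $r=s$, so the base map $\beta$ is quasi-one-to-one. Where you diverge is the step you flag as ``the main obstacle'', and in fact it is no obstacle at all. The paper bypasses it: Theorem~\ref{thm:Amenable} asserts that not only $\beta$ but \emph{a fortiori} the aptolic map itself is quasi-$(s/r)$-to-one (this is Lemma~\ref{lem:TransQuasi}, which transfers the property from $\beta$ to $\tilde\Phi$ precisely because $\alpha$ is a bijection, so preimages decompose leafwise), and then Proposition~\ref{prop:QIdistBij} is applied directly to $\tilde\Phi\colon F\wr H\to F\wr H$, finishing the proof in one line. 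Your hand-made promotion also works, but far more simply than your sketch suggests: Proposition~\ref{prop:AptoQI} already gives that $\alpha$ is a bijection of the colouring sets, and if $b\colon H\to H$ is a bijection at bounded distance from $\beta$ (Proposition~\ref{prop:QIdistBij} applied to $\beta$), then $(c,p)\mapsto(\alpha(c),b(p))$ is a bijection of $F\wr H$ at distance at most $\sup_p d(\beta(p),b(p))$ from $\tilde\Phi$, since corresponding images lie in the same leaf $Y(\alpha(c))$ and their positions are uniformly close. In particular the uniformity you worry about is automatic, because one uses the single bijection $b$ on every leaf; one-endedness is spent earlier, in Theorem~\ref{thm:StructureQI} (and that is also where the case $H=\mathbb{Z}$ breaks down, not in any straightening step).

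Two small inaccuracies in your sketch are worth correcting. The claim that a bounded neighbourhood of a leaf meets only boundedly many leaves is false: the $K$-neighbourhood of $X(0)$ meets every leaf $X(c)$ with $c$ supported in some ball $B(p,K)$, and there are infinitely many such. What is true, and what the leaf-preserving analysis of Section~\ref{section:CosetAptolic} actually uses, is that two distinct leaves have bounded coarse intersection, which is why the induced map on leaves is a well-defined bijection. Likewise, there is no further ``fibrewise'' matching of colourings to perform: leaves are in bijection with colourings, so the bijection on leaves is exactly $\alpha$.
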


\noindent
We emphasize that such a property has strong consequences. Loosely speaking, $\kappa$ can be used as an Euler characteristic in this context. For instance:

\begin{cor}\label{cor:Commensurability}
Let $F$ be a non-trivial finite group and $H$ a finitely presented one-ended amenable group. Fix two finite-index subgroups $K_1,K_2 \leq F \wr H$. If $K_1$ and $K_2$ are biLipschitz equivalent (e.g. isomorphic) then they have the same index in $F \wr H$. 
\end{cor}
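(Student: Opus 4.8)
The plan is to derive this as a direct consequence of Corollary \ref{cor:kappatrivial} together with the multiplicativity of the index under the "$\kappa$-to-one" bookkeeping. First, recall that both $K_1$ and $K_2$, being finite-index subgroups of $F \wr H$, are themselves finitely generated and quasi-isometric to $F \wr H$; by Corollary \ref{cor:kappatrivial}, every self-quasi-isometry of $F \wr H$ lies at finite distance from a bijection, i.e. $\kappa(F \wr H) = \{1\}$. The key point is that the inclusion $K_i \hookrightarrow F \wr H$ is a quasi-isometry which is quasi-$[F \wr H : K_i]$-to-one: a ball of radius $R$ in $F \wr H$ meets roughly $[F \wr H : K_i]$ cosets of $K_i$, so the preimage in $K_i$ of such a ball has size comparable to $\tfrac{1}{[F\wr H:K_i]}$ times the size of the ball (this is exactly the content of Lemma \ref{lem:KappaWellDefined} and Proposition \ref{prop:EasyKappa}, which I may assume).

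Now suppose $\phi \colon K_1 \to K_2$ is a biLipschitz equivalence (in particular a bijective, hence quasi-$1$-to-one, quasi-isometry; an isomorphism of finite-index subgroups is such a map after choosing word metrics). Compose to obtain the self-quasi-isometry of $F \wr H$ given by
\[
F \wr H \xleftarrow{\ \iota_1\ } K_1 \xrightarrow{\ \phi\ } K_2 \xrightarrow{\ \iota_2\ } F \wr H,
\]
where $\iota_1, \iota_2$ are the inclusions. Since the composition of a quasi-$\kappa$-to-one and a quasi-$\kappa'$-to-one quasi-isometry is quasi-$(\kappa \kappa')$-to-one (Proposition \ref{prop:EasyKappa}), and a quasi-inverse of a quasi-$\kappa$-to-one map is quasi-$(1/\kappa)$-to-one, this composite map $\iota_2 \circ \phi \circ (\text{quasi-inverse of } \iota_1)$ is a quasi-$\tfrac{[F\wr H : K_2]}{[F \wr H : K_1]}$-to-one self-quasi-isometry of $F \wr H$. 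Hence $\tfrac{[F\wr H : K_2]}{[F \wr H : K_1]} \in \kappa(F \wr H) = \{1\}$, which forces $[F \wr H : K_1] = [F \wr H : K_2]$.

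The only genuine subtlety is matching up conventions: one must check that an abstract group isomorphism $K_1 \to K_2$ between finite-index subgroups is indeed a biLipschitz map once each $K_i$ is equipped with the word metric of a finite generating set (immediate, since isomorphisms send generating sets to generating sets), and that the inclusion $K_i \hookrightarrow F \wr H$ genuinely realizes the quasi-$[F\wr H:K_i]$-to-one behaviour — this is where the finite-presentation and one-endedness of $H$ enter only indirectly, through their role in establishing Corollary \ref{cor:kappatrivial}; the index computation itself is soft and holds for any finitely generated group. I expect no real obstacle here beyond carefully invoking the already-established machinery; the entire difficulty of the statement is concentrated in Corollary \ref{cor:kappatrivial}, which in turn rests on Theorem \ref{thm:LampRigid}.
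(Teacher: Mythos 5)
Your proof is correct and follows essentially the same route as the paper, which establishes the more general Proposition~\ref{prop:commensurable} by exactly this composition (quasi-inverse of one inclusion, the biLipschitz equivalence, the other inclusion), the bookkeeping of Proposition~\ref{prop:EasyKappa} and Proposition~\ref{prop:kappa}(iii), and the combination of Corollary~\ref{cor:kappatrivial} with Lemma~\ref{lem:KappaWellDefined}, using that $F\wr H$ is amenable. One harmless slip: with Definition~\ref{def:quasione} the inclusion $K_i \hookrightarrow F \wr H$ is quasi-$(1/[F\wr H:K_i])$-to-one (your own density justification says precisely this), so the composite is quasi-$([F\wr H:K_1]/[F\wr H:K_2])$-to-one rather than its reciprocal --- immaterial here, since the conclusion is that this ratio equals $1$.
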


\noindent
Notice that the algebraic part of this statement cannot be proved by using an actual Euler characteristic as a consequence of \cite{MR837621} (see also \cite{MR1171301}). We refer to Proposition~\ref{prop:commensurable} for a more general statement.

\medskip \noindent
In the rest of the introduction, we give some details about the strategy used in order to prove Theorem \ref{thm:LampRigid}.

\paragraph{Aptolic quasi-isometries.} A central idea of the article is that a quasi-isometry $\mathcal{L}_n(X) \to \mathcal{L}_m(Y)$, where $n,m \geq 2$ where and $X,Y$ are coarsely $1$-connected uniformly one-ended, is compatible with the lamplighter structure in a strong way. We formalise this idea through the following concept:

\begin{definition}
Let $n,m \geq 1$ be two integers and $X,Y$ two graphs. A map $q: \mathcal{L}_n(X) \to \mathcal{L}_m(Y)$ is of {\it aptolic form}\footnote{This adjective comes from the contraction of the two Greek words $\alpha\pi\tau \omega$ (to light) and $\lambda \upsilon \chi \nu o \varsigma$ (lamp). It refers to a map that preserves the lamplighter structure.} if there exist $\alpha : \mathbb{Z}_n^{(X)} \to \mathbb{Z}_m^{(Y)}$ and $\beta :X \to Y$ such that $q(c,x)=(\alpha(c),\beta(x))$ for all $(c,x) \in \mathcal{L}_n(X)$. A quasi-isometry $\mathcal{L}_n(X) \to \mathcal{L}_m(Y)$ is \emph{aptolic} if it is of aptolic form and if it admits a quasi-inverse of aptolic form.
\end{definition}

\noindent
The first step towards the proof of Theorem \ref{thm:LampRigid} is to show that, if there exists an aptolic quasi-isometry $\mathcal{L}_n(X) \to \mathcal{L}_m(Y)$ between two lamplighter graphs, where $X,Y$ are not necessarily coarsely $1$-connected nor one-ended, then $X,Y$ must be quasi-isometric and $n,m$ must have the same prime divisors. Moreover, if in addition $X$ is amenable, then we obtain a stronger conclusion: $n,m$ must be powers of a common number, say $n=k^r$, $m=k^s$ for some $r,s \geq 1$; and there must exist a quasi-isometry $X \to Y$ that is quasi-$(s/r)$-to-one. This is done by elementary combinatorial arguments in Section \ref{bigsection:Aptolic}. The hard part is to prove that, if $X,Y$ are coarsely $1$-connected and uniformly one-ended graphs, an arbitrary quasi-isometry between our two graphs always lies at finite distance from an aptolic quasi-isometry, that is:

\begin{thm}\label{thm:StructureQI}
Let $n,m \geq 2$ be two integers and $X,Y$ two coarsely $1$-connected uniformly one-ended graphs of bounded degree. Then every quasi-isometry $\mathcal{L}_n(X) \to \mathcal{L}_m(Y)$ is at bounded distance from an aptolic quasi-isometry. 
 \end{thm}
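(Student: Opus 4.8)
The plan is to show that any quasi-isometry $q : \mathcal{L}_n(X) \to \mathcal{L}_m(Y)$ must, up to bounded distance, respect the fibration of each lamplighter graph by its leaves, and then extract from this the aptolic data $(\alpha,\beta)$. The leaves of $\mathcal{L}_n(X)$ are isometric copies of $X$ (with the induced graph metric), and they are exactly the maximal subsets on which, after modding out by the lamp configuration, one sees a copy of a one-ended coarsely $1$-connected space; complementarily, the \emph{lamp axes} $\{(c,x) \mid c \text{ varies over colourings differing from a fixed } c_0 \text{ only on a fixed finite set}\}$ through a vertex look like bounded or at least very different objects. The first key step is therefore a \emph{coarse rigidity of leaves}: I would prove that the image $q(L)$ of any leaf $L \subseteq \mathcal{L}_n(X)$ is Hausdorff-close to a leaf of $\mathcal{L}_m(Y)$. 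The natural way to do this is to characterise leaves intrinsically inside $\mathcal{L}_n(X)$ in a quasi-isometry-invariant manner. Here the one-endedness of $X$ is crucial: a leaf is one-ended and coarsely simply connected, whereas a subset of $\mathcal{L}_n(X)$ that "spreads across" infinitely many leaves contains large-scale cut structure coming from the tree-like branching of lamp configurations, so it is not one-ended. Concretely, I expect to use the fact that $\mathcal{L}_n(X)$ is a quasi-median-type space (as advertised in the introduction) whose "hyperplanes" separating leaves are quasi-isometrically embedded copies of $X$; one then argues that a quasi-isometry coarsely permutes the maximal one-ended coarsely-simply-connected "flats" of a fixed type, which are precisely the leaves.

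Granting that $q$ coarsely sends leaves to leaves, the second step is to organise this into a map on the index sets. The set of leaves of $\mathcal{L}_n(X)$ is $\mathbb{Z}_n^{(X)}$, but two leaves are at finite Hausdorff distance iff... never (distinct leaves are at infinite Hausdorff distance, but any two are "parallel" and a bounded-distance matching of one leaf to another is impossible unless they coincide). So $q$ induces an honest map $\mathbb{Z}_n^{(X)} \to \mathbb{Z}_m^{(Y)}$ on leaves, which I will call $\alpha$; one checks it is a quasi-isometry for the natural metrics on these configuration spaces (Hamming-type weighted by distances in $X$, $Y$). Simultaneously, restricting $q$ to a single leaf $L \cong X$ and post-composing with the identification $q(L) \cong Y$ gives a quasi-isometry $X \to Y$; the content is that this quasi-isometry $\beta$ does \emph{not depend} on the choice of leaf $L$, up to bounded distance — this is a "coarse coherence" statement that I would prove by looking at two leaves $L, L'$ that agree outside a single vertex $x_0$ (so they share all of $\mathcal{L}_n(X)$ near $x_0$ except the lamp at $x_0$) and noting that $q$ must match them up compatibly on the overlap, which forces $\beta_L$ and $\beta_{L'}$ to agree away from $q(x_0)$, and then propagating across all of $X$ using connectedness of the "leaf graph". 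Combining $\alpha$ and $\beta$, and checking compatibility $q(c,x) \approx (\alpha(c), \beta(x))$ using that $(c,x)$ lies in the leaf $c$ at the point $x$, yields that $q$ is at bounded distance from a map of aptolic form; running the same argument on a quasi-inverse gives the aptolic quasi-inverse.

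The main obstacle I anticipate is the first step — proving that leaves are coarsely recognisable and hence coarsely preserved. The difficulty is that $\mathcal{L}_n(X)$ is genuinely far from hyperbolic or CAT(0), so one cannot directly invoke boundary or Morse-type rigidity; one needs a robust invariant that distinguishes "leaf directions" from "lamp directions". Coarse simple connectedness plus one-endedness is the right pair of invariants in principle, but making it quantitative inside $\mathcal{L}_n(X)$ requires understanding how loops in $\mathcal{L}_n(X)$ decompose, which is where the quasi-median geometry and the "hyperplane" structure do the real work: a loop that wanders into many leaves must cross many separating copies of $X$, and filling it controls how the leaf-coordinate and lamp-coordinate interact. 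A secondary subtlety is uniformity: since $X, Y$ need only be uniformly one-ended (not vertex-transitive), all the constants in "one leaf maps close to one leaf" must be uniform over the choice of leaf and over the basepoint, which forces one to phrase everything with explicit quasi-isometry constants rather than up to finite Hausdorff distance only. Once leaves are pinned down, the remaining steps — building $\alpha$, building $\beta$, proving independence of $\beta$ from the leaf, and assembling the aptolic form — are, I expect, a sequence of bookkeeping arguments using connectedness of the space of leaves, modulo care with constants.
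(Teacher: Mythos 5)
Your two-step outline (leaves go coarsely to leaves; then extract $(\alpha,\beta)$) is indeed the paper's strategy, but both steps have genuine gaps as you have set them up. For the first step, the mechanism you gesture at is not the one that works: $\mathcal{L}_n(X)$ itself is not a quasi-median space, and its leaves are not separated inside $\mathcal{L}_n(X)$ by hyperplanes or by quasi-isometrically embedded copies of $X$, so there is no "large-scale cut structure" downstairs to detect directly. The paper's proof of leaf-rigidity is an embedding theorem (Theorem~\ref{thm:FullEmbeddingThm}): it realises $\mathcal{L}_n(X)$ as the graph of pointed simplices of a prism complex $W(n,X)$, uses coarse $1$-connectedness of the source to \emph{lift} a coarse embedding to the universal cover $\widetilde{W}(n,X)$, which is the quasi-median complex of the graph product $X\mathbb{Z}_n$, and then exploits two facts simultaneously: the walls upstairs genuinely separate, while their images downstairs are \emph{bounded}; uniform one-endedness of the source then forces the lifted image to lie on one side of every wall, hence in a single leaf. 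This covering-space step, and the boundedness of wall images under the cover, are exactly the missing ideas in your sketch; without them, "leaves are the maximal one-ended coarsely simply connected pieces" is a slogan, not an argument, and the theorem moreover needs it with constants uniform over all leaves (it is applied to the restriction of $q$ to each leaf $X(c)\cong X$, which is where the hypotheses on $X$ enter).

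The second step is also not bookkeeping, and your coherence argument is incorrect as stated. Two leaves $X(c)$ and $X(c+\delta_{x_0})$ (differing only at $x_0$) are disjoint and have no overlap: the points $(c,x)$ and $(c+\delta_{x_0},x)$ are at distance roughly $2\,d(x,x_0)$, so the two leaves fellow-travel only \emph{near} $x_0$ and diverge linearly away from it. Comparing their images under $q$ therefore pins the base projections together only near $x_0$ -- the opposite of your claim that they are forced to agree "away from $q(x_0)$" -- and toggling lamps at other vertices compares different pairs of leaves at different points, which does not assemble into a single $\beta$ by connectedness of the leaf graph alone. The paper's warm-up discussion exhibits precisely this failure mode (a hexagonal circle of leaves whose "matched" points have wildly different projections), and the actual statement you need -- if $\pi_X(x)=\pi_X(y)$ then $\pi_Y(q(x))$ and $\pi_Y(q(y))$ are uniformly close (Proposition~\ref{prop:Height}) -- is proved by a genuinely combinatorial analysis of circles and lines of leaves subject to geodesicity conditions (Lemmas~\ref{lem:CircleHeight} and~\ref{lem:ChainCircles}), after which Theorem~\ref{thm:CosetAptolic} converts leaf-preservation into aptolic form. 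So as written, the proposal identifies the right intermediate statements but supplies a working proof of neither.
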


\noindent
Observe that, as a consequence of Proposition \ref{prop:NonAptolic} below, the conclusion always fails outside the one-ended case. Combining this theorem with the previous observations related to aptolic quasi-isometries leads to a proof of Theorem \ref{thm:LampRigid}.

\paragraph{Embedding theorem.} Theorem \ref{thm:StructureQI} is proved in two steps. First, we prove in Section~\ref{section:CosetAptolic} that a quasi-isometry $\mathcal{L}_n(X) \to \mathcal{L}_m(Y)$, where $X,Y$ are not necessarily coarsely $1$-connected nor one-ended, that is \emph{leaf-preserving}, i.e. that sends every \emph{leaf} of $\mathcal{L}_n(X)$ at finite Hausdorff distance from a \emph{leaf} of $\mathcal{L}_n(X)$, must be at finite distance from an aptolic quasi-isometry. Next, we prove that, if $X,Y$ are coarsely $1$-connected and uniformly one-ended, then any quasi-isometry $\mathcal{L}_n(X) \to \mathcal{L}_m(Y)$ must be leaf-preserving. This step is the core of the article, and it lies on the following general embedding theorem:

\begin{thm}\label{thm:MainThm}
Let $X,Z$ be two graphs, $n \geq 2$ an integer, and $\rho : Z \to \mathcal{L}_n(X)$ a coarse embedding. If $Z$ is coarsely $1$-connected and uniformly one-ended, then the image of $\rho$ lies in the neighbourhood of a leaf in $\mathcal{L}_n(X)$.
\end{thm}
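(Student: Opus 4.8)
The plan is to argue by contradiction: suppose the image of $\rho$ is not contained in any bounded neighbourhood of a leaf of $\mathcal{L}_n(X)$. The heuristic is that a coarse embedding of a one-ended, coarsely $1$-connected space cannot ``spread out'' over many leaves, because moving between leaves forces one to change the lamp configuration, and the set of configurations reachable within a bounded distance is very thin (it is essentially controlled by the values of the colouring near the current position in $X$). The first step is to set up the right notion of ``spreading out''. For a point $p = (c,x) \in \mathcal{L}_n(X)$, write $\pi_X(p) = x$ for the projection to the $X$-coordinate and $\ell(p) = c$ for the leaf (colouring). For a subset $A \subseteq \mathcal{L}_n(X)$ I would measure how far $A$ is from a single leaf by the ``lamp diameter'' $\sup\{ \mathrm{supp}(c_1 - c_2) : (c_1,x_1),(c_2,x_2) \in A\}$ suitably localised; the assumption is that along $\rho(Z)$ this quantity is unbounded.

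The core of the argument should exploit one-endedness of $Z$ together with coarse $1$-connectedness via a ``separation/connectedness'' dichotomy. Here is the mechanism I would try to make precise. Fix a large scale $R$. Because $Z$ is coarsely $1$-connected, any two points of $Z$ are joined by an $R$-coarse path (a chain of points with consecutive jumps $\le R$), and because $Z$ is uniformly one-ended, removing a bounded ball from $Z$ leaves exactly one unbounded ``coarse component'', so coarse paths between far-away points can be routed to avoid any prescribed bounded ball (at the cost of a controlled detour). On the $\mathcal{L}_n(X)$ side, the key structural fact is that to change the colour at a vertex $v \in X$ one must physically visit $v$ in the $X$-coordinate; hence if $\gamma$ is a path in $\mathcal{L}_n(X)$ from $(c_1,x_1)$ to $(c_2,x_2)$ with $c_1,c_2$ differing at $v$, then $\pi_X(\gamma)$ passes within bounded distance of $v$. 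Combining these: if $\rho(Z)$ contained points whose colourings differ at two vertices $v, w \in X$ that are far apart, I would pick three ``anchor'' points in $Z$ mapping near these differing configurations, connect them by coarse paths in $Z$ avoiding a large ball around a fixed basepoint (using one-endedness), push these paths through $\rho$ to get coarse paths in $\mathcal{L}_n(X)$, and derive that the $X$-projections of these paths must simultaneously visit neighbourhoods of $v$ and of $w$ while the paths in $Z$ stayed in a single unbounded coarse component far from the basepoint — producing, after enough iterations of the ``unbounded lamp diameter'' hypothesis, a configuration that cannot be realised: either the $X$-projections are forced to backtrack arbitrarily far (contradicting the coarse-Lipschitz bound on $\rho$ composed with these bounded-detour coarse paths), or $Z$ would be forced to have more than one end.

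More concretely, I expect the cleanest route is a ``monotonicity'' argument on the $X$-coordinate: show that for the coarse embedding $\rho$, the map $\pi_X \circ \rho : Z \to X$ is itself a coarse embedding onto a ``coarsely convex-ish'' region, and then that the colouring part of $\rho$ is determined up to bounded ambiguity by $\pi_X \circ \rho$ together with a single global colouring. Uniform one-endedness of $Z$ is what forbids $\pi_X \circ \rho$ from being ``disconnected at infinity'' in a way that would let different lamps be lit in different directions; coarse $1$-connectedness is what upgrades pointwise statements to statements along coarse paths. The main obstacle, I expect, is precisely quantifying the interplay between the detour cost of rerouting coarse paths around balls in $Z$ (which depends on the one-endedness modulus) and the distortion introduced by $\rho$: one must ensure the detours do not blow up the distances enough to destroy the contradiction, which will require choosing the scales $R$ and the ball radii in the correct order and invoking uniform (not just pointwise) one-endedness. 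A secondary technical point is handling the case where $X$ itself is bounded or has small ends — but there $\mathcal{L}_n(X)$ is itself quasi-isometric to a leaf up to bounded error, so the statement is trivial and can be dispatched first.
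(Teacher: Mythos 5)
There is a genuine gap, and it can be located precisely: your argument never actually uses coarse $1$-connectedness of $Z$. What you invoke is only coarse connectedness (existence of $R$-coarse paths, which any connected graph already has) plus uniform one-endedness (rerouting around balls), together with the observation that a coarse path whose endpoints have colourings differing at $v$ must project into a neighbourhood of $v$. But the statement is false under those weaker hypotheses, so no amount of quantifying the "backtracking versus two ends" dichotomy can close the argument. Concretely, take $Z=\mathcal{L}_2(\mathbb{Z})$ (the Cayley graph of $\mathbb{Z}_2\wr\mathbb{Z}$) and $\rho=\mathrm{id}$: this graph is amenable, not two-ended, hence one-ended, and uniformly so by vertex-transitivity; it is not coarsely simply connected (Proposition \ref{prop:LampGraph}), and its image is certainly not in a bounded neighbourhood of a leaf. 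Your proposed mechanism, applied verbatim to this example, would "prove" a false statement, so the missing ingredient is exactly the loop-filling hypothesis, and it must enter in an essential, not decorative, way. In the paper it enters as follows: one fills in the small cycles and the cycles lying in leaves to build a $2$-complex (a complex of pointed simplices over a prism complex $W(n,X)$), coarse $1$-connectedness of $Z$ guarantees that $\rho$ maps loops of $Z$ to null-homotopic loops there, so $\rho$ lifts to the universal cover; the cover is a quasi-median complex carrying a wallspace whose walls have \emph{bounded} image downstairs, and only then does uniform one-endedness of $Z$ do its work, forcing the lift to choose one sector per wall, the intersection of all chosen sectors being a single vertex, i.e. a single leaf.

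Two further points. First, your fallback "monotonicity" route is circular: since $\pi_X$ restricted to any leaf is an isometry onto $X$, the assertions that $\pi_X\circ\rho$ is a coarse embedding and that the colouring component is determined up to bounded ambiguity by $\pi_X\circ\rho$ and one global colouring are, taken together, essentially a reformulation of the conclusion, and no independent mechanism is offered to prove them. Second, even granting the anchor-point scheme, the step "after enough iterations ... a configuration that cannot be realised" is not an argument: lighting lamps at far-apart vertices $v,w$ and later returning near them to undo the changes is perfectly realisable inside $\mathcal{L}_n(X)$ at bounded cost per visit (this is exactly what happens in the counterexample above), so the contradiction has to come from the global topology of $Z$ (filling loops), not from metric bookkeeping on projections.
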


\noindent
Let us motivate and illustrate the strategy we follow in order to prove Theorem \ref{thm:MainThm} by considering lamplighter groups instead of more general lamplighter graphs. First, assume that there exists a $1$-Lipschitz coarse embedding $\rho$ from $\mathbb{Z}^2$ (identified with its Cayley graph associated to $S:= \{(0,1),(1,0)\}$) to $\mathbb{Z}_2 \wr \mathbb{Z}^2$ (identified with its Cayley graph associated to $\mathbb{Z}_2 \cup S$). It is an elementary observation that $\mathbb{Z}_2 \wr \mathbb{Z}^2$ has no $3$-cycle and that every $4$-cycle in $\mathbb{Z}_2 \wr \mathbb{Z}^2$ lies in a $\mathbb{Z}^2$-coset. As a consequence, the image of $\rho$ in the $2$-complex $X$ obtained from $\mathbb{Z}_2 \wr \mathbb{Z}^2$ by filling in all the $4$-cycles with discs is necessarily homotopically trivial. So $\rho$ lifts to the universal cover $\widetilde{X}$, giving a coarse embedding $\widetilde{\rho} : \mathbb{R}^2 \to \widetilde{X}$. But the geometry of $\widetilde{X}$ is quite specific. Intuitively, we think of $\mathbb{Z}_2 \wr \mathbb{Z}^2$ as endowed with a \emph{leaf structure} induced by the $\mathbb{Z}^2$-cosets. In $\mathbb{Z}_2 \wr \mathbb{Z}^2$, two leaves do not fellow-travel, i.e. the intersection between the neighbourhoods of two distinct leaves is always bounded. This motivates the idea that $\mathbb{Z}_2 \wr \mathbb{Z}^2$ has the local geometry of a tree of flats and that $\widetilde{X}$ should be a tree of flats. An algebraic justification of this picture is that $\widetilde{X}$ coincides with the Cayley $2$-complex of the truncated presentation
$$\langle a,r,s \mid a^2=1, [r,s]=1 \rangle \simeq \mathbb{Z}_2 \ast \mathbb{Z}^2$$
obtained from the presentation
$$\langle a,r,s \mid a^2=1, [r,s]=1, \left[ a, r^ms^nas^{-n}r^{-m} \right]=1 \ (n,m \in \mathbb{Z}) \rangle$$
of $\mathbb{Z}_2 \wr \mathbb{Z}^2$. Therefore, $\widetilde{X}$ is indeed a tree of flats, which implies that the image of $\widetilde{\rho} : \mathbb{Z}^2 \to \widetilde{X}$ must lie in the neighbourhood of a flat; and because the covering map $\widetilde{X} \to X$ sends every flat to a $\mathbb{Z}^2$-coset (up to finite distance), we conclude that the image of $\rho : \mathbb{Z}^2 \to \mathbb{Z}_2 \wr \mathbb{Z}^2$ lies in the neighbourhood of $\mathbb{Z}^2$-coset. 

\medskip \noindent
In the general case of an arbitrary coarse embedding $\rho : Z \to \mathbb{Z}_2 \wr H$ from a coarsely $1$-connected uniformly one-ended graph $Z$ (e.g. the Cayley graph of a finitely presented one-ended group like $\mathbb{Z}^2$), we follow the same idea. We fix a large $R \geq 0$ and we construct a $2$-complex $X$ from $\mathbb{Z}_2 \wr H$ by filling in with discs all the cycles lying in $H$-cosets and all the cycles of length $\leq R$. If $R$ is well-chosen, the image of any loop of $Z$ by $\rho$ is homotopically trivial in $X$, so $\rho$ lifts to the universal cover $\widetilde{X}$, giving a coarse embedding $\widetilde{\rho} : Z \to \widetilde{X}$. In order to understand the geometry of $\widetilde{X}$, observe that it coincides with the Cayley $2$-complex of the truncated presentation
$$\langle a, H \mid a^2=1, [a,hah^{-1}]=1 \ (h \in H) \rangle \text{ for some $S \subset H$ finite}$$
obtained from the presentation
$$\langle a, H \mid a^2=1, [a,hah^{-1}]=1 \ (h \in H) \rangle$$
of $\mathbb{Z}_2 \wr H$. However, the group $\mathbb{Z}_2 \square_S H$ defined by the former presentation may no longer be a tree of copies of $H$, so it is not immediately obvious that the image of $\widetilde{\rho} : Z \to \mathbb{Z}_2 \square_S H$ has to lie in the neighbourhood of an $H$-coset. Nevertheless, $\mathbb{Z}_2 \square_S H$ turns out to have a remarkable algebraic structure: it splits as a semidirect product $C(\Gamma) \rtimes H$ where $C(\Gamma)$ denotes the right-angled Coxeter group defined by $\Gamma:= \mathrm{Cayl}(H,S)$. The key observation is that the well-known structure of $C(\Gamma)$ as a \emph{median graph} (i.e. the one-skeleton of a CAT(0) cube complex) induces a wallspace structure on $\mathbb{Z}_2 \square_S H$ with the property that every wall has a bounded image in $\mathbb{Z}_2 \square_S H$ under the covering map $\mathbb{Z}_2 \square_S H \twoheadrightarrow \mathbb{Z}_2 \wr H$ (which coincides with the quotient map from an algebraic point of view). This implies that the image of $\widetilde{\rho}$ in $\mathbb{Z}_2 \square_S H= C(\Gamma) \rtimes H$ has to avoid the factor $C(\Gamma)$. Indeed, otherwise it would be possible to separate this image with a wall, and consequently to separate the image of $\rho$ with a bounded set, contradicting the assumption that $Z$ is one-ended. In other words, the image of $\widetilde{\rho}$ in $\mathbb{Z}_2 \square_S H$ must lie in the neighbourhood of an $H$-coset, and we conclude that the image of $\rho$ in $\mathbb{Z}_2 \wr H$ must lie in the neighbourhood of an $H$-coset, as desired. 

\medskip \noindent
The wallspace structure we define on $\mathbb{Z}_2 \square_S H$ follows from a description of the Cayley graph of $\mathbb{Z}_2 \square_S H$ in terms of pointed edges in the median graph associated to $C(\Gamma)$. Indeed, observe that an element of $C(\Gamma) \rtimes H$ is given by a pair $(g,h)$ where $g \in C(\Gamma)$ can be thought of as a vertex in the median graph $\mathrm{Cayl}(C(\Gamma),H)$ of $C(\Gamma)$ and where $h \in H$ can be thought of as a direction starting from $g$; in other words, $(g,h)$ naturally corresponds to the edge $(g,gh)$ of $\mathrm{Cayl}(C(\Gamma),H)$ pointed at $g$. 

\medskip \noindent
However, this description is specific to the lamplighters $\mathbb{Z}_2 \wr H$. When $\mathbb{Z}_2$ is replaced with a larger finite group, the right-angled Coxeter group $C(\Gamma)$ has to be replaced with a \emph{graph product} of finite groups, and median geometry has to be replaced with \emph{quasi-median geometry}. But the arguments can be adapted with no major modifications. When dealing with \emph{lamplighter graphs} instead of lamplighter groups, there is no presentation to truncate, but thinking in terms of pointed cliques in quasi-median graphs (generalising our previous pointed edges in median graphs) remains possible. We develop this point of view in Section~\ref{section:approximation}, and adapt the strategy described above in Sections~\ref{section:EmbeddingProof} and~\ref{section:ProofsLemmas}.

\paragraph{Other applications.} In view of the classification provided by Corollary \ref{cor:LampRigid}, the natural question to ask next is: when is a finitely generated group quasi-isometric to a lamplighter group $F \wr H$ where $F$ is a finite group and $H$ a finitely presented one-ended group? Often, answering such a question requires a description of the quasi-isometry group of the group under consideration. Our work provides a promising partial description of $\mathrm{QI}(F\wr H)$, but obtaining a precise global picture will require further work. We plan to write on the subject in a near future. 

\medskip \noindent
Meanwhile, avoiding a full description of the quasi-isometry groups thanks to the recent \cite[Theorem 1.1]{QIsubgroup}, we are able to prove the following partial solution to our problem:

\begin{thm}\label{intro:QItoW}
Let $F$ be a non-trivial finite group, $H$ a finitely presented one-ended group, and $G$ a finitely generated group. If $G$ is quasi-isometric to $F \wr H$, then there exist finitely many subgroups $H_1, \ldots, H_n \leq G$ such that:
\begin{itemize}
	\item $H_1, \ldots, H_n$ are all quasi-isometric to $H$;
	\item the collection $\{H_1, \ldots, H_n\}$ is almost malnormal;
	\item for every finitely presented one-ended subgroup $K\leq G$, there exist $g \in G$ and $1 \leq i \leq n$ such that $K \leq gH_ig^{-1}$.
\end{itemize}
\end{thm}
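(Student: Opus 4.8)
The plan is to combine the quasi-isometry classification of lamplighters (Corollary~\ref{cor:LampRigid} and Theorem~\ref{thm:QIfactors}) with the structural result \cite[Theorem 1.1]{QIsubgroup}, which extracts, from a group $G$ quasi-isometric to a group with a suitable hierarchical/malnormal-collection structure, an almost malnormal collection of subgroups recording the ``peripheral'' pieces. First I would identify the right peripheral structure inside $F \wr H$ itself: the leaves, i.e. the $H$-cosets, form a collection of subspaces that are pairwise not fellow-travelling (the intersection of neighbourhoods of two distinct leaves is bounded, as exploited throughout Section~\ref{section:CosetAptolic} and in the proof of Theorem~\ref{thm:MainThm}), so the copies of $H$ sitting inside $F\wr H$ as point-stabiliser cosets form an almost malnormal family in the coarse sense. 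Moreover Theorem~\ref{thm:MainThm} tells us precisely that every coarsely $1$-connected uniformly one-ended subspace of $\mathcal{L}_{|F|}(\mathrm{Cayl}(H,S)) = \mathrm{Cayl}(F\wr H, F\cup S)$ lies in a bounded neighbourhood of a leaf; in particular every finitely presented one-ended subgroup $K \leq F\wr H$ is, up to finite Hausdorff distance, contained in an $H$-coset, hence quasi-isometric to a subgroup of $H$.

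Next I would transport this picture across the quasi-isometry $\varphi : G \to F\wr H$. The idea is that $\varphi$ carries the leaf structure of $F\wr H$ to a coarsely-invariant family of subspaces of $G$; applying \cite[Theorem 1.1]{QIsubgroup} (whose hypotheses are exactly that $G$ is quasi-isometric to a space admitting such an almost malnormal family of ``hyperplane-like'' or ``periphery-like'' subspaces each quasi-isometric to $H$, with the one-endedness enveloping property) produces finitely many subgroups $H_1,\dots,H_n \leq G$, each at finite Hausdorff distance from $\varphi^{-1}$ of some leaf. Because each leaf is quasi-isometric to $H$ and $\varphi$ is a quasi-isometry, each $H_i$ is quasi-isometric to $H$, giving the first bullet. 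The almost malnormality of $\{H_1,\dots,H_n\}$ is inherited from the almost malnormality (coarse separation / bounded packing of leaves) of the leaf family, which is the content guaranteed by the cited theorem; this gives the second bullet. For the third bullet, let $K \leq G$ be finitely presented one-ended: then $\varphi(K)$ is a coarsely $1$-connected uniformly one-ended subspace of $F \wr H$ (coarse $1$-connectedness and uniform one-endedness are QI-invariants of the metric space), so by Theorem~\ref{thm:MainThm} it lies in a bounded neighbourhood of a single leaf $L$. Pulling back, $K$ lies in a bounded neighbourhood of $\varphi^{-1}(L)$, which by construction is within finite Hausdorff distance of some coset $g H_i$; a standard argument (a finitely generated group at bounded Hausdorff distance from a coset of a subgroup is contained in that subgroup after enlarging the coset representative, or one passes to the coarse stabiliser) then yields $K \leq g H_i g^{-1}$ for appropriate $g\in G$ and $i$.

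The main obstacle I anticipate is the interface with \cite[Theorem 1.1]{QIsubgroup}: one has to check that the leaf family in $F\wr H$ satisfies precisely the axioms that theorem requires as input — I expect these to include (a) that the leaves are uniformly quasi-isometrically embedded copies of a fixed one-ended group $H$, (b) that the family has bounded packing / is coarsely almost malnormal, and (c) a ``quasi-convexity'' or ``coarse enveloping'' condition asserting that every one-ended finitely presented subspace is coarsely contained in one leaf. Item (c) is exactly Theorem~\ref{thm:MainThm}, and (a) is built into the lamplighter construction, so the real work is verifying (b), i.e. that distinct leaves have uniformly bounded coarse intersection and that only finitely many leaves come uniformly close to any given leaf; this is the bounded-packing statement that underlies the whole ``aptolic'' analysis and should follow from the combinatorics of $\mathcal{L}_n(X)$ developed in Section~\ref{bigsection:Aptolic}. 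A secondary technical point is passing from ``$K$ coarsely contained in a coset $gH_i$'' to the genuine algebraic inclusion $K \leq gH_ig^{-1}$; since $K$ is a subgroup and $H_i$ is undistorted (being quasi-isometrically embedded), this is routine but needs the observation that a subgroup lying in a bounded neighbourhood of a coset of an undistorted subgroup is actually contained in a bounded-index overgroup of a conjugate, which one then absorbs by possibly replacing the finite list $\{H_i\}$ with a commensurated enlargement — or, more cleanly, by noting that the $H_i$ output by \cite[Theorem 1.1]{QIsubgroup} are already maximal for this property.
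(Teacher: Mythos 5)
Your skeleton does coincide with the paper's: promote the quasi-isometry to a proper cocompact quasi-action of $G$ on $F\wr H$, feed the pattern of leaves (i.e.\ $H$-cosets) into \cite[Theorem 1.1]{QIsubgroup} to get finitely many subgroups, get the first bullet from cocompactness of the quasi-stabilisers on the leaves, and use the embedding theorem together with the fact that two distinct leaves have bounded coarse intersection for the last two bullets. However, the two places you flag as ``the real work'' are not the right ones, and one of your steps is false.

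First, the packing condition you propose to verify for \cite[Theorem 1.1]{QIsubgroup} --- ``only finitely many leaves come uniformly close to any given leaf'' --- fails in $\mathcal{L}_n(X)$: colourings supported on a single vertex, located anywhere in $X$, give infinitely many distinct leaves within distance $2$ of $X(0)$. Fortunately that is not what is needed. The inputs the paper actually verifies are (a) the algebraic fact that $H$ is almost malnormal in $F\wr H$, hence has finite index in (indeed equals) its commensurator (Lemma~\ref{lem:commensurator}), and (b) that every self-quasi-isometry of $F\wr H$ sends each $H$-coset uniformly Hausdorff-close to an $H$-coset; (b) is exactly the leaf-preservation statement of Theorem~\ref{thm:StructureQI}, and this QI-invariance of the coset pattern --- which your list of axioms omits, although it can be deduced from Theorem~\ref{thm:MainThm} applied to images of leaves --- is where the hard structure theory enters, not bounded packing. (Also, almost malnormality of $\{H_1,\dots,H_n\}$ is not delivered by the cited theorem; the paper proves it directly from the bounded coarse intersection of distinct leaves.) Second, your primary argument for the third bullet is incorrect as stated: a subgroup lying at bounded Hausdorff distance from a coset of $H_i$ need not be contained in any conjugate of $H_i$ (compare $\mathbb{Z}$ and $2\mathbb{Z}$), undistortion does not repair this, and enlarging the $H_i$ afterwards would threaten almost malnormality. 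The correct route --- which you mention only as a fallback --- is built into the construction: $H_i$ is defined as the quasi-stabiliser of the leaf $\mathcal{H}_i$, one shows that every $k\in K$ quasi-stabilises the single leaf near $q(K)$ (again because distinct leaves have bounded coarse intersection), and hence $K$ is literally contained in the quasi-stabiliser of that leaf, which is a conjugate $gH_ig^{-1}$; no finite-index fudge is needed or allowed.
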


\noindent
Theorem \ref{intro:QItoW} imposes severe algebraic restrictions on the finitely generated groups that are quasi-isometric to our wreath products. As an application, the combination of Corollaries \ref{cor:PermutationalW} and \ref{cor:PermutationW} below characterises when a permutational wreath product between finite and finitely presented one-ended groups is quasi-isometric to a lamplighter group.

\paragraph{Acknowledgements.} We thank A. Le Boudec, J. Brieussel, Y. Cornulier, D. Fisher for their comments on the first version of our manuscript.

\section{Preliminaries}\label{section:prel}

\noindent
In this section, we collect some basic definitions and notations that will be used in the rest of the article.

\paragraph{2.1. Lamplighter graphs.} Recall from the introduction that, given an integer $n \geq 2$ and a graph $X$, the \emph{lamplighter graph} $\mathcal{L}_n(X)$ is the graph
\begin{itemize}
	\item whose vertices are the pairs $(c,x)$ with $c : V(X) \to \mathbb{Z}_n$ a finitely supported colouring (denoted by $c \in \mathbb{Z}_n^{(X)}$, where $\mathbb{Z}_n$ is the cyclic group of order $n$) and $x \in V(X)$ a vertex (thought of as an arrow pointing at $x$);
	\item and whose edges connect $(c_1,x_1)$ and $(c_2,x_2)$ either if $c_1=c_2$ and $x_1,x_2$ are adjacent, or if $x_1=x_2$ and $c_1,c_2$ differ only at this vertex.
\end{itemize}
Loosely speaking, moving a vertex in $\mathcal{L}_n(X)$ amounts to moving an arrow in $X$ that is able to modify a colouring of $X$ where it is. Notice that, given a non-trivial finite group $F$, a group $H$, and a generating set $S$, the Cayley graph $\mathrm{Cayl}(F \wr H,F \cup S)$ coincides with $\mathcal{L}_{|F|} ( \mathrm{Cayl}(H,S))$. 

\medskip \noindent
In the sequel, we shall use the following useful notation. Given a subset $A\subset X$, we denote by $\mathcal{L}(A)$ the subgroup $\bigoplus_{a\in A} \mathbb{Z}_n$ of $\bigoplus_A \mathbb{Z}_n$. In other words, $\mathcal{L}(A)$ is the collection of all colourings supported in $A$. 

\medskip \noindent
As a graph, $\mathcal{L}_n(X)$ has a canonical metric, i.e. the distance between any two vertices corresponds to the minimal length of a path between them (each edge having length one). However, it may be convenient to endow $\mathcal{L}_n(X)$ with another metric. These two metrics, referred to as the \emph{diligent} and \emph{lazy} metrics, are biLipschitz equivalent so choosing one instead of the other has no consequence on our study of the asymptotic geometry of lamplighter graphs. The convention we follow is that $\mathcal{L}_n(X)$ is by default endowed with its graph metric, and the use of the diligent metric will be always explicitly mentioned. Loosely speaking, in order to go from $(c_1,p_1)$ to $(c_2,p_2)$ in $\mathcal{L}_n(X)$ with respect to the lazy metric (i.e. the graph metric), the arrow moves from $p_1$ to $p_2$ in $X$ and stops at each point where $c_1, c_2$ differ in order to modify the colouring from the value of $c_1$ to the value of $c_2$; with respect to the diligent metric, the arrow passes through each point where $c_1,c_2$ differ but it does not need to stop in order to modify the colouring.

\paragraph{The diligent metric.} The graph metric obtained from $\mathcal{L}_n(X)$ by adding an edge between any two vertices $(c_1,p_1),(c_2,p_2)$ such that $p_1,p_2$ are adjacent in $X$ and such that $c_1,c_2$ may only differ at $p_1$ is referred to as the \emph{diligent metric}. With respect to this metric, the distance between any two points $(c_1,p_1),(c_2,p_2) \in \mathcal{L}_n(X)$ coincides with the shortest length of a path in $X$ starting from $p_1$, visiting all the points where $c_1,c_2$ differ (i.e. all the points in $\mathrm{supp}(c_1-c_2)$), and ending at $p_2$. 

\medskip \noindent
(For simplicity, here we use the convention that the length of a path reduced to a single point is one. Indeed, observe that, if $\mathrm{supp}(c_1-c_2)=\{p_1\}= \{p_2\}$, then the distance between $(c_1,p_1)$ and $(c_2,p_2)$ is $1$ while the shortest path starting from $p_1$, visiting all the points in $\mathrm{supp}(c_1-c_2)$, and ending at $p_2$ is reduced to a single point, namely $p_1=p_2$.) 

\medskip \noindent
Notice that, given a non-trivial finite group $F$ and a group $H$ generating by some $S \subset H$, the diligent metric defined on $\mathrm{Cayl}(F\wr H, F \cup S)$ (through its identification with $\mathcal{L}_{|F|}(\mathrm{Cayl}(H,S))$) coincides with the word metric associated to the generating set $F \cdot (S \cup \{1\})$.

\paragraph{The lazy metric.} We refer to the graph metric of $\mathcal{L}_n(X)$ as the \emph{lazy metric}. Observe that the lazy distance between any two points $(c_1,p_1),(c_2,p_2) \in \mathcal{L}_n(X)$ coincides with
$$d_{\mathrm{dil}}((c_1,p_1),(c_2,p_2)) + | \mathrm{supp}(c_1-c_2)|$$
where $d_\mathrm{dil}$ denotes the diligent metric. As a consequence, we have $d_\mathrm{dil} \leq d_{\mathrm{laz}} \leq 2 d_\mathrm{dil}$, so our two metrics are biLipschitz equivalent.

\paragraph{Leaves.} The lamplighter graph $\mathcal{L}_n(X)$ contains natural copies of $X$, namely the subgraphs 
$$X(c):=\{(c,x) \mid x \in X\} \text{ where $c \in \mathbb{Z}_n^{(X)}$ is a fixed colouring.}$$
For convenience, we identify $X(0)$ with $X$. We refer to these subgraphs as the \emph{leaves} of $\mathcal{L}_n(X)$. Observe that, in $\mathcal{L}_n(X)$, the leaves do not fellow-travel:

\begin{fact}
For every $K \geq 0$ and for any two distinct leaves $L_1,L_2 \subset \mathcal{L}_n(X)$, the intersection $L_1^{+K} \cap L_2^{+K}$ of the $K$-neighbourhoods of $L_1,L_2$ is bounded.
\end{fact}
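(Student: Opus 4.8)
The plan is to track how the lamp component of a vertex can change along a short path in $\mathcal{L}_n(X)$, and to turn this into a bound on the arrow component. Write the two distinct leaves as $L_1=X(c_1)$ and $L_2=X(c_2)$, so that $c_1\neq c_2$ and $\mathrm{supp}(c_1-c_2)$ is a non-empty finite subset of $V(X)$; fix once and for all a vertex $p\in\mathrm{supp}(c_1-c_2)$. (Since the lazy and diligent metrics are biLipschitz equivalent and the statement only concerns boundedness, it is harmless to work with the graph metric throughout.)

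First I would take an arbitrary vertex $(a,x)\in L_1^{+K}\cap L_2^{+K}$ and, for $i\in\{1,2\}$, choose $y_i\in V(X)$ with $d_{\mathcal{L}_n(X)}\bigl((a,x),(c_i,y_i)\bigr)\le K$ together with a path realising this distance. The key observation is that along such a path the arrow traces a walk in $X$ of length at most $K$, starting at $x$ and ending at $y_i$, while every recolouring occurs at the current position of the arrow. Since the colouring goes from $a$ to $c_i$, every vertex of $\mathrm{supp}(a-c_i)$ must be recoloured at least once, hence visited by that walk; therefore $\mathrm{supp}(a-c_i)\subseteq B_X(x,K)$ and $d_X(x,y_i)\le K$.

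Then I would combine the two inclusions: $\mathrm{supp}(c_1-c_2)\subseteq\mathrm{supp}(a-c_1)\cup\mathrm{supp}(a-c_2)\subseteq B_X(x,K)$, so in particular $p\in B_X(x,K)$, i.e.\ $d_X(x,p)\le K$, and hence also $d_X(y_1,p)\le d_X(y_1,x)+d_X(x,p)\le 2K$. Moving the arrow along a geodesic of $X$ from $y_1$ to $p$ without touching the colouring gives $d_{\mathcal{L}_n(X)}\bigl((c_1,y_1),(c_1,p)\bigr)\le 2K$, whence
$$ d_{\mathcal{L}_n(X)}\bigl((a,x),(c_1,p)\bigr)\le d_{\mathcal{L}_n(X)}\bigl((a,x),(c_1,y_1)\bigr)+d_{\mathcal{L}_n(X)}\bigl((c_1,y_1),(c_1,p)\bigr)\le 3K. $$
This shows $L_1^{+K}\cap L_2^{+K}\subseteq B_{\mathcal{L}_n(X)}\bigl((c_1,p),3K\bigr)$, which is bounded, as required.

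There is no genuine obstacle here; the only point demanding a little care is the bookkeeping in the second paragraph, namely formalising that a path of length $\le K$ in $\mathcal{L}_n(X)$ splits into "move" steps — which form a walk of length $\le K$ in $X$ issued from the initial arrow position — and "recolour" steps, each performed at the current arrow position, so that the set of modified vertices is contained in the trace of that walk. Once this is said cleanly, the rest is just two triangle inequalities.
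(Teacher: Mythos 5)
Your proof is correct and follows essentially the same route as the paper's: both hinge on the observation that a vertex within $K$ of the leaf $X(c_i)$ has $\mathrm{supp}(\cdot - c_i)$ contained in the $K$-ball around its arrow position, and then combine the two conditions via $\mathrm{supp}(c_1-c_2)\subseteq\mathrm{supp}(a-c_1)\cup\mathrm{supp}(a-c_2)$. You merely push the conclusion slightly further, locating the intersection in an explicit ball of radius $3K$ about $(c_1,p)$, whereas the paper just exhibits a bounded containing set.
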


\begin{proof}
Fix two distinct colourings $c_1,c_2 \in \mathbb{Z}_n^{(X)}$ such that $L_1=X(c_1)$ and $L_2=X(c_2)$. If a vertex $(c,p)$ belongs to $L_1^{+K}$, then either $c=c_1$ or $c,c_1$ differ in $B(p,K)$; similarly, if $(c,p)$ belongs to $L_2^{+K}$ then either $c=c_2$ or $c,c_2$ differ in $B(p,K)$. Because $c_1,c_2$ are distinct, we have
$$L_1^{+K} \cap L_2^{+K} \subset \{ (c,p) \in \mathcal{L}_n(X) \mid \{p\} \cup \mathrm{supp}(c-c_1) \subset \mathrm{supp}(c_1-c_2)^{+K}\}.$$
The subset in the right-hand side being bounded, the desired conclusion follows.
\end{proof}

\noindent
Finally, observe that $\mathcal{L}_n(X)$ naturally projects onto $X$ through $\pi_X : (c,p) \mapsto p$. Clearly, $\pi_X$ is $1$-Lipschitz (with respect to the diligent and lazy metrics). Algebraically speaking, given a non-trivial finite group $F$ and a group $H$ generating by some $S \subset H$, the projection of $\mathrm{Cayl}(F \wr H,F \cup S)$ (when thought of as $\mathcal{L}_{|F|}(\mathrm{Cayl}(H,S))$) onto $\mathrm{Cayl}(H,S)$ as defined above coincides with the quotient map $F \wr H \twoheadrightarrow H$.

\paragraph{2.2. Coarse embeddings.} A map $f : X \to Y$ between two metric spaces $X$ and $Y$ is a \emph{coarse embedding} if there exist two functions $\rho_1,\rho_2 : [0,+ \infty) \to [0,+ \infty)$ tending to infinity such that
$$\rho_1(d(x,y)) \leq d(f(x),f(y)) \leq \rho_2(d(x,y))) \text{ for all $x,y \in X$.}$$
The functions $\rho_1,\rho_2$ are referred to as the \emph{parameters} of $f$. A coarse embedding with affine parameters is a \emph{quasi-isometric embedding}. More precisely, given $A>0$ and $B \geq 0$, a map $f : X \to Y$ is an \emph{$(A,B)$-quasi-isometric embedding} if
$$\frac{1}{A} \cdot d(x,y) - B \leq d(f(x),f(y)) \leq A \cdot d(x,y) + B \text{ for all $x,y \in X$}.$$
It is an \emph{$(A,B)$-quasi-isometry} if in addition every point in $Y$ is within $B$ from $f(X)$. A \emph{biLipschitz equivalence} is a bijective coarse embedding with linear parameters. Among discrete metric spaces (like graphs), a bijective quasi-isometry is automatically a biLipschitz equivalence.

\medskip \noindent
We record the following statement for future use:

\begin{lemma}\label{lem:CoarseLift}
Let $Z$ be a connected graph, $\pi : A \to B$ a covering map between two cellular complexes, and $\eta : Z \to B^{(1)}$ a continuous map. Assume that $\eta(Z)$ is simply connected in $B$. Then $\eta$ lifts as $\widetilde{\eta} : Z \to A^{(1)}$ and
$$d_A(\widetilde{\eta}(x),\widetilde{\eta}(y)) = d_B(\eta(x),\eta(y)) \text{ for all vertices $x,y \in Z$}$$
where $d_A,d_B$ refer to the graph metrics in $A^{(1)},B^{(1)}$. As a consequence, if $\eta$ is a coarse embedding then $\widetilde{\eta}$ is a coarse embedding with the same parameters.
\end{lemma}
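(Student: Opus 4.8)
The plan is to prove Lemma~\ref{lem:CoarseLift} by the standard lifting criterion for covering maps, followed by a comparison of path lengths upstairs and downstairs.

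\medskip\noindent
\textbf{Step 1: Lift $\eta$ using the lifting criterion.} Since $\pi : A \to B$ is a covering map and $Z$ is a connected graph (hence path-connected and, being a $1$-complex, locally path-connected and having a well-behaved fundamental group), the usual lifting criterion applies: a continuous map $\eta : Z \to B^{(1)} \subset B$ lifts to $\widetilde{\eta} : Z \to A$ as soon as $\eta_*(\pi_1(Z)) \subset \pi_*(\pi_1(A))$. The hypothesis that $\eta(Z)$ is simply connected \emph{in} $B$ means precisely that every loop in $\eta(Z)$ is null-homotopic in $B$, so $\eta_*(\pi_1(Z))$ is trivial and the criterion is satisfied. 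One should note that $\widetilde{\eta}$ a priori maps into $A$, but since $\eta$ maps into the $1$-skeleton $B^{(1)}$ and $\pi$ restricts to a covering $\pi^{-1}(B^{(1)}) \to B^{(1)}$ with $\pi^{-1}(B^{(1)}) = A^{(1)}$ (covering maps of CW-complexes respect skeleta), in fact $\widetilde{\eta}$ maps into $A^{(1)}$.

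\medskip\noindent
\textbf{Step 2: Compare distances.} Fix vertices $x, y \in Z$. For the inequality $d_A(\widetilde{\eta}(x), \widetilde{\eta}(y)) \le d_B(\eta(x), \eta(y))$: take a geodesic edge-path $\gamma$ in $B^{(1)}$ from $\eta(x)$ to $\eta(y)$ of length $d_B(\eta(x),\eta(y))$. Because $\eta(Z)$ is simply connected in $B$ and both $\eta(x),\eta(y)$ and (by an argument one can make with a path in $Z$) suitable choices lie over it, one lifts $\gamma$ starting at $\widetilde{\eta}(x)$; the endpoint of the lift is a lift of $\eta(y)$, and one must check it equals $\widetilde{\eta}(y)$ --- this uses that $\widetilde{\eta}$ is \emph{the} lift determined by the value at $x$ together with uniqueness of path-lifts: concatenating a path in $Z$ from $x$ to $y$ (pushed down by $\eta$ and lifted) with $\gamma^{-1}$ gives a loop whose lift is a loop, forcing the endpoints to match. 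Since $\pi$ is a graph morphism on $1$-skeleta (it sends edges to edges isometrically), this lift is an edge-path in $A^{(1)}$ of the same length, so $d_A(\widetilde{\eta}(x),\widetilde{\eta}(y)) \le \mathrm{length}(\gamma) = d_B(\eta(x),\eta(y))$. The reverse inequality $d_B(\eta(x),\eta(y)) \le d_A(\widetilde{\eta}(x),\widetilde{\eta}(y))$ is immediate: a geodesic in $A^{(1)}$ between $\widetilde{\eta}(x)$ and $\widetilde{\eta}(y)$ projects under $\pi$ to an edge-path of the same length in $B^{(1)}$ from $\eta(x)$ to $\eta(y)$, because $\pi$ is $1$-Lipschitz (indeed sends edges to edges). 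Combining the two gives the claimed equality, and the final assertion about coarse embeddings is then immediate from the definition, reading off the same parameters $\rho_1, \rho_2$.

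\medskip\noindent
\textbf{Main obstacle.} The routine part is the covering-space bookkeeping; the one genuinely delicate point is justifying that the endpoint of the lifted geodesic $\gamma$ is exactly $\widetilde{\eta}(y)$ rather than merely \emph{some} point in the fibre $\pi^{-1}(\eta(y))$. This requires care in setting up the loop (choosing a path in $Z$ from $x$ to $y$, applying $\eta$, and using that $\eta(Z)$ is simply connected in $B$ so that the composite loop in $B$ is null-homotopic and hence lifts to a loop in $A$) and invoking the homotopy-lifting property rather than just unique path-lifting. Once that identification is secured, everything else is formal.
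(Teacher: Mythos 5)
Your Step 1 (the lifting criterion) and the easy direction $d_B(\eta(x),\eta(y)) \leq d_A(\widetilde{\eta}(x),\widetilde{\eta}(y))$ (project a geodesic of $A^{(1)}$, $\pi$ being $1$-Lipschitz) agree with the paper. The genuine gap is in the other direction, at exactly the point you flag as delicate. You lift a geodesic $\gamma$ of $B^{(1)}$ and argue that its lift ends at $\widetilde{\eta}(y)$ because the loop $\eta(\xi)\cdot\gamma^{-1}$ (with $\xi$ a path in $Z$ from $x$ to $y$) is null-homotopic in $B$. But the hypothesis only controls loops \emph{contained in} $\eta(Z)$, and $\gamma$ has no reason to lie in $\eta(Z)$, so the null-homotopy of this concatenated loop does not follow --- and the step genuinely fails. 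Concretely: let $Z$ be a path $x$--$a$--$b$--$y$ of length $3$, let $B$ be a $4$-cycle, $A$ its universal cover, and $\eta$ the embedding of $Z$ onto three of the four edges. Then $\eta(Z)$ is a tree, hence simply connected in $B$, yet $\eta(\xi)\cdot\gamma^{-1}$ is the whole $4$-cycle, a generator of $\pi_1(B)$; the lift of the single-edge geodesic $\gamma$ starting at $\widetilde{\eta}(x)$ ends at a lift of $\eta(y)$ different from $\widetilde{\eta}(y)$, and indeed $d_A(\widetilde{\eta}(x),\widetilde{\eta}(y))=3$ while $d_B(\eta(x),\eta(y))=1$. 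So the inequality $d_A \leq d_B$ cannot be extracted from the stated hypothesis by lifting an ambient geodesic.

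The paper's proof sidesteps this by taking the geodesic $\zeta$ \emph{inside} $\eta(Z)$, i.e.\ a shortest path from $\eta(x)$ to $\eta(y)$ among paths contained in $\eta(Z)$. Then the concatenation of $\zeta$ with $\eta(\xi)^{-1}$ is a loop lying in $\eta(Z)$, the hypothesis makes it null-homotopic in $B$, and uniqueness of path lifting forces the lift of $\zeta$ starting at $\widetilde{\eta}(x)$ to end at $\widetilde{\eta}(y)$; the upper bound obtained is therefore the length of a path joining $\eta(x)$ to $\eta(y)$ \emph{within} $\eta(Z)$ (as the example above shows, this is how the displayed identity must be interpreted, and it is what the subsequent application actually uses, since there the bound on $d_A(\widetilde{\eta}(x),\widetilde{\eta}(y))$ comes from lifting $\eta$-images of paths in $Z$). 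If you rerun your loop-plus-uniqueness mechanism with the geodesic taken in $\eta(Z)$ rather than in $B^{(1)}$, it goes through; as written, it does not.
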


\begin{proof}
Fix two vertices $x,y \in Z$. Because $\pi$ is $1$-Lipschitz, we have 
$$d_A(\widetilde{\eta}(x),\widetilde{\eta}(y)) \geq d_B(\pi \circ \widetilde{\eta} (x), \pi \circ \widetilde{\eta}(y)) = d_B(\eta(x),\eta(y)).$$
Next, let $\zeta$ be a geodesic in $\eta(Z) \subset B^{(1)}$ from $\eta(x)$ to $\eta(y)$ and let $\widetilde{\zeta} \subset A$ denote the lift of $\zeta$ that starts from $\widetilde{\eta}(x)$. Observe that $\widetilde{\zeta}$ ends at $\widetilde{\eta}(y)$. Otherwise, by letting $\widetilde{\xi}$ be a path in $\widetilde{\eta}(Z)$ from $\widetilde{\eta}(x)$ to $\widetilde{\eta}(y)$, the concatenation of $\gamma$ and $\pi(\widetilde{\xi})$ would create a loop in $\eta(Z)$ that is not homotopically trivial in $B$. So $\widetilde{\zeta}$ indeed ends at $\widetilde{\eta}$, hence
$$d_A(\widetilde{\eta}(x),\widetilde{\eta}(y)) \leq \mathrm{length}(\widetilde{\zeta})= \mathrm{length}(\zeta) = d_B(\eta(x),\eta(y)).$$
This completes the proof of our lemma.
\end{proof}

\paragraph{2.3. Amenable graphs.} Given a locally finite graph $X$, a \emph{F\o lner sequence} is a sequence of finite subsets $(A_n)$ such that $|\partial A_n|/ |A_n| \to 0$ as $n \to + \infty$, where $|\cdot|$ denotes the cardinality of the subset under consideration and where we denote by $\partial S$ the \emph{boundary} of a finite subset $S \subset X$ (i.e. the set of vertices not in $S$ that are adjacent to vertices in $S$). A graph is \emph{amenable} if it admits a F\o lner sequence. It is well-known that a finitely generated group is amenable if and only if its Cayley graphs (with respect to finite generating sets) are amenable in the above sense.

\paragraph{2.4. Scaling quasi-isometries.} Let us record from \cite{Kappa} how to define maps that are ``coarsely $\kappa$-to-one'' (for some real number $\kappa>0$) and a few elementary properties satisfied by such maps. The concept of quasi-$\kappa$-to-one quasi-isometries will be central in Section~\ref{section:AmenableCase}, dedicated to aptolic quasi-isometries between lamplighters over amenable graphs.

\begin{definition}\label{def:quasione}
Let $f:X\to Y$ be a proper map between two graphs $X,Y$ and let $\kappa>0$. Then $f$ is \emph{quasi-$\kappa$-to-one} if there exists a constant $C>0$ such that
\[\left| \kappa |A|- |f^{-1}(A)| \right|\leq C|\partial A|\]
for all finite subset $A\subset Y$.
\end{definition}

\noindent
Notice that, for every integer $n \geq 1$, an $n$-to-one map is quasi-$n$-to-one. The terminology used in Definition \ref{def:quasione} is also justified by the fact that a quasi-isometry that is quasi-one-to-one lies at finite distance from a bijection. Proposition \ref{prop:kappa} below also gives alternative definitions of being quasi-$\kappa$-to-one when $\kappa$ is rational. The former observation is a straightforward consequence of a result of Whyte \cite[Theorems~A and~C]{MR1700742} (see also \cite[Theorems~5.3 and~5.4]{MR2730576}). We refer to \cite[Proposition~4.1]{Kappa} for more details.

\begin{prop}\label{prop:QIdistBij}
Let $f:X_1\to X_2$ be a quasi-isometry between two graphs with bounded degree. Then $f$ is at bounded distance from a bijection if and only if it is quasi-one-to-one.
\end{prop}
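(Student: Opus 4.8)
Throughout write $A^{+D}$ for the $D$-neighbourhood of a subset $A$, and recall that both $X_1,X_2$ have bounded degree. I would treat the two implications separately; the first is elementary and the second is the substantial one.

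For the implication ``bounded distance from a bijection $\Rightarrow$ quasi-one-to-one'', suppose $g:X_1\to X_2$ is a bijection with $d(f(x),g(x))\le D$ for all $x$. Fix a finite subset $A\subseteq X_2$ and set $B:=\{y\in A\mid d(y,X_2\setminus A)>D\}$. A direct check gives $g^{-1}(B)\subseteq f^{-1}(A)\subseteq g^{-1}(A^{+D})$: if $g(x)\in B$ then $f(x)$ cannot lie outside $A$, since that would force $d(f(x),g(x))>D$; and if $f(x)\in A$ then $g(x)\in A^{+D}$. As $g$ is a bijection, this yields $|B|\le|f^{-1}(A)|\le|A^{+D}|$. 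Because $X_2$ has bounded degree, both $A^{+D}\setminus A$ and $A\setminus B$ are contained in the $D$-neighbourhood of $\partial A$, hence have cardinality at most $C|\partial A|$ for a constant $C$ depending only on $D$ and the degree bound. Therefore $\big|\,|A|-|f^{-1}(A)|\,\big|\le C|\partial A|$, which is precisely the quasi-one-to-one condition of Definition~\ref{def:quasione} with $\kappa=1$.

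For the converse, I would reduce the construction of a bijection at bounded distance from $f$ to finding a perfect matching in an auxiliary bipartite graph. Choose $D$ larger than the quasi-surjectivity constant of $f$ and form the bipartite graph $\mathcal{B}$ on $X_1\sqcup X_2$ in which $x\in X_1$ is joined to $y\in X_2$ whenever $d_{X_2}(f(x),y)\le D$. Since $X_2$ has bounded degree the left degrees are bounded; since $f$ is a quasi-isometry between bounded-degree graphs, the preimage of any ball has uniformly bounded diameter and hence uniformly bounded cardinality, so the right degrees are bounded too. Thus $\mathcal{B}$ is locally finite, and by the choice of $D$ every vertex of $X_2$ has a neighbour. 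A perfect matching of $\mathcal{B}$ is exactly a bijection $g:X_1\to X_2$ with $d(f(x),g(x))\le D$. By the form of Hall's marriage theorem valid for locally finite bipartite graphs (deduced from the finite case by a compactness argument), such a matching exists as soon as Hall's condition $|N(S)|\ge|S|$ holds for every finite $S$ in either part.

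Verifying these Hall conditions from the quasi-one-to-one hypothesis is where I expect the main obstacle. For finite $S\subseteq X_1$ one has $N(S)=f(S)^{+D}$ and $S\subseteq f^{-1}(f(S)^{+D})$, so applying the quasi-one-to-one bound to $A=f(S)^{+D}$ gives only $|N(S)|\ge|S|-C|\partial(f(S)^{+D})|$; symmetrically, for finite $T\subseteq X_2$ one gets $|N(T)|=|f^{-1}(T^{+D})|\ge|T|-C|\partial(T^{+D})|$. In each case a residual ``defect'' proportional to the boundary of a thickened set remains, and for non-amenable $X_1,X_2$ this defect need not be negligible relative to the set size. Absorbing it requires a finer matching/flow argument exploiting the freedom to move points a bounded distance, rather than naive Hall on $\mathcal{B}$; this is precisely the content of Whyte's work \cite[Theorems~A and~C]{MR1700742} (see also \cite[Theorems~5.3 and~5.4]{MR2730576}). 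I would therefore obtain the converse directly from that result, as recorded in \cite[Proposition~4.1]{Kappa}.
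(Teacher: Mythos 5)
Your proposal matches the paper's treatment: the paper does not actually prove Proposition~\ref{prop:QIdistBij}, but records it as a consequence of Whyte's results \cite[Theorems~A and~C]{MR1700742}, referring to \cite[Proposition~4.1]{Kappa} for details, which is exactly where you send the substantial direction, while your elementary counting argument for the other direction is correct. Your diagnosis that a naive Hall-type argument leaves an unabsorbed boundary defect (significant precisely in the non-amenable setting) is a fair account of why the appeal to Whyte is genuinely needed rather than a shortcut.
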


\noindent
It is not difficult to show that, given a finitely generated group $G$ and a finite-index subgroup $H \leq G$, the inclusion $H \hookrightarrow G$ is quasi-$(1/[G:H])$-to-one. On the other hand, if $G$ is non-amenable, it follows from Theorem \ref{thm:QIvsBil} that $H \hookrightarrow G$ is also quasi-one-to-one. Therefore, the property of being quasi-$\kappa$-to-one is not quite informative in the non-amenable case. The next lemma, proved in \cite[Lemma~3.5]{Kappa}, shows that this phenomenon does not happen in the amenable case.

\begin{lemma}\label{lem:KappaWellDefined}
Let $f : X \to Y$ be a proper map between two graphs. Assume that $X$ is amenable. If $f$ is both quasi-$\kappa_1$-to-one and quasi-$\kappa_2$-to-one for some $\kappa_1,\kappa_2>0$, then $\kappa_1=\kappa_2$.
\end{lemma}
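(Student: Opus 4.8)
The plan is to argue by contradiction, exploiting the fact that amenability forces the two putative scaling constants to agree via a Følner-sequence averaging argument. Suppose $f : X \to Y$ is quasi-$\kappa_1$-to-one with constant $C_1$ and quasi-$\kappa_2$-to-one with constant $C_2$, and suppose for contradiction that $\kappa_1 \neq \kappa_2$. The first step is to combine the two defining inequalities: for every finite $A \subset X$ (or rather, one should be careful — the definition quantifies over finite subsets $A \subset Y$, so I will use finite subsets of $Y$), the triangle inequality gives
\[
\left| \kappa_1 |A| - \kappa_2 |A| \right| \leq \left| \kappa_1|A| - |f^{-1}(A)| \right| + \left| |f^{-1}(A)| - \kappa_2 |A| \right| \leq (C_1 + C_2)|\partial A|,
\]
so $|\kappa_1 - \kappa_2| \cdot |A| \leq (C_1+C_2) |\partial A|$ for every finite $A \subset Y$.

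The second step is to produce a finite subset $A \subset Y$ for which $|\partial A| / |A|$ is arbitrarily small, which immediately contradicts the displayed inequality once $|A|$ is large enough (since $|\kappa_1 - \kappa_2| > 0$ is a fixed positive number). The subtlety is that the hypothesis gives amenability of $X$, not of $Y$, so I cannot simply invoke a Følner sequence in $Y$ directly. Here I would use that $f$ is a proper map (in particular, uniformly proper in the quasi-isometry setting, with the two graphs of bounded degree in the intended application, though properness alone combined with the quasi-$\kappa$-to-one condition already controls things): take a Følner sequence $(B_n)$ in $X$ and set $A_n := f(B_n)$, or better, set $A_n$ to be a suitable ``thickening/image'' so that $f^{-1}(A_n)$ is comparable to $B_n$. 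One checks that $|f^{-1}(A_n)| \geq |B_n|$, hence (using the quasi-$\kappa_1$-to-one bound) $|A_n| \geq \frac{1}{\kappa_1}\big(|B_n| - C_1|\partial A_n|\big)$, while $|\partial A_n|$ is controlled by $|\partial B_n|$ up to a multiplicative constant coming from the bounded geometry and the coarseness parameters of $f$. Thus $|\partial A_n|/|A_n| \to 0$, i.e. $(A_n)$ is a Følner sequence in $Y$.

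Plugging such an $A_n$ into $|\kappa_1 - \kappa_2|\cdot|A_n| \leq (C_1+C_2)|\partial A_n|$ yields $|\kappa_1 - \kappa_2| \leq (C_1+C_2)\,|\partial A_n|/|A_n| \to 0$, forcing $\kappa_1 = \kappa_2$, a contradiction. This completes the argument.

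The main obstacle, and the only place requiring genuine care, is the transfer of amenability from $X$ to $Y$ along $f$ — that is, verifying that images of Følner sets in $X$ are Følner in $Y$ and that $|f^{-1}(A_n)|$ stays comparable to $|B_n|$. This needs the bounded-degree hypothesis on the graphs and the (uniform) properness of $f$ to bound fibres and boundaries; one should be mindful that $f$ need not be surjective, so $A_n$ should be defined as $f(B_n)$ and one argues $B_n \subseteq f^{-1}(A_n)$ directly, while a vertex of $\partial A_n$ is the $f$-image of a point within bounded distance of $B_n$, whose count is controlled by $|B_n^{+r}| \le |B_n| + (\text{const})\cdot|\partial B_n|$ in a bounded-degree graph. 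Everything else is the one-line triangle-inequality manipulation above. (In fact, since this lemma is cited from \cite[Lemma~3.5]{Kappa}, one may alternatively just invoke it; but the self-contained sketch above is the natural proof.)
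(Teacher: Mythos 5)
The paper does not actually prove this lemma: it is quoted from \cite[Lemma~3.5]{Kappa}, so your sketch has to stand on its own. Your first step (the triangle inequality giving $|\kappa_1-\kappa_2|\cdot|A|\leq (C_1+C_2)|\partial A|$ for all finite $A\subset Y$) is correct, and it correctly reduces the lemma to producing finite subsets of $Y$ with arbitrarily small boundary-to-volume ratio. The gap is in how you produce them. Your claim that $|\partial f(B_n)|$ is controlled by $|\partial B_n|$ is false: a vertex of $\partial f(B_n)$ lies outside $f(B_n)$ and need not be in the image of $f$ at all, so it is not ``the $f$-image of a point within bounded distance of $B_n$''; and even for very tame maps the bound fails. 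Take $X=Y=\mathbb{Z}$ and $f(n)=2\lfloor n/2\rfloor$, a quasi-isometry at distance $1$ from the identity: for the F\o lner set $B=[1,2N]$ one gets $f(B)=\{2,4,\dots,2N\}$, so $|\partial f(B)|=N+1$ while $|\partial B|=2$. Images of F\o lner sets are in general not F\o lner, so this step cannot be repaired as written.

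Worse, under the hypotheses you allow yourself (properness plus the quasi-$\kappa$-to-one inequalities), the needed conclusion ``$Y$ contains sets with small boundary-to-volume ratio'' can genuinely fail, so your parenthetical that ``properness alone combined with the quasi-$\kappa$-to-one condition already controls things'' is not salvageable. Let $f:\mathbb{Z}\to T_3$ be an isometric embedding onto a bi-infinite geodesic of the $3$-regular tree. This map is proper, $\mathbb{Z}$ is amenable, and since $|f^{-1}(A)|\leq |A|$ and $|\partial A|\geq \frac{1}{3}|A|$ for every finite $A\subset T_3$, it is quasi-$\kappa$-to-one for \emph{every} $\kappa>0$; yet $T_3$ has no F\o lner sets. (This also shows that the lemma, read literally for arbitrary proper maps, needs the stronger hypotheses under which it is actually proved in \cite{Kappa} and used in this paper: $f$ a quasi-isometry, or at least coarsely surjective, between bounded-degree graphs.) The correct completion of your argument in that setting is not to push F\o lner sets forward, but to invoke the quasi-isometry invariance of amenability for bounded-degree graphs (via the doubling/paradoxicality criterion, not via images of F\o lner sets) to conclude that $Y$ is amenable, and then apply your first step to a F\o lner sequence of $Y$.
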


\noindent
Our next statement, proved in \cite[Proposition~3.6]{Kappa}, shows how being quasi-$\kappa$-to-one is compatible with composition.  

\begin{prop}\label{prop:EasyKappa}
Let $X,Y,Z$ be three connected graphs with  bounded degree, $\kappa_1,\kappa_2>0$ two real numbers, and $f,h:X\to Y$ and $g:Y\to Z$  three quasi-isometries. 
\begin{itemize}
\item[(i)] If $f, h$ are at bounded distance and if $f$ is quasi-$\kappa_1$-to-one, then $h$ is also quasi-$\kappa_1$-to-one.
\item[(ii)] If $f$ and $g$ are respectively quasi-$\kappa_1$-to-one and quasi-$\kappa_2$-to-one, then $g\circ f$ is quasi-$\kappa_1\kappa_2$-to-one. 
\item[(iii)] If $\bar{f}$ is a quasi-inverse of $f$ and if $f$ is quasi-$\kappa_1$-to-one, then $\bar{f}$ is quasi-$(1/\kappa_1)$-to-one.  
\end{itemize}
\end{prop}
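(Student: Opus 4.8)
The plan is to prove the three statements directly from Definition \ref{def:quasione}, exploiting the fact that for a quasi-isometry $f : X \to Y$ between graphs of bounded degree, preimages of finite sets are finite, and boundaries are controlled multiplicatively under bounded-distance perturbations and compositions. Throughout, I will repeatedly use the elementary observation that if $f : X \to Y$ is an $(A,B)$-quasi-isometry between bounded-degree graphs, then there is a constant $D = D(A,B,\deg)$ such that $|f^{-1}(A^{+1})| \leq D|A|$ for all finite $A \subset Y$ and such that $|\partial_X f^{-1}(A)| \leq D|\partial_Y A|$; the latter is the key geometric input, and it follows because a vertex of $\partial_X f^{-1}(A)$ maps into a bounded neighbourhood of $\partial_Y A$ in $Y$, together with the bounded-degree preimage bound. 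I would isolate this as a preliminary observation before treating (i), (ii), (iii).

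For (i), suppose $f$ is quasi-$\kappa_1$-to-one with constant $C$ and that $d(f,h) \leq M$. For a finite subset $A \subset Y$, the sets $f^{-1}(A)$ and $h^{-1}(A)$ differ only within a bounded neighbourhood of $f^{-1}(\partial^{+M} A)$, so $\big| |f^{-1}(A)| - |h^{-1}(A)| \big| \leq C'|\partial A|$ for a suitable $C'$ depending on $M$, $A$, $B$ and the degree bound; combining with $\big| \kappa_1|A| - |f^{-1}(A)| \big| \leq C|\partial A|$ and the triangle inequality gives $\big| \kappa_1|A| - |h^{-1}(A)| \big| \leq (C+C')|\partial A|$, as required. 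For (ii), write $(g\circ f)^{-1}(A) = f^{-1}(g^{-1}(A))$ and estimate
\[
\big| \kappa_1\kappa_2 |A| - |f^{-1}(g^{-1}(A))| \big| \leq \kappa_1 \big| \kappa_2|A| - |g^{-1}(A)| \big| + \big| \kappa_1|g^{-1}(A)| - |f^{-1}(g^{-1}(A))| \big|.
\]
The first term is at most $\kappa_1 C_2 |\partial A|$ by hypothesis on $g$; the second is at most $C_1 |\partial_Y g^{-1}(A)|$ by hypothesis on $f$, and $|\partial_Y g^{-1}(A)| \leq D |\partial_Z A|$ by the preliminary observation applied to $g$. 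This yields the bound with constant $\kappa_1 C_2 + C_1 D$. For (iii), let $\bar f$ be a quasi-inverse of $f$, so $\bar f \circ f$ and $f \circ \bar f$ are at bounded distance from the identity. Applying (i) and (ii): the identity $\mathrm{id}_X$ is (trivially) quasi-$1$-to-one, $\bar f \circ f$ is at bounded distance from $\mathrm{id}_X$ hence quasi-$1$-to-one by (i), and by (ii) it is also quasi-$(\kappa_1 \kappa)$-to-one where $\kappa$ is the (a priori unknown) scaling of $\bar f$ — but here one must first know $\bar f$ is quasi-$\kappa$-to-one for some $\kappa$, which is not immediate.

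The main obstacle is exactly this last point: establishing that a quasi-isometry between bounded-degree graphs is quasi-$\kappa$-to-one for \emph{some} $\kappa>0$ at all. I expect this to be handled either by an averaging/F\o lner argument (on an amenable side, so that the ratio $|f^{-1}(A_n)|/|A_n|$ has a convergent subsequence along a F\o lner sequence, whose limit must then be the uniform constant by a subadditivity argument) or, more likely given the phrasing, by the cited result of Whyte and the companion paper \cite{Kappa}: indeed the statement already says this proposition is \emph{proved in} \cite[Proposition~3.6]{Kappa}, so in the present paper one is entitled to simply invoke it. Accordingly, I would present the computation above for (i) and (ii) in full as the transparent part, and for (iii) either cite \cite{Kappa} for the existence of a scaling constant for $\bar f$ and then close the loop via (i) and (ii) as sketched — from $\bar f$ quasi-$\kappa$-to-one and $\bar f \circ f$ quasi-$1$-to-one $=$ quasi-$(\kappa\kappa_1)$-to-one one gets $\kappa\kappa_1 = 1$ provided $X$ is amenable (Lemma \ref{lem:KappaWellDefined}), giving $\kappa = 1/\kappa_1$ — or simply defer the entire proof to \cite{Kappa} as the excerpt does.
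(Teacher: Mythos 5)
The paper gives no argument for this proposition at all: it is quoted from the companion paper (\cite[Proposition~3.6]{Kappa}), so your fallback of simply citing that reference is exactly what the paper does, and your self-contained computations for (i) and (ii) are correct and essentially complete: for (i), $h^{-1}(A)$ and $f^{-1}(A)$ differ only inside $f^{-1}(A^{+M}\setminus A)\cup h^{-1}(A^{+M}\setminus A)$, whose size is at most a constant times $|\partial A|$ by bounded degree and uniformly finite point preimages; for (ii), your two-term triangle inequality together with the boundary estimate $|\partial\, g^{-1}(A)|\leq D|\partial A|$ (the paper's Fact~\ref{fact:EasyTwo}) gives the stated bound.

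The one genuine gap is your treatment of (iii). As you yourself note, your route needs to know in advance that $\bar{f}$ is quasi-$\kappa$-to-one for \emph{some} $\kappa$, and your identification of $\kappa$ with $1/\kappa_1$ invokes Lemma~\ref{lem:KappaWellDefined}, which requires amenability --- not a hypothesis of the proposition. Neither ingredient is needed: (iii) follows directly from (i) and your preliminary observation. Indeed, $\bar{f}\circ f$ lies at bounded distance from $\mathrm{id}_X$, which is trivially quasi-one-to-one, so by (i) $\bar{f}\circ f$ is quasi-one-to-one. Now fix a finite $A\subset X$ and set $B:=\bar{f}^{-1}(A)$, which is finite since $\bar{f}$ has uniformly finite point preimages. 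Then $(\bar{f}\circ f)^{-1}(A)=f^{-1}(B)$, so
$$\left| |A|-|f^{-1}(B)| \right| \leq C_0|\partial A| \quad\text{and}\quad \left| \kappa_1|B|-|f^{-1}(B)| \right| \leq C_1|\partial B| \leq C_1D|\partial A|,$$
where the last inequality is Fact~\ref{fact:EasyTwo} applied to $\bar{f}$. Combining and dividing by $\kappa_1$ yields $\left| (1/\kappa_1)|A|-|\bar{f}^{-1}(A)| \right| \leq \kappa_1^{-1}(C_0+C_1D)\,|\partial A|$, i.e.\ $\bar{f}$ is quasi-$(1/\kappa_1)$-to-one, with no amenability assumption and no a priori scaling constant for $\bar{f}$.
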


\noindent
Finally, in case $\kappa$ is rational, we proved in \cite[Proposition~4.2]{Kappa} the following equivalent formulations of Definition~\ref{def:quasione} .

\begin{prop}\label{prop:kappa}
Let $m,n \geq 1$ be natural integers and $f:X\to Y$ a quasi-isometry between two graphs with bounded degree. The following statements are equivalent:
\begin{itemize}
\item[(i)] $f$ is quasi-$(m/n)$-to-one;
\item[(ii)] the map $\iota \circ f\circ \pi$ is at bounded distance from a bijection, where $\pi : X\times \mathbb{Z}_n \twoheadrightarrow X$ is the canonical embedding and $\iota : Y \hookrightarrow Y\times \mathbb{Z}_m$ the canonical projection.
\item[(iii)] there exist a partition $\mathcal{P}_X$ (resp. $\mathcal{P}_Y$) of $X$ (resp. of $Y$) with uniformly bounded pieces of size $m$ (resp. $n$) and a bijection $\psi:\mathcal{P}_X\to \mathcal{P}_Y$ such that $f$ is at bounded distance from a map $g : X \to Y$ satisfying $g(P) \subset \psi(P)$ for every $P \in \mathcal{P}_X$.
\end{itemize}
\end{prop}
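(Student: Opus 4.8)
The plan is to prove the cycle of implications $(iii)\Rightarrow(ii)\Rightarrow(i)\Rightarrow(iii)$, since $(ii)$ is the most rigid formulation and $(iii)$ the most flexible, while $(i)$ is the definition we started from. Throughout I will use that $X,Y$ have bounded degree, so that boundaries of finite sets behave well under quasi-isometries; concretely, if $g:X\to Y$ is an $(A,B)$-quasi-isometry between bounded-degree graphs then there is a constant $C=C(A,B,\mathrm{deg})$ with $|\partial_Y(g(A))|\le C|\partial_X A|$ and $|\partial_X(g^{-1}(A'))|\le C|\partial_Y A'|$ for all finite $A\subset X$, $A'\subset Y$; this is the recurring technical fact.

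\medskip

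First I would prove $(iii)\Rightarrow(ii)$. Given partitions $\mathcal{P}_X,\mathcal{P}_Y$ into uniformly bounded pieces of sizes $m,n$ respectively, a bijection $\psi:\mathcal{P}_X\to\mathcal{P}_Y$, and a map $g$ at bounded distance from $f$ with $g(P)\subset\psi(P)$, the idea is to upgrade $g$ to an actual bijection after crossing with $\mathbb{Z}_n$ on the source and $\mathbb{Z}_m$ on the target. For each pair $(P,\psi(P))$ we have $|P\times\mathbb{Z}_n|=mn=|\psi(P)\times\mathbb{Z}_m|$, so we may choose, piece by piece, a bijection $P\times\mathbb{Z}_n\to\psi(P)\times\mathbb{Z}_m$; gluing these gives a bijection $b:X\times\mathbb{Z}_n\to Y\times\mathbb{Z}_m$ which, because the pieces are uniformly bounded and $g$ respects them, lies at bounded distance from $\iota\circ f\circ\pi$. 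By Proposition~\ref{prop:QIdistBij} (or rather its easy direction, which only says a map at bounded distance from a bijection is quasi-one-to-one) we get $(ii)$. Conversely, for $(i)\Rightarrow(iii)$ I would run the same bookkeeping backwards: quasi-$(m/n)$-to-one gives that $\iota\circ f\circ\pi$ is quasi-one-to-one, hence at bounded distance from a bijection $b:X\times\mathbb{Z}_n\to Y\times\mathbb{Z}_m$ by Proposition~\ref{prop:QIdistBij}; then for $x\in X$ the set $b(\{x\}\times\mathbb{Z}_n)$ has $n$ elements, and I would coarsen these fibres into genuine partition pieces. Precisely, one takes $\mathcal{P}_Y$ to be an equivalence-class refinement ensuring each class has size exactly $n$ and bounded diameter (shuffling finitely many points near the ``seams''), pulls back through $b^{-1}$ to get $\mathcal{P}_X$ with pieces of size $m$, and checks that $f$ is at bounded distance from the induced map sending each $\mathcal{P}_X$-piece into the corresponding $\mathcal{P}_Y$-piece. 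The combinatorial chasing here is elementary but must be done carefully to keep all pieces uniformly bounded.

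\medskip

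The middle implication $(ii)\Rightarrow(i)$ is where the F\o lner-type counting enters. Assume $\iota\circ f\circ\pi$ is at bounded distance from a bijection; then it is quasi-one-to-one, so for every finite $B\subset Y\times\mathbb{Z}_m$ we have $\bigl||B|-|(\iota\circ f\circ\pi)^{-1}(B)|\bigr|\le C'|\partial B|$. Now take a finite $A\subset Y$ and feed in $B:=A\times\mathbb{Z}_m$, which has $|B|=m|A|$ and $|\partial B|\asymp|\partial A|$. Unwinding the definitions of $\pi$ and $\iota$, one computes $(\iota\circ f\circ\pi)^{-1}(A\times\mathbb{Z}_m)=f^{-1}(A)\times\mathbb{Z}_n$, which has cardinality $n|f^{-1}(A)|$. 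Substituting gives $\bigl|m|A|-n|f^{-1}(A)|\bigr|\le C''|\partial A|$, i.e. $\bigl|(m/n)|A|-|f^{-1}(A)|\bigr|\le (C''/n)|\partial A|$, which is exactly quasi-$(m/n)$-to-one. This direction is essentially a one-line calculation once the fibre identity is nailed down.

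\medskip

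I expect the main obstacle to be the ``seam-fixing'' in $(i)\Rightarrow(iii)$: turning the bijection $b:X\times\mathbb{Z}_n\to Y\times\mathbb{Z}_m$ supplied by Proposition~\ref{prop:QIdistBij} into honest uniformly-bounded partitions on both sides while keeping the $f$-compatibility. The naive fibres $b(\{x\}\times\mathbb{Z}_n)$ need not be disjoint across different $x$, nor need their $b^{-1}$-preimages in $Y$ organise into size-$n$ cells; one has to perform a bounded local rearrangement (a Hall-type matching argument on a bounded-valence bipartite graph recording near-overlaps) to fix this, and verify the rearrangement moves points only a bounded amount so that $f$ stays at bounded distance from the resulting cell-preserving map. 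Everything else — the $(iii)\Rightarrow(ii)$ gluing and the $(ii)\Rightarrow(i)$ counting — is routine given the bounded-degree hypothesis and Proposition~\ref{prop:QIdistBij}.
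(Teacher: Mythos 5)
A preliminary remark: the paper does not prove Proposition~\ref{prop:kappa} at all --- it quotes it from \cite{Kappa} --- so your argument has to be judged on its own merits rather than against a proof in this text. Your implications $(iii)\Rightarrow(ii)$ and $(ii)\Rightarrow(i)$ are correct as sketched: gluing arbitrary bijections $P\times\mathbb{Z}_n\to\psi(P)\times\mathbb{Z}_m$ gives a bijection at bounded distance from $\iota\circ f\circ\pi$, and the identity $(\iota\circ f\circ\pi)^{-1}(A\times\mathbb{Z}_m)=f^{-1}(A)\times\mathbb{Z}_n$ with $|\partial(A\times\mathbb{Z}_m)|\le m|\partial A|$ yields $(i)$. (In $(i)\Rightarrow(iii)$ you also need the converse counting over \emph{arbitrary} finite $B\subset Y\times\mathbb{Z}_m$, not just product sets, to see that $\iota\circ f\circ\pi$ is quasi-one-to-one; this is a routine fibre-by-fibre estimate --- partial fibres meet $\partial B$ --- but it should be said.)

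The genuine gap is the rectification step in $(i)\Rightarrow(iii)$. Pulling a partition $\mathcal{P}_Y$ of $Y$ into $n$-sets back ``through $b^{-1}$'' produces a partition of $X$ into $m$-sets only if every block $Q\times\mathbb{Z}_m$ is a union of \emph{whole} fibres $b(\{x\}\times\mathbb{Z}_n)$ (incidentally these fibres are already pairwise disjoint; it is their projections to $Y$ that overlap). Encode $b$ by the bipartite multigraph on $X\sqcup Y$ joining $x$ to the $Y$-coordinates of $b(\{x\}\times\mathbb{Z}_n)$: it is $n$-regular on the $X$-side, $m$-regular on the $Y$-side, and has uniformly bounded displacement. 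The compatibility just described forces each matched pair $(P,\psi(P))$ to be a union of connected components of this graph; as soon as $m,n\ge 2$ nothing prevents these components from being infinite, so in general \emph{no} partition is compatible with the given $b$. Nor can this be repaired by a ``bounded local rearrangement near the seams'': already for $m=n=2$ a component may be an alternating cycle with an odd number of $X$-vertices (perfectly consistent with bounded displacement), and then its $X$-vertices cannot be grouped into pairs without borrowing from other, a priori distant, components. Proving that such borrowing can always be carried out with uniformly bounded transport is essentially the whole content of the implication, so the Hall-type cleanup you invoke is not bookkeeping but the missing argument itself.

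A way to close the gap with the tools you are already using: do not rectify $b$. First fix a partition of $Y$ into pieces of size exactly $n$ and uniformly bounded diameter --- this exists for every infinite connected bounded-degree graph, but it is a separate lemma (for non-amenable $Y$ it follows from Theorem~\ref{thm:QIvsBil} applied to the projection $Y\times\mathbb{Z}_n\to Y$; in general it needs a short direct argument). Let $q:Y\to\mathcal{P}_Y$ be the quotient map, an exactly $n$-to-one quasi-isometry onto a bounded-degree graph. By Proposition~\ref{prop:EasyKappa}, $q\circ f$ is quasi-$m$-to-one, so by your integer-case counting the map $x\mapsto(q\circ f(x),0)\in\mathcal{P}_Y\times\mathbb{Z}_m$ is quasi-one-to-one, hence by Proposition~\ref{prop:QIdistBij} at bounded distance from a bijection; equivalently, $q\circ f$ is at bounded distance from an exactly $m$-to-one map $h:X\to\mathcal{P}_Y$ whose fibres have uniformly bounded diameter (because $f$ is a quasi-isometry). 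The fibres of $h$ give $\mathcal{P}_X$, the rule $\psi(h^{-1}(Q)):=Q$ gives the bijection, and any $g$ with $g(x)\in\psi(P)$ for $x\in P$ is at bounded distance from $f$, which is exactly $(iii)$.
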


\paragraph{2.5. A few facts.} We conclude this preliminary section with a few elementary observations about graphs with bounded degree.

\begin{fact}\label{fact:Easy}
Let $X$ be a graph with bounded degree. Then $|A^{+K}| \leq N^K \cdot |A|$ for every finite subset $A \subset X$ and every constant $K \geq 0$, where $N \geq 3$ is a fixed integer larger than the maximal degree of a vertex in $X$.
\end{fact}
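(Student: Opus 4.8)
The plan is a straightforward induction on $K$, after reducing to the case where $K$ is a non-negative integer. Since the graph metric takes integer values, a vertex lies within distance $K$ of $A$ if and only if it lies within distance $\lfloor K\rfloor$ of $A$; hence $A^{+K}=A^{+\lfloor K\rfloor}$, and as $N\geq 1$ we have $N^{\lfloor K\rfloor}\leq N^{K}$. So it suffices to establish the bound for $K\in\mathbb{N}$.

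For the induction, the base case $K=0$ is immediate, since $A^{+0}=A$ and $N^{0}=1$. For the inductive step, I would use the identity $A^{+(K+1)}=\left(A^{+K}\right)^{+1}$, which reduces everything to the single claim that $|B^{+1}|\leq N\cdot|B|$ for every finite subset $B\subset X$. To prove this, write $B^{+1}=B\sqcup\partial B$, where $\partial B$ is the set of vertices outside $B$ adjacent to some vertex of $B$; each vertex of $\partial B$ is joined by an edge to at least one vertex of $B$, and each vertex of $B$ has at most $N-1$ neighbours (as $N$ exceeds the maximal degree), so a crude count gives $|\partial B|\leq (N-1)\cdot|B|$, whence $|B^{+1}|\leq |B|+(N-1)|B|=N\cdot|B|$. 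Applying this with $B=A^{+K}$ and invoking the inductive hypothesis yields $|A^{+(K+1)}|\leq N\cdot|A^{+K}|\leq N\cdot N^{K}|A|=N^{K+1}|A|$, completing the induction.

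There is no genuine obstacle in this argument; the only point requiring a modicum of care is the bound $|\partial B|\leq (N-1)|B|$, which is a routine double-counting (handshake) estimate over the edges between $B$ and $\partial B$, together with the observation that the reduction to integer $K$ is harmless.
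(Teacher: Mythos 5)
Your proof is correct and takes essentially the same route as the paper: both arguments just iterate the bounded-degree growth of one-step neighbourhoods, the paper summing the layers in one line as $|A^{+K}|\leq \sum_{i=0}^{K-1} N^i\,|A|\leq N^K\,|A|$, while you package the identical count as an induction on $K$ via the one-step bound $|B^{+1}|\leq N\,|B|$. If anything your write-up is more careful, handling non-integer $K$ explicitly and avoiding the slip in the paper's displayed inequality, whose right-hand side reads $N^K\cdot|\partial A|$ where $N^K\cdot|A|$ is clearly intended.
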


\begin{proof}
Because every vertex in $X$ has at most $N$ neighbours, it follows that
$$|A^{+K}| \leq  \sum\limits_{i=0}^{K-1} N^i \cdot | A| \leq N^K \cdot |\partial A|$$
as desired.
\end{proof}

\begin{fact}\label{fact:EasyOne}
Let $X$ be a graph with bounded degree. Then $|A^{+K} \backslash A | \leq  N^{K} \cdot |\partial A|$ for every finite subset $A \subset X$ and every constant $K \geq 1$, where $N \geq 3$ is a fixed integer larger than the maximal degree of a vertex in $X$.
\end{fact}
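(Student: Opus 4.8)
Proof plan for Fact 2.31 (the statement $|A^{+K} \setminus A| \leq N^K \cdot |\partial A|$).

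The plan is to mimic the argument of Fact 2.30, but to account for the fact that we are only counting the vertices in the shell around $A$ rather than all of $A^{+K}$. First I would observe that every vertex of $A^{+K} \setminus A$ lies within distance $K$ of $\partial A$: indeed, if $x \notin A$ but $d(x,A) \leq K$, pick a geodesic from $x$ to a nearest vertex of $A$; the last vertex of this geodesic lying outside $A$ is, by definition, a vertex of $\partial A$, and it is at distance at most $K-1 \leq K$ from $x$. Hence $A^{+K} \setminus A \subseteq (\partial A)^{+K}$.

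Next I would bound $|(\partial A)^{+K}|$ using the bounded-degree hypothesis exactly as in Fact 2.30. Since every vertex has at most $N$ neighbours, a ball of radius $K$ around a single vertex has at most $\sum_{i=0}^{K} N^i$ vertices, and more crudely $|(\partial A)^{+K}| \leq N^K \cdot |\partial A|$ once one notes that $\sum_{i=0}^{K-1} N^i \leq N^K$ for $N \geq 3$ and $K \geq 1$ (the displayed chain of inequalities in the proof of Fact 2.30 gives precisely $|S^{+K}| \leq N^K |\partial S|$ applied with $S = \partial A$, although one should be slightly careful that $\partial(\partial A)$ need not equal $\partial A$; it is cleaner to just expand the ball directly). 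Combining the two steps yields $|A^{+K} \setminus A| \leq |(\partial A)^{+K}| \leq N^K \cdot |\partial A|$, which is the claim.

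The only mild subtlety — and the place where I would be most careful — is the off-by-one in the radius: a vertex of $A^{+K} \setminus A$ is at distance $\leq K$ from $A$ but a priori only at distance $\leq K-1$ from $\partial A$ (one step of the geodesic is spent leaving $A$), so in fact $A^{+K}\setminus A \subseteq (\partial A)^{+(K-1)}$, and $|(\partial A)^{+(K-1)}| \leq \left(\sum_{i=0}^{K-1} N^i\right)|\partial A| \leq N^K |\partial A|$ for $N\geq 3$. This is harmless for the stated bound but is the one spot that needs a line of justification; everything else is a routine ball-counting estimate identical in spirit to Fact 2.30.
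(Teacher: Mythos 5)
Your proposal is correct and follows essentially the same route as the paper: the paper's proof consists precisely of the observation $A^{+K} \setminus A \subset (\partial A)^{+(K-1)}$ (your "off-by-one" refinement, justified by the same last-vertex-of-a-geodesic argument) followed by applying the ball-counting estimate of Fact~\ref{fact:Easy} to the set $\partial A$. The only slip — the crude intermediate claim that a radius-$K$ count already gives $N^K|\partial A|$ — is harmless, since you correct it yourself by passing to radius $K-1$, which is exactly what the paper does.
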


\begin{proof}
By noticing that $A^{+K} \backslash A \subset (\partial A)^{+(K-1)}$, the desired conclusion follows from Fact~\ref{fact:Easy}.
\end{proof}

\begin{fact}\label{fact:EasyTwo}
Let $X,Y$ be two graphs with bounded degree, $\kappa>0$ a real number, and $f : X \to Y$ a quasi-isometry. There exists a constant $M \geq 0$ such that $|\partial f^{-1}(A)| \leq M \cdot |\partial A|$ for every finite subset $A \subset Y$. 
\end{fact}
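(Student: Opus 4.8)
The plan is to reduce the statement to a standard fact about quasi-isometries between bounded-degree graphs: preimages of sets under a quasi-isometry are ``controlled'' in the sense that their boundary grows at most linearly in the boundary of the original set. First I would fix an $(A,B)$-quasi-isometry $f : X \to Y$, let $N$ be larger than the maximal degree of both $X$ and $Y$, and fix a quasi-inverse $\bar f : Y \to X$ together with a constant $D \geq 0$ such that $d(\bar f f(x), x) \leq D$ for all $x$ and $d(f \bar f(y), y) \leq D$ for all $y$.

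The key geometric observation is that if $x \in \partial f^{-1}(A)$, then $x \notin f^{-1}(A)$ but $x$ is adjacent to some $x' \in f^{-1}(A)$; hence $f(x) \notin A$ while $f(x')\in A$, and $d(f(x), f(x')) \leq A + B$ since $x,x'$ are adjacent. Therefore $f(x)$ lies within distance $A+B$ of a point of $A$ but is not in $A$, so $f(x) \in (\partial A)^{+(A+B-1)}$ (every point of $A^{+(A+B)}\setminus A$ lies in the $(A+B-1)$-neighbourhood of $\partial A$, as in Fact~\ref{fact:EasyOne}). Consequently $f(\partial f^{-1}(A)) \subset (\partial A)^{+(A+B-1)}$, and so $\partial f^{-1}(A) \subset f^{-1}\big( (\partial A)^{+(A+B-1)} \big)$. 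The remaining point is that $f^{-1}$ of a finite set is finite with cardinality linearly bounded by the cardinality of that set: since $f$ is an $(A,B)$-quasi-isometry, the fibers $f^{-1}(y)$ all have diameter at most $A B + A^2$ or so (two points with the same image are at bounded distance), hence bounded cardinality by Fact~\ref{fact:Easy}; thus $|f^{-1}(S)| \leq C_0 |S|$ for every finite $S \subset Y$, with $C_0$ depending only on $A,B,N$. Combining, $|\partial f^{-1}(A)| \leq C_0 \cdot |(\partial A)^{+(A+B-1)}| \leq C_0 \cdot N^{A+B-1} \cdot |\partial A|$ by Fact~\ref{fact:Easy}, so $M := C_0 N^{A+B-1}$ works.

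The only mild subtlety — and the step I expect to require the most care — is making the bound on fiber cardinalities clean: one should observe that if $f(x) = f(x')$ then $d(x,x') \leq A(d(f(x),f(x')) + B) = AB$, so each fiber sits in a ball of radius $AB$, whence $|f^{-1}(y)| \leq N^{AB}$ uniformly; more generally for a set $S$, $f^{-1}(S) \subset \bigcup_{y \in S} f^{-1}(y)$ gives $|f^{-1}(S)| \leq N^{AB}|S|$. Everything else is bookkeeping with the neighbourhood estimates already recorded in Facts~\ref{fact:Easy} and~\ref{fact:EasyOne}, so no genuine obstacle arises; the statement is essentially the assertion that a quasi-isometry of bounded-degree graphs is a quasi-isometry ``at the level of Cheeger-type ratios.''
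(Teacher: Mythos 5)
Your argument is correct and follows essentially the same route as the paper's proof: show that $\partial f^{-1}(A)$ maps into a bounded neighbourhood of $\partial A$ (the paper uses $A^{+Q}\setminus A$ with $Q$ the Lipschitz-type constant, you use $(\partial A)^{+(A+B-1)}$, which is the same estimate via Fact~\ref{fact:EasyOne}), then pull back using a uniform bound on fiber cardinalities and Fact~\ref{fact:Easy}. Your explicit justification that fibers lie in balls of radius $AB$, hence have at most $N^{AB}$ elements, simply fills in the step the paper states without proof when it introduces the constant $P$.
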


\begin{proof}
Let $C \geq 0$ be such that $\left| \kappa |A|- |f^{-1}(A)| \right| \leq C \cdot |\partial A|$ for every finite subset $A \subset Y$. Also, let $Q \geq 0$ be such that $f$ sends two adjacent vertices to two vertices at distance $\leq Q$; and let $P \geq 1$ denote the maximal cardinality of the preimage under $f$ of a point.

\medskip \noindent
Now, fix a finite subset $A \subset Y$. By definition, if $x$ belongs to $\partial f^{-1}(A)$ then it does not belong to $f^{-1}(A)$ and it has a neighbour $y \in f^{-1}(A)$, so $f(x)$ does not belong to $A$ and it is within $Q$ from $f(y) \in A$, i.e. $f(x) \in A^{+Q} \backslash A$. In other words, we have proved that $\partial f^{-1}(A) \subset f^{-1} ( A^{+Q} \backslash A)$. Then
$$|\partial f^{-1}(A)| \leq \left| f^{-1}(A^{+Q} \backslash A) \right| \leq P \left| A^{+Q} \backslash A \right| \leq PN^Q \cdot |\partial A|$$
where the last inequality is justified by Fact \ref{fact:EasyTwo}. 
\end{proof}

\section{Aptolic quasi-isometries}\label{bigsection:Aptolic}

\subsection{Generalities}\label{section:Aptolic}

\noindent
Recall from the introduction that, given two integers $n,m \geq 2$ and two graphs $X,Y$, a quasi-isometry $q : \mathcal{L}_n(X) \to \mathcal{L}_m(Y)$ is aptolic if it is of aptolic form and if it admits a quasi-inverse of aptolic form, i.e. there exist four maps $\alpha : \mathbb{Z}_n^{(X)} \to \mathbb{Z}_n^{(Y)}$, $\alpha' : \mathbb{Z}_n^{(Y)} \to \mathbb{Z}_n^{(X)}$, $\beta : X \to Y$ and $\beta' : Y \to X$ such that
$$q(c,p)= \left( \alpha(c),\beta(p) \right) \text{ for all $(c,p) \in \mathcal{L}_n(X)$}$$
and such that 
$$(c,p) \mapsto (\alpha'(c), \beta'(p)), \ (c,p) \in \mathcal{L}_m(Y)$$
is a quasi-inverse of $q$.
In this section, we record a few elementary observations about aptolic quasi-isometries. Regarding the classification of lamplighter groups up to quasi-isometry, our main result states that, if there exists an aptolic quasi-isometry $\mathcal{L}_n(X) \to \mathcal{L}_m(Y)$, then $n$ and $m$ must have the same prime divisors. See Proposition \ref{prop:general}. In the sequel, we endow every lamplighter graph with the diligent metric.

\medskip \noindent
We begin by characterising aptolic quasi-isometries among maps of aptolic form.

\begin{prop}\label{prop:AptoQI}
Let $n,m \geq 2$ be two integers, $X,Y$ two unbounded graphs, $\alpha : \mathbb{Z}_n^{(X)} \to \mathbb{Z}_n^{(Y)}$ and $\beta : X \to Y$ two maps. Then
$$q : \left\{ \begin{array}{ccc} \mathcal{L}_n(X) & \to & \mathcal{L}_m(Y) \\ (c,p) & \mapsto & (\alpha(c),\beta(p)) \end{array} \right.$$
is an aptolic quasi-isometry if and only if the following conditions hold:
\begin{itemize}
	\item[(i)] $\alpha$ is a bijection;
	\item[(ii)] $\beta$ is a quasi-isometry;
	\item[(iii)] there exists $Q \geq 0$ such that, for all colourings $c_1,c_2 \in \mathbb{Z}_n^{(X)}$, the Hausdorff distance between $\beta(\mathrm{supp}(c_1-c_2))$ and $\mathrm{supp}(\alpha(c_1)-\alpha(c_2))$ is at most $Q$. 
\end{itemize}
If so, every aptolic quasi-inverse of $q$ is of the form
$$(c,p) \mapsto \left( \alpha^{-1}(c), \bar{\beta}(p) \right), \ (c,p) \in \mathcal{L}_m(Y)$$
where $\bar{\beta} : Y \to X$ is a quasi-inverse of $\beta$. 
\end{prop}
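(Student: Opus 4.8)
The plan is to prove both directions by translating geometric statements about $\mathcal{L}_n(X)$ and $\mathcal{L}_m(Y)$ into combinatorial statements about colourings, using the explicit formula for the diligent metric recalled in the preliminaries: namely, $d_{\mathrm{dil}}((c_1,p_1),(c_2,p_2))$ is the shortest length of a path in $X$ starting at $p_1$, visiting all of $\mathrm{supp}(c_1-c_2)$, and ending at $p_2$.

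\medskip\noindent
\textbf{The ``if'' direction.} Assume (i), (ii), (iii) hold. First I would check that $q$ is a quasi-isometric embedding. The lower bound: if $d_{\mathrm{dil}}(q(c_1,p_1),q(c_2,p_2))$ is small, then $\beta(p_1),\beta(p_2)$ are close and $\mathrm{supp}(\alpha(c_1)-\alpha(c_2))$ is contained in a small ball around them; by (iii) this forces $\beta(\mathrm{supp}(c_1-c_2))$ to be contained in a slightly larger ball, and since $\beta$ is a quasi-isometry (hence coarsely surjective with a quasi-inverse $\bar\beta$ that is Lipschitz-up-to-constants) this pulls back to $\mathrm{supp}(c_1-c_2)$, $p_1$, $p_2$ all lying in a bounded region of $X$, so the original diligent distance is controlled. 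The upper bound is symmetric and easier: a short travelling-salesman path downstairs maps, via $\beta$ and (iii), to a path realising (up to multiplicative and additive constants) the required tour upstairs in $Y$. Coarse surjectivity of $q$ follows from surjectivity of $\alpha$ together with coarse surjectivity of $\beta$: given $(d,y)\in\mathcal{L}_m(Y)$, pick $c$ with $\alpha(c)=d$ and $p$ with $\beta(p)$ close to $y$; then $q(c,p)=(d,\beta(p))$ is close to $(d,y)$. Finally, I would verify that the proposed inverse $(c,p)\mapsto(\alpha^{-1}(c),\bar\beta(p))$, for $\bar\beta$ any quasi-inverse of $\beta$, is itself of aptolic form and is a genuine quasi-inverse: the compositions with $q$ move the $X$-coordinate by $d(\bar\beta\beta(p),p)\leq\mathrm{const}$ and fix the colouring coordinate, so they lie at bounded distance from the identity in the diligent metric (using again the metric formula to see that a bounded change in position plus no change in colouring is a bounded change in distance). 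This shows $q$ is an aptolic quasi-isometry and simultaneously pins down the form of its aptolic quasi-inverses.

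\medskip\noindent
\textbf{The ``only if'' direction.} Suppose $q$ is an aptolic quasi-isometry with aptolic quasi-inverse $(c,p)\mapsto(\alpha'(c),\beta'(p))$. For (i): since the composite $(c,p)\mapsto(\alpha'\alpha(c),\beta'\beta(p))$ is at bounded distance from the identity, and since two colourings $c\neq c'$ with $\alpha'\alpha(c)=\alpha'\alpha(c')$ would give points $(c,p)$ and $(c',p)$ mapped to points with identical colouring coordinate but whose diligent distance grows without bound as one moves $\mathrm{supp}(c-c')$ far out in the (unbounded) graph $X$ — I would make this precise using that $X$ is unbounded to push the support to distance $\gg$ the quasi-inverse constants — we get $\alpha'\alpha=\mathrm{id}$, and symmetrically $\alpha\alpha'=\mathrm{id}$, so $\alpha$ is a bijection with inverse $\alpha'$. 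For (ii): restrict $q$ to a single leaf $X(c_0)$; since $\alpha$ is a bijection and $q$ changes the colouring coordinate only through $\alpha$, $q$ maps $X(c_0)$ onto the leaf $Y(\alpha(c_0))$, and the induced map is exactly $\beta$ up to the identification of leaves with $X$ and $Y$; the diligent metric restricted to a leaf is the graph metric of $X$ (resp.\ $Y$), so $\beta$ is a quasi-isometric embedding, and coarse surjectivity follows from that of $q$ together with $\alpha$ being onto. This forces $\beta'$ to be a quasi-inverse of $\beta$ by the same leaf argument applied to the quasi-inverse. For (iii): apply $q$ to the two points $(c_1,p)$ and $(c_2,p)$ for an arbitrary basepoint $p$. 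On one hand $d_{\mathrm{dil}}((c_1,p),(c_2,p))$ is the length of the shortest closed-ish tour from $p$ through $\mathrm{supp}(c_1-c_2)$ back to $p$, which is comparable (up to a factor $2$ and additive constants depending only on where $p$ sits) to $\mathrm{diam}(\{p\}\cup\mathrm{supp}(c_1-c_2))$; on the other hand the image points are $(\alpha(c_1),\beta(p))$ and $(\alpha(c_2),\beta(p))$, whose diligent distance is comparably $\mathrm{diam}(\{\beta(p)\}\cup\mathrm{supp}(\alpha(c_1)-\alpha(c_2)))$. Since $q$ is a quasi-isometry these two diameters are comparable for every choice of $p$; choosing $p$ to range over $\mathrm{supp}(c_1-c_2)$ and its $\beta$-image comparing to $\mathrm{supp}(\alpha(c_1)-\alpha(c_2))$, I would deduce that $\beta(\mathrm{supp}(c_1-c_2))$ and $\mathrm{supp}(\alpha(c_1)-\alpha(c_2))$ have Hausdorff distance bounded by a constant $Q$ independent of $c_1,c_2$: if a point of one set were far from the other, placing the basepoint there would produce a pair on which $q$ badly distorts distances.

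\medskip\noindent
\textbf{Main obstacle.} The routine parts are the ``if'' direction estimates. The delicate point is condition (iii) in the ``only if'' direction: extracting a \emph{uniform} Hausdorff bound between $\beta(\mathrm{supp}(c_1-c_2))$ and $\mathrm{supp}(\alpha(c_1)-\alpha(c_2))$ from the mere quasi-isometry inequalities requires choosing basepoints cleverly and carefully bookkeeping how the travelling-salesman/diameter comparison degrades when $\mathrm{supp}(c_1-c_2)$ is disconnected or spread out in $X$. One must also be slightly careful that the comparison ``diligent distance $\asymp$ diameter of $\{p\}\cup\mathrm{supp}$'' is only true up to constants that depend on $X$ having bounded geometry or at least on controlling how far apart the support points are — here the fact that $\beta$ is already known (from (ii)) to be a quasi-isometry is what makes the transfer between the two sides uniform. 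I would isolate this diameter-versus-diligent-distance comparison as a small self-contained observation first, then feed it into the argument for (iii).
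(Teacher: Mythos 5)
Your sketches for (i), (ii), the coarse surjectivity, and the ``compositions are close to the identity'' check are essentially the paper's arguments (for (i), note that what you must prove is $\alpha'\circ\alpha=\mathrm{id}$ directly, not merely injectivity of $\alpha'\circ\alpha$; the same mechanism --- the composite is within $K$ of the identity, so $\mathrm{supp}(\alpha'\alpha(c)-c)\subset B(p,K)$ for every $p$, and $X$ is unbounded --- gives exactly that). But two steps of your plan fail as written. In the ``only if'' direction, your route to (iii) rests on the comparison ``diligent distance between $(c_1,p)$ and $(c_2,p)$ $\asymp$ $\mathrm{diam}(\{p\}\cup\mathrm{supp}(c_1-c_2))$'', which is false and is not rescued by bounded degree: the diligent distance is a travelling-salesman length, and in $\mathbb{Z}^2$ a support filling an $n\times n$ square has diameter $\sim n$ but tour length $\sim n^2$. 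Worse, even with tour lengths in place of diameters, comparing $q$ on the single pair $(c_1,p),(c_2,p)$ only shows that a point of one support at distance $R$ from the other forces $R\lesssim d((c_1,p),(c_2,p))$, a quantity which is unbounded over all $c_1,c_2$; no choice of basepoint extracts a uniform $Q$ this way. The paper obtains uniformity by localising: interpolate $c_1=a_1,a_2,\ldots,a_N=c_2$ changing one lamp at a time, with the arrow placed at the lamp $p_i$ being changed; each step has upstairs distance $1$, hence $d(q(a_i,p_i),q(a_{i+1},p_i))\leq C+K$, so $\alpha(a_i)$ and $\alpha(a_{i+1})$ differ only inside $B(\beta(p_i),C+K)$, and the union over $i$ gives $\mathrm{supp}(\alpha(c_1)-\alpha(c_2))\subset\beta(\mathrm{supp}(c_1-c_2))^{+(C+K)}$; the reverse inclusion comes from running the same argument for the aptolic quasi-inverse and using $\beta\circ\bar\beta\approx\mathrm{id}$.

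In the ``if'' direction, your lower bound (``$\mathrm{supp}(c_1-c_2)$, $p_1$, $p_2$ all lie in a bounded region of $X$, so the original diligent distance is controlled'') is a non sequitur: a quasi-isometric embedding requires $d((c_1,p_1),(c_2,p_2))$ to be bounded \emph{linearly} in $D:=d(q(c_1,p_1),q(c_2,p_2))$, whereas confining everything to a ball of radius $O(D)$ only bounds the tour length by the number of vertices it may have to visit, which is exponential in $D$ even for bounded degree (and unbounded without it). The paper's way around this is the one to follow: first check that the pair $(\alpha^{-1},\bar\beta)$ again satisfies (i)--(iii) with a larger constant; prove the coarse Lipschitz \emph{upper} bound for both $q$ and $\bar q$ by pushing the optimal upstairs tour forward via $\beta$ (resp.\ $\bar\beta$) and thickening it so that it visits the entire $Q$-neighbourhood of the image of the support --- this is where condition (iii) and bounded degree enter, and it is also the detail hidden in your ``up to multiplicative and additive constants''; then deduce the lower bound for $q$ from the upper bound for $\bar q$ together with $\bar q\circ q$ lying at distance $\leq K$ from the identity. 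With these two repairs your outline matches the paper's proof; without them, (iii) and the quasi-isometric embedding lower bound are not established.
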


\begin{proof}
First, assume that $q$ is an aptolic quasi-isometry. Let $C,K \geq 0$ be two constants and $\bar{\alpha} : \mathbb{Z}_n^{(Y)}$, $\bar{\beta} : Y \to X$ two maps such that $q$ is a $(C,K)$-quasi-isometry, such that
$$\bar{q} : (c,p) \mapsto (\bar{\alpha}(c), \bar{\beta}(p)), \ (c,p) \in \mathcal{L}_m(Y)$$
is a quasi-isometry, and such that $q \circ \bar{q}$, $\bar{q} \circ q$ are within $K$ from identities. 

\medskip \noindent
We begin by proving $(ii)$. Notice that, for all $a,b \in X$, we have
$$\begin{array}{lcl} d(\beta(a),\beta(b)) & = & d((\alpha(0),\beta(a)),(\alpha(0),\beta(b))) = d(q(0,a),q(0,b)) \\ \\ & \leq & C \cdot d((0,a),(0,b))+K = C \cdot d(a,b)+K, \end{array}$$
and we show similarly that
$$d(\beta(a),\beta(b)) \geq \frac{1}{C} d(a,b) - K.$$
Thus, $\beta$ defines a $(C,K)$-quasi-isometric embedding. Next, notice that, for every $h \in Y$, there exists some $(c,p) \in \mathcal{L}_n(X)$ such that $d(q(c,p),(0,h)) \leq K$. Since 
$$d\left(\beta(p),h \right) \leq d\left((\alpha(c),\beta(p)),(0,h) \right) = d \left( q(c,p),(0,h) \right) \leq K,$$
we conclude that $\beta$ is a quasi-isometry. Notice our arguments only use the fact that $q$ is a quasi-isometry. We record this assertion for future use.

\begin{fact}\label{fact:Beta}
Given two maps $\alpha : \mathbb{Z}_n^{(X)} \to \mathbb{Z}_n^{(Y)}$ and $\beta : X \to Y$, if
$$(c,p) \mapsto (\alpha(c), \beta(p)), \ (c,p) \in \mathcal{L}_n(X)$$
defines a quasi-isometry $\mathcal{L}_n(X) \to \mathcal{L}_m(Y)$, then $\beta$ defines a quasi-isometry $X \to Y$. 
\end{fact}

\noindent
Thus, we have proved $(ii)$. Observe that, by symmetry, we also know that $\bar{\beta}$ is a quasi-isometry. 

\medskip \noindent
Now, we want to prove $(i)$. Given $(c,p) \in \mathcal{L}_m(Y)$, we have
$$d((\alpha \circ \bar{\alpha} (c), \beta \circ \bar{\beta}(p)), (c,p)) = d( q \circ \bar{q}(c,p), (c,p)) \leq K.$$
Therefore, we must have $d(\beta \circ \bar{\beta}(p),p) \leq K$ and $c, \alpha \circ \bar{\alpha}(c)$ may only differ in the ball $B(p,K)$. The former observation implies that $\beta \circ \bar{\beta}$ is within $K$ from the identity, and we deduce from the latter observation by letting $p$ go to infinity in $Y$ that $\alpha \circ \bar{\alpha}$ is the identity. By symmetry, we obtain similarly that $\bar{\beta} \circ \beta$ is within $K$ from the identity and that $\bar{\alpha} \circ \alpha$ is the identity. Consequently, $\alpha$ is a bijection and $\bar{\alpha}=\alpha^{-1}$, proving $(i)$; also, $\bar{\beta}$ must be a quasi-inverse of $\beta$, proving the last assertion of our proposition. 

\medskip \noindent
Finally, we want to prove $(iii)$. So let $c_1,c_2 \in \mathbb{Z}_n^{(X)}$ be two colourings. Fix a sequence 
$$a_1=c_1, \ a_2, \ldots, \ a_{n-1}, \ a_n=c_2 \in \mathbb{Z}_n^{(X)}$$
such that, for every $1 \leq i \leq n-1$, $a_i$ and $a_{i+1}$ differ at exactly one point $p_i$. Observe that, for every $1 \leq i \leq n-1$, we have
$$\begin{array}{lcl} d((\alpha(a_i),\beta(p_i)), (\alpha(a_{i+1}),\beta(p_i))) & = & d( q(a_i,p_i),q(a_{i+1},p_i)) \\ \\ & \leq & C d( (a_i,p_i), (a_{i+1},p_i)) +K = C+K, \end{array}$$
so $\alpha(a_i)$ and $\alpha(a_{i+1})$ may only differ in the ball $B(\beta(p_i),C+K)$. It follows that $\alpha(a_1)=\alpha(c_1)$ and $\alpha(a_n)=\alpha(c_2)$ may only differ in 
$$\bigcup\limits_{i=1}^{n-1} B(\beta(p_i),C+K) = \beta( \{p_1,\ldots, p_{n-1} \})^{+C+K} = \beta(\mathrm{supp}(c_1-c_2)^{+C+K}.$$
In other words, we have proved that $\mathrm{supp}(\alpha(c_1)-\alpha(c_2))$ lies in the $(C+K)$-neighbourhood of $\beta(\mathrm{supp}(c_1-c_2))$. The same argument applied to $\bar{q}$ and $\alpha(c_1),\alpha(c_2)$ shows that $\mathrm{supp}(c_1-c_2)$ lies in the $(C+K)$-neighbourhood of $\bar{\beta}(\mathrm{supp}(\alpha(c_1)-\alpha(c_2)))$. So $\beta(\mathrm{supp}(c_1-c_2))$ must lie in the $(C(C+K)+K)$-neighbourhood of $\beta \circ \bar{\beta}(\mathrm{supp}(\alpha(c_1)-\alpha(c_2)))$, the latter being contained in the $K$-neighbourhood of $\mathrm{supp}(\alpha(c_1)-\alpha(c_2))$. Thus, we know that, conversely, $\beta(\mathrm{supp}(c_1-c_2))$ lies in the $(C(C+K)+2K)$-neighbourhood of $\mathrm{supp}(\alpha(c_1)-\alpha(c_2))$. This concludes the proof of $(iii)$.

\medskip \noindent
Conversely, assume that $(i)-(iii)$ hold and let us prove that $q$ is an aptolic quasi-isometry. Let $C,K \geq 0$ be such that $\beta$ is a $(C,K)$-quasi-isometry and such that there exists a $(C,K)$-quasi-isometry $\bar{\beta}$ with $\beta \circ \bar{\beta}$, $\bar{\beta} \circ \beta$ within $K$ from identities. Set
$$\bar{q} : \left\{ \begin{array}{ccc} \mathcal{L}_m(Y) & \to & \mathcal{L}_n(X) \\ (c,p) & \mapsto & \left( \alpha^{-1}(c) , \bar{\beta}(p) \right) \end{array} \right.,$$
and observe that 
$$q \circ \bar{q} : (c,p) \mapsto (c, \beta \circ \bar{\beta}(p)) \text{ and } \bar{q} \circ q : (c,p) \mapsto (c, \bar{\beta}\circ \beta(p))$$
are at distance $\leq K$ from identities. 

\medskip \noindent
Let $(c_1,p_1),(c_2,p_2) \in \mathcal{L}_n(X)$ be two points. Fix a path $\zeta$ of minimal length that starts from $p_1$, visits all the points in $\mathrm{supp}(c_1-c_2)$, and ends at $p_2$. Let $\xi \subset \mathcal{L}_m(Y)$ denote a concatenation of geodesics connecting any two consecutive points along $q(\zeta)$. Notice that $\xi$ has length $\leq (C+K) \mathrm{length}(\zeta)$ according to $(ii)$. By construction, $\xi$ starts from $\beta(p_1)$, visits all the points in $\beta(\mathrm{supp}(c_1-c_2))$, and ends at $\beta(p_2)$. Denoting by $L \geq 1$ the length of one path that visits all the points in a ball of radius $Q$ in $\mathcal{L}_m(Y)$ and that both starts and ends at the centre, we know that there exists a path $\eta$ of length $\leq L \cdot \mathrm{length}(\xi)$ that starts from $\beta(p_1)$, visits all the points in the $Q$-neighbourhood of $\beta(\mathrm{supp}(c_1-c_2))$, and ends at $\beta(p_2)$. Because $\mathrm{supp}(\alpha(c_1)-\alpha(c_2))$ lies in the $Q$-neighbourhood of $\beta(\mathrm{supp}(c_1-c_2))$ according to $(iii)$, it follows that
$$\begin{array}{lcl} d(q(c_1,p_1),q(c_2,p_2)) & = & d((\alpha(c_1),\beta(p_1)), (\alpha(c_2),\beta(p_2))) \leq \mathrm{length}(\eta) \leq L \cdot \mathrm{length}(\xi) \\ \\ & \leq & L(C+K) \cdot \mathrm{length}(\zeta) = L(C+K) \cdot d((c_1,p_1),(c_2,p_2)). \end{array}$$
Observe that $\bar{q}$ also satisfies $(i)-(iii)$. For $(i)$ and $(ii)$, it is clear. For $(iii)$, we know that, for all colourings $c_1,c_2 \in \mathbb{Z}_n^{(Y)}$, the Hausdorff distance between $\beta(\mathrm{supp}(\alpha^{-1}(c_1)-\alpha^{-1}(c_2)))$ and $\mathrm{supp}(c_1-c_2)$ is at most $Q$. So the Hausdorff distance between $\bar{\beta} \circ \beta(\mathrm{supp}(\alpha^{-1}(c_1)-\alpha^{-1}(c_2)))$ and $\bar{\beta}(\mathrm{supp}(c_1-c_2))$ is at most $(C+K)Q$. But the Hausdorff distance between $\bar{\beta} \circ \beta(\mathrm{supp}(\alpha^{-1}(c_1)-\alpha^{-1}(c_2)))$ and $\mathrm{supp}(\alpha^{-1}(c_1)-\alpha^{-1}(c_2))$ is at most $K$, so we conclude that the Hausdorff distance between  $\bar{\beta}(\mathrm{supp}(c_1-c_2))$ and $\mathrm{supp}(\alpha^{-1}(c_1)-\alpha^{-1}(c_2))$ is at most $(C+K)Q+K$, as desired. Therefore, by reproducing the previous argument, we show that 
$$d(\bar{q}(c_1,p_1),\bar{q}(c_2,p_2)) \leq M (C+K) \cdot d((c_1,p_1),(c_2,p_2))$$
for all $(c_1,p_1),(c_2,p_2) \in \mathcal{L}_m(Y)$, where $M$ denotes the length of one path that visits all the points in a ball of radius $(C+K)Q+K$ in $\mathcal{L}_n(X)$ and that both starts and ends at the centre. We deduce from the previous two centred inequalities that
$$\begin{array}{lcl} d(q(c_1,p_1),q(c_2,p_2)) & \geq & \displaystyle \frac{1}{M(C+K)} d( \bar{q} \circ q (c_1,p_1), \bar{q}\circ q(c_2,p_2)) \\ \\ & \geq & \displaystyle \frac{1}{M(C+K)} d((c_1,p_1),(c_2,p_2)) - \frac{2K}{M(C+K)} \end{array}$$
for all $(c_1,p_1),(c_2,p_2) \in \mathcal{L}_n(X)$; and that
$$\begin{array}{lcl} d(\bar{q}(c_1,p_1), \bar{q}(c_2,p_2)) & \geq & \displaystyle \frac{1}{L(C+K)} d(q \circ \bar{q} (c_1,p_1), q\circ \bar{q}(c_2,p_2)) \\ \\ & \geq & \displaystyle \frac{1}{L(C+K)} d((c_1,p_1),(c_2,p_2)) - \frac{2K}{L(C+K)}. \end{array}$$
Thus, $q$ is a quasi-isometry with $\bar{q}$ as a quasi-inverse, proving that $q$ is an aptolic quasi-isometry.
\end{proof}

\noindent
Now, we are ready to state the main result of this section. The key point is that it imposes restrictions on the integers $n,m$ if there exists an aptolic quasi-isometry $\mathcal{L}_n(X) \to \mathcal{L}_m(Y)$.

\begin{prop}\label{prop:general}
Let $n,m \geq 2$ be two integers, $X,Y$ two unbounded graphs, and $q: \mathcal{L}_n(X)\to \mathcal{L}_m(Y)$ an aptolic quasi-isometry, i.e. there exist a bijection $\alpha : \mathbb{Z}_n^{(X)} \to \mathbb{Z}_n^{(Y)}$ and a quasi-isometry $\beta : X \to Y$ such that $q(c,p)= (\alpha(c),\beta(p))$ for all $(c,p) \in \mathcal{L}_n(X)$. For every quasi-inverse $\bar{\beta}$ of $\beta$, there exists a constant $Q \geq 0$ such that:
\begin{itemize}
	\item[] For all subset $A_1\subset X$ and number $Q' \geq Q$, $\alpha^{-1}\left( \mathcal{L}\left( \beta(A_1)^{+Q'} \right) \right)$ is a union of cosets of $\mathcal{L}(A_1)$; conversely, for all subset $A_2\subset Y$ and number $Q' \geq Q$, 
$\alpha \left( \mathcal{L} \left( \bar{\beta}(A_2)^{+Q'} \right) \right)$ is a union of cosets of $\mathcal{L}(A_2)$. 
\end{itemize}
As a consequence, $n$ and $m$ have the same prime divisors.
 \end{prop}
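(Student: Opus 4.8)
The plan is to extract everything from the support control of condition~(iii) in Proposition~\ref{prop:AptoQI}, applied simultaneously to $q$ and to one of its aptolic quasi-inverses, after which only elementary bookkeeping with supports and finite cardinalities remains. So, first, fix a quasi-inverse $\bar\beta$ of $\beta$; by the last assertion of Proposition~\ref{prop:AptoQI}, the map $\bar q\colon(c,p)\mapsto(\alpha^{-1}(c),\bar\beta(p))$ is an aptolic quasi-inverse of $q$. Applying condition~(iii) of that proposition to $q$ and to $\bar q$ and taking for $Q$ the larger of the two constants obtained, I get a single $Q\ge 0$ such that $\mathrm{supp}(\alpha(c_1)-\alpha(c_2))\subseteq\beta(\mathrm{supp}(c_1-c_2))^{+Q}$ for all $c_1,c_2\in\mathbb{Z}_n^{(X)}$ and $\mathrm{supp}(\alpha^{-1}(c_1)-\alpha^{-1}(c_2))\subseteq\bar\beta(\mathrm{supp}(c_1-c_2))^{+Q}$ for all $c_1,c_2\in\mathbb{Z}_m^{(Y)}$.

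Next I would prove the union-of-cosets statements. Fix $A_1\subseteq X$ and $Q'\ge Q$. It suffices to check that, whenever $c,c'\in\mathbb{Z}_n^{(X)}$ satisfy $\mathrm{supp}(c-c')\subseteq A_1$ (i.e.\ $c-c'\in\mathcal{L}(A_1)$) and $\alpha(c)\in\mathcal{L}(\beta(A_1)^{+Q'})$, then $\alpha(c')\in\mathcal{L}(\beta(A_1)^{+Q'})$ too; since the condition $\mathrm{supp}(c-c')\subseteq A_1$ is symmetric in $c$ and $c'$, this precisely says that $\alpha^{-1}(\mathcal{L}(\beta(A_1)^{+Q'}))$ is a union of $\mathcal{L}(A_1)$-cosets. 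To see it, write $\alpha(c')=\alpha(c)+(\alpha(c')-\alpha(c))$; then $\mathrm{supp}(\alpha(c'))\subseteq\mathrm{supp}(\alpha(c))\cup\mathrm{supp}(\alpha(c')-\alpha(c))\subseteq\beta(A_1)^{+Q'}\cup\beta(\mathrm{supp}(c-c'))^{+Q}\subseteq\beta(A_1)^{+Q'}$, using the first inclusion from the previous paragraph together with $\mathrm{supp}(c-c')\subseteq A_1$ and $Q\le Q'$. The converse statement about $\alpha(\mathcal{L}(\bar\beta(A_2)^{+Q'}))$ follows by repeating the argument for $\bar q$ in place of $q$, i.e.\ interchanging $X\leftrightarrow Y$, $n\leftrightarrow m$, $\alpha\leftrightarrow\alpha^{-1}$ and $\beta\leftrightarrow\bar\beta$.

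Finally, to get that $n$ and $m$ have the same prime divisors, I would specialise $A_1$ to a single vertex. Pick $x\in X$ and any $Q'\ge Q$; then $\mathcal{L}(\{x\})\cong\mathbb{Z}_n$ has order $n$, while $\mathcal{L}(\beta(\{x\})^{+Q'})=\mathcal{L}(B(\beta(x),Q'))$ has order $m^{N}$ with $N:=|B(\beta(x),Q')|$ finite. As $\alpha$ is a bijection, $\alpha^{-1}(\mathcal{L}(B(\beta(x),Q')))$ has cardinality $m^{N}$; being a nonempty union of cosets of $\mathcal{L}(\{x\})$ by the previous step, its cardinality is a multiple of $n$, so $n\mid m^{N}$ and hence every prime divisor of $n$ divides $m$. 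The symmetric argument, using a vertex $y\in Y$ and the second statement, gives $m\mid n^{N'}$ with $N'=|B(\bar\beta(y),Q')|$, so every prime divisor of $m$ divides $n$; therefore $n$ and $m$ have the same prime divisors.

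I do not expect a genuine obstacle here: the geometric heart of the matter has already been carried out in Proposition~\ref{prop:AptoQI}, and what remains is purely combinatorial. The only point requiring a little care is obtaining a \emph{single} constant $Q$ that simultaneously controls $\alpha$ and $\alpha^{-1}$, which is why condition~(iii) must be invoked for both $q$ and $\bar q$; one should also keep in mind that the finiteness of the balls $B(\beta(x),Q')$, used in the last step, relies on the graphs being locally finite (automatic under the bounded-degree hypotheses in force elsewhere in the paper).
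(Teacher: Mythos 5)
Your proof is correct and takes essentially the same route as the paper: both obtain the support-control constant from Proposition~\ref{prop:AptoQI}(iii) applied to $q$ and to an aptolic quasi-inverse built from $\bar\beta$, deduce that $\alpha^{-1}(\mathcal{L}(\beta(A_1)^{+Q'}))$ (and symmetrically $\alpha(\mathcal{L}(\bar\beta(A_2)^{+Q'}))$) is stable under adding $\mathcal{L}(A_1)$ (resp.\ $\mathcal{L}(A_2)$), and then compare cardinalities of singleton cosets using that $\alpha$ is a bijection. The only (harmless) difference is that the paper reaches the coset-stability via an induction on $|A|$ from the single-lamp case (Lemmas~\ref{lem:F(A)} and~\ref{lem:cosetsF(A)}), whereas you invoke condition~(iii) for arbitrary pairs of colourings and get the inclusion in one step; your caveat about local finiteness of the balls matches what the paper's own counting implicitly uses.
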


\noindent
Our proof relies on the following two preliminary lemmas.

\begin{lemma}\label{lem:F(A)}
Let $n,m \geq 2$, be two integers and $X,Y$ two graphs. Assume that we are given a constant $Q \geq 0$ and two maps $\alpha : \mathbb{Z}_n^{(X)} \to \mathbb{Z}_n^{(Y)}$ and $\beta :X \to Y$ such that, for all colourings $a,b \in \mathbb{Z}_n^{(X)}$ that satisfy $\mathrm{supp}(a-b)\subset \{p\}$ for some $p \in X$, we have $\mathrm{supp}(a-b)\subset B(\beta (p),Q)$.
 Then, for every subset $A\subset X$ and every colouring $c\in \mathbb{Z}_n^{(X)}$, we have 
 \[\alpha(c+\mathcal{L}(A))\subset \alpha(c)+ \mathcal{L} \left( \beta(A)^{+Q} \right).\]
\end{lemma}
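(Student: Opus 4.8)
The plan is to prove Lemma~\ref{lem:F(A)} by an induction argument that reduces the general case to the hypothesis, which concerns colourings differing at a single vertex. First I would fix a subset $A \subset X$ and a colouring $c \in \mathbb{Z}_n^{(X)}$, and take an arbitrary element $c + d$ with $d \in \mathcal{L}(A)$; I need to show that $\alpha(c+d)$ and $\alpha(c)$ differ only on $\beta(A)^{+Q}$. Since $d$ is finitely supported and $\mathrm{supp}(d) \subset A$, I can write a finite chain $c = a_0, a_1, \ldots, a_\ell = c+d$ in $\mathbb{Z}_n^{(X)}$ where each consecutive pair $a_i, a_{i+1}$ differs at exactly one vertex $p_i \in \mathrm{supp}(d) \subset A$ (walk through the support of $d$ one coordinate at a time, adjusting the value at each vertex in a single step).

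The key step is then to apply the hypothesis to each consecutive pair: since $\mathrm{supp}(a_i - a_{i+1}) \subset \{p_i\}$, the hypothesis gives $\mathrm{supp}(\alpha(a_i) - \alpha(a_{i+1})) \subset B(\beta(p_i), Q)$. Here I should note that this uses a mild abuse of notation in the statement (the hypothesis as written says $\mathrm{supp}(a-b) \subset B(\beta(p),Q)$ but clearly means $\mathrm{supp}(\alpha(a)-\alpha(b)) \subset B(\beta(p),Q)$, since otherwise the conclusion is vacuous); I would read it in the sensible way. Then by the triangle-inequality property of supports, namely that $\mathrm{supp}(u - w) \subset \mathrm{supp}(u-v) \cup \mathrm{supp}(v-w)$, I get
\[
\mathrm{supp}(\alpha(c) - \alpha(c+d)) \subset \bigcup_{i=0}^{\ell-1} \mathrm{supp}(\alpha(a_i) - \alpha(a_{i+1})) \subset \bigcup_{i=0}^{\ell-1} B(\beta(p_i), Q) \subset \beta(A)^{+Q},
\]
where the last inclusion holds because each $p_i \in A$. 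Hence $\alpha(c+d) \in \alpha(c) + \mathcal{L}(\beta(A)^{+Q})$, which is exactly the claim.

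I do not expect a serious obstacle here; the only points requiring a little care are the chain construction (making sure one can always interpolate between two finitely supported colourings by single-vertex modifications, which is immediate since $\mathbb{Z}_n^{(X)}$ is a direct sum) and the bookkeeping that every intermediate support point lies in $A$ so that the union of balls is contained in $\beta(A)^{+Q}$. The lemma is purely combinatorial and makes no use of metric geometry beyond the definition of $\beta(A)^{+Q}$ as the $Q$-neighbourhood of $\beta(A)$ in $Y$, so the argument above should be essentially the whole proof.
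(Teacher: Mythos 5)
Your proof is correct and follows essentially the same route as the paper: the paper reduces to finite $A$ and inducts on $|A|$, peeling off one vertex at a time and invoking the single-vertex hypothesis (read, as you do, with $\mathrm{supp}(\alpha(a)-\alpha(b))$ in place of the typo $\mathrm{supp}(a-b)$), which is exactly your chain through $\mathrm{supp}(d)$ combined with the inclusion $\mathrm{supp}(u-w)\subset\mathrm{supp}(u-v)\cup\mathrm{supp}(v-w)$. Your phrasing even avoids the paper's preliminary reduction to finite $A$, since finiteness of $\mathrm{supp}(d)$ does the work automatically.
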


\begin{proof}
Clearly, it is enough to prove the inclusion above for all finite subsets in $A$, so we can assume without loss of generality that $A$ is finite. We argue by induction over the cardinality of $A$. The case $|A|=1$ is covered by the assumption of the lemma. Now, assume that our assertion holds for a given cardinality and for every colouring. Fix an $a\in A$ and obverse that, given two colourings $c\in \mathcal{L}(H_1)$ and $c'\in c+\mathcal{L}(A)$, we have $c'\in c''+\mathcal{L}(\{a\})$ for some $c''\in c+\mathcal{L}(A \backslash \{a\})$ since $c+F_1(A)=c+\mathcal{L}(A\backslash\{a\})+\mathcal{L}(\{a\})$. Applying the inductive assumption to $c''$, we deduce that $\alpha(c'')\in \alpha(c)+\mathcal{L}(\beta(A\backslash \{a\})^{+Q})$. And applying our assumption to $c''$ yields $\alpha(c')\in \alpha(c'')+\mathcal{L}(\{a\}^{+Q})$. Hence
\[\alpha(c')\in \alpha(c)+\mathcal{L} \left( \beta(A \backslash \{a\})^{+Q} \right) +\mathcal{L} \left( \{a\}^{+Q} \right) =\alpha (c)+\mathcal{L} \left( \beta(A\backslash \{a\})^{+Q} \cup \{\beta(a)\}^{+Q} \right).\]
And we conclude thanks to the general formula $(X \cup Y)^{+Q}=X^{+Q}\cup Y^{+Q}$.
\end{proof}

\begin{lemma}\label{lem:cosetsF(A)}
Under the assumptions of Lemma \ref{lem:F(A)}, we have that, for every colouring $c\in \mathbb{Z}_n^{(X)}$, every subset $A \subset X$, and every $Q'\geq Q$, $\alpha^{-1}(\alpha(c)+\mathcal{L}(\beta(A)^{+Q'}))$ is a union of cosets of $\mathcal{L}(A)$. In particular, its cardinality is a multiple of $m^{|A|}$.
\end{lemma}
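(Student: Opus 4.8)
The plan is to leverage Lemma~\ref{lem:F(A)} together with a symmetric statement obtained from it. Note that Lemma~\ref{lem:F(A)} tells us $\alpha$ maps $c + \mathcal{L}(A)$ into $\alpha(c) + \mathcal{L}(\beta(A)^{+Q})$. To prove that the preimage set is a \emph{union of cosets} of $\mathcal{L}(A)$, I would argue as follows. Fix a colouring $c$, a subset $A \subset X$, and $Q' \geq Q$. It suffices to check that if $\alpha(d) \in \alpha(c) + \mathcal{L}(\beta(A)^{+Q'})$ for some colouring $d$, then $d + \mathcal{L}(A)$ is entirely contained in $\alpha^{-1}(\alpha(c) + \mathcal{L}(\beta(A)^{+Q'}))$. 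Indeed, applying Lemma~\ref{lem:F(A)} to the colouring $d$ gives $\alpha(d + \mathcal{L}(A)) \subset \alpha(d) + \mathcal{L}(\beta(A)^{+Q}) \subset \alpha(d) + \mathcal{L}(\beta(A)^{+Q'})$. Since $\alpha(d) - \alpha(c) \in \mathcal{L}(\beta(A)^{+Q'})$ and $\mathcal{L}(\beta(A)^{+Q'})$ is a subgroup, we get $\alpha(d + \mathcal{L}(A)) \subset \alpha(c) + \mathcal{L}(\beta(A)^{+Q'})$, i.e. $d + \mathcal{L}(A) \subset \alpha^{-1}(\alpha(c) + \mathcal{L}(\beta(A)^{+Q'}))$, as desired. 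Hence the preimage is saturated under translation by $\mathcal{L}(A)$, which is exactly the claim.

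For the cardinality statement, I would first reduce to the case where $A$ is finite: the general case follows since any colouring has finite support and the set in question is an increasing union of its restrictions to finite subsets (or one simply observes that a union of cosets of $\mathcal{L}(A)$ for arbitrary $A$ still has cardinality a ``multiple'' of $m^{|A|}$ in the obvious sense when $A$ is infinite, namely it is infinite). When $A$ is finite, $\mathcal{L}(A) \cong \mathbb{Z}_m^{|A|}$ under $\alpha$'s target side --- wait, I must be careful: $\mathcal{L}(A)$ on the source side is $\mathbb{Z}_n^{|A|}$, but the statement concerns cosets of $\mathcal{L}(A)$ inside $\mathbb{Z}_n^{(X)}$, so each coset has cardinality $n^{|A|}$. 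Re-examining the statement of Lemma~\ref{lem:cosetsF(A)}: it says the cardinality is a multiple of $m^{|A|}$, which suggests the intended reading uses the \emph{target-side} count. I would resolve this by noting the image $\alpha(\alpha^{-1}(\alpha(c) + \mathcal{L}(\beta(A)^{+Q'}))) = \alpha(c) + \mathcal{L}(\beta(A)^{+Q'})$, and since $\alpha$ is just a map here (not yet known bijective in this sub-lemma --- though in the application it will be), the preimage being a union of cosets of $\mathcal{L}(A)$ forces its cardinality to be a multiple of $|\mathcal{L}(A)| = n^{|A|}$; but for the purpose of Proposition~\ref{prop:general} the relevant divisibility will come from pairing this with the reverse inclusion. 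I expect the cleanest route is: the preimage is a disjoint union of cosets of $\mathcal{L}(A)$ (each of size $n^{|A|}$), hence its cardinality is divisible by $n^{|A|}$; and symmetrically, running the same argument with a quasi-inverse of $\alpha$, the image side $\alpha(c) + \mathcal{L}(\beta(A)^{+Q'})$ (which has size $m^{|\beta(A)^{+Q'}|}$) will be comparable.

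The main obstacle I anticipate is precisely pinning down which group ($\mathbb{Z}_n$ or $\mathbb{Z}_m$) indexes the cosets and hence getting the $m^{|A|}$ in the statement to come out correctly; this is bookkeeping rather than a genuine difficulty, but it must be done carefully because the whole point of Proposition~\ref{prop:general} is extracting the divisibility relation $m^{|A|} \mid (\text{something involving } n)$ and its converse, which together force $n$ and $m$ to have the same prime divisors. Concretely, I would: (1) prove the ``union of cosets'' claim as above using only Lemma~\ref{lem:F(A)} and the subgroup property of $\mathcal{L}(\beta(A)^{+Q'})$; (2) reduce to finite $A$; (3) conclude the divisibility of the cardinality by $m^{|A|}$ --- here I believe the intended argument is that $\alpha$ restricted appropriately, being close to a bijection (in the application $\alpha$ is genuinely bijective by Proposition~\ref{prop:AptoQI}(i)), identifies the fibre structure, so that the $n^{|A|}$-sized source cosets map onto the set whose natural size is a power of $m$, yielding $n^{|A|} = m^{|B|}$ for $B = \beta(A)^{+Q'}$ and thus the common-prime-divisor conclusion after letting $|A| \to \infty$. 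For the present lemma, though, I would simply state and prove the coset-union property and the $n^{|A|}$-divisibility (or $m^{|A|}$ with the target-side reading made explicit), deferring the number-theoretic extraction to the proof of Proposition~\ref{prop:general}.
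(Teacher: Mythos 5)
Your proof of the coset-saturation claim is exactly the paper's: take an arbitrary $c'$ with $\alpha(c')\in\alpha(c)+\mathcal{L}(\beta(A)^{+Q'})$, apply Lemma~\ref{lem:F(A)} to $c'$, and use that $\mathcal{L}(\beta(A)^{+Q})\subset\mathcal{L}(\beta(A)^{+Q'})$ is a subgroup to conclude the preimage is stable under translation by $\mathcal{L}(A)$. Your side remark on the cardinality is also correct: since each coset of $\mathcal{L}(A)$ has size $n^{|A|}$, the stated ``multiple of $m^{|A|}$'' is a slip for $n^{|A|}$, which is indeed how the lemma is invoked later (``a multiple of the cardinality of $\mathcal{L}(A)$'' in the proof of Theorem~\ref{thm:Amenable}), so deferring the divisibility bookkeeping to Proposition~\ref{prop:general} is exactly what the paper does.
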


\begin{proof}
We let $c'\in \alpha^{-1}(\alpha(c)+\mathcal{L}(\beta(A)^{+Q'})$. We deduce from Lemma \ref{lem:F(A)} that 
\[\alpha(c'+\mathcal{L}(A))\in \alpha(c')+ \mathcal{L} \left( \beta(A)^{+Q} \right) \subset \alpha(c)+\mathcal{L} \left( \beta(A)^{+Q'} \right).\]
In other words, $\alpha^{-1}(\alpha(c)+\mathcal{L}(\beta(A)^{+Q}))$ is stable by multiplication by $\mathcal{L}(A)$, so the desired conclusion follows.
\end{proof}

\begin{proof}[Proof of Proposition \ref{prop:general}.]
As a consequence of Proposition \ref{prop:AptoQI}(iii), Lemmas~\ref{lem:F(A)} and~\ref{lem:cosetsF(A)} apply. The first assertion of our proposition follows from Lemma \ref{lem:cosetsF(A)} applied to $c:= \alpha^{-1}(0)$. Our second assertion is symmetric to the first one. Finally, the conclusion on the cardinalities follows from the fact that $\alpha$ is a bijection.
\end{proof}

\noindent
As mentioned in the introduction, one of the main results of this article is that, given two integers $n,m \geq 2$ and two coarsely $1$-connected uniformly one-ended graphs $X,Y$, every quasi-isometry $\mathcal{L}_n(X) \to \mathcal{L}_m(Y)$ is at finite distance from an aptolic quasi-isometry. In the next example, we show how to construct (many) quasi-isometries that are not at finite distance from aptolic quasi-isometries as soon as we remove the assumption of being one-ended.

\begin{ex}\label{ex:NonAptolic}
Let $n \geq 2$ be an integer and $X$ a multi-ended graph. Fix a vertex $x_0 \in X$ and a ball $B(x_0,R)$ whose complement in $X$ contains at least two unbounded connected components, say $A,B \subset X$. Let $\delta$ denote the colouring taking the value $1$ at $x_0$ and $0$ elsewhere. 

\begin{prop}\label{prop:NonAptolic}
The map $\Phi : \mathcal{L}_n(X) \to \mathcal{L}_n(X)$ defined by
$$(c,x)  \mapsto  \left\{ \begin{array}{cl} (c,x) & \text{if $x \in A$ or $c_{|A} \neq 0$} \\ (c+\delta,x) & \text{if $x \notin A$ and $c_{|A}=0$} \end{array} \right.$$
is a surjective $(1,2R)$-quasi-isometry, i.e.
$$d((c_1,x_1),(c_2,x_2))-2R \leq d(\Phi(c_1,x_1), \Phi(c_2,x_2)) \leq d((c_1,x_1),(c_2,x_2))+2R$$
for all $(c_1,x_1),(c_2,x_2) \in \mathcal{L}_n(X)$, and it is not at finite distance from an aptolic quasi-isometry.
\end{prop}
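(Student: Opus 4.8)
The plan is to prove the three assertions in turn: $\Phi$ is a bijection (hence surjective), $\Phi$ is a $(1,2R)$-quasi-isometry for the diligent metric (the one fixed in this section), and $\Phi$ is not within bounded distance of any aptolic quasi-isometry.

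\emph{Bijectivity.} I would partition $\mathcal{L}_n(X)$ into $P_1=\{(c,x):x\in A\}$, $P_2=\{(c,x):x\notin A,\ c_{|A}\neq 0\}$ and $P_3=\{(c,x):x\notin A,\ c_{|A}=0\}$. Since $x_0\notin A$, adding $\delta$ to a colouring does not affect its restriction to $A$, so $\Phi$ acts as the identity on $P_1\cup P_2$ and as the bijection $(c,x)\mapsto(c+\delta,x)$ of $P_3$ onto itself. Thus $\Phi$ is a bijection, in particular surjective, and $\Phi^{-1}$ is given by the same formula with $-\delta$ in place of $\delta$.

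\emph{The metric estimate.} Recall that $d((c_1,p_1),(c_2,p_2))$ is the minimal length of a path in $X$ from $p_1$ to $p_2$ visiting $\mathrm{supp}(c_1-c_2)$. Write $\Phi(c_i,x_i)=(c_i+\epsilon_i\delta,x_i)$ with $\epsilon_i\in\{0,1\}$; the two output colourings differ by $(c_1-c_2)+(\epsilon_1-\epsilon_2)\delta$, which is supported in $\mathrm{supp}(c_1-c_2)\cup\{x_0\}$. If $\epsilon_1=\epsilon_2$ this difference equals $c_1-c_2$ and the distance is unchanged. If $\epsilon_1\neq\epsilon_2$ — say $\epsilon_1=1$, so $x_1\notin A$ and $(c_1)_{|A}=0$, and $\epsilon_2=0$, so $x_2\in A$ or $(c_2)_{|A}\neq 0$ — then a minimal path $\gamma$ realizing $d((c_1,x_1),(c_2,x_2))$ starts outside $A$ and must reach a vertex of $A$: either $x_2$, or a vertex of $\mathrm{supp}(c_1-c_2)\cap A$, which is nonempty because $c_1$ vanishes on $A$ while $c_2$ does not. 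As $A$ is a full connected component of $X\setminus B(x_0,R)$, the only edges with one endpoint in $A$ and one outside have the outside endpoint in $B(x_0,R)$; hence $\gamma$ passes through some vertex $y$ with $d(y,x_0)\leq R$. Inserting into $\gamma$ a round trip $y\to x_0\to y$, of length $\leq 2R$, yields a path from $x_1$ to $x_2$ visiting $\mathrm{supp}(c_1-c_2)\cup\{x_0\}$, so $d(\Phi(c_1,x_1),\Phi(c_2,x_2))\leq d((c_1,x_1),(c_2,x_2))+2R$; the case $\epsilon_1=0,\epsilon_2=1$ is symmetric. Applying this upper bound to $\Phi^{-1}$, which has the same form, and substituting gives the matching lower bound, so $\Phi$ is a surjective $(1,2R)$-quasi-isometry. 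This path-surgery argument is the only genuinely delicate point; everything else is bookkeeping.

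\emph{Non-aptolicity.} I would argue by contradiction: suppose some aptolic quasi-isometry $q(c,x)=(\alpha(c),\beta(x))$ satisfies $d(q(v),\Phi(v))\leq D$ for all $v$ and some $D\geq 0$. Evaluating at $v=(0,x)$ with $x\in A$, where $\Phi(v)=(0,x)$, forces $\mathrm{supp}(\alpha(0))\subseteq B(x,D)$; since $A$ is an unbounded connected component this yields $\alpha(0)=0$. Now evaluate at $v=(0,x)$ with $x\in X\setminus A$, where $\Phi(v)=(\delta,x)$: from $d((0,\beta(x)),(\delta,x))\leq D$, the visited vertex $x_0\in\mathrm{supp}(\delta)$ and the endpoint $x$ lie on a common path of length $\leq D$, so $d(x_0,x)\leq D$. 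Then $X\setminus A\subseteq B(x_0,D)$, contradicting that $X\setminus A$ contains the unbounded component $B$. Hence $\Phi$ lies at infinite distance from every aptolic quasi-isometry.
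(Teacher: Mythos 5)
Your proof is correct and follows essentially the same route as the paper's: the same case analysis according to which of the two points gets shifted by $\delta$, the same path surgery inserting a detour of length at most $2R$ through $x_0$ (using that any path from $A$ to its complement must cross $B(x_0,R)$), and the same test points $(0,x)$ with $x$ in the unbounded pieces to rule out aptolicity. Your two small variations --- deducing the lower bound by applying the upper bound to $\Phi^{-1}$, and reaching the contradiction without first reducing to $\beta=\mathrm{Id}_X$ --- are harmless streamlinings of the paper's argument.
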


\noindent
Indeed, fix two points $(c_1,x_1),(c_2,x_2) \in \mathcal{L}_n(X)$. Four cases can happen:
\begin{itemize}
	\item If $x_1 \in A$ or $c_{1|A} \neq 0$ and if $x_2 \in A$ or $c_{2|A} \neq 0$, then $\Phi(c_1,x_1)=(c_1,x_1)$ and $\Phi(c_2,x_2)=(c_2,x_2)$, so there is nothing to prove.
	\item If $x_1,x_2 \notin A$ and $c_{1|A}, c_{2|A} \neq 0$, then $$d(\Phi(c_1,x_1),\Phi(c_2,x_2))=d((c_1+\delta,x_1),(c_2+\delta,x_2))=d((c_1,x_1),(c_2,x_2)),$$ which trivially implies our inequality.
	\item Assume that $x_1 \in A$ or $c_{1|A} \neq 0$ and that $x_2 \notin A$ and $c_{2|A} = 0$. Notice that $\Phi(c_1,x_1)=(c_1,x_1)$ and $\Phi(c_2,x_2)= (c_2+\delta,x_2)$. Let $\gamma$ be a path in $H$ starting at $x_1$, visiting all the points in $\mathrm{supp}(c_1-c_2)$, ending at $x_2$, and whose length coincides with the distance between $(c_1,x_1)$ and $(c_2,x_2)$ in $H$. Observe that $\gamma$ ends at a point not in $A$, namely $x_2$, and that it intersects $A$, because either $x_1 \in A$ or $c_1$, $c_2$ differ at a point in $A$. Therefore, $\gamma$ crosses the ball $B(x_0,R)$, and we can add to $\gamma$ a loop of length $\leq 2R$ passing through $x_0$. If we denote by $\gamma'$ the path thus obtained, we deduce that $$\begin{array}{lcl} d(\Phi(c_1,x_1),\Phi(c_2,x_2))& = & d((c_1+\delta,x_1),(c_2,x_2)) \leq \mathrm{length}(\gamma') \\ \\ & \leq & \mathrm{length}(\gamma)+2R = d((c_1,x_1),(c_2,x_2))+2R. \end{array}$$
Similarly, one shows that $$d((c_1,x_1),(c_2,x_2))=d((c_1+\delta - \delta,x_1),(c_2,x_2)) \leq d((c_1+\delta,x_1),(c_2,x_2))+2R$$ as desired.
	\item If $x_2 \in A$ or $c_{2|A} \neq 0$ and if $x_1 \notin A$ and $c_{1|A} = 0$, then the configuration is symmetric the previous one.
\end{itemize}
Thus, we have proved that $\Phi$ is a quasi-isometry. Now, we verify that $\Phi$ is not at finite distance from an aptolic quasi-isometry. Indeed, let $\Psi : (c,x) \mapsto (\alpha(c),\beta(x))$ be an aptolic quasi-isometry potentially at finite distance from $\Phi$, where $\alpha : \mathbb{Z}_n^{(X)} \to \mathbb{Z}_n^{(X)}$ and $\beta : X \to X$ are two maps. Clearly, if $\beta$ is not at finite distance from the identity, then the distance between $\Phi$ and $\Psi$ is not finite, so from now on we assume that $\beta = \mathrm{Id}_X$. Fix two sequences $(a_k)$ and $(b_k)$, respectively in $A$ and $B$, that goes to infinity. If $\Phi$ and $\Psi$ are at finite distance, say $C$, then
$$\left\{ \begin{array}{l} C \geq d(\Phi(0,b_k),\Psi(0,b_k)) = d((\delta,b_k),(\alpha(0),b_k)) \\ C \geq d(\Phi(0,a_k),\Psi(0,a_k)) = d((0,a_k),(\alpha(0),a_k)) \end{array} \right.$$
for every $k \geq 0$. But the first inequality implies that $\alpha(0)=\delta$ while the second inequality implies that $\alpha(0)=0$, a contradiction. Thus, our quasi-isometry $\Phi$ cannot be at finite distance from an aptolic quasi-isometry.
\end{ex}

\subsection{Lamplighters over amenable groups}\label{section:AmenableCase}

\noindent
We saw in Section \ref{section:Aptolic} that, if there exists an aptolic quasi-isometry $\mathcal{L}_n(X) \to \mathcal{L}_m(Y)$, then $n$ and $m$ must have the same prime divisors. In this section, our goal is to show that, under the additional assumption that $X$ is amenable, this observation can be strengthened. Namely:

\begin{thm}\label{thm:Amenable}
Let $n,m \geq 2$ be two integers, $X,Y$ two unbounded graphs, and $q: \mathcal{L}_n(X)\to \mathcal{L}_m(Y)$ an aptolic quasi-isometry, i.e. there exist a bijection $\alpha : \mathbb{Z}_n^{(X)} \to \mathbb{Z}_n^{(Y)}$ and a quasi-isometry $\beta : X \to Y$ such that $q(c,p)= (\alpha(c),\beta(p))$ for all $(c,p) \in \mathcal{L}_n(X)$. If $X$ is amenable, then there exist integers $k,r,s \geq 1$ satisfying $n=k^{r}$ and $m=k^{s}$. Moreover, $\beta$ (and a fortiori $q$) is quasi-$(s/r)$-to-one.
\end{thm}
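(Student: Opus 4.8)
The plan is to combine the coset-counting information from Proposition~\ref{prop:general} with the Følner property of $X$. Fix a quasi-inverse $\bar\beta$ of $\beta$, and let $Q\ge 0$ be the constant provided by Proposition~\ref{prop:general}, so that for every finite $A\subset X$ the set $\alpha^{-1}\left(\mathcal{L}\left(\beta(A)^{+Q}\right)\right)$ is a union of cosets of $\mathcal{L}(A)$, hence its cardinality is a multiple of $n^{|A|}$. On the other hand, I will also want to bound this cardinality from above: since $\alpha$ is a bijection, $\left|\alpha^{-1}\left(\mathcal{L}\left(\beta(A)^{+Q}\right)\right)\right| = \left|\mathcal{L}\left(\beta(A)^{+Q}\right)\right| = m^{|\beta(A)^{+Q}|}$. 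So $n^{|A|}$ divides $m^{|\beta(A)^{+Q}|}$. Symmetrically, for finite $B\subset Y$, $m^{|B|}$ divides $n^{|\bar\beta(B)^{+Q}|}$. The first divisibility already forces (via Proposition~\ref{prop:general}) that $n,m$ have the same prime divisors; write $p_1,\dots,p_t$ for these primes, and decompose $n=\prod p_i^{a_i}$, $m=\prod p_i^{b_i}$ with $a_i,b_i\ge 1$.

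The core of the argument is to pin down the ratio $b_i/a_i$ and show it is independent of $i$. First I would note that when $A$ is a Følner set for $X$, the quasi-isometry $\beta$ turns $\beta(A)^{+Q}$ into an ``almost-Følner'' set for $Y$ of comparable size: more precisely, using that $\beta$ is a quasi-isometry between bounded-degree graphs together with Facts~\ref{fact:Easy}, \ref{fact:EasyOne}, \ref{fact:EasyTwo}, one gets constants $\lambda\ge 1$, $C\ge 0$ with $\lambda^{-1}|A| - C|\partial A| \le |\beta(A)^{+Q}| \le \lambda |A| + C|\partial A|$. Plugging into $n^{|A|}\mid m^{|\beta(A)^{+Q}|}$ and taking logarithms base $p_i$, applied coordinatewise: $a_i |A| \le b_i |\beta(A)^{+Q}| \le b_i(\lambda|A| + C|\partial A|)$, and symmetrically from the $\bar\beta$-direction, with $B = \beta(A)^{+Q}$ (or a Følner set of $Y$), $b_i|\beta(A)^{+Q}| \le a_i(\lambda' |\beta(A)^{+Q}| + C'|\partial(\beta(A)^{+Q})|)$. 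Dividing by $|A|$ and letting $A$ run through a Følner sequence, so that $|\partial A|/|A|\to 0$ and $|\beta(A)^{+Q}|/|A|\to$ some limit $\tau$ (passing to a subsequence if necessary; one should check $\tau$ is well-defined, or just work with $\limsup$ and $\liminf$), one obtains $a_i \le b_i \tau$ and $b_i\tau \le a_i$, i.e. $b_i\tau = a_i$ for every $i$. Hence $\tau = a_i/b_i$ is the same rational number for all $i$; writing it as $r/s$ in lowest terms gives $a_i = r c_i$, $b_i = s c_i$ for integers $c_i\ge 1$. Setting $k := \prod p_i^{c_i}$ yields $n = k^r$ and $m = k^s$, which is the first claim.

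It remains to show $\beta$ is quasi-$(s/r)$-to-one. By Definition~\ref{def:quasione} I must produce a constant $D$ with $\bigl| (s/r)|B| - |\beta^{-1}(B)| \bigr| \le D|\partial B|$ for all finite $B\subset Y$; equivalently, by Proposition~\ref{prop:kappa}, it suffices to exhibit the appropriate partitions or to show $\iota\circ\beta\circ\pi$ is at bounded distance from a bijection. I would rather extract this directly from the cardinality identity I already have: the relation $\tau = r/s$ obtained above says precisely that $|\beta(A)^{+Q}|$ and $(r/s)|A|$ differ by $O(|\partial A|)$ along Følner sets, but I need it for \emph{all} finite sets, and for $\beta^{-1}$ rather than $\beta(\cdot)^{+Q}$. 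The cleanest route is to revisit Lemma~\ref{lem:cosetsF(A)}: it shows $\alpha^{-1}(\mathcal{L}(\beta(A)^{+Q}))$ is a union of $\mathcal{L}(A)$-cosets, of cardinality $m^{|\beta(A)^{+Q}|} = k^{s|\beta(A)^{+Q}|}$, and being a union of cosets of a group of order $n^{|A|} = k^{r|A|}$ forces $r|A| \le s|\beta(A)^{+Q}|$ exactly (no error term), i.e. $|\beta(A)^{+Q}| \ge (r/s)|A|$ for every finite $A$; the symmetric statement with $\bar\beta$ gives the matching upper bound up to an error controlled by $|\partial A|$ via Facts~\ref{fact:EasyOne} and~\ref{fact:EasyTwo} (relating $\beta(A)^{+Q}$, $\beta^{-1}$ of neighbourhoods, and $\partial A$). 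Massaging these two-sided bounds — replacing $\beta(A)^{+Q}$ by $\beta^{-1}(B)$ via the quasi-isometry relations, and swapping the roles of $A$ and $B$ — yields the quasi-$(s/r)$-to-one estimate for $\beta$ (hence for $q$, since $q$ and $\beta$ have the ``same'' fibres up to the bijection $\alpha$ on the lamp coordinate, cf.\ Proposition~\ref{prop:EasyKappa}).

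I expect the main obstacle to be the bookkeeping in the last step: turning the clean divisibility $r|A| \le s|\beta(A)^{+Q}|$ into a genuine quasi-$(s/r)$-to-one bound $\bigl|(s/r)|B| - |\beta^{-1}(B)|\bigr| = O(|\partial B|)$ requires carefully translating between $\beta$-images, $\beta$-preimages, and fattenings, keeping every error term proportional to a boundary and not to a volume. The Følner-sequence limit argument for the first claim is comparatively routine once the divisibility $n^{|A|} \mid m^{|\beta(A)^{+Q}|}$ and its symmetric companion are in hand; the subtlety there is only to make sure one extracts a \emph{single} well-defined scaling constant $\tau$, for which it is safest to argue with $\limsup/\liminf$ and the two inequalities squeezing them together, rather than assuming convergence a priori.
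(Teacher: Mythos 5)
Your proposal follows essentially the same route as the paper: the divisibilities $n^{|A|}\mid m^{|\beta(A)^{+Q}|}$ and $m^{|B|}\mid n^{|\bar\beta(B)^{+Q}|}$ extracted from Proposition~\ref{prop:general}, combined with volume comparisons whose errors are proportional to boundaries and a F\o lner sequence, are exactly how the paper proves its key claim (Claim~\ref{claim:forAmenable}) and then deduces both $n=k^r$, $m=k^s$ and the quasi-$(s/r)$-to-one property, the transfer from $\beta$ to $q$ being handled by Lemma~\ref{lem:TransQuasi}. So the outline is correct and not genuinely different; the deferred ``bookkeeping'' at the end (apply the two-sided estimate to $A=\beta^{-1}(B)$ and control $\partial\beta^{-1}(B)$ and $B^{+Q}\setminus B$ via Facts~\ref{fact:EasyOne} and~\ref{fact:EasyTwo}) is precisely the paper's final computation.

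One step, as written, would fail, and it sits at the crux. Your displayed symmetric inequality $b_i|\beta(A)^{+Q}|\le a_i\bigl(\lambda'|\beta(A)^{+Q}|+C'|\partial(\beta(A)^{+Q})|\bigr)$ is too weak: with a multiplicative constant $\lambda'>1$ on the right-hand side, dividing by $|A|$ and passing to the F\o lner limit only yields $b_i\le\lambda' a_i$, not the squeeze $b_i\tau\le a_i$ that you then assert. What is needed is that the coefficient in front of $|A|$ be exactly $1$: from $m^{|B|}\mid n^{|\bar\beta(B)^{+Q}|}$ applied to $B=\beta(A)^{+Q}$ one gets $b_i|\beta(A)^{+Q}|\le a_i\,|\bar\beta(\beta(A)^{+Q})^{+Q}|$, and the quasi-isometry relations give the inclusion $\bar\beta(\beta(A)^{+Q})^{+Q}\subseteq A^{+R}$ for a constant $R$ depending only on the parameters, whence $|\bar\beta(\beta(A)^{+Q})^{+Q}|\le |A|+N^{R}|\partial A|$ by Fact~\ref{fact:EasyOne}. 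Only with this coefficient-one, boundary-error bound does the limit give $b_i\tau\le a_i$, hence $\tau=a_i/b_i$ for every $i$; this round-trip inclusion is exactly the paper's chain $A\subset\bar\beta(\beta(A)^{+K})^{+K}\subset A^{+(C+3)K}$. A minor further remark: the passage from $\beta$ to $q$ is not an application of Proposition~\ref{prop:EasyKappa} (which concerns compositions) but a direct fibrewise counting over the lamp coordinate, as in Lemma~\ref{lem:TransQuasi}.
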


\noindent
The last assertion of our theorem relies on the following observation:

\begin{lemma}\label{lem:TransQuasi}
Let $n,m \geq 2$ be two integers, $X,Y$ two graphs, and $q : \mathcal{L}_n(X) \to \mathcal{L}_m(Y)$ an aptolic quasi-isometry, i.e. there exist a bijection $\alpha : \mathbb{Z}_n^{(X)} \to \mathbb{Z}_n^{(Y)}$ and a quasi-isometry $\beta : X \to Y$ such that $q(c,p)= (\alpha(c),\beta(p))$ for every $(c,p) \in \mathcal{L}_n(X)$. If $\beta$ is quasi-$\kappa$-to-one for some $\kappa>0$, then so is $q$. 
\end{lemma}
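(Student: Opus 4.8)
The plan is to verify the defining inequality of Definition~\ref{def:quasione} for $q$ directly, by slicing $\mathcal{L}_m(Y)$ into leaves. Fix a finite subset $\mathcal{A}\subset\mathcal{L}_m(Y)$. For each vertex $y\in Y$ set $\mathcal{A}_y:=\{d\in\mathbb{Z}_m^{(Y)}\mid (d,y)\in\mathcal{A}\}$ and $n_y:=|\mathcal{A}_y|$, so that $|\mathcal{A}|=\sum_{y\in Y}n_y$ is a finite sum. Since $q(c,p)=(\alpha(c),\beta(p))$ with $\alpha$ bijective, for each $p\in X$ the colourings $c$ with $q(c,p)\in\mathcal{A}$ are exactly those in $\alpha^{-1}(\mathcal{A}_{\beta(p)})$, a set of cardinality $n_{\beta(p)}$. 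Grouping the resulting sum over $p$ according to the value $y=\beta(p)$,
\[
|q^{-1}(\mathcal{A})|=\sum_{p\in X}n_{\beta(p)}=\sum_{y\in Y}|\beta^{-1}(y)|\cdot n_y ,\qquad\text{hence}\qquad |q^{-1}(\mathcal{A})|-\kappa|\mathcal{A}|=\sum_{y\in Y}\bigl(|\beta^{-1}(y)|-\kappa\bigr)n_y .
\]

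The obstacle is that this is a sum over $Y$ weighted by the leaf‑fibre sizes $n_y$, whereas the hypothesis that $\beta$ is quasi‑$\kappa$‑to‑one only controls the \emph{unweighted} partial sums $|\beta^{-1}(A)|-\kappa|A|=\sum_{y\in A}(|\beta^{-1}(y)|-\kappa)$ over subsets $A\subset Y$. I would remove the weights by a layer‑cake decomposition: writing $n_y=\sum_{t\geq 1}\mathbf{1}_{\{n_y\geq t\}}$ and setting $\mathcal{A}^{(t)}:=\{y\in Y\mid n_y\geq t\}$ (each finite, and empty once $t>|\mathcal{A}|$), one gets
\[
|q^{-1}(\mathcal{A})|-\kappa|\mathcal{A}|=\sum_{t\geq 1}\Bigl(|\beta^{-1}(\mathcal{A}^{(t)})|-\kappa|\mathcal{A}^{(t)}|\Bigr),
\]
so if $C$ is a constant witnessing that $\beta$ is quasi‑$\kappa$‑to‑one, then $\bigl|\,|q^{-1}(\mathcal{A})|-\kappa|\mathcal{A}|\,\bigr|\leq C\sum_{t\geq 1}|\partial\mathcal{A}^{(t)}|$. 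It remains to establish the purely combinatorial inequality $\sum_{t\geq 1}|\partial\mathcal{A}^{(t)}|\leq|\partial\mathcal{A}|$, which I expect to be the real crux.

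For that, note that $y\in\partial\mathcal{A}^{(t)}$ precisely when $n_y<t\leq m_y$, where $m_y:=\max\{n_{y'}\mid y'\sim y\}$ (and $m_y:=0$ if $y$ is isolated); summing over $t$ gives $\sum_{t\geq 1}|\partial\mathcal{A}^{(t)}|=\sum_{y\in Y}\max(m_y-n_y,0)$. Now fix $y$ with $m_y>n_y$ and a neighbour $y^\ast\sim y$ with $n_{y^\ast}=m_y$. Since $|\mathcal{A}_{y^\ast}\setminus\mathcal{A}_y|\geq n_{y^\ast}-n_y=m_y-n_y$, each of these colourings $d\in\mathcal{A}_{y^\ast}\setminus\mathcal{A}_y$ gives a vertex $(d,y)\in\partial\mathcal{A}$: indeed $(d,y)\notin\mathcal{A}$, while $(d,y)\sim(d,y^\ast)\in\mathcal{A}$ in $\mathcal{L}_m(Y)$ because $y\sim y^\ast$. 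Boundary vertices obtained from distinct $y$ are distinct (different second coordinates), and for a fixed $y$ distinct $d$ give distinct vertices, so $\sum_{y\in Y}\max(m_y-n_y,0)\leq|\partial\mathcal{A}|$.

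Combining the three estimates yields $\bigl|\kappa|\mathcal{A}|-|q^{-1}(\mathcal{A})|\bigr|\leq C|\partial\mathcal{A}|$ for every finite $\mathcal{A}\subset\mathcal{L}_m(Y)$, i.e. $q$ is quasi‑$\kappa$‑to‑one, in fact with the same constant as $\beta$. No new estimate about $\beta$ beyond its being quasi‑$\kappa$‑to‑one, and nothing about the ambient graphs, is required; the whole argument is bookkeeping plus the single geometric observation that a jump in leaf‑fibre size across an edge of $Y$ forces boundary vertices in $\mathcal{L}_m(Y)$.
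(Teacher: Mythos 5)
Your proof is correct, and it reaches the same conclusion (with the same constant $C$ as for $\beta$) by a genuinely different decomposition than the paper's. The paper slices the finite set $A\subset\mathcal{L}_m(Y)$ along \emph{leaves}: for each colouring $c$ occurring in $A$ it sets $A_c\subset A$ and projects it to $B_c\subset Y$, applies the hypothesis on $\beta$ once per $c$, and concludes immediately because $\partial A$ contains the disjoint union $\bigsqcup_c \{c\}\times\partial B_c$; in that slicing the summands are unweighted, so no further work is needed. You instead slice along the fibres of the projection to $Y$, which produces the weighted identity $|q^{-1}(\mathcal{A})|-\kappa|\mathcal{A}|=\sum_y\bigl(|\beta^{-1}(y)|-\kappa\bigr)n_y$ and forces the layer-cake reduction to the super-level sets $\mathcal{A}^{(t)}$, followed by your crux inequality $\sum_{t\geq 1}|\partial\mathcal{A}^{(t)}|\leq|\partial\mathcal{A}|$. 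That inequality is established correctly: the count $\sum_t|\partial\mathcal{A}^{(t)}|=\sum_y\max(m_y-n_y,0)$ is right, and each unit of a jump $m_y-n_y>0$ across an edge $y\sim y^\ast$ is witnessed by a distinct boundary vertex $(d,y)$ of $\mathcal{A}$ with $d\in\mathcal{A}_{y^\ast}\setminus\mathcal{A}_y$, and these witnesses are separated by their second coordinate, so there is no double counting. The trade-off is that the paper's leafwise slicing makes the boundary bookkeeping trivial and the whole proof three lines, while your route is longer but isolates a clean co-area-type estimate relating the boundaries of the level sets of the fibre-count function to the boundary in the lamplighter graph, which is of some independent interest; both arguments use nothing about $X$, $Y$ beyond what the hypothesis on $\beta$ already encodes.
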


\begin{proof}
Let $A \subset \mathcal{L}_m(Y)$ be a finite subset. Let $\mathscr{C}$ denote the set of colourings appearing as first coordinates of elements in $A$, and, for every $c \in \mathscr{C}$, let $A_c \subset A$ denote the subset of the elements having $c$ as first coordinate. Notice that
$$|A|= \sum\limits_{c \in \mathscr{C}} |A_c| \text{ and } |q^{-1}(A)|  = \sum\limits_{c \in \mathscr{C}} |q^{-1}(A_c)| = \sum\limits_{c \in \mathscr{C}} | \beta^{-1}(B_c)|$$
where $B_c$ denotes the projection of $A_c$ onto $Y$. Because $\beta$ is quasi-$\kappa$-to-one, we have
$$\left| \kappa |A| - |q^{-1}(A)| \right| \leq \sum\limits_{c \in \mathscr{C}} \left| \kappa |A_c| - |q^{-1}(A_c)| \right| = \sum\limits_{c \in \mathscr{C}} \left| \kappa |B_c| - |q^{-1}(B_c)| \right| \leq \sum\limits_{c \in \mathscr{C}} C \cdot |\partial B_c|$$
for some constant $C\geq 0$ that does not depend on $A$. Because $\partial A$ contains the disjoint union $\bigsqcup_{c \in \mathscr{C}} \{c\} \times \partial B_c$, we have
$$\left| \kappa |A| - |q^{-1}(A)| \right| \leq C |\partial A|,$$
proving that $q$ is quasi-$\kappa$-to-one.
\end{proof}

\begin{proof}[Proof of Theorem \ref{thm:Amenable}.]
Let $C,K \geq 0$ be such that $\beta$ is a $(C,K)$-quasi-isometry admitting a quasi-inverse $\bar{\beta}$ that is also a $(C,K)$-quasi-isometry with $\bar{\beta} \circ \beta, \beta \circ \bar{\beta}$ within $K$ from identities. Up to increasing $K$, we assume that $K$ is larger than the constant $Q$ given by Proposition \ref{prop:general}.

\begin{claim}\label{claim:forAmenable}
Fix a prime $p$ and let $p_1$ (resp. $p_2$) denote the $p$-valuation of $n$ (resp. $m$). There exists a constant $M \geq 1$ such that
$$\left| |A| - \frac{p_2}{p_1} | \beta(A)^{+K}| \right| \leq M \cdot | \partial A |$$
for all finite $A \subset X$.
\end{claim}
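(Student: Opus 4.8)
The plan is to count, in two different ways, the number of vertices of $\mathcal{L}_n(X)$ whose lamp configuration is supported in a fixed finite set $A\subset X$ and whose arrow also sits in $A$; then compare with the analogous count on the $Y$-side via $q$, using Proposition \ref{prop:general} to control the combinatorics and amenability (through Fact \ref{fact:EasyTwo} and Fact \ref{fact:EasyOne}) to absorb the error terms. Concretely, fix a finite $A\subset X$ and consider the set $W_A:=\{(c,a)\mid c\in\mathcal{L}(A),\ a\in A\}$, so that $|W_A|=|A|\cdot n^{|A|}$. On the other side, let $B:=\beta(A)^{+K}$ and $W'_B:=\{(c',b)\mid c'\in\mathcal{L}(B),\ b\in B\}$, so $|W'_B|=|B|\cdot m^{|B|}$. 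The first step is to show that $q(W_A)$ and $W'_B$ are \emph{comparable} up to a bounded neighbourhood in $\mathcal{L}_m(Y)$: by Proposition \ref{prop:AptoQI}(iii) and the definition of $B$, any $(c,a)\in W_A$ is sent to a point whose configuration is supported in $\mathrm{supp}(\alpha(c)-\alpha(0))\subseteq\beta(A)^{+Q}\subseteq B$ (after translating by the fixed colouring $\alpha(0)$, which only shifts $W'_B$ by a coset) and whose arrow $\beta(a)$ lies in $B$; conversely a symmetric argument with $\bar\beta$ shows that the preimage of $W'_B$ is contained in a controlled $W$-type set over $\bar\beta(B)^{+K'}\subseteq A^{+K''}$ for some constant depending only on $C,K$.

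The second step converts these containments into a numerical identity modulo $|\partial A|$. Counting the number of lamp configurations: $\alpha$ restricts (up to a coset, by Proposition \ref{prop:general}, which tells us $\alpha(\mathcal{L}(A))$ is a union of cosets of $\mathcal{L}(B)$, and $\alpha^{-1}(\mathcal{L}(B))$ is a union of cosets of $\mathcal{L}(A')$ for a slightly larger $A'$) to a bijection-like correspondence between $\mathcal{L}(A)$ and $\mathcal{L}(B)$ up to a bounded-index discrepancy concentrated near $\partial A$. Precisely, since $\alpha$ is a bijection and $\alpha(\mathcal{L}(A))$ is a union of cosets of $\mathcal{L}(B)$, we get $m^{|B|}\mid n^{|A|}$ whenever $B\supseteq\beta(A)^{+Q}$; applying the converse statement to $A':=\bar\beta(B)^{+K}\supseteq A$ gives $n^{|A'|}\mid m^{|B|}\cdot(\text{something})$, and iterating along a F\o lner sequence forces $n=k^r$, $m=k^s$. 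Taking $p$-valuations in $n^{|A|}=m^{|B|}\cdot(\text{index})$ yields $p_1|A|=p_2|B|+O(\log(\text{index}))$; the key point is that the ``index'' is bounded by $n^{|A^{+K''}\setminus A|}\le n^{N^{K''}|\partial A|}$ by Fact \ref{fact:Easy}/\ref{fact:EasyOne}, so $\log(\text{index})=O(|\partial A|)$. Dividing by $p_1$ gives exactly $\bigl||A|-\tfrac{p_2}{p_1}|B|\bigr|\le M|\partial A|$, which is the claim (recall $B=\beta(A)^{+K}$).

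The main obstacle will be step two: making precise the ``union of cosets up to a controlled discrepancy'' bookkeeping so that the comparison between $\mathcal{L}(A)$ and $\mathcal{L}(\beta(A)^{+K})$ is genuinely an equality of cardinalities modulo an $O(|\partial A|)$ error, rather than merely a divisibility. The clean way to do this is to sandwich: one has $\mathcal{L}(A)\subseteq\mathcal{L}(\bar\beta(\beta(A)^{+K})^{+K})$ and the two sets differ only on $A^{+K''}\setminus A\subseteq(\partial A)^{+(K''-1)}$, whose cardinality is $O(|\partial A|)$ by Fact \ref{fact:EasyOne}; combined with the exact coset-union statements from Proposition \ref{prop:general} this pins down $\log_p$ of all the relevant cardinalities to within $O(|\partial A|)$. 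Once that is in place, the existence of $k,r,s$ with $n=k^r$, $m=k^s$ follows by running the inequality over a F\o lner sequence $(A_j)$: dividing $p_1|A_j|=p_2|\beta(A_j)^{+K}|+O(|\partial A_j|)$ by $|A_j|$ and letting $j\to\infty$ shows $p_1/p_2$ equals the (well-defined, by Lemma \ref{lem:KappaWellDefined}) scaling factor of $\beta$ and is therefore the \emph{same} rational number for every prime $p$, which forces $n$ and $m$ to be powers of a common $k$ with $r/s=p_1/p_2$. Finally, Definition \ref{def:quasione} applied with $\kappa=s/r$ reads off directly from Claim \ref{claim:forAmenable} that $\beta$ is quasi-$(s/r)$-to-one, and Lemma \ref{lem:TransQuasi} promotes this to $q$.
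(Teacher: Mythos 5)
Your overall route is the paper's: sandwich $A\subset B:=\bar\beta(\beta(A)^{+K})^{+K}\subset A^{+(C+3)K}$, compare $p$-valuations of the cardinalities of $\mathcal{L}(A)$, $\mathcal{L}(\beta(A)^{+K})$, $\mathcal{L}(B)$ via the coset-union statements of Proposition \ref{prop:general}, and absorb $|B|-|A|$ using Facts \ref{fact:EasyOne} and \ref{fact:EasyTwo}. However, the divisibilities in your second step are stated backwards, and one premise is incorrect: Proposition \ref{prop:general} does not say that $\alpha(\mathcal{L}(A))$ is a union of cosets of $\mathcal{L}(B)$ --- by Lemma \ref{lem:F(A)} that image is only \emph{contained} in a single coset of $\mathcal{L}(\beta(A)^{+Q})$ --- so your conclusion $m^{|B|}\mid n^{|A|}$ is unjustified (and in general false). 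The statements the proposition actually provides are: $\alpha^{-1}(\mathcal{L}(\beta(A)^{+K}))$ is a union of $\mathcal{L}(A)$-cosets, which, since $\alpha$ is a bijection, gives $p^{p_1|A|}\mid p^{p_2|\beta(A)^{+K}|}$; and, applying the converse part to $A_2=\beta(A)^{+K}$, $\alpha(\mathcal{L}(B))$ is a union of $\mathcal{L}(\beta(A)^{+K})$-cosets, giving $p^{p_2|\beta(A)^{+K}|}\mid p^{p_1|B|}$. Combined with $|A|\le|B|\le|A|+N^{(C+3)K}\cdot|\partial A|$, these two divisibilities immediately give the two-sided estimate of the Claim, which is exactly the paper's computation; so once the directions are corrected your plan goes through unchanged. (The first-step count $|W_A|=|A|\cdot n^{|A|}$ involving the arrow positions plays no role and can be dropped, and the F\o lner-sequence and quasi-$\kappa$-to-one discussion belongs to the remainder of Theorem \ref{thm:Amenable}, not to the Claim itself.)
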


\noindent
In order to shorten the notation, we set $B:= \bar{\beta}\left( \beta(A)^{+K} \right)^{+K}$. Observe that
$$A \subset B \subset A^{+(C+3)K}.$$
The first inclusion is justified by the fact that, for every $a \in A$, we have $d(a, \bar{\beta}(\beta(a))) \leq K$ with $\bar{\beta}(\beta(a)) \in B$. The second inclusion is justified by the fact that, for every $x \in B$, there exist $y \in H_1$ and $a \in A$ such that $d(x,\bar{\beta}(y)) \leq K$ and $d(y,\beta(a)) \leq K$, hence
$$\begin{array}{lcl} d(x,a) & \leq & \displaystyle d \left( x, \bar{\beta}(y) \right) + d \left( \bar{\beta}(y), \bar{\beta}(\beta(a)) \right) + d\left( \bar{\beta}(\beta(a)), a \right) \\ \\ & \leq & K + C d(y,\beta(a)) +K +K \leq (C+3)K, \end{array}$$
i.e. $x \in A^{+(C+3)K}$ as desired. By combining these inclusions with Fact \ref{fact:EasyTwo}, we have
\begin{equation}\label{equationFirst}
|A| \leq |B| \leq |A| + N^{(C+3)K} \cdot |\partial A|
\end{equation}
where $N \geq 3$ is a fixed integer larger than the maximal degree of a vertex in $X$. Next, notice that Proposition \ref{prop:general} implies that $\alpha^{-1}(\mathcal{L}(\beta(A)^{+K}))$ is a union of cosets of $\mathcal{L}(A)$, so the cardinality of $\alpha^{-1}(\mathcal{L}(\beta(A)^{+K}))$ must be a multiple of the cardinality of $\mathcal{L}(A)$, hence
\begin{equation}\label{equationE}
p^{p_2 |\beta(A)^{+K}|} = E \cdot p^{p_1 |A|} \text{ for some $E\geq 1$}.
\end{equation}
Similarly, Proposition \ref{prop:general} implies that $\alpha(\mathcal{L}(B))$ is a union of cosets of $\mathcal{L}(\beta(A)^{+K})$, hence
\begin{equation}\label{equationF}
p^{p_1 |B|} = F \cdot p^{p_2 |\beta(A)^{+K}|} \text{ for some $F \geq 1$}.
\end{equation}
It follows from (\ref{equationF}) that
$$|B| = \frac{1}{p_1} \log (F) + \frac{p_2}{p_1} \cdot \left| \beta(A)^{+K} \right|,$$
and the combination of (\ref{equationFirst}), (\ref{equationE}), and (\ref{equationF}) implies that
$$\log(F) \leq \log(EF) = p_1 \left( |B|-|A| \right)  \leq p_1 N^{(C+3)K)} \cdot |\partial A|,$$
so we have
\begin{equation}\label{equationSecond}
\frac{p_2}{p_1} \left| \beta(A)^{+K} \right| \leq |B| \leq \frac{p_2}{p_1} \left| \beta(A)^{+K} \right| + N^{(C+3)K} |\partial A|.
\end{equation}
The combination of (\ref{equationFirst}) and (\ref{equationSecond}) leads to the desired inequalities, concluding the proof of our claim.

\medskip \noindent
Now, let us prove that $n$ and $m$ are powers of a common number. Let $(A_k)$ be a F\o lner sequence in $X$ and let $p_1,q_1$ (resp. $p_2,q_2$) denote the valuations of $n$ (resp. $m$) with respect to two primes. By applying Claim \ref{claim:forAmenable} to $p_1,p_2$, we find that $(|\beta(A_k)^{+K}|/ |A_k|)$ converges to $p_1/p_2$ because $|\partial A_k|/ |A_k| \to 0$. Similarly, by applying Claim \ref{claim:forAmenable} to $q_1,q_2$, we find that $(|\beta(A_k)^{+K}|/ |A_k|)$ converges to $q_1/q_2$. Thus, we have proved that there exists a rational $r/s$ such that, for every prime $p$, the quotient of the $p$-valuations of $n$ and $m$ is $r/s$, which implies that $n=k^{r}$ and $m=k^{s}$ for some $k \geq 1$. This proves the first assertion of our proposition.

\medskip \noindent
From now on, we set $\kappa:= r/s$. We want to prove that $\beta$ is quasi-$\kappa$-to-one. So fix a finite $A \subset Y$. By applying Claim \ref{claim:forAmenable} to $\beta^{-1}(A)$, we get 
$$\left| \kappa |A^{+K}| - |\beta^{-1}(A)| \right| = \left| \kappa |\beta(\beta^{-1}(A))^{+K}| - |\beta^{-1}(A)| \right| \leq M \cdot |\partial \beta^{-1}(A)|.$$
According to Facts \ref{fact:EasyOne} and \ref{fact:EasyTwo}, there exists a constant $U \geq 0$ that does not depend on $A$ such that $|\partial \beta^{-1}(A)| \leq U \cdot |\partial A|$ and $|A^{+K} \backslash A| \leq U \cdot |\partial A|$. Therefore,
$$\left| \kappa |A| - |\beta^{-1}(A)| \right| \leq \left| |A^{+K}|-|A| \right|+ \left| \kappa |A^{+K}| - |\beta^{-1}(A)| \right| \leq U(M+1) \cdot |\partial A|.$$
Thus, we have proved that $\beta$ (and a fortiori $q$ according to Lemma \ref{lem:TransQuasi}) is quasi-$\kappa$-to-one, as desired. 
\end{proof}

\noindent
We conclude this section by noticing that the necessary condition provided by Theorem~\ref{thm:Amenable} for the existence an aptolic quasi-isometry is also sufficient.

\begin{prop}\label{prop:AmConverse}
Let $n,m \geq 2$ be two integers and $X,Y$ two graphs of bounded degree. Assume that $n=k^{r}$, $m=k^{s}$ for some $k,r,s \geq 1$ and that there exists a quasi-$(s/r)$-to-one quasi-isometry $X \to Y$. Then there exists an aptolic quasi-isometry $\mathcal{L}_n(X) \to \mathcal{L}_m(Y)$. 
\end{prop}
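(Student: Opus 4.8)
The goal is to reverse-engineer the necessary condition from Theorem~\ref{thm:Amenable}. By Proposition~\ref{prop:kappa}(iii), a quasi-$(s/r)$-to-one quasi-isometry $f : X \to Y$ is at bounded distance from a map $g : X \to Y$ for which there are partitions $\mathcal{P}_X$ of $X$ with pieces of size $s$ and $\mathcal{P}_Y$ of $Y$ with pieces of size $r$, together with a bijection $\psi : \mathcal{P}_X \to \mathcal{P}_Y$ with $g(P) \subset \psi(P)$ for all $P$. Since we only care about producing \emph{some} aptolic quasi-isometry, and since Proposition~\ref{prop:AptoQI} characterises these intrinsically, it suffices to build a bijection $\alpha : \mathbb{Z}_n^{(X)} \to \mathbb{Z}_m^{(Y)}$ and keep $\beta := g$ (or $f$) so that conditions (i)--(iii) of Proposition~\ref{prop:AptoQI} hold.

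\textbf{Key steps.} First, use $n = k^r$ and $m = k^s$ to identify $\mathbb{Z}_n$ with $(\mathbb{Z}_k)^r$ and $\mathbb{Z}_m$ with $(\mathbb{Z}_k)^s$ as sets (fixing any bijections; we do not need them to be homomorphisms). Then a finitely supported colouring $c : V(X) \to \mathbb{Z}_n$ is the same data as an $r$-tuple of $\mathbb{Z}_k$-colourings of $X$, i.e.\ an element of $(\mathbb{Z}_k^{(X)})^r$; similarly a colouring of $Y$ with values in $\mathbb{Z}_m$ is an $s$-tuple in $(\mathbb{Z}_k^{(Y)})^s$. Now I would exploit the partitions: the bijection $\psi$ pairs each size-$s$ piece $P \in \mathcal{P}_X$ with a size-$r$ piece $\psi(P) \in \mathcal{P}_Y$, and $\mathbb{Z}_k^P \cong (\mathbb{Z}_k)^s$ while $\mathbb{Z}_k^{\psi(P)} \cong (\mathbb{Z}_k)^r$. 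Since the $r$ copies of $\mathbb{Z}_k^{(X)}$ restricted to $P$ give $(\mathbb{Z}_k^s)^r = \mathbb{Z}_k^{rs}$, and the $s$ copies of $\mathbb{Z}_k^{(Y)}$ restricted to $\psi(P)$ give $(\mathbb{Z}_k^r)^s = \mathbb{Z}_k^{rs}$, one can fix once and for all a bijection $\mathbb{Z}_k^{rs} \to \mathbb{Z}_k^{rs}$ (the same one for every piece, chosen to send the zero vector to the zero vector) and patch these piecewise bijections together to define $\alpha$. Finite support on $X$ translates to finite support on $Y$ because the patching is supported on finitely many pieces and each piece is finite and $\psi$ is a bijection; this gives (i). Condition (ii) is immediate since $\beta = f$ is a quasi-isometry by hypothesis. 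For (iii), the point is that $\mathrm{supp}(c_1 - c_2)$ is a union over finitely many pieces $P$ of the set of vertices in $P$ where $c_1,c_2$ differ, and $\alpha$ acts on each such piece by a fixed bijection of $\mathbb{Z}_k^{rs}$ fixing $0$; hence a piece $P$ meets $\mathrm{supp}(c_1-c_2)$ if and only if $\psi(P)$ meets $\mathrm{supp}(\alpha(c_1)-\alpha(c_2))$. Since pieces are uniformly bounded and $g$ maps each $P$ into $\psi(P)$ while being at bounded distance from the quasi-isometry $f$, the sets $\beta(\mathrm{supp}(c_1-c_2))$ and $\mathrm{supp}(\alpha(c_1)-\alpha(c_2))$ are each contained in a uniformly bounded neighbourhood of $\bigcup\{\psi(P) : P \cap \mathrm{supp}(c_1-c_2) \neq \emptyset\}$, so they are at uniformly bounded Hausdorff distance. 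By Proposition~\ref{prop:AptoQI}, $(c,p) \mapsto (\alpha(c),\beta(p))$ is then an aptolic quasi-isometry.

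\textbf{Main obstacle.} The only delicate bookkeeping is ensuring that $\alpha$ is a genuine bijection on \emph{finitely supported} colourings and that condition (iii) comes out with a \emph{uniform} constant $Q$. Both hinge on using literally the same fixed bijection of $\mathbb{Z}_k^{rs}$ (fixing the origin) on every piece, so that support is controlled piece-by-piece and the constant depends only on the uniform bound on piece diameters and on the quasi-isometry constants of $f$; there is no interaction between distinct pieces. A minor subtlety is that $\mathcal{P}_X$ must genuinely partition \emph{all} of $X$ (not merely a cofinite part) so that every colouring of $X$ is handled, and likewise for $Y$; this is exactly what Proposition~\ref{prop:kappa}(iii) provides. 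Once these points are set up, conditions (i)--(iii) follow with routine verifications, and the proposition is proved.
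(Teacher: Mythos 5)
Your proposal is correct and follows essentially the same route as the paper: invoke Proposition~\ref{prop:kappa}(iii) to get the matched partitions and the map $\beta$ with $\beta(P)\subset\psi(P)$, define $\alpha$ piecewise by one fixed zero-preserving bijection between the colouring sets of paired pieces (both of cardinality $k^{rs}$; the paper phrases this as $\sigma:\mathbb{Z}_n^s\to\mathbb{Z}_m^r$ rather than via $\mathbb{Z}_k$-tuples, which is the same thing), and verify conditions (i)--(iii) of Proposition~\ref{prop:AptoQI} using that a piece meets $\mathrm{supp}(c_1-c_2)$ iff its partner meets $\mathrm{supp}(\alpha(c_1)-\alpha(c_2))$ together with the uniform bound on piece diameters.
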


\begin{proof}
According to Proposition \ref{prop:kappa}, there exist a partition $\mathcal{P}$ (resp. $\mathcal{Q}$) of $X$ (resp. of $Y$) with uniformly bounded pieces of size $s$ (resp. $r$), a bijection $\psi:\mathcal{P}\to \mathcal{Q}$, and a quasi-isometry $\beta : X \to Y$ satisfying $\beta(P) \subset \psi(P)$ for every $P \in \mathcal{P}$. Fix a bijection $\sigma : \mathbb{Z}_n^{s} \to \mathbb{Z}_m^{r}$ satisfying $\sigma(0)=0$, and define a bijection $\alpha : \mathbb{Z}_n^{(X)} \to \mathbb{Z}_n^{(Y)}$ in such a way that $\alpha$ sends $\mathcal{L}(P)$ to $\mathcal{L}(\psi(P))$ through $\sigma$ for every $P \in \mathcal{P}$. We claim that
$$q : (c,p) \mapsto (\alpha(c), \beta(p)), \ (c,p) \in \mathcal{L}_n(X)$$
is the quasi-isometry we are looking for. Let $(c_1,p_1),(c_2,p_2) \in \mathcal{L}_n(X)$ be two points. Let $P_1, \ldots, P_n$ denote the pieces of $\mathcal{P}$ containing points in $\mathrm{supp}(c_1-c_2)$. By construction, $\psi(P_1),\ldots, \psi(P_n)$ are the pieces of $\mathcal{Q}$ containing points in $\mathrm{supp}(\alpha(c_1)-\alpha(c_2))$. Because the pieces of $\mathcal{Q}$ are uniformly bounded, the Hausdorff distance between $\mathrm{supp}(\alpha(c_1)-\alpha(c_2))$ and $\psi(P_1) \cup \cdots \cup \psi(P_n)$ is finite. We also know by construction that $\beta(\mathrm{supp}(c_1-c_2))$ lies in $\psi(P_1) \cup \cdots \cup \psi(P_n)$ and has a point in each $\psi(P_1),\ldots, \psi(P_n)$. Once again because the pieces of $\mathcal{Q}$ are uniformly bounded, we deduce that the Hausdorff dimension between $\mathrm{supp}(\alpha(c_1)-\alpha(c_2))$ and $\beta(\mathrm{supp}(c_1-c_2))$ is bounded (by a bound that does not depend on $c_1,c_2$ but only on $\mathcal{P},\mathcal{Q}$). We conclude from Proposition \ref{prop:AptoQI} that $q$ is an aptolic quasi-isometry, as desired. 
\end{proof}

\subsection{Lamplighters over non-amenable groups}

\noindent
We saw in the previous section that, if there exists an aptolic quasi-isometry $\mathcal{L}_n(X) \to \mathcal{L}_m(Y)$ where $X,Y$ are amenable, then $n$ and $m$ must be powers of a common number, strengthening the observation made in Section \ref{section:Aptolic} that $n$ and $m$ must have the same prime divisors. In this section, our goal is to prove that this phenomenon is specific to the amenable case. More precisely:

\begin{prop}\label{prop:QInonamenable}
Let $X$ be a graph of bounded degree and $n,m \geq 2$ two integers. If $X$ is non-amenable and if $n,m$ have the same prime divisors, then there exists an aptolic quasi-isometry between $\mathcal{L}_n(X)$ and $\mathcal{L}_m(X)$.
\end{prop}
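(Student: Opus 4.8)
The plan is to reduce everything to the characterization of aptolic quasi-isometries given by Proposition~\ref{prop:AptoQI}: an aptolic quasi-isometry $\mathcal{L}_n(X)\to\mathcal{L}_m(X)$ amounts to a bijection $\alpha:\mathbb{Z}_n^{(X)}\to\mathbb{Z}_m^{(X)}$ together with a self-quasi-isometry $\beta$ of $X$ such that, for some $Q\ge 0$, the Hausdorff distance between $\mathrm{supp}(\alpha(c_1)-\alpha(c_2))$ and $\beta(\mathrm{supp}(c_1-c_2))$ is $\le Q$ for all $c_1,c_2$. A first observation is that the composition of two aptolic quasi-isometries is again aptolic (compose the $\alpha$'s, compose the $\beta$'s, and take the composition of aptolic-form quasi-inverses as a quasi-inverse). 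Since $\mathrm{lcm}(n,m)$ has the same prime divisors as $n$ and as $m$, it therefore suffices to construct an aptolic quasi-isometry $\mathcal{L}_a(X)\to\mathcal{L}_b(X)$ whenever $a\mid b$ and $a,b$ have the same prime divisors: applying this to the pairs $(n,\mathrm{lcm}(n,m))$ and $(m,\mathrm{lcm}(n,m))$ and composing then gives the statement.

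Next I would record a \emph{power trick}: for every $t\ge 1$ there is an aptolic quasi-isometry $\mathcal{L}_a(X)\to\mathcal{L}_{a^t}(X)$. Indeed, $X\times\mathbb{Z}_t$ is a bounded-degree graph quasi-isometric to $X$, hence non-amenable, so by Theorem~\ref{thm:QIvsBil} the projection $X\times\mathbb{Z}_t\to X$ (a quasi-isometry) lies at finite distance from a bijection, yielding a biLipschitz bijection $\psi:X\to X\times\mathbb{Z}_t$; precomposing colourings through $\psi^{-1}$ gives a biLipschitz, aptolic-form bijection $\mathcal{L}_a(X)\to\mathcal{L}_a(X\times\mathbb{Z}_t)$ (condition~(iii) holds with Hausdorff distance $0$). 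On the other hand, collapsing each fibre $\{x\}\times\mathbb{Z}_t$ — that is, using $\beta(x,i)=x$ together with a fixed set-bijection $\mathbb{Z}_a^{\,t}\to\mathbb{Z}_{a^t}$ sending $0$ to $0$ applied fibrewise — gives an aptolic quasi-isometry $\mathcal{L}_a(X\times\mathbb{Z}_t)\to\mathcal{L}_{a^t}(X)$, conditions~(i)--(iii) of Proposition~\ref{prop:AptoQI} being immediate (again Hausdorff distance $0$ in~(iii)). Composing yields the power trick. Choosing $t$ with $b\mid a^t$ and writing $a^t=bd$ (so every prime dividing $d$ divides $b$), this reduces the problem to comparing $\mathcal{L}_{bd}(X)$ with $\mathcal{L}_b(X)$; note, however, that this is a statement of exactly the same type, so the power trick alone cannot finish the argument — one must genuinely alter the ``shape'' of the lamp group, which is where non-amenability will be used in an essential way.

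The heart of the proof, and the step I expect to be the main obstacle, is the direct construction of an aptolic quasi-isometry $\mathcal{L}_a(X)\to\mathcal{L}_b(X)$ when $a\mid b$ and $a,b$ have the same prime divisors. Here I would take $\beta=\mathrm{id}_X$: since $X$ is non-amenable there is $\varepsilon>0$ with $|\partial A|\ge\varepsilon|A|$ for every finite $A$, so $\big|\kappa|A|-|\mathrm{id}^{-1}(A)|\big|=|\kappa-1|\cdot|A|\le(|\kappa-1|/\varepsilon)|\partial A|$ and the identity is quasi-$\kappa$-to-one for \emph{every} $\kappa>0$; in particular there is no obstruction from Proposition~\ref{prop:AptoQI}(ii). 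It then remains to build, for a suitably large $Q$, a bijection $\alpha:\mathbb{Z}_a^{(X)}\to\mathbb{Z}_b^{(X)}$ which is $Q$-local in the sense that changing a colouring on a finite set $S$ changes $\alpha$ of it only on $S^{+Q}$, and symmetrically for $\alpha^{-1}$; conditions~(i) and~(iii) of Proposition~\ref{prop:AptoQI} then hold by construction and $q(c,x)=(\alpha(c),x)$ is the desired map. The construction of $\alpha$ is where the exponential growth of balls in a non-amenable bounded-degree graph is indispensable: one re-encodes a bounded block of $\mathbb{Z}_a$-symbols into a bounded block of $\mathbb{Z}_b$-symbols inside a fixed finite neighbourhood, routing the ``carries'' outward so that they stay within that neighbourhood, organised along a fixed combing (or net/tiling) of $X$ at scale $Q$; the delicate point is to arrange the scheme so that it is simultaneously injective \emph{and surjective} — a naive block-by-block recoding cannot be surjective because $a^{|S|}\ne b^{|S'|}$, and it is precisely the exponential gap $b^{|S^{+Q}|}\gg a^{|S|}$ available in the non-amenable setting that lets one absorb the discrepancy locally (the same scheme provably fails on an amenable graph by a F\o lner-cardinality count, consistently with Theorem~\ref{thm:Amenable}). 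Once $\alpha$ is produced, combining this with the reductions and the power trick above completes the proof of Proposition~\ref{prop:QInonamenable}.
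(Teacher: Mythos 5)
Your reductions and your ``power trick'' are correct: composition of aptolic quasi-isometries is aptolic, the bijection $X\to X\times\mathbb{Z}_t$ at bounded distance from the inclusion exists by Theorem~\ref{thm:QIvsBil}, and the fibrewise recoding does give an aptolic quasi-isometry $\mathcal{L}_a(X)\to\mathcal{L}_{a^t}(X)$ satisfying conditions (i)--(iii) of Proposition~\ref{prop:AptoQI}. You are also right that powers alone cannot finish (e.g.\ $6$ and $12$ have no common power). The problem is that what you call the heart of the proof is never actually carried out: the $Q$-local bijection $\alpha:\mathbb{Z}_a^{(X)}\to\mathbb{Z}_b^{(X)}$ over $\beta=\mathrm{id}_X$ is only described in programmatic terms (``routing the carries outward'', ``organised along a fixed combing'', ``the exponential gap lets one absorb the discrepancy locally''), with no construction and no proof of simultaneous injectivity, surjectivity and locality of both $\alpha$ and $\alpha^{-1}$. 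As written, the key step of the proposition is asserted rather than proved, so the argument has a genuine gap.

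The fix is an idea you already have in hand, and it is exactly what the paper does: apply your fibrewise-recoding mechanism to \emph{one prime factor of the lamp group at a time} instead of to the whole lamp group. Write $\mathbb{Z}_{mp}\cong\mathbb{Z}_m\oplus\mathbb{Z}_p$ and $\mathbb{Z}_{mp^{n}}\cong\mathbb{Z}_m\oplus\mathbb{Z}_p^{\,n}$, take (as in your power trick, composing the bijection $X\to X\times\mathbb{Z}_n$ with the projection) an $n$-to-one map $f:X\to X$ at bounded distance from the identity, and define $\alpha$ by keeping the $\mathbb{Z}_m$-coordinate of the colouring at each $x$ and replacing the $\mathbb{Z}_p$-coordinate at $x$ by the tuple of $\mathbb{Z}_p$-values over the fibre $f^{-1}(x)$, leaving the arrow fixed. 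This $\alpha$ is a bijection and is local (supports move by at most $d(f,\mathrm{id})$), so Proposition~\ref{prop:AptoQI} applies and yields an aptolic quasi-isometry $\mathcal{L}_{mp}(X)\to\mathcal{L}_{mp^{n}}(X)$. Iterating over the primes dividing $n$ and $m$ identifies both $\mathcal{L}_n(X)$ and $\mathcal{L}_m(X)$ aptolically with $\mathcal{L}_{r}(X)$, $r$ the common radical, which proves the proposition; the delicate direct construction you postponed (and your lcm detour) is then unnecessary.
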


\noindent
We emphasize that, in this statement, we do not assume that $X$ is coarsely $1$-connected or one-ended. For instance, $X$ can be a (bushy) tree. 

\medskip \noindent
Let us illustrate the construction we use in order to prove Proposition \ref{prop:QInonamenable} by explaining why the lamplighter groups $\mathbb{Z}_6 \wr \mathbb{F}_2$ and $\mathbb{Z}_{24} \wr \mathbb{F}_2$ are quasi-isometric, where the free group $\mathbb{F}_2$ will be thought of as the $4$-regular tree. The first trick is to replace $\mathbb{Z}_6 \wr \mathbb{F}_2$ (resp. $\mathbb{Z}_{24} \wr \mathbb{F}_2$) with $(\mathbb{Z}_3 \oplus \mathbb{Z}_2) \wr \mathbb{F}_2$ (resp. $(\mathbb{Z}_{3} \oplus \mathbb{Z}_2^3) \wr \mathbb{F}_2$). Loosely speaking, we split each lamp into two half-lamps. Formally, each colouring $c : \mathbb{F}_2 \to \mathbb{Z}_6$ (resp. $c : \mathbb{F}_2 \to \mathbb{Z}_{24}$) becomes the sum $c_1 \oplus c_2$ of two colourings $c_1 : \mathbb{F}_2 \to \mathbb{Z}_3$ and $c_2 : \mathbb{F}_2 \to \mathbb{Z}_2$ (resp. $c_2 : \mathbb{F}_2 \to \mathbb{Z}_2^3$). The second trick is to notice that, given a point at infinity $\xi \in \partial \mathbb{F}_2$, one can associate a colouring $\bar{c} : \mathbb{F}_2 \to \mathbb{Z}_2^3$ to any colouring $c : \mathbb{F}_2 \to \mathbb{Z}_2$ in the following way: for every point $p \in \mathbb{F}_2$, we define $\bar{c}(p) \in \mathbb{Z}_2^3$ thanks to the three digits in $\mathbb{Z}_2$ provided by the values taken by $c$ at the three points $p_1,p_2,p_3 \in \mathbb{F}_2$ that are separated from $\xi$ by $p$, i.e. $\bar{c}(p):= (c(p_1),c(p_2),c(p_3))$. See Figure \ref{QIwreath}. Now, we define a map $(\mathbb{Z}_3 \oplus \mathbb{Z}_6) \wr \mathbb{F}_2 \to (\mathbb{Z}_{3} \oplus \mathbb{Z}_2^3) \wr \mathbb{F}_2$ by modifying the second halves of the lamps thanks to the previous operation and by leaving the arrow and the first halves as they were, i.e. $(c_1 \oplus c_2, p) \mapsto (c_1 \oplus \overline{c_2},p)$. This map turns out to define a quasi-isometry because the modifications on the colourings are local. 
\begin{figure}
\includegraphics[trim={0 0 15cm 0},clip,width=0.5\linewidth]{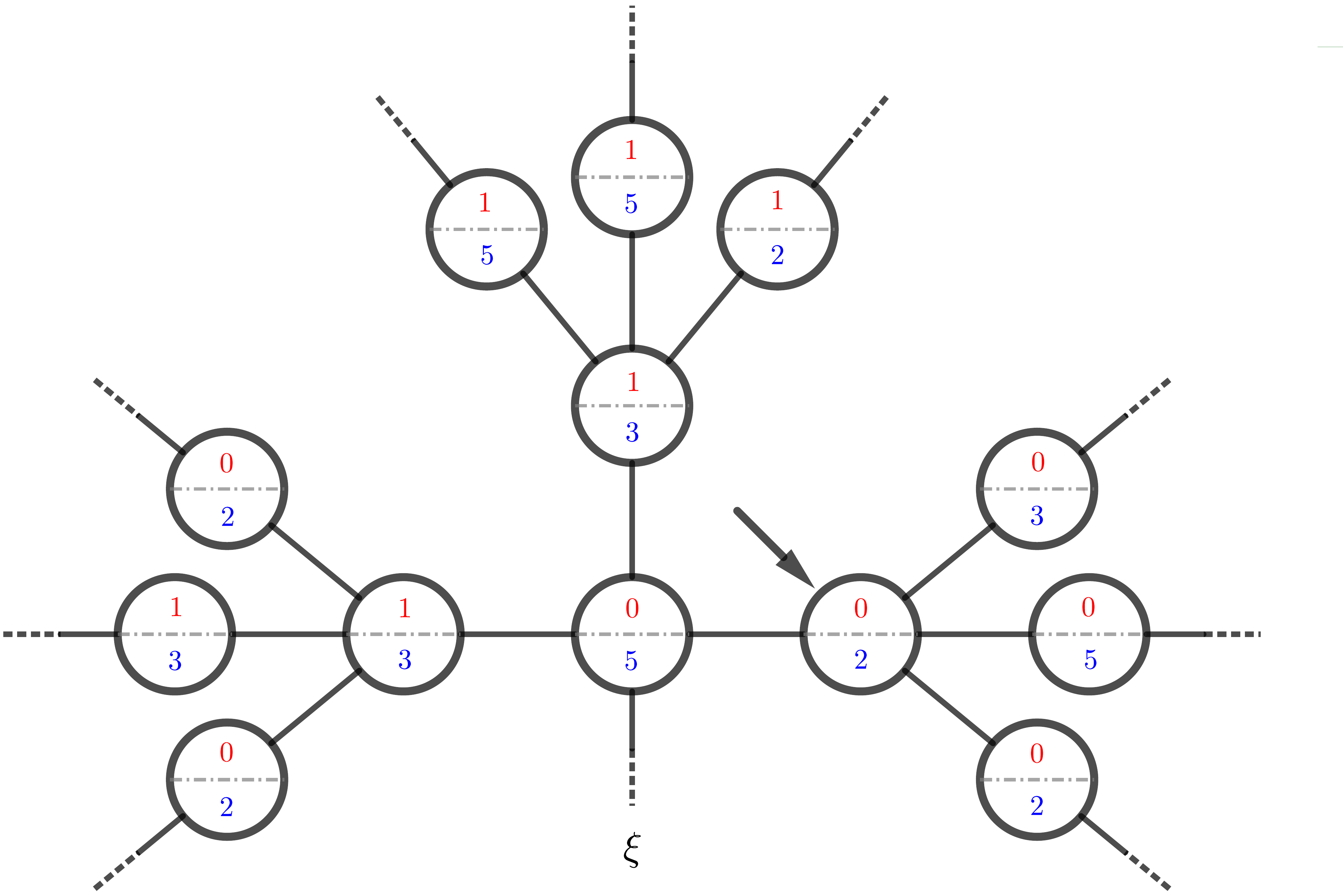}
\includegraphics[trim={0 0 15cm 0},clip,width=0.5\linewidth]{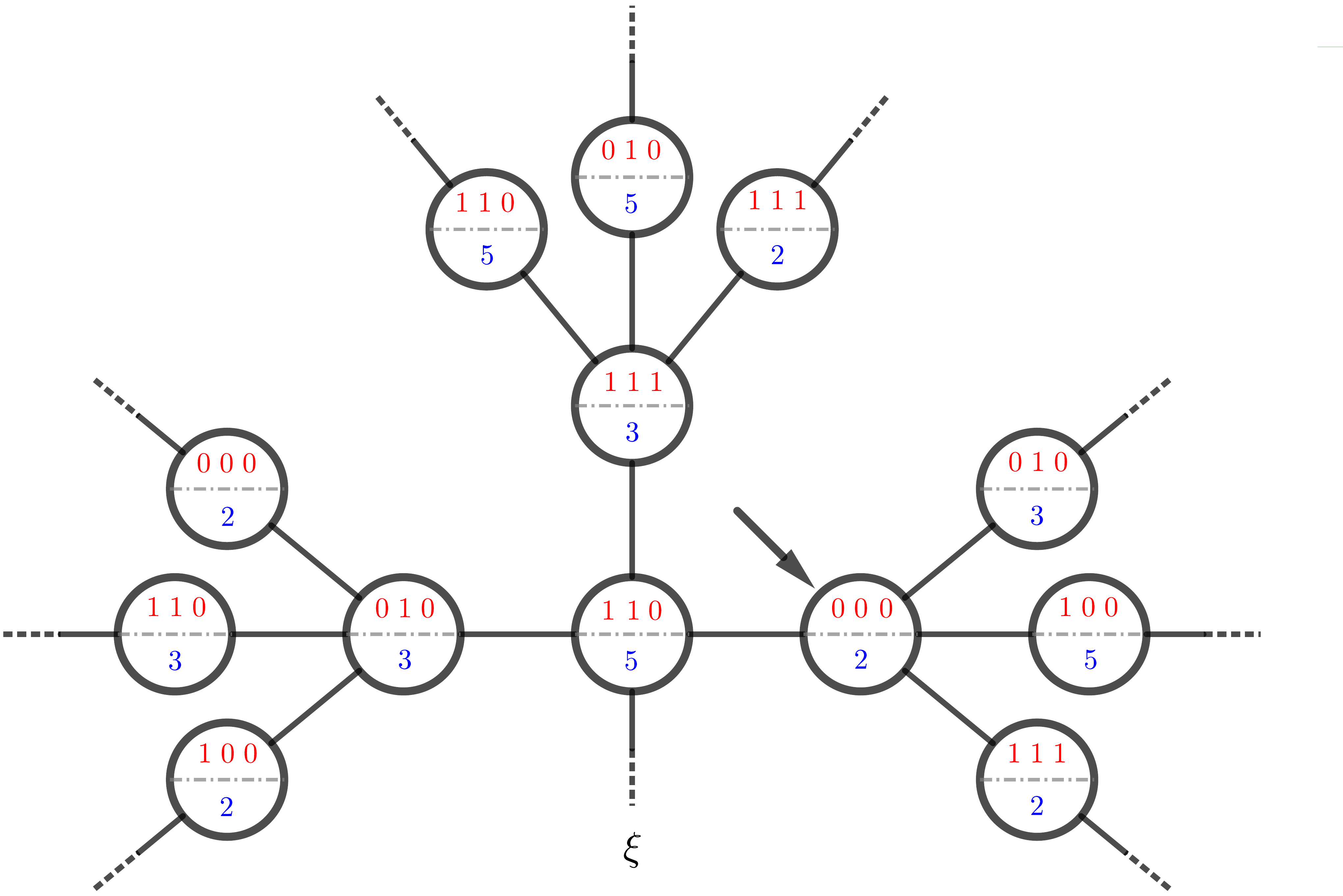}
\caption{A quasi-isometry $(\mathbb{Z}_6 \oplus \mathbb{Z}_2) \wr \mathbb{F}_2 \to (\mathbb{Z}_6\oplus \mathbb{Z}_2^3) \wr \mathbb{F}_2$.}
\label{QIwreath}
\end{figure}

\medskip \noindent
The key point in the previous construction is that there exists a $3$-to-$1$ map $\mathbb{F}_2 \to \mathbb{F}_2$ that lies at finite distance from the identity, namely the map that sends every vertex to its neighbour towards $\xi$. We generalise this idea for arbitrary non-amenable groups. 

\begin{proof}[Proof of Proposition \ref{prop:QInonamenable}.]
Given two integers $m \geq 1$, $n \geq 2$, and a prime $p$, we prove that $\mathcal{L}_{mp^n}(X)$ and $\mathcal{L}_{mp}(X)$ are quasi-isometric (through an aptolic quasi-isometry). This is sufficient to deduce our proposition.

\medskip \noindent
First of all, observe that there exists an $n$-to-$1$ map $f : X \to X$ at finite distance from the identity, i.e. there exists some $C \geq 0$ such that $d(f(x),x) \leq C$ for every $x \in X$. Indeed, as a consequence of \cite{MR1700742}, the embedding $\iota : X \hookrightarrow X \oplus \mathbb{Z}_n$ is at finite distance, say $C$, from a bijection $g : X \to X \oplus \mathbb{Z}_n$. If $p : X \oplus \mathbb{Z}_n \to X$ denotes the canonical projection, then $f:=p \circ g$ is $n$-to-$1$. Moreover,
$$d(f(x),x)=d(p(g(x)),p(\iota(x))) \leq d(g(x),\iota(x)) \leq C$$
for every $x \in X$. This proves our observation.

\medskip \noindent
From now on, we fix an enumeration of $X$ and we identify $\mathbb{Z}_{mp}$ (resp. $\mathbb{Z}_{mp^n}$) with $\mathbb{Z}_m \oplus \mathbb{Z}_p$ (resp. $\mathbb{Z}_m \oplus \mathbb{Z}_p^n$). Given a finitely supported colouring $c : X \to \mathbb{Z}_{m} \oplus \mathbb{Z}_{p}$, we construct a new finitely supported colouring $\bar{c} : X \to \mathbb{Z}_m \oplus \mathbb{Z}_p^n$ as follows. For convenience, we denote by $\pi_1$ and $\pi_2$ the projections on the first and second coordinates in both $\mathbb{Z}_m \oplus \mathbb{Z}_p$ and $\mathbb{Z}_m \oplus \mathbb{Z}_p^n$. Given an $x \in X$,
\begin{itemize}
	\item set $\pi_1(\bar{c}(x)):= \pi_1(c(x))$;
	\item enumerate $f^{-1}(x)$ as $\{x_1, \ldots, x_k\}$ by following the order of induced by our enumeration of $X$, and set $\pi_2(\bar{c}(x))=( \pi_2(c(x_1)), \ldots, \pi_2(c(x_k)))$.
\end{itemize}
We claim that
$$\Phi : \left\{ \begin{array}{ccc} \mathcal{L}_{mp}(X) & \to \mathcal{L}_{mp^n}(X) \\ (c,h) & \mapsto (\bar{c},h) \end{array} \right.$$
is a quasi-isometry. In the rest of the proof, our lamplighter graphs are endowed with diligent metrics. So fix two finitely supported colourings $c_1,c_2 : X \to \mathbb{Z}_m \oplus \mathbb{Z}_p$ and two points $k_1,k_2 \in X$. 

\medskip \noindent
Notice that, if $\bar{c}_1(x) \neq \bar{c}_2(x)$ for some $x \in X$, then either $\pi_1(c_1(x)) \neq \pi_1(c_2(x))$ (hence $x \in \mathrm{supp}(c_1-c_2)$) or $\pi_2(c_1(x')) \neq \pi_2(c_2(x'))$ (hence $x' \in \mathrm{supp}(c_1-c_2)$) for some $x' \in f^{-1}(x)$. Consequently, 
\begin{equation}\label{First}
\mathrm{supp}(\bar{c}_1-\bar{c}_2) \subset \mathrm{supp}(c_1-c_2) \cup f( \mathrm{supp}(c_1-c_2)).
\end{equation}
Next, if $c_1(x) \neq c_2(x)$ for some $x \in X$, then either $\pi_1(c_1(x)) \neq \pi_1(c_2(x))$, hence $\bar{c}_1(x) \neq \bar{c}_2(x)$; or $\pi_2(c_1(x)) \neq \pi_2(c_2(x))$, hence $\bar{c}_1(f(x)) \neq \bar{c}_2(f(x))$. Consequently,
\begin{equation}\label{Second}
\mathrm{supp}(c_1-c_2) \subset \mathrm{supp}(\bar{c}_1-\bar{c}_2) \cup f^{-1}(\mathrm{supp}(\bar{c}_1-\bar{c}_2) ).
\end{equation}
Now, fix a path $\alpha$ in $X$ that starts from $k_1$, that visits all the points in $\mathrm{supp}(c_1-c_2)$, that ends at $k_2$, and such that the length of $\alpha$ coincides with the distance between $(c_1,k_1)$ and $(c_2,k_2)$ in $\mathcal{L}_{mp}(X)$. For every point of $x \in \mathrm{supp}(c_1-c_2)$, we add to $\alpha$ a loop of length $\leq 2C$ based at $x$ and passing through $f(x)$. Thus, we obtain a new path $\alpha'$ that visits all the points in $\mathrm{supp}(c_1-c_2) \cup f( \mathrm{supp}(c_1-c_2))$ and whose length is at most 
$$\mathrm{length}(\alpha)+2C |\mathrm{supp}(c_1-c_2)| \leq (2C+1) \mathrm{length}(\alpha).$$
It follows from the inclusion (\ref{First}) that
$$d((\bar{c}_1,k_1),(\bar{c}_2,k_2)) \leq \mathrm{length}(\alpha') \leq (2C+1) d((c_1,k_1),(c_2,k_2)).$$
Next, fix a path $\beta$ in $X$ that starts from $k_1$, that visits all the points in $\mathrm{supp}(\bar{c}_1-\bar{c}_2)$, that ends at $k_2$, and such that the length of $\beta$ coincides with the distance between $(\bar{c}_1,k_1)$ and $(\bar{c}_2,k_2)$ in $\mathcal{L}_{mp^n}(X)$. For every point $x \in \mathrm{supp}(\bar{c}_1-\bar{c}_2)$ and every $x' \in f^{-1}(x)$, we add to $\beta$ a loop of length $\leq 2C$ based at $x$ and passing through $x'$. Thus, we obtain a new path $\beta'$ that visits all the points in $\mathrm{supp}(\bar{c}_1-\bar{c}_2) \cup f^{-1}(\mathrm{supp}(\bar{c}_1-\bar{c}_2) )$ and whose length is at most
$$\mathrm{length}(\beta) + 2nC |\mathrm{supp}(\bar{c}_1-\bar{c}_2)| \leq (2nC+1) \mathrm{length}(\beta).$$
It follows from the inclusion (\ref{Second}) that
$$d((c_1,k_1),(c_2,k_2)) \leq \mathrm{length}(\beta') \leq (2nC+1) d((\bar{c}_1,k_1),(\bar{c}_2,k_2)).$$
This concludes the proof that $\Phi$ is a quasi-isometry.
\end{proof}

\section{Leaf-preserving quasi-isometries are aptolic}\label{section:CosetAptolic}

\noindent
In this section, our goal is to characterise quasi-isometries that lie at finite distance from aptolic quasi-isometries. For this purpose, we introduce some terminology.

\begin{definition}
Let $n,m \geq 2$ be two integers, $X,Y$ two graphs, $q : \mathcal{L}_n(X) \to \mathcal{L}_m(Y)$ a quasi-isometry, and $\bar{q} : \mathcal{L}_m (Y) \to \mathcal{L}_n(X)$ a quasi-inverse. Then $q$ is \emph{leaf-preserving} if there exists some $C \geq 0$ such that $q$ (resp. $\bar{q}$) sends every leaf of $\mathcal{L}_n(X)$ (resp. of $\mathcal{L}_m(Y)$) at Hausdorff distance $\leq C$ from a leaf in $\mathcal{L}_m(Y)$ (resp. in $\mathcal{L}_n(X)$). 
\end{definition}

\noindent
An alternative characterisation of leaf-preserving quasi-isometries is:

\begin{lemma}
Let $n,m \geq 2$ be two integers, $X,Y$ two graphs, and $q : \mathcal{L}_n(X) \to \mathcal{L}_m(Y)$ a quasi-isometry. Then $q$ is leaf-preserving if and only if there exist a bijection $\alpha : \mathbb{Z}_n^{(X)} \to \mathbb{Z}_m^{(Y)}$ and a constant $C \geq 0$ such that, for every $c \in \mathbb{Z}_n^{(X)}$, the Hausdorff distance between $q(X(c))$ and $Y(\alpha(c))$ is $\leq C$.
\end{lemma}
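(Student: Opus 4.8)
The plan is to make everything rest on the Fact already recorded above that distinct leaves of a lamplighter graph do not fellow-travel, i.e.\ the intersection $L_1^{+K}\cap L_2^{+K}$ of the $K$-neighbourhoods of two distinct leaves is bounded for every $K\ge 0$. The consequence we exploit repeatedly is that an \emph{unbounded} subset can lie within finite Hausdorff distance of \emph{at most one} leaf. Throughout we assume $X$ and $Y$ are unbounded (otherwise $\mathcal{L}_n(X)$ is bounded and the statement is empty of content), and we fix a quasi-inverse $\bar{q}$ of $q$ together with a constant $K\ge 0$ such that $\bar{q}$ is a $(K,K)$-quasi-isometry and the compositions $q\circ\bar{q}$, $\bar{q}\circ q$ are $K$-close to the respective identity maps.

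For the forward implication, assume $q$ is leaf-preserving with constant $C$. For each $c\in\mathbb{Z}_n^{(X)}$, the set $q(X(c))$ has Hausdorff distance $\le C$ from some leaf of $\mathcal{L}_m(Y)$; since $X(c)\cong X$ is unbounded and $q$ is a quasi-isometric embedding, $q(X(c))$ is unbounded, so by the Fact that leaf is unique. Denoting it $Y(\alpha(c))$ defines a map $\alpha:\mathbb{Z}_n^{(X)}\to\mathbb{Z}_m^{(Y)}$ for which $d_{\mathrm{Haus}}(q(X(c)),Y(\alpha(c)))\le C$ holds by construction, so it remains only to show $\alpha$ is bijective. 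For injectivity, if $\alpha(c_1)=\alpha(c_2)$ then $q(X(c_1))$ and $q(X(c_2))$ are both $C$-close to the same leaf, hence have Hausdorff distance $\le 2C$; applying the $(K,K)$-quasi-isometry $\bar{q}$ and using $\bar{q}\circ q\approx\mathrm{id}$ shows $X(c_1)$ and $X(c_2)$ are at bounded Hausdorff distance, and being unbounded they must coincide by the Fact, so $c_1=c_2$. For surjectivity, given a leaf $Y(d)$, since $q$ is leaf-preserving $\bar{q}(Y(d))$ lies at Hausdorff distance $\le C$ from a leaf $X(c)$; pushing forward by $q$ and using $q\circ\bar{q}\approx\mathrm{id}$ shows $Y(d)$ is at bounded Hausdorff distance from $q(X(c))$, which is $C$-close to $Y(\alpha(c))$, so $Y(d)$ and $Y(\alpha(c))$ are at bounded Hausdorff distance; as $Y(d)$ is unbounded, the Fact forces $d=\alpha(c)$.

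For the converse, assume we are given a bijection $\alpha$ and a constant $C$ with $d_{\mathrm{Haus}}(q(X(c)),Y(\alpha(c)))\le C$ for every $c$. Then $q$ visibly sends each leaf to within $C$ of a leaf. For $\bar{q}$, start from the two inclusions $q(X(\alpha^{-1}(d)))\subset Y(d)^{+C}$ and $Y(d)\subset q(X(\alpha^{-1}(d)))^{+C}$, apply $\bar{q}$ — which enlarges $C$-neighbourhoods into $(KC+K)$-neighbourhoods since $\bar{q}$ is a $(K,K)$-quasi-isometry — and absorb the error coming from $\bar{q}\circ q\approx\mathrm{id}$; one obtains $d_{\mathrm{Haus}}(\bar{q}(Y(d)),X(\alpha^{-1}(d)))\le KC+2K$, uniformly in $d$. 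Since $\alpha$ is onto, every leaf of $\mathcal{L}_m(Y)$ is some $Y(d)$, so $\bar{q}$ too sends leaves to within a uniform distance of leaves; hence $q$ is leaf-preserving.

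I expect no serious obstacle here: the argument is entirely bookkeeping of quasi-isometry constants. The one point that genuinely needs the geometric input — and that should be stated carefully — is the well-definedness and bijectivity of $\alpha$, which is exactly where the non-fellow-travelling Fact and the unboundedness of leaves are indispensable; without them the phrase ``the leaf near $q(X(c))$'' would be ambiguous. A secondary care is to keep all estimates uniform over the (infinitely many) leaves, which is automatic since leaf-preservation supplies a single constant $C$ and the quasi-isometry data for $\bar{q}$ is fixed once and for all.
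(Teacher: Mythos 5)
Your proof is correct and follows essentially the same route as the paper's: fix a quasi-inverse $\bar{q}$, use the fact that distinct (unbounded) leaves cannot remain at finite Hausdorff distance to pin down $\alpha$ and show it is invertible, and push the Hausdorff estimate through $\bar{q}$ for the converse, with the same constant bookkeeping. Note that the paper's own proof relies on the same implicit unboundedness of $X$ and $Y$ (it invokes the infinite Hausdorff distance between distinct leaves), so your explicit reduction to the unbounded case is consistent with it, even though your parenthetical justification overlooks graphs of bounded diameter with infinitely many vertices.
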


\begin{proof}
Fix a quasi-inverse $\bar{q}$ of $q$. If $q$ is leaf-preserving, then there exists some $C \geq 0$ such that, for every $c \in \mathbb{Z}_n^{(X)}$, there exists some $\alpha(c) \in \mathbb{Z}_m^{(Y)}$ such that the Hausdorff distance between $q(X(c))$ and $Y(\alpha(c))$ is $\leq C$. Similarly, for every $c \in \mathbb{Z}_m^{(Y)}$ there exists some $\bar{\alpha}(c) \in \mathbb{Z}_n^{(X)}$ such that the Hausdorff distance between $\bar{q}(Y(c))$ and $X(\bar{\alpha}(c))$ is $\leq C$ (up to increasing $C$ if necessary). Because the Hausdorff distance between two distinct leaves in $\mathcal{L}_n(X)$ and $\mathcal{L}_m(Y)$ is infinite, we must have $\alpha \circ \bar{\alpha} = \mathrm{id}$ and $\bar{\alpha} \circ \alpha= \mathrm{id}$. Thus, we have proved that there exists a bijection $\alpha : \mathbb{Z}_n^{(X)} \to \mathbb{Z}_m^{(Y)}$ and a constant $C \geq 0$ such that, for every $c \in \mathbb{Z}_n^{(X)}$, the Hausdorff distance between $q(X(c))$ and $Y(\alpha(c))$ is $\leq C$. Conversely, if $q$ satisfies this property, then, for every $c_1 \in \mathbb{Z}_n^{(X)}$ (resp. $c_2 \in \mathbb{Z}_m^{(Y)}$), the Hausdorff distance between $q(X(c_1))$ and $Y(\alpha(c_1))$ (resp. $\bar{q}(Y(c_2))$ and $X(\alpha^{-1}(c_2))$) is $\leq C$. In other words, $q$ is leaf-preserving.
\end{proof}

\noindent
The objective of this section is to prove the following criterion:

\begin{thm}\label{thm:CosetAptolic}
Let $n,m \geq 2$ be two integers, $X,Y$ two graphs of bounded degree, and $q : \mathcal{L}_n(X) \to \mathcal{L}_m(Y)$ a quasi-isometry. The following statements are equivalent:
\begin{itemize}
	\item[(i)] $q$ lies at finite distance from an aptolic quasi-isometry;
	\item[(ii)] $q$ is leaf-preserving, i.e. there exists a bijection $\alpha : \mathbb{Z}_n^{(X)} \to \mathbb{Z}_m^{(Y)}$ and a constant $C \geq 0$ such that, for every $c \in \mathbb{Z}_n^{(X)}$, the Hausdorff distance between $q(X(c))$ and $Y(\alpha(c))$ is $\leq C$.
\end{itemize}
Moreover, if $(ii)$ holds, then the distance from $q$ to an aptolic quasi-isometry is bounded above by a constant that depends only on the integers $n,m$, the graphs $X,Y$, the parameters of $q$, and the constant $C$. 
\end{thm}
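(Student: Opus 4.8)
The implication $(i)\Rightarrow(ii)$ is immediate. If $q$ lies within distance $D$ of an aptolic quasi-isometry $\Psi:(c,p)\mapsto(\alpha(c),\beta(p))$, then by Proposition~\ref{prop:AptoQI} the map $\alpha$ is a bijection $\mathbb{Z}_n^{(X)}\to\mathbb{Z}_m^{(Y)}$ and $\beta:X\to Y$ is a quasi-isometry; since $\Psi(X(c))=\{\alpha(c)\}\times\beta(X)$ is contained in $Y(\alpha(c))$ and $\beta(X)$ is coarsely dense in $Y$, the set $\Psi(X(c))$ is at finite Hausdorff distance from $Y(\alpha(c))$, and therefore so is $q(X(c))$. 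Running the same argument for a quasi-inverse of $q$ (aptolic again, by Proposition~\ref{prop:AptoQI}) handles the leaves of $\mathcal{L}_m(Y)$, so $q$ is leaf-preserving.

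For $(ii)\Rightarrow(i)$, after post-composing $q$ with the colouring-translation $\tau_{-\alpha(0)}$ (an isometry of $\mathcal{L}_m(Y)$, affecting neither hypothesis nor conclusion) we may assume $\alpha(0)=0$. Put $\psi:=\pi_Y\circ q$ and $\beta:=\psi(0,\cdot):X\to Y$; by Fact~\ref{fact:Beta}, $\beta$ is a quasi-isometry. First, leaf-preservation already makes $q$ coarsely aptolic in the lamp coordinate: writing $q(c,p)=(\gamma,z)$ with $z=\psi(c,p)$, the colouring $\gamma$ agrees with $\alpha(c)$ outside a ball of radius $\le C$ around $z$, so a detour of length bounded in terms of $C$ and the degree of $Y$ shows $q(c,p)$ lies within a constant $C_1$ of the genuine leaf-vertex $(\alpha(c),\psi(c,p))$. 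Consequently it suffices to produce a uniform constant $D_1$ with $d_Y(\psi(c,p),\beta(p))\le D_1$ for all $(c,p)$: given this, the map $(c,p)\mapsto(\alpha(c),\beta(p))$ is at distance $\le C_1+D_1+O(C)$ from $q$, and it is an aptolic quasi-isometry by Proposition~\ref{prop:AptoQI} — condition $(i)$ there holds since $\alpha$ is bijective, condition $(ii)$ since $\beta$ is a quasi-isometry, and condition $(iii)$ (that $\beta(\mathrm{supp}(c_1-c_2))$ stays at bounded Hausdorff distance from $\mathrm{supp}(\alpha(c_1)-\alpha(c_2))$) follows by reducing, via a colouring-translation in the source together with $\tau_{-\alpha(c_2)}$ in the target, to the case $c_2=0$, for which the uniform bound $D_1$ feeds a Lemma~\ref{lem:F(A)}-type induction to give $\mathrm{supp}(\alpha(c_1))\subseteq\beta(\mathrm{supp}(c_1))^{+O(1)}$, the reverse inclusion coming from the same statement applied to a quasi-inverse of $q$.

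It thus remains to establish the uniform bound $d_Y(\psi(c,p),\beta(p))\le D_1$; equivalently, that the quasi-isometries $\psi(c,\cdot)$ and $\psi(0,\cdot)$ from $X$ to $Y$ coincide up to a uniformly bounded error, i.e. that the arrow-position of $q(c,p)$ does not feel the lamps of $c$ that are far from $p$. Two reductions bring the general case to the worst configuration. First, toggling the lamp at the arrow is a single edge of $\mathcal{L}_n(X)$, so $\psi(c,p)$ is insensitive, up to $A+B$, to the value $c(p)$; more generally, erasing all lamps of $c$ inside a fixed ball $B(p,R_0)$ costs a path of length bounded in terms of $R_0$ and the degree of $X$, so up to a bounded error we may assume $\mathrm{supp}(c)\cap B(p,R_0)=\varnothing$. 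Second, in that situation the combinatorics forces the identification: the $n$ vertices $(c',p)$ with $c'$ ranging over the colourings that agree with $c$ off $\{p\}$ form a clique whose image has diameter $\le A+B$, so the $n$ colourings $\alpha(c')$ differ only inside a ball of bounded radius around $\psi(c,p)$; comparing this with the analogous clique above $(0,p)$ — whose corresponding ball is centred at $\beta(p)$ — and using that, because $c$ vanishes on $B(p,R_0)$, the $R_0$-balls around $(c,p)$ and $(0,p)$ in $\mathcal{L}_n(X)$ are isometric while leaf-preservation constrains how the two leaves $X(c)$ and $X(c')$ (resp.\ $X(0)$ and $X(\delta)$) approach one another, one pins $\psi(c,p)$ to within a constant of $\beta(p)$. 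Assembling the two reductions yields $D_1$; tracking constants throughout shows that the resulting distance from $q$ to an aptolic quasi-isometry is controlled by $n,m$, the graphs $X,Y$ (through their degree bounds), the parameters of $q$, and $C$, as asserted.

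The main obstacle is the second reduction, the regime $\mathrm{supp}(c)\cap B(p,R_0)=\varnothing$: the first reduction is routine bookkeeping, but showing that leaf-preservation pins the arrow in that regime must be extracted carefully from the geometry of how two leaves approach each other, together with the fact that $q^{-1}$ is leaf-preserving as well. It is precisely here that the hypothesis cannot be weakened: Example~\ref{ex:NonAptolic} exhibits non-leaf-preserving quasi-isometries for which the conclusion fails.
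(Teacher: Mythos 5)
Your outer structure matches the paper's: $(i)\Rightarrow(ii)$ is routine, and for $(ii)\Rightarrow(i)$ everything reduces to one uniform estimate, namely $d_Y\big(\pi_Y(q(c,p)),\pi_Y(q(0,p))\big)\leq D_1$ for all $(c,p)$ — this is exactly Proposition \ref{prop:Height}, from which the theorem follows essentially as you indicate (the paper deduces aptolicity a bit more directly, by exhibiting the quasi-inverse $(c,p)\mapsto(\alpha^{-1}(c),\bar{\beta}(p))$ rather than re-verifying condition $(iii)$ of Proposition \ref{prop:AptoQI}; your detour there, and your slightly premature appeal to Fact \ref{fact:Beta}, are harmless). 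The genuine gap is that you have not proved this estimate, and the argument you sketch for it does not work. Your first reduction (erasing the lamps of $c$ inside $B(p,R_0)$) removes precisely the lamps that were never the difficulty: the whole problem is the insensitivity of the arrow position to lamps \emph{far} from $p$. Your second step compares the clique $\{(c+t\delta_p,p)\}_t$ with the clique $\{(t\delta_p,p)\}_t$: leaf-preservation does give $\mathrm{supp}(\alpha(c+t\delta_p)-\alpha(c))\subset B(\pi_Y(q(c,p)),\mathrm{const})$ and, after your normalisation, $\mathrm{supp}(\alpha(t\delta_p))\subset B(\beta(p),\mathrm{const})$, but there is no mechanism relating the two: $\alpha$ is merely a bijection with no additivity, and a quasi-isometry is not determined by local data, so the observation that the $R_0$-balls around $(c,p)$ and $(0,p)$ are isometric yields nothing. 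Pairwise distance constraints among the relevant points and leaves are also insufficient: already for $c=\delta_{x_0}$ with $d(p,x_0)$ large, they only force $\pi_Y(q(c,p))$ to lie at distance comparable to $d(p,x_0)$ from $\beta(x_0)$, which is perfectly compatible with it sitting on the ``wrong side'' of $\beta(x_0)$, far from $\beta(p)$.

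This is exactly the difficulty that Section \ref{section:BigProof} is built to overcome: one needs a genuinely global interpolation between the configuration at $(c,p)$ and the one at $(0,p)$ through chains of leaves. The paper introduces $(A,B)$-circles of leaves whose two halves are $(A',B')$-geodesic lines of leaves; Lemma \ref{lem:CircleHeight} shows that such a circle pins the projections of the two ``antipodal'' approach points (the geodesicity hypothesis is essential — the warm-up example with a plain $6$-cycle of leaves shows the naive version fails), Lemma \ref{lem:ChainCircles} shows that any two points with the same projection are linked by a uniformly bounded number of such circles, and Claim \ref{claim:GeodToGeod} shows that a leaf-preserving quasi-isometry transports these configurations with controlled constants. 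None of this machinery, nor any substitute for it, appears in your proposal; your own remark that the key step ``must be extracted carefully'' concedes the point, so the core of the implication $(ii)\Rightarrow(i)$ is left unproved.
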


\noindent
The theorem is essentially a straightforward consequence of the following statement. Roughly speaking, Proposition~\ref{prop:Height} shows that leaf-preserving quasi-isometries $\mathcal{L}_n(X) \to \mathcal{L}_m(Y)$ preserve the projections onto $X$ and $Y$, and the proof below shows that the latter property amounts to being aptolic.

\begin{prop}\label{prop:Height}
Let $n,m \geq 2$ be two integers, $X,Y$ two graphs of bounded degree, and $q : \mathcal{L}_n(X) \to \mathcal{L}_m(Y)$ a quasi-isometry. Let $\pi_X$ (resp. $\pi_Y$) denote the canonical projection $\mathcal{L}_n(X) \twoheadrightarrow X$ (resp. $\mathcal{L}_m(Y) \twoheadrightarrow Y$). Assume that 
\begin{description}
	\item[($q$ leaf-preserving)] there exists a bijection $\alpha : \mathbb{Z}_n^{(X)} \to \mathbb{Z}_m^{(Y)}$ and a constant $C \geq 0$ such that, for every $c \in \mathbb{Z}_n^{(X)}$, the Hausdorff distance between $q(X(c))$ and $Y(\alpha(c))$ is $\leq C$. 
\end{description}
Then there exists a constant $Q \geq 0$ such that, for all $x,y \in \mathcal{L}_n(X)$, if $\pi_X(x)=\pi_X(y)$ then $d(\pi_Y(q(x)),\pi_Y(q(y))) \leq Q$. 
Moreover, $Q$ depends only on the integers $n,m$, the graphs $X,Y$, the parameters of $q$, and the constant $C$. 
\end{prop}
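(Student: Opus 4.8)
The plan is to exploit the combinatorial rigidity of leaves together with the Fact (proved in Subsection 2.1) that distinct leaves do not fellow-travel. Fix $x = (c,p)$ and $y = (c',p)$ with the same $X$-coordinate $p$. The first step is to recall that, by definition of the lamplighter graph, any two vertices sharing the same $X$-coordinate are connected by a path that stays inside the ``fiber'' over $p$, i.e. only the colouring changes; more precisely $d_{\mathrm{dil}}(x,y) \leq \lvert \mathrm{supp}(c-c')\rvert$ and, crucially, both $x$ and $y$ lie in any leaf $X(d)$ with $d$ agreeing with $c$ and $c'$ off $\mathrm{supp}(c-c')$ — but that is not quite enough since $c,c'$ need not agree anywhere. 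Instead I would use the following trick: pick an auxiliary vertex $q_0 \in X$ far from $p$ (possible since $X$ is unbounded, or if $X$ is bounded the statement is trivial), and observe that $x$ lies on the leaf $X(c)$ and $y$ lies on the leaf $X(c')$, and that these two leaves, while distinct in general, both contain vertices arbitrarily far out; the point pinning things down is that $x$ and $y$ are \emph{both} at $X$-coordinate $p$.

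The real mechanism is this. Move an arrow from $p$ out to some very distant point $w \in X$ while carrying the colouring $c$ unchanged; this gives a point $x_w = (c,w)$ on the leaf $X(c)$ with $d(x,x_w) = d_X(p,w)$, and likewise $y_w = (c',w)$ on $X(c')$ with $d(y,y_w) = d_X(p,w)$. Now $q(x_w)$ is within $C$ of the leaf $Y(\alpha(c))$ and $q(y_w)$ is within $C$ of $Y(\alpha(c'))$. The key step is to compare $\pi_Y(q(x))$ and $\pi_Y(q(y))$ by noting that $\pi_Y \circ q$ is coarsely Lipschitz (being the composition of a quasi-isometry with a $1$-Lipschitz map), so $d(\pi_Y(q(x)),\pi_Y(q(x_w))) \leq A\, d_X(p,w) + B$ and similarly for $y$; this alone does not bound the difference. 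The trick that closes it is to instead run the arrow from $x=(c,p)$ to $y=(c',p)$ directly through the fiber over $p$: along such a path the $X$-coordinate is constantly $p$, but the colourings interpolate. We do not have a single leaf containing the whole path, so instead I would chop $\mathrm{supp}(c-c')$ into singletons, changing one lamp at a time, and at each intermediate stage the arrow can be parked at $p$; the point is that consecutive intermediate vertices differ by one lamp at $p$, hence lie at distance $1$, hence their $q$-images lie at distance $\leq A+B$, so $d(\pi_Y(q(x)),\pi_Y(q(y))) \leq (A+B)\lvert\mathrm{supp}(c-c')\rvert$ — \emph{but this is not bounded}. So the crude estimate fails and the leaf-preserving hypothesis must genuinely be used; the correct argument must be that $\pi_Y\circ q\circ(\text{fiber over }p)$ has bounded image, which I expect to follow because that fiber is, coarsely, a single leaf's worth of lamp-configurations sitting over one point, and moving the arrow out a leaf shows any two points of the fiber are connected by a path whose $q$-image stays within $C+C'$ of a union of leaves that all pass through the arrow-position $p$, forcing the $Y$-coordinates to stay near $\pi_Y(q(\,\cdot\,,p))$ evaluated on a fixed reference.

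Concretely, here is the clean route I would actually write. For $x=(c,p)$, consider the two vertices $x$ and $x' := (c, p')$ where $p'$ is a neighbour of $p$: these are adjacent, lie on the same leaf $X(c)$, and $d(q(x),q(x'))\leq A+B$. Iterating, for the whole leaf $X(c)$ the images $q(X(c))$ are within $C$ of $Y(\alpha(c))$, so $\pi_Y(q(X(c)))$ is within $C$ of $Y$-coordinates ranging over all of $Y$. Now given $x=(c,p)$ and $y=(c',p)$: the subgraph $\{(d,p) : d\in\mathbb{Z}_n^{(X)}\}$ is the fiber, isometric to a copy of the ``lamp group'' with its word metric, which is \emph{locally finite but unbounded}; so again the crude bound is unbounded and I must argue differently. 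The resolution, which I believe is what the authors intend, is: since $q$ is a quasi-isometry, $\pi_Y\circ q$ restricted to a single fiber $\pi_X^{-1}(p)$ must have image of diameter bounded \emph{independently of the fiber}, because $\pi_X^{-1}(p)$ is at bounded Hausdorff distance inside $\mathcal{L}_n(X)$ from... no. I will instead argue by contradiction: if $d(\pi_Y(q(x)),\pi_Y(q(y)))$ were arbitrarily large over pairs with $\pi_X(x)=\pi_X(y)$, then combining with the fact that $q(X(c))$ and $q(X(c'))$ are $C$-close to leaves $Y(\alpha(c)),Y(\alpha(c'))$, and that $x,y$ have the \emph{same} $p$, we could produce two points of $\mathcal{L}_n(X)$ at bounded distance whose images are far apart — contradicting that $q$ is coarsely Lipschitz. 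The precise bounded-distance pair comes from: in the fiber over $p$, write $d((c,p),(c',p))$ in the \emph{diligent} metric as roughly the length of a shortest tour of $\mathrm{supp}(c-c')$ based at $p$; if that support is small the pair is already close and we are done, and if it is large we reduce via the leaf structure. \textbf{The main obstacle} is exactly this reduction: controlling $\pi_Y(q(y)) - \pi_Y(q(x))$ when $\mathrm{supp}(c-c')$ is large, using only the leaf-preserving hypothesis and not the still-unproven fact that $q$ is leaf-preserving in a stronger metric sense. I expect the authors handle it by fixing a \emph{single} reference colouring $c_0$, noting $q$ maps $X(c_0)$ within $C$ of $Y(\alpha(c_0))$, and comparing every $(c,p)$ to the point $(c_0,p)$ on that fixed leaf via a path in $\mathcal{L}_n(X)$ of controlled length that first parks the arrow, changes lamps, then returns — the length being $O(\text{diam}\,\mathrm{supp}(c-c_0))$ which is \emph{not} uniformly bounded, so ultimately the bound $Q$ must be extracted from the interplay of the \emph{two} quasi-inverse leaf-preserving conditions, squeezing $\pi_Y(q(\pi_X^{-1}(p)))$ between the images of $X$-leaves through both $q$ and $\bar q$; making that squeeze quantitative, with $Q$ depending only on $n,m,X,Y$, the QI-parameters, and $C$, is the crux of the proof.
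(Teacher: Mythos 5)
There is a genuine gap, and you identify it yourself: everything you write either gives an estimate proportional to $|\mathrm{supp}(c-c')|$ (hence unbounded), or ends with ``the crux of the proof'' still to be done. The missing idea is a characterisation of the relation ``$\pi_X(x)=\pi_X(y)$'' purely in terms of data that the hypothesis actually transports through $q$, namely distances \emph{between leaves}. The leaf-preserving assumption only tells you that $q$ sends each leaf $X(c)$ within $C$ of $Y(\alpha(c))$; it says nothing directly about fibres $\pi_X^{-1}(p)$, and indeed your attempts to work fibrewise (changing one lamp at a time, or arguing by contradiction with coarse Lipschitzness) cannot close, because the fibre over $p$ has unbounded diameter and is not coarsely a leaf.

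The paper's mechanism is the notion of circles of leaves. One first proves (Lemma \ref{lem:CircleHeight}) that if $X(c_0),\dots,X(c_{2s-1})$ is an $(A,B)$-circle of leaves --- consecutive leaves at distance $\leq A$, next-but-one leaves at distance $\geq B$ --- whose two halves are $(A',B')$-geodesic as lines of leaves, then any point within $A$ of both $X(c_0)$ and $X(c_{2s-1})$ has projection onto $X$ within $14A$ of the projection of any point within $A$ of both $X(c_{s-1})$ and $X(c_s)$: travelling around the circle forces the colouring modifications made in the first half to be undone at essentially the same places in the second half, which pins down the arrow position. One then proves the converse direction (Lemma \ref{lem:ChainCircles}): there is a constant $N$, depending only on $X$ and the chosen parameters, such that any two points with the same projection onto $X$ (and not too close) are joined by a chain of at most $N$ such circles, built by partitioning $X$ into $N$ classes of pairwise far-apart vertices and switching the lamps class by class. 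Since all the conditions defining these configurations are metric conditions on leaves, they are carried by the leaf-preserving quasi-isometry $q$ to configurations of the same type with controlled parameters (Claim \ref{claim:GeodToGeod}); applying Lemma \ref{lem:CircleHeight} in $\mathcal{L}_m(Y)$ to each of the $\leq N$ image circles and summing gives $d(\pi_Y(q(x)),\pi_Y(q(y)))\leq 14NA_2+O(1)$, with all constants depending only on $n,m,X,Y$, the parameters of $q$, and $C$. This leaf-intrinsic detection of the projection is precisely the step your proposal defers, so as written the proposal does not constitute a proof.
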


\begin{proof}[Proof of Theorem \ref{thm:CosetAptolic} assuming Proposition \ref{prop:Height}.]
The implication $(i) \Rightarrow (ii)$ is clear. Conversely, assume that $(ii)$ holds. Up to replacing $q$ with a new quasi-isometry at finite distance, we suppose without loss of generality that $q(c,p) \in Y(\alpha(c))$ for every $(c,p) \in \mathcal{L}_n(X)$. For every $p \in X$, let $\beta(p) \in Y$ be such that $q(0,p)=(\alpha(0),\beta(p))$; and set
$$\tilde{q} : (c,p) \mapsto (\alpha(c),\beta(p)), \ (c,p) \in \mathcal{L}_n(X).$$
Observe that, for every $(c,p) \in \mathcal{L}_n(X)$, we have
$$d(q(c,p),\tilde{q}(c,p)) = d \left( \pi_Y (q(c,p)), \beta(p) \right) = d \left( \pi_Y(c,p), \pi_Y(0,p) \right) \leq Q$$
where the first equality holds because $q(c,p),\tilde{q}(c,p)$ both belong to the same leaf, namely $Y(\alpha(c) )$. Therefore, $\tilde{q}$ lies at finite distance from $q$. Since $\beta$ is a quasi-isometry according to Fact \ref{fact:Beta}, it admits a quasi-inverse $\bar{\beta}$, and we can define
$$Q : (c,p) \mapsto \left( \alpha^{-1}(c), \bar{\beta}(p) \right), \ (c,p) \in \mathcal{L}_m(Y).$$
Clearly, $\tilde{q} \circ Q$ and $Q \circ \tilde{q}$ lie at finite distances from identities. Because $\tilde{q}$ is a quasi-isometry, this implies that $Q$ is also a quasi-isometry and a quasi-inverse of $\tilde{q}$. Thus, we have proved that $q$ is at finite distance from $\tilde{q}$, which is an aptolic quasi-isometry.
\end{proof}

\noindent
The proof of Proposition \ref{prop:Height}, in Section \ref{section:BigProof}, is rather technical but it relies on a simple idea. Therefore, for the reader's convenience, we begin with a general discussion in the next section.

\subsection{Warm up}

\noindent
Fix an integer $n \geq 2$ and a graph $X$. The geometric trick, in order to prove that two points in $\mathcal{L}_n(X)$ have close projections onto $X$, is that, if $X_0,X_1,X_2,X_3$ are four leaves such that $d(X_i,X_{i+1})$ is very small and $d(X_i,X_{i+2})$ very large ($i \in \mathbb{Z}_4$), then a point close to $X_0$ and $X_1$ must have its projection onto $X$ close to the projection of a point close to $X_2$ and $X_3$. A justification is the following. Consider a loop as illustrated by Figure \ref{B}. In terms of colourings, travelling along the loop corresponds to the following:
\begin{itemize}
	\item from $a$ to $a'$, one modifies the colouring of $a$ in a small region $A$ around the arrow;
	\item from $a'$ to $b$, one moves the arrow far away without modifying the colouring;
	\item from $b$ to $b'$, one modifies the colouring of $b$ in a small region $B$ around the arrow;
	\item from $b'$ to $c$, one moves the arrow far away without modifying the colouring;
	\item from $c$ to $c'$, one modifies the colouring of $c$ in a small region $C$ around the arrow;
	\item from $c'$ to $d$, one moves the arrow far away without modifying the colouring;
	\item from $d$ to $d'$, one modifies the colouring of $d$ in a small region $D$ around the arrow;
	\item from $d'$ to $a$, one moves the arrow far away without modifying the colouring.
\end{itemize}
Because we are following a loop, at some point we have to undo what we have done on the colouring. But the regions $A$ and $B$ are far away from each other, so we cannot undo anything between $a$ and $c$. Consequently, we have to undo in $C$ what we did in $A$ and undo in $D$ what we did in $B$. Since the regions $A$ and $C$ must be more or less the same, the projections onto $X$ must be approximatively the same between $a,a'$ and between $c,c'$. 

\medskip \noindent
However, considering paths of leaves of length $4$ is not sufficient in order to be able to identify any two points with the same projection onto $X$. We want to generalise the previous observation to longer paths. However, a naive extension does not work, as shown by Figure \ref{B}. The point is that the sequence $X_0,X_1,X_2,X_3$ corresponds to modifying the colouring in well separated regions $A,B,C$, but next the sequence $X_3,X_4,X_5,X_0$ undo what we did in a different order: $B$, $C$, and finally $A$. The fact that the points $x,y$ have very different projections onto $X$ comes from the fact that going from $X_5$ to $X_0$ corresponds to modifying the colouring inside the region $A$ while going from $X_2$ to $X_3$ corresponds to modifying the colouring inside a very different region, namely $C$. In order to avoid such a phenomenon, we require that the sequences $X_0,\ldots, X_3$ and $X_2,\ldots, X_5$ cannot be shortened when thought of as path of leaves. In the configuration illustrated by Figure \ref{B}, notice that $X_2, \ldots, X_5$ can be shortened as $X_2,X_5$. 
\begin{figure}
\begin{center}
\includegraphics[width=0.42\linewidth]{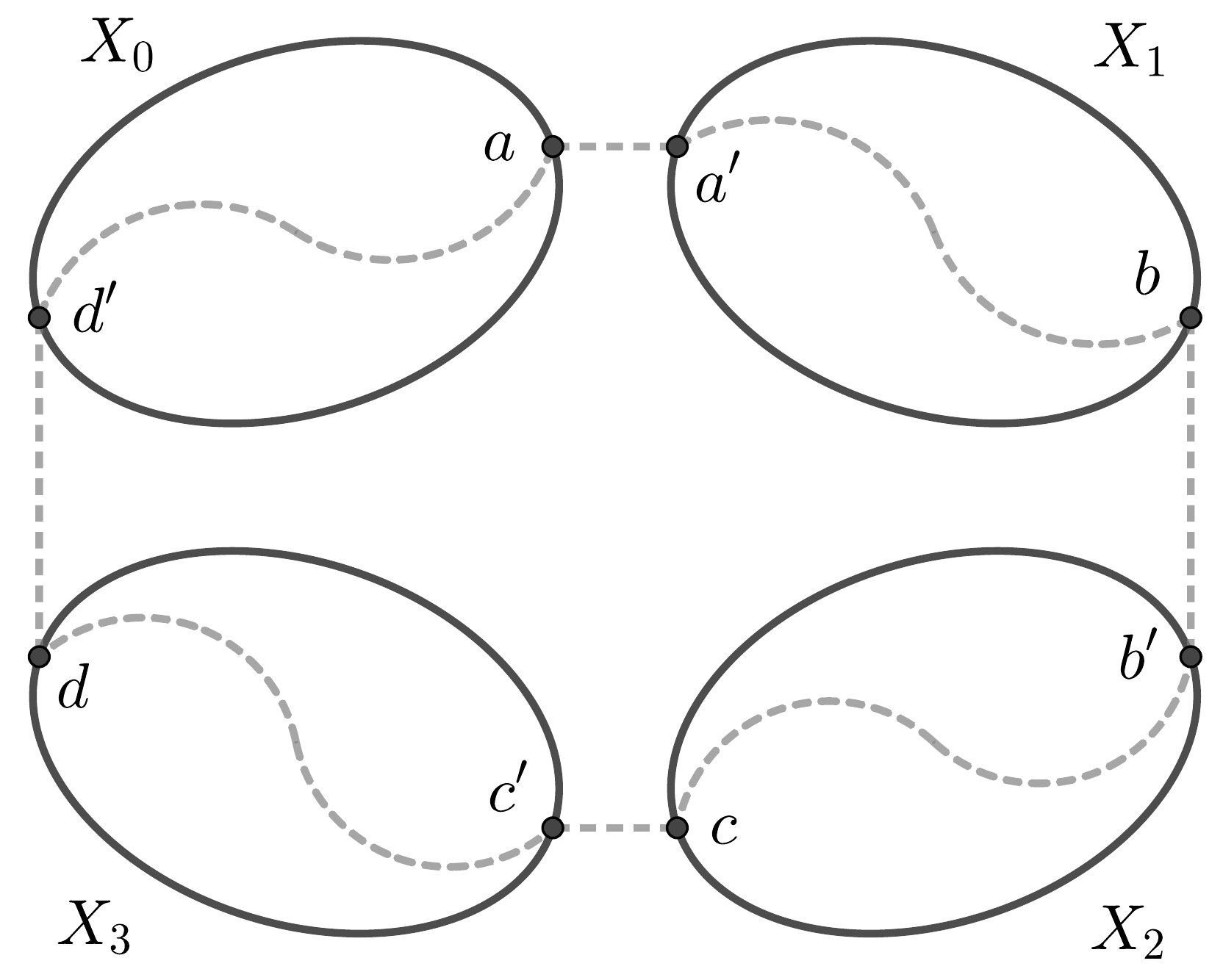} \hspace{0.8cm}
\includegraphics[width=0.5\linewidth]{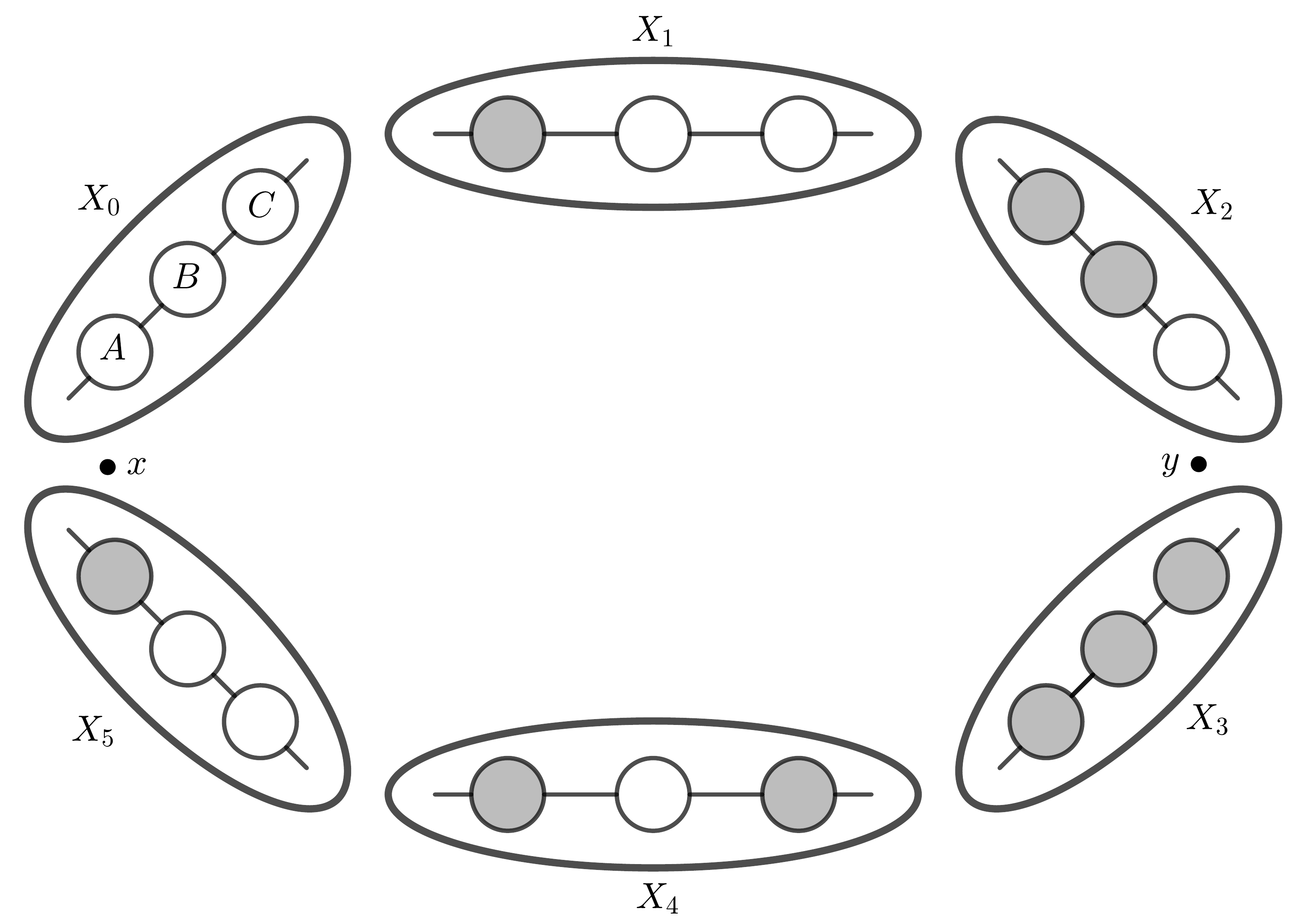}
\caption{On the left, the points $a$ and $c$ have close projections; on the right, the points $x$ and $y$ have very different projections.}
\label{B}
\end{center}
\end{figure}

\medskip \noindent
This is our fundamental tool in order to recognize two points with close projections onto $X$ : Let $X_0, \ldots, X_{2s-1}$ be a sequence of leaves of $X$ such that $d(X_i,X_{i+1})$ is very small for every $i \in \mathbb{Z}_{2s}$, such that $d(X_i,X_{i+2})$ is very large for every $i \in \mathbb{Z}_{2s}$, and such that $X_0, \ldots, X_s$ and $X_{n-1}, \ldots, X_{2s-1}$ cannot be shortened. Then a point close to $X_0$ and $X_{2s}$ must have its projection onto $X$ close to the projection of a point close to $X_{s-1}$ and $X_n$. See Lemma \ref{lem:CircleHeight} for a precise statement.

\medskip \noindent
Conversely, it is however not true that, if two points $x,y \in \mathcal{L}_n(X)$ have the same projection onto $X$, then there exists such a cycle of leaves $X_0, \ldots, X_{2s-1}$ such that $x$ is close to $X_0,X_{2s-1}$ and $y$ close to $X_{s-1},X_s$. But it is almost true. It turns out that there exists a constant $N \geq 1$ such that, for any two points $x,y \in \mathcal{L}_n(X)$ with the same projection onto $X$, we can find a sequence of points $x_1=x, \ x_2, \ldots, \ x_{N-1}, \ x_N=y$ such that, for every $1 \leq i \leq N-1$, there exists a cycle of leaves $X_0, \ldots, X_{2s-1}$ such that $x_i$ is close to $X_0,X_{2s-1}$ and $x_{i+1}$ close to $X_{s-1},X_s$. See Lemma \ref{lem:ChainCircles} for a precise statement.

\medskip \noindent
Thus, we are able to recognize geometrically when two points in $\mathcal{L}_n(X)$ have close projections onto $X$.

\subsection{Proof of the criterion}\label{section:BigProof}

\noindent
Until the proof of Proposition \ref{prop:Height}, we fix an integer $n \geq 2$ and a graph $X$ all of whose vertices have $\leq D$ neighbours. We denote by $\pi$ the canonical projection $\mathcal{L}_n(X) \twoheadrightarrow X$ and we endow the lamplighter graph with its diligent metric (see Section~\ref{section:prel}). 

\begin{definition}
Let $A,A',B,B' \geq 0$ be four real numbers. An \emph{$(A,B)$-line of leaves} is a sequence $X(c_1), \ldots, X(c_k)$ with $c_1, \ldots, c_k \in \mathbb{Z}_n^{(X)}$ such that:
\begin{enumerate}
	\item $d(X(c_i),X(c_{i+1})) \leq A$ for every $1 \leq i \leq k-1$;
	\item $d(X(c_i),X(c_{i+2})) \geq B$ for every $1 \leq i \leq k-2$.
\end{enumerate}
It is \emph{$(A',B')$-geodesic} if, for all $1 \leq i,j \leq k$, every $(A',B')$-line of leaves from $X(c_i)$ to $X(c_j)$ has length at least $|i-j|$. 
\end{definition}

\begin{definition}
Let $A,B \geq 0$ be two real numbers. An \emph{$(A,B)$-circle of leaves} is a sequence $X(c_0), \ldots, X(c_{k-1})$ with $c_1, \ldots, c_k \in \mathbb{Z}_n^{(X)}$ such that:
\begin{enumerate}
	\item $d(X(c_i),X(c_{i+1})) \leq A$ for every $i \in \mathbb{Z}_k$;
	\item $d(X(c_i),X(c_{i+2})) \geq B$ for every $i \in \mathbb{Z}_k$.
\end{enumerate}
\end{definition}

\noindent
Our proof of Proposition \ref{prop:Height} relies on two preliminary lemmas. The first one is a necessary condition for two points of $\mathcal{L}_n(X)$ to have close projections onto $X$.

\begin{lemma}\label{lem:CircleHeight}
Let $A,B,A',B' \geq 0$ be four real number satisfying
$$A<B, \ A' \geq 2D^{3A} \text{ and } B' \leq B-2D^A.$$
Let $X(c_0), \ldots, X(c_{2s-1})$ be an $(A,B)$-circle of leaves. Assume that both $X(c_0), \ldots, X(c_{s})$ and $X(c_{s-1}), \ldots, X(c_{2s-1})$ are $(A',B')$-geodesic, and fix a point $x$ (resp. $y$) within $A$ to both $X(c_0)$ and $X(c_{2s-1})$ (resp. $X(c_{s-1})$ and $X(c_s)$). Then $\pi(x)$ and $\pi(y)$ are within $14A$ in $X$.
\end{lemma}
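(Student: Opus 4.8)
The idea is to encode the combinatorial information of the circle of leaves into a single large cycle in $\mathcal{L}_n(X)$ and then exploit the fact that, modulo the diligent metric, every edge of $\mathcal{L}_n(X)$ is either of ``horizontal'' type (moving the arrow, keeping the colouring) or of ``vertical'' type (modifying the colouring at one vertex). Concretely, I would first pick, for each consecutive pair $X(c_i),X(c_{i+1})$, a short path $\gamma_i$ realizing $d(X(c_i),X(c_{i+1}))\le A$; its endpoints lie in $X(c_i)$ and $X(c_{i+1})$ respectively, so $c_i$ and $c_{i+1}$ differ only on a set of at most $D^A$ vertices, all contained in a ball of radius $A$ around $\pi$ of either endpoint. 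Call this set $S_i=\mathrm{supp}(c_i-c_{i+1})$. Since $d(X(c_i),X(c_{i+2}))\ge B>2D^A$ we get that $S_{i-1}$ and $S_{i}$ are ``far apart'': more precisely, if $p\in S_{i-1}$ and $q\in S_i$ then $d(p,q)\ge B-2D^A\ge B'$ (this is exactly where $B'\le B-2D^A$ is used, via the fact that $c_{i-1}$ and $c_{i+1}$ must differ somewhere, and any such difference point is within $D^A$ of $S_{i-1}\cup S_i$ from both sides but the two leaves are at distance $\ge B$).

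**Key steps.** The second step is the ``telescoping'' argument sketched in the warm-up. Since $\sum_i (c_{i+1}-c_i)=0$ around the circle and consecutive supports $S_i$ are pairwise $B'$-separated (so disjoint, in particular), the only way the colouring changes cancel is that each $S_i$ must be ``matched'' with another $S_j$ lying in the same region: the changes performed on $S_i$ when traversing $\gamma_i$ must be undone on some $S_j$ with $S_i$ and $S_j$ within $D^A$ of each other, hence $j\equiv i$ or $j\equiv i\pm1$ (because all other $S_k$ are $B'$-far). I would now invoke the $(A',B')$-geodesic hypothesis on the two halves $X(c_0),\dots,X(c_s)$ and $X(c_{s-1}),\dots,X(c_{2s-1})$: if on the first half some $S_i$ got matched with a nearby $S_j$ with $|i-j|\ge 2$, then the subsequence obtained by ``shortcutting'' between $X(c_i)$ and $X(c_j)$ would be an $(A',B')$-line of leaves (here $A'\ge 2D^{3A}$ is exactly the slack needed so that $d(X(c_{i-1}),X(c_j))\le A'$ and $d(X(c_{i-1}),X(c_{j+1}))\ge B'$ still hold — this is the bookkeeping with triple balls $D^{3A}$), contradicting geodesicity. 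So on each half the matching is forced: $S_i$ pairs with $S_{2s-1-i}$, i.e. the support travelled from $X(c_0)$ out to the ``middle'' is mirrored on the way back. In particular $S_0$ and $S_{2s-2}$ lie within $D^A$ of each other, and iterating, the point $x$ (within $A$ of $X(c_0)$ and $X(c_{2s-1})$) and the point $y$ (within $A$ of $X(c_{s-1})$ and $X(c_s)$) have their projections $\pi(x),\pi(y)$ confined to a common region whose diameter is controlled: tracing the chain of $D^A$-overlaps plus the $A$-slack at each of the four ``corner'' passes gives $d(\pi(x),\pi(y))\le 14A$.

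**The main obstacle.** The delicate point is making the ``forced matching'' argument rigorous: one has to rule out that a support set $S_i$ is undone partly at $S_{i-1}$, partly at $S_{i+1}$, and partly at $S_i$ itself (i.e. that the colouring oscillates back and forth in one region across the whole circle). This is where the geodesic hypothesis on \emph{both} overlapping halves is essential — a single half does not suffice, precisely as illustrated by Figure~\ref{B} — and where one must carefully track which pairs of leaves the shortcuts produce and verify they still form an $(A',B')$-line, which is the source of the specific constants $2D^{3A}$ and $B-2D^A$. Once the matching structure is pinned down, the final bound $14A$ is a routine accumulation of the constant-size errors (four corners contributing roughly $3A$ each, plus overlaps), so I expect no difficulty there; the whole weight of the proof is in the separation-and-matching combinatorics of the first part.
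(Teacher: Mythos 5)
Your overall strategy (separation of the supports within each half via the geodesic hypothesis, plus a cancellation argument around the circle) is in the right spirit, but two of your key steps do not hold as stated. First, the claimed $B'$-separation of consecutive supports is unjustified: $d(X(c_{i-1}),X(c_{i+1}))\geq B$ is a travelling-salesman-type quantity (the length of a shortest path visiting all of $\mathrm{supp}(c_{i+1}-c_{i-1})$), and it does not imply that every point of $S_{i-1}$ is at distance $\geq B-2D^{A}$ from every point of $S_i$; moreover the hypotheses allow $B$ barely larger than $A$ and $B'=0$, so no such numerical separation is available from the circle condition alone. The correct way to separate supports (consecutive or not, within one half) is the shortcut argument: if the supports of $L_i$ and $L_j$ met a common ball of radius $3A$, then inserting $L_j$ early produces, by Fact~\ref{fact:DistBall}, a line of leaves with steps $\leq 2D^{3A}\leq A'$ and gaps $\geq B-2D^{A}\geq B'$ which is too short, contradicting $(A',B')$-geodesicity; this is also where $B'\leq B-2D^{A}$ is actually used, not in a pointwise separation of consecutive supports.

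Second, and more seriously, the "forced mirror matching" $S_i\leftrightarrow S_{2s-1-i}$ is wrong, and with it your endgame collapses. In the model circles (those built in Lemma~\ref{lem:ChainCircles}) the cancellation pattern is antipodal, $S_i\leftrightarrow S_{i+s}$, not mirrored; indeed a mirrored (LIFO) undoing would make the two middle steps act on the same region and would violate geodesicity of the half $X(c_{s-1}),\ldots,X(c_{2s-1})$. What the conclusion actually requires is the single antipodal-type fact that the region of the middle step $L_{s-1}$ (next to which $y$ sits) lies within $2A$ of the region of the closing step $L_{2s-1}$ (next to which $x$ sits). This is obtained by combining: (a) within each half the step locations $q_i$ are pairwise $>2A$ apart; (b) since the circle closes up, the changes of each first-half step must be cancelled by some step among $L_s,\ldots,L_{2s-1}$ (otherwise its support would be empty, forcing $c_i=c_{i+1}$ and $A\geq d(X(c_{i-1}),X(c_{i+1}))\geq B$, contradicting $A<B$); and (c) applying (b) to $i=s-1$, the second-half separation leaves only $L_{2s-1}$ as a candidate. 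Your mirror matching instead pairs $S_{s-1}$ with $S_s$ and $S_0$ with $S_{2s-1}$ (and the assertion "$S_0$ within $D^{A}$ of $S_{2s-2}$" matches neither convention), which says nothing about the relative position of $x$ and $y$, so no bound on $d(\pi(x),\pi(y))$ follows. Finally, the "routine accumulation" also needs a quantitative step such as Fact~\ref{fact:DistCoset}, converting "$x$ is within $A$ of two leaves at distance $\leq A$" into "$\pi(x)$ is within $6A$ of the place where the two colourings differ", yielding $6A+2A+6A=14A$; but this cannot be patched until the matching step is corrected.
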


\noindent
Before proving Lemma \ref{lem:CircleHeight}, we record two elementary observations. The first one shows that two leaves indexed by two colourings that differ in a small region must be close to each other.

\begin{fact}\label{fact:DistBall}
If two colourings $c_1,c_2 \in \mathbb{Z}_n^{(X)}$ only differ in a ball of radius $r$ in $X$, then the inequality $d(X(c_1),X(c_2)) \leq 2 D^r$ holds in $\mathcal{L}_n(X)$.
\end{fact}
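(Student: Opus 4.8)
The plan is to reduce the claim to a single leaf-to-leaf comparison based at the centre of the ball, and then to bound the diligent distance by the length of a depth-first closed walk sweeping that ball. So suppose $x_0\in X$ and $r\ge 0$ are such that $\mathrm{supp}(c_1-c_2)\subseteq B(x_0,r)$. Since $d(X(c_1),X(c_2))\le d((c_1,x_0),(c_2,x_0))$ and, in the diligent metric, $d((c_1,x_0),(c_2,x_0))$ is exactly the shortest length of a path in $X$ that starts at $x_0$, visits every vertex of $\mathrm{supp}(c_1-c_2)$, and returns to $x_0$, it is enough to exhibit one such closed walk of length at most $2D^r$.

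First I would record that $B(x_0,r)$ spans a connected subgraph of $X$: any vertex of $B(x_0,r)$ is joined to $x_0$ by a geodesic all of whose vertices still lie in $B(x_0,r)$, so $x_0$ is connected inside $B(x_0,r)$ to every vertex of $B(x_0,r)$. Choosing a spanning tree $T$ of that subgraph and performing a depth-first traversal of $T$ based at $x_0$, one obtains a closed walk which crosses each edge of $T$ exactly twice --- hence of length $2(|B(x_0,r)|-1)$ --- and which visits every vertex of $B(x_0,r)$, in particular every vertex of $\mathrm{supp}(c_1-c_2)$. (When $B(x_0,r)=\{x_0\}$ this walk degenerates to the trivial path, of length $1$ by convention, which is still $\le 2D^r$.) Consequently $d(X(c_1),X(c_2))\le 2(|B(x_0,r)|-1)$.

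It then remains to verify the purely combinatorial estimate $|B(x_0,r)|-1\le D^r$. Letting $b_k$ denote the number of vertices at distance exactly $k$ from $x_0$, one has $b_1\le D$ and $b_{k+1}\le (D-1)b_k$ for $k\ge 1$ --- indeed a vertex at distance $k\ge 1$ already has a neighbour at distance $k-1$, so at most $D-1$ of its remaining neighbours lie at distance $k+1$ --- whence $b_k\le D(D-1)^{k-1}$. Summing, $|B(x_0,r)|-1=\sum_{k=1}^{r}b_k\le D\sum_{j=0}^{r-1}(D-1)^j$, and an immediate induction on $r$ gives $\sum_{j=0}^{r-1}(D-1)^j\le D^{r-1}$ (the inductive step reads $1+(D-1)D^{r-1}=D^r-(D^{r-1}-1)\le D^r$). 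Hence $|B(x_0,r)|-1\le D^r$ and the Fact follows.

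I do not anticipate a genuine obstacle here. The only two points that need a word of care are the connectedness of $B(x_0,r)$, used to guarantee a spanning tree lying inside $X$, and the fact that one must use the sharp ball-growth bound $|B(x_0,r)|\le D^r+1$ rather than the cruder $|B(x_0,r)|\le 1+D+\cdots+D^r$, which would only yield a bound of $4D^r$; this is exactly why I would isolate the elementary inequality $\sum_{j<r}(D-1)^j\le D^{r-1}$ as a separate induction.
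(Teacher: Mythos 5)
Your proof is correct and takes essentially the same route as the paper's: fix a spanning tree of the ball containing the support, traverse it crossing each edge twice, and bound the length by the ball-growth estimate. The only differences are cosmetic — you base the walk at the centre and close it up rather than letting it end at an arbitrary vertex, and your count $2(|B(x_0,r)|-1)\le 2D^r$ via $|B(x_0,r)|\le D^r+1$ is in fact slightly more careful than the paper's chain $2|E(T)|\le 2|V(T)|\le 2D^r$.
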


\begin{proof}
Let $B(x,r)$ be a ball of radius $r$ in $X$ such that $c_1$ and $c_2$ only differ in $B(x,r)$. Fix a point $p \in B(x,r)$ and a maximal spanning tree $T \subset B(x,r)$. There exists a path $\gamma$ in $T$ that starts from $p$, that visits each vertex in $T$, and that passes through each edge at most twice. Let $q \in B(x,r)$ denote the final vertex of $\gamma$. Notice that
$$\mathrm{length}(\gamma) \leq 2 |E(T)| \leq 2|V(T)| = 2 | B(x,r)| \leq 2 D^r.$$
One can construct a path from $(c_1,p)$ to $(c_2,q)$ by moving the arrow from $p$ to $q$ following $\gamma$, and along the way we modify the colouring at each point where $c_1$ and $c_2$ differ from the value of $c_1$ to the value of $c_2$. The length of such a path coincides with $\mathrm{length}(\gamma)$.
Thus, we have constructed a path of length $\leq 2D^r$ from a point of $X(c_1)$ to a point of $X(c_2)$, concluding the proof of our fact.
\end{proof}

\noindent
Our second observation aims to show that, if a vertex is near to two close leaves, then it must also be close to any two vertices minimising the distance between the leaves.

\begin{fact}\label{fact:DistCoset}
Fix two distinct colourings $c_1,c_2 \in \mathbb{Z}_n^{(X)}$ and two points $a_1 \in X(c_1)$, $a_2 \in X(c_2)$. Then the inequality
$$d(x,a_1),d(x,a_2) \leq 2(d(x,X(c_1))+d(x,X(c_2))) + d(a_1,a_2)+d(X(c_1),X(c_2))$$
holds for every $x \in \mathcal{L}_n(X)$.
\end{fact}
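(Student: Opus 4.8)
The plan is to reduce, via the symmetry exchanging $(c_1,a_1)$ with $(c_2,a_2)$, to bounding $d(x,a_1)$ alone, and then to compare $x$ with $a_1$ \emph{through the support} $S:=\mathrm{supp}(c_1-c_2)$, which is nonempty since $c_1\neq c_2$. Abbreviate $d_i:=d(x,X(c_i))$ and $\delta:=d(X(c_1),X(c_2))$. First I would pick $b_1\in X(c_1)$ with $d(x,b_1)=d_1$ and move the arrow inside the leaf $X(c_1)$ from $b_1$ to $a_1$, getting $d(x,a_1)\le d_1+d(b_1,a_1)\le d_1+d_X(\pi(b_1),\pi(a_1))$, where $d_X$ is the graph metric of $X$ and $\pi:\mathcal{L}_n(X)\to X$ the projection; so it is enough to prove
$$d_X(\pi(b_1),\pi(a_1))\le (d_1+d_2)+\delta+d(a_1,a_2).$$

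\medskip

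To get this I would establish three estimates: $(1)$ $d_X(\pi(b_1),S)\le d_1+d_2$; $(2)$ $d_X(\pi(a_1),S)\le d(a_1,a_2)$; $(3)$ $\mathrm{diam}_X(S)\le\delta$. All three rest on one observation: any path in $\mathcal{L}_n(X)$ joining a point of $X(c)$ to a point of $X(c')$ flips every lamp of $\mathrm{supp}(c-c')$, so its image under $\pi$ is a walk in $X$ that passes through every vertex of $\mathrm{supp}(c-c')$ and has length at most that of the path. Applying this to a shortest path from $b_1$ to $X(c_2)$, whose length is at most $d(x,b_1)+d(x,X(c_2))=d_1+d_2$, gives $(1)$; to a geodesic from $a_1$ to $a_2$ gives $(2)$; and to a geodesic realising $d(X(c_1),X(c_2))$ gives $(3)$, because then all of $S$ lies on one walk of length at most $\delta$. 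The triangle inequality in $X$ then yields
$$d_X(\pi(b_1),\pi(a_1))\le d_X(\pi(b_1),S)+\mathrm{diam}_X(S)+d_X(\pi(a_1),S)\le (d_1+d_2)+\delta+d(a_1,a_2),$$
so $d(x,a_1)\le 2d_1+d_2+\delta+d(a_1,a_2)\le 2(d_1+d_2)+d(a_1,a_2)+\delta$; the bound for $d(x,a_2)$ follows by symmetry.

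\medskip

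The one place cleverness is needed is the decision to route through $S$. The naive move, bounding $d_X(\pi(b_1),\pi(a_1))$ by inserting the projection of the point of $X(c_2)$ nearest to $x$, merely reintroduces a term of the same shape as $d(b_1,a_1)$ and the argument loops. The support $S=\mathrm{supp}(c_1-c_2)$ works as an intermediary precisely because it is simultaneously within $d_1+d_2$ of $\pi(b_1)$ (since $x$ is close to both leaves), within $d(a_1,a_2)$ of $\pi(a_1)$ (since $a_1$ is close to $a_2$), and of $X$-diameter at most $d(X(c_1),X(c_2))$. The remaining points — that the induced metric on a leaf is the graph metric of $X$, and that $\pi$ sends any path of $\mathcal{L}_n(X)$ to a walk of no greater length in which no flipped lamp is skipped (with the diligent metric each edge projects to a vertex or to an edge of $X$, and the arrow must sit on a lamp to flip it) — are immediate from the definition of the edges of $\mathcal{L}_n(X)$.
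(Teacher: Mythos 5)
Your proof is correct and follows essentially the same route as the paper: both arguments route through the support $\mathrm{supp}(c_1-c_2)$, using the diligent-metric description of distances (projections of paths between the two leaves must visit every point of the support) to get the three estimates $d_X(\pi(b_1),S)\le d_1+d_2$, $d_X(\pi(a_1),S)\le d(a_1,a_2)$, and $\mathrm{diam}_X(S)\le d(X(c_1),X(c_2))$, and then chain them. The only cosmetic difference is that the paper picks nearest points of $x$ in both leaves and bounds via the distance between those two points, whereas you use the nearest point in one leaf and its distance to the other leaf; the resulting bound $2d_1+d_2+d(a_1,a_2)+\delta$ is identical.
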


\begin{proof}
Fix two points $(c_1,p)$ and $(c_2,q)$ such that $d(x,(c_1,p))=d(x,X(c_1))$ and similarly $d(x,(c_2,q))=d(x,X(c_2))$. Also, write $a_i = (c_i,f_i)$ for some $f_i \in X$, $i=1,2$. For convenience, set $\Delta:= \mathrm{supp}(c_2-c_1)$. The distance in $\mathcal{L}_n(X)$ between $(c_1,p)$ and $(c_2,q)$ coincides with the minimal length of a path in $X$ that starts from $p$, ends at $q$, and passes through all the points in $\Delta$. Consequently,
$$\begin{array}{lcl} d(x,X(c_1))+d(x,X(c_2)) & \geq & d((c_1,p),(c_2,q)) \geq d(p,\Delta) \\ \\ & \geq & d(p,f_1)-d(f_1,\Delta)- \mathrm{diam}(\Delta). \end{array}$$
Observe that we also have $d(f_1,\Delta) \leq d((c_1,f_1),(c_2,f_2))=d(a_1,a_2)$, because the distance in $\mathcal{L}_n(X)$ between $(c_1,f_1)$ and $(c_2,f_2)$ is at least the length of a path in $X$ from $f_1$ to $f_2$ passing through all the points in $\Delta$; and $\mathrm{diam}(\Delta) \leq d(X(c_1),X(c_2))$, because the distance in $\mathcal{L}_n(X)$ between $X(c_1)$ and $X(c_2)$ is at least the length of a path in $X$ visiting all the points in $\Delta$. Therefore,
$$\begin{array}{lcl} d(x,X(c_1))+d(x,X(c_2)) & \geq & d(p,f_1)- d(a_1,a_2)-d(X(c_1),X(c_2)) \\ \\ & \geq & d(a_1,(c_1,p))-d(a_1,a_2)-d(X(c_1),X(c_2)). \end{array}$$
Thus, we have proved that
$$d(a_1,(c_1,p)) \leq d(x,X(c_1))+d(x,X(c_2)) + d(a_1,a_2)+d(X(c_1),X(c_2)).$$
We conclude that
$$\begin{array}{lcl} d(x,a_1) & \leq & d(x,(c_1,p)) + d((c_1,p),a_1) \\ \\ & \leq & 2d(x,X(c_1))+d(x,X(c_2)) + d(a_1,a_2)+d(X(c_1),X(c_2)) \end{array}$$
as desired. The inequality for $d(x,a_2)$ is obtained similarly.
\end{proof}

\begin{proof}[Proof of Lemma \ref{lem:CircleHeight}.]
For every $i \in \mathbb{Z}_{2s}$, we fix two points $(c_i,q_i) \in X(c_i)$ and $(c_{i+1},p_{i+1}) \in X(c_{i+1})$ such that $d((c_i,q_i),(c_{i+1},p_{i+1})) \leq A$. This inequality implies that $d(q_i,p_{i+1}) \leq A$ and that the colourings $c_{i}$ and $c_{i+1}$ may only differ in $B(q_{i},A)$, i.e. $c_{i+1}=c_i+ L_i$ where $L_i$ is supported in $B(q_{i},A)$. 
\begin{figure}
\begin{center}
\includegraphics[width=\linewidth]{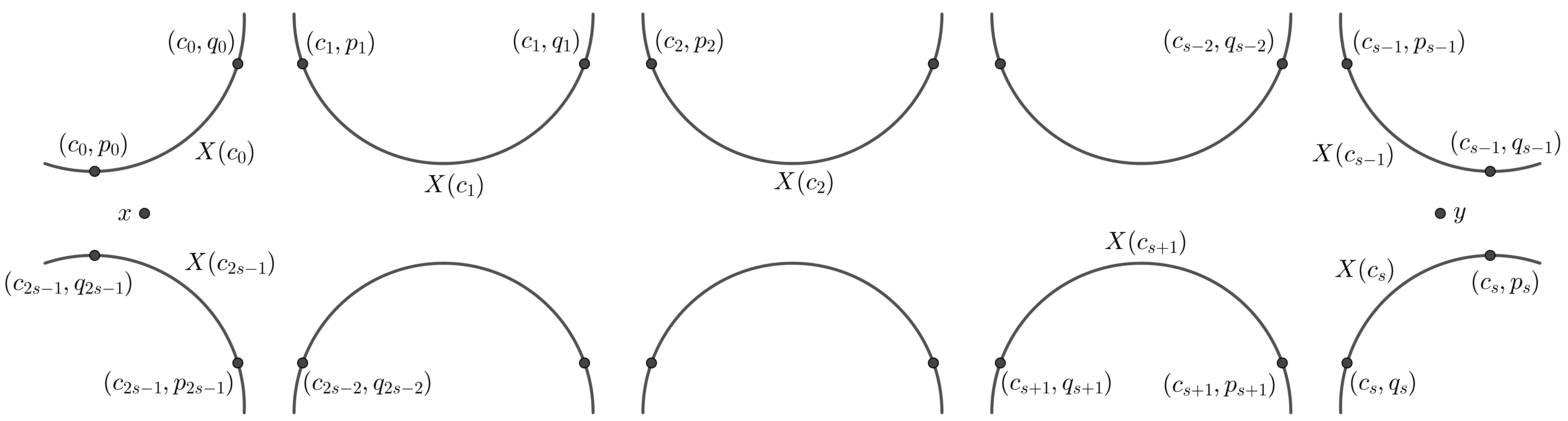}
\caption{Configuration from the proof of Lemma \ref{lem:CircleHeight}.}
\label{Conf}
\end{center}
\end{figure}

\begin{claim}\label{claim:InInf}
For all $0 \leq i < j \leq s-1$, we have $d(q_i,q_j) > 2A$.
\end{claim}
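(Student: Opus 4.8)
The plan is to argue by contradiction, using only the $(A',B')$-geodesic hypothesis on the sub-line $X(c_0),\ldots,X(c_s)$ and the circle inequalities. Suppose $d(q_i,q_j)\le 2A$ for some $0\le i<j\le s-1$. Since $c_{t+1}=c_t+L_t$ with $L_t$ supported in $B(q_t,A)$, the assumption $d(q_i,q_j)\le 2A$ forces both $L_i$ and $L_j$ to be supported in the ball $B(q_i,3A)$. I will use this to build an $(A',B')$-line of leaves from $X(c_i)$ to $X(c_{j+1})$ of length only $j-i$; since $j+1\le s$, this contradicts $(A',B')$-geodesicity, which requires every $(A',B')$-line of leaves from $X(c_i)$ to $X(c_{j+1})$ to have length at least $(j+1)-i=j-i+1$.

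The line I propose is
\[ X(c_i),\ X(c_{i+2}-L_i),\ X(c_{i+3}-L_i),\ \ldots,\ X(c_j-L_i),\ X(c_{j+1}) \]
(of length $j-i$; the middle block is empty when $j=i+1$, in which case the line is just $X(c_i),X(c_{j+1})$). Heuristically this performs $L_{i+1},\ldots,L_{j-1}$ first, and then performs $L_i$ and $L_j$ simultaneously in a single final step. To check the consecutive distances are $\le A'$: each step changes the colouring either by a single $L_t$, or by $L_{i+1}$ at the first splice, or by $L_i+L_j$ at the last splice; in every case the change is supported in a ball of radius at most $3A$, so Fact~\ref{fact:DistBall} gives a distance at most $2D^{3A}\le A'$. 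To check the gap-$2$ distances are $\ge B'$, I use that in the diligent metric $d(X(a),X(b))$ depends only on $\mathrm{supp}(a-b)$: since $(c_k-L_i)-(c_{k'}-L_i)=c_k-c_{k'}$ and $(c_k-L_i)-c_i=c_k-c_{i+1}$, every gap-$2$ distance among the listed leaves not involving the last vertex $X(c_{j+1})$ equals some gap-$2$ distance $d(X(c_t),X(c_{t+2}))$ of the original circle, hence is $\ge B\ge B'$. The unique gap-$2$ pair involving $X(c_{j+1})$ is handled by one triangle inequality together with $d(X(c_t),X(c_t-L_i))\le 2D^A$ (Fact~\ref{fact:DistBall}) and the circle inequality $d(X(c_{j-1}),X(c_{j+1}))\ge B$ (resp.\ $d(X(c_{i+1}),X(c_{i+3}))\ge B$ in the short case $j=i+2$), giving a lower bound $\ge B-2D^A\ge B'$; here I use $A\le 2D^A$ and the hypothesis $B'\le B-2D^A$.

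The step I expect to be the crux — and the one a naive argument gets wrong — is the decision to contradict geodesicity between the \emph{intermediate} leaves $X(c_i)$ and $X(c_{j+1})$ rather than between the endpoints $X(c_0)$ and $X(c_s)$. If one instead tried to directly shorten the whole line $X(c_0),\ldots,X(c_s)$ by the same splice, one would create a triple $\bigl(X(c_{i-1}),X(c_i),X(c_{i+2}-L_i)\bigr)$ whose gap-$2$ distance is only bounded below by roughly $B-A-2D^A$, which the hypotheses do not control. Restricting to the sub-pair makes this bad triple disappear — the new line now starts at $X(c_i)$ — and then the ``distance depends only on the support'' observation lets every surviving gap-$2$ distance collapse onto one of the circle inequalities. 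Everything that remains is routine bookkeeping with the constants $A,A',B,B',D$.
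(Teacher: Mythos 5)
Your argument is the paper's own proof run in mirror image: the paper splices $L_j+L_i$ at the \emph{first} step of the shortcut line $X(c_i),\,X(c_i+L_j+L_i),\,X(c_i+L_j+L_i+L_{i+1}),\ldots$ and then performs $L_{i+1},\ldots,L_{j-1}$, whereas you perform $L_{i+1},\ldots,L_{j-1}$ first and splice $L_i+L_j$ at the \emph{last} step, and both verifications rest on exactly the same ingredients (Fact~\ref{fact:DistBall}, the fact that $d(X(a),X(b))$ depends only on $\mathrm{supp}(a-b)$ so the interior gap-$2$ distances collapse onto the circle inequalities, and one comparison yielding $\geq B-2D^A\geq B'$ for the single exceptional gap-$2$ pair), ending in the same length-$(j-i)$ contradiction with $(A',B')$-geodesicity between $X(c_i)$ and $X(c_{j+1})$. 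So the proposal is correct and takes essentially the same approach as the paper.
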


\noindent
Assume towards a contradiction that there exist two indices $0 \leq i< j \leq s-1$ such that $d(q_i,q_j) \leq 2A$. Now, observe that $L_i+L_j$ is supported in the ball $B(q_i,3A)$, hence 
$$d(X(c_i), X(c_i+L_j+L_i)) \leq 2 D^{3A}$$
according to Fact \ref{fact:DistBall}; also, observe that
$$\begin{array}{l} d(X(c_i+L_j+L_i+ \cdots +L_k),X(c_i+L_j+L_i+ \cdots +L_{k+1})) \\ \hspace{2cm} \begin{array}{cl} = & d(X(c_i+L_i+ \cdots +L_k),X(c_i+L_i+ \cdots +L_{k+1})) \\ = & d(X(c_k,X(c_{k+1})) \leq A \end{array} \end{array}$$
for every $i \leq k \leq j-1$; next that
$$\begin{array}{l} d(X(c_i),X(c_i+L_j+L_i+L_{i+1}) \\ \hspace{1cm} \begin{array}{cl} \geq & d(X(c_i),X(c_i+L_i+L_{i+1}))- d(X(c_i+L_i+L_{i+1}),X(c_i+L_j+L_i+L_{i+1})) \\ \geq & d(X(c_i),X(c_{i+2}))- d(H, X(L_j)) \geq B- 2 D^A \end{array}  \end{array}$$
where the last inequality if justified by Fact \ref{fact:DistBall}; and finally that
$$\begin{array}{l} d(X(c_i+L_j+L_i+\cdots + L_k),X(c_i+L_j+L_i+\cdots + L_{k+2})) \\ \hspace{2cm} \begin{array}{cl} = & d(X(c_i+L_i+\cdots + L_k),X(c_i+L_i+\cdots + L_{k+2})) \\ = & d(X(c_k),X(c_{k+2})) \geq B \end{array} \end{array}$$
for every $i \leq k \leq j-2$. It follows from all these inequalities that the sequence
$$X(c_i), X(c_i+L_j+L_i), X(c_i+L_j+L_i+L_{i+1}), \ldots, X(c_i+L_j+ L_i + \cdots + L_{j-1})$$
defines a $\left( 2D^{3A},B-2D^A \right)$-line of leaves from $X(c_i)$ to $X(c_{j+1})$ of length $j-i$. Since $A' \geq 2D^{3A}$ and $B' \leq B-2D^A$, we find a contradiction with the fact that $X(c_0), \ldots, X(c_{s})$ is $(A',B')$-geodesic. This completes the proof of our claim.

\begin{claim}\label{claim:InInfSym}
For all $s-1 \leq i < j \leq 2s-2$, we have $d(q_i,q_j) > 2A$.
\end{claim}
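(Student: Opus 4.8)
The plan is to observe that Claim~\ref{claim:InInfSym} is perfectly symmetric to Claim~\ref{claim:InInf}: the circle of leaves $X(c_0), \ldots, X(c_{2s-1})$ can be re-indexed cyclically so that the indices $s-1, \ldots, 2s-2$ play the role of $0, \ldots, s-1$, and the hypothesis that $X(c_{s-1}), \ldots, X(c_{2s-1})$ is $(A',B')$-geodesic (together with the fact that $c_{2s-1} = c_0$ in the circle, so this segment really does close up with $X(c_0) = X(c_{2s})$) replaces the hypothesis that $X(c_0), \ldots, X(c_s)$ is $(A',B')$-geodesic. So the proof proceeds by literally repeating the argument of Claim~\ref{claim:InInf} verbatim after this relabelling.

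Concretely, I would argue towards a contradiction: suppose there exist $s-1 \leq i < j \leq 2s-2$ with $d(q_i,q_j) \leq 2A$. As in the proof of Claim~\ref{claim:InInf}, write $c_{k+1} = c_k + L_k$ where $L_k$ is supported in $B(q_k,A)$ (indices mod $2s$). Since $d(q_i,q_j) \leq 2A$, the colouring $L_i + L_j$ is supported in $B(q_i,3A)$, so Fact~\ref{fact:DistBall} gives $d(X(c_i), X(c_i+L_j+L_i)) \leq 2D^{3A}$. The same telescoping computations as before show that
\[
X(c_i),\ X(c_i+L_j+L_i),\ X(c_i+L_j+L_i+L_{i+1}),\ \ldots,\ X(c_i+L_j+L_i+\cdots+L_{j-1})
\]
is a $\left(2D^{3A},\, B - 2D^A\right)$-line of leaves from $X(c_i)$ to $X(c_{j+1})$ of length $j-i$ (the only points to check are the successive distances $\leq A$, the ``skip-one'' distances $\geq B$ coming from $d(X(c_k),X(c_{k+2})) \geq B$ in the circle, and the two special estimates near the start, each handled exactly as in Claim~\ref{claim:InInf} using Fact~\ref{fact:DistBall}). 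Since $A' \geq 2D^{3A}$ and $B' \leq B - 2D^A$, this contradicts the assumption that $X(c_{s-1}), \ldots, X(c_{2s-1})$ is $(A',B')$-geodesic, because $j - i < (j+1) - (i) \le (2s-1) - (s-1) = s$ keeps all indices inside this geodesic segment (and $c_{2s-1}$ coincides with $c_0$, so when $j = 2s-2$ the endpoint $X(c_{j+1}) = X(c_0)$ is still a leaf of the segment). This contradiction proves the claim.

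There is essentially no genuine obstacle here — the statement is a mirror image of the previous claim and no new idea is needed. The only mild subtlety to be careful about is bookkeeping with the cyclic indices, i.e.\ making sure that the line of leaves produced really does stay within the segment $X(c_{s-1}), \ldots, X(c_{2s-1})$ that was assumed to be $(A',B')$-geodesic, and that the wrap-around identification $X(c_{2s}) = X(c_0) = X(c_{2s-1}+L_{2s-1})$ is used consistently; once that is set up, the displayed chains of equalities and inequalities are identical to those in the proof of Claim~\ref{claim:InInf}. For this reason I would simply write ``The proof is identical to that of Claim~\ref{claim:InInf}, after cyclically relabelling the circle of leaves so that $X(c_{s-1})$ plays the role of $X(c_0)$'' rather than reproducing the computation.
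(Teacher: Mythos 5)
Your proposal is correct and follows the paper's proof exactly: the paper disposes of this claim in one sentence, noting it is symmetric to Claim~\ref{claim:InInf} with the segment $X(c_0),\ldots,X(c_s)$ replaced by $X(c_{s-1}),\ldots,X(c_{2s-1})$, which is precisely your relabelling argument. One harmless slip: your parenthetical identification $c_{2s-1}=c_0$ is false (the wrap-around is $c_{2s}=c_0$), but it is also unneeded, since for $j\leq 2s-2$ the endpoint $X(c_{j+1})$ has index $j+1\leq 2s-1$ and hence already lies in the geodesic segment $X(c_{s-1}),\ldots,X(c_{2s-1})$.
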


\noindent
Claim \ref{claim:InInfSym} is symmetric to Claim \ref{claim:InInf} by replacing the sequence $X(c_0),\ldots,X(c_s)$ with the sequence $X(c_{s-1}),\ldots, X(c_{2s-1})$. 

\begin{claim}\label{claim:InSup}
For every $0 \leq i \leq s-1$, there exists some $s \leq j \leq 2s-1$ such that $d(q_i,q_j) \leq 2A$.
\end{claim}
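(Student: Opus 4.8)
\textbf{Proof proposal for Claim~\ref{claim:InSup}.}
The plan is to argue by contradiction, turning a failure of the claim into an excess of separated supports that is incompatible with the fact that the colourings close up around the circle. So suppose the statement fails for some $i$ with $0 \leq i \leq s-1$, i.e. $d(q_i,q_j) > 2A$ for every $s \leq j \leq 2s-1$. Applying Claim~\ref{claim:InInf} (also with the two indices swapped, using that the metric is symmetric) gives $d(q_i,q_k) > 2A$ for every $k \in \{0,\ldots,s-1\}\setminus\{i\}$ as well; since $\{0,\ldots,s-1\}\cup\{s,\ldots,2s-1\} = \mathbb{Z}_{2s}$, we would conclude that $d(q_i,q_k) > 2A$ for \emph{every} $k \in \mathbb{Z}_{2s}$ with $k \neq i$. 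In particular the ball $B(q_i,A)$ is then disjoint from $B(q_k,A)$ for each such $k$, since a common point $z$ would yield $d(q_i,q_k) \leq d(q_i,z)+d(z,q_k) \leq 2A$.

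Next I would use that we are on a circle of leaves. Recall from the beginning of the proof of Lemma~\ref{lem:CircleHeight} that $c_{k+1}=c_k+L_k$ with $\mathrm{supp}(L_k)\subset B(q_k,A)$ for every $k\in\mathbb{Z}_{2s}$, the indices being read modulo $2s$ (so $c_{2s}=c_0$). Telescoping once around the circle gives
$$\sum_{k\in\mathbb{Z}_{2s}} L_k \;=\; \sum_{k=0}^{2s-1}(c_{k+1}-c_k) \;=\; c_{2s}-c_0 \;=\; 0.$$
By the disjointness established in the previous paragraph, $\mathrm{supp}(L_i)$ is disjoint from $\mathrm{supp}(L_k)$ for all $k\neq i$, so evaluating the identity $\sum_k L_k = 0$ at an arbitrary point of $\mathrm{supp}(L_i)$ forces $\mathrm{supp}(L_i)=\emptyset$; that is, $L_i=0$ and hence $c_{i+1}=c_i$.

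Finally, $c_{i+1}=c_i$ means $X(c_i)=X(c_{i+1})$, so the two defining inequalities of an $(A,B)$-circle of leaves give
$$B \;\leq\; d(X(c_i),X(c_{i+2})) \;=\; d(X(c_{i+1}),X(c_{i+2})) \;\leq\; A,$$
contradicting $A<B$. This contradiction proves the claim. I do not anticipate a genuine obstacle here: the argument is essentially a pigeonhole/closing-up computation, and the only point requiring care is the bookkeeping with the cyclic indices — namely checking that Claim~\ref{claim:InInf} together with the negated statement really does control $d(q_i,q_k)$ for \emph{all} $k\neq i$ in $\mathbb{Z}_{2s}$, and that the telescoping sum of the $L_k$ around the full circle vanishes.
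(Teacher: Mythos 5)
Your proof is correct and follows essentially the same route as the paper: negate the claim, combine with Claim~\ref{claim:InInf} to get that $\mathrm{supp}(L_i)$ is disjoint from all other $\mathrm{supp}(L_k)$, use that the increments sum to zero around the circle to force $L_i=0$, and derive a contradiction with $A<B$ from the two defining inequalities of an $(A,B)$-circle. The only cosmetic difference is which consecutive pair of leaves you use in the final contradiction, which is immaterial.
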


\noindent
Assume towards a contradiction that there exists some $0 \leq i \leq s-1$ such that, for every $s \leq j \leq 2s-1$, we have $d(q_i,q_j)> 2A$. We also know from Claim \ref{claim:InInf} that $d(q_i,q_j)>2A$ for every $0 \leq j \leq s-1$ distinct from $i$. Consequently, 
$$\mathrm{supp}(L_i) \cap \mathrm{supp}(L_j) \subset B(q_i,A) \cap B(q_j,A) = \emptyset$$
for every $0 \leq j \leq 2s-1$ distinct from $i$. Necessarily, $c_0$ and $c_0+L_0+ \cdots + L_{2s-1}$ must differ on $\mathrm{supp}(L_i)$. But, by construction, $c_0+L_0 + \cdots + L_{2s-1}=c_0$, hence $\mathrm{supp}(L_i)= \emptyset$. In other words, we have proved that $c_i=c_{i-1}+L_i=c_{i-1}$. It follows that
$$A \geq d(X(c_i),X(c_{i+1})) = d(X(c_{i-1}),X(c_{i+1})) \geq B,$$
a contradiction. This complete the proof of our claim.

\medskip \noindent
We are finally ready to conclude the proof of our lemma. According to Claim \ref{claim:InSup}, there exists some $s \leq i \leq 2s-1$ such that $d(q_{s-1},q_i) \leq 2A$. Claim \ref{claim:InInfSym} imposes that $i=2s-1$. So we have $d(q_{s-1},q_{2s-1}) \leq 2A$. We now apply Fact \ref{fact:DistCoset} with $c_1$ and $c_2$ being respectively $c_{2s-1}$ and $c_0$, and with $a_1$ and $a_2$ being respectively $(c_{2s-1},q_{2s-1})$ and $(c_{0},p_{0})$. Using that the quantities $d(x,X(c_{2s-1}))$, $d(x,X(c_0))$, $d((c_{2s-1},q_{2s-1}),(c_0,p_0))$, and $d(X(c_{2s-1}),X(c_{0}))$ are $\leq A$, we deduce that $ d(x,(c_{2s-1},q_{2s-1}))\leq 6A$. Similarly, we have $d(y,(c_{s-1},q_{s-1})) \leq 6A$. Therefore,
$$\begin{array}{lcl} d(\pi(x),\pi(y)) & \leq & d(\pi(x),q_{2s-1}) + d(q_{2s-1},q_{s-1}) + d(q_{s-1},\pi(y)) \\ \\ & \leq & d(x,(c_{2s-1},q_{2s-1})) +2A+ d(y,(c_{s-1},q_{s-1})) \leq 14A, \end{array}$$
concluding the proof of our lemma.
\end{proof}

\noindent
We are now ready to prove our second preliminary lemma. It essentially states that the sufficient condition provided by Lemma \ref{lem:CircleHeight} is almost necessary.

\begin{lemma}\label{lem:ChainCircles}
Fix four integers $A,B,A',B' \geq 2$. There exists an integer $N \geq 1$ such that, for any two points $x,y \in \mathcal{L}_n(X)$ satisfying $\pi(x)= \pi(y)$ and $d(x,y) > 2 D^{\max(A',B)}$, there exist $(A,B)$-circles of leaves
$$\begin{array}{c} X(c_0^1), \ X(c_1^1), \ldots, \ X(c_{2s_1-1}^1) \\ X(c_0^2), \ X(c_1^2), \ldots, \ X(c_{2s_2-1}^2) \\ \vdots \\ X(c_0^r), \ X(c_1^r), \ldots, \ X(c_{2s_r-1}^r) \end{array}$$
such that $r \leq N$ and such that
\begin{itemize}
	\item for every $1 \leq i \leq r$, $X(c_0^i), \ldots, C(c_{s_i}^i)$ and $X(c_{s_i-1}^i), \ldots, X(c_{2s_i-1}^i)$ are $(A',B')$-geodesic;
	\item for every $1 \leq i \leq r-1$, $X(c_{s_i-1}^i)$, $X(c_{s_i}^i)$, $X(c_0^{i+1})$, and $X(c_{2s_i-1}^i)$ are pairwise at distance $\leq A$;
	\item $x$ is at distance $\leq A$ to both $X(c_0^1)$ and $X(c_{2s_1-1}^1)$;
	\item there exists a point $y'$ at distance $\leq 2D^{\max(A',B)}$ from $y$ and at distance $\leq A$ to both $X(c_0^r)$ and $X(c_{2s_r-1}^r)$.
\end{itemize}
Moreover, $N$ depends only on $X$, $A'$ and $B$. 
\end{lemma}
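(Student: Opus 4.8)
The plan is to reduce immediately to the case where $x$ and $y$ differ only in their colouring: since $\pi(x)=\pi(y)=:p$, write $x=(c,p)$, $y=(c',p)$ and set $\Delta:=\mathrm{supp}(c-c')$, a finite subset of $X$. The hypothesis $d(x,y)>2D^{\max(A',B)}$ combined with Fact~\ref{fact:DistBall} (applied to the ball $B(p,\max(A',B))$, which would otherwise contain all of $\Delta$ and force $d(x,y)\leq 2D^{\max(A',B)}$) tells us that $\Delta$ is genuinely spread out at scale $\max(A',B)$, which is what leaves room to build well-separated circles of leaves. As a first reduction, let $c''$ be the colouring agreeing with $c'$ outside $B(p,\max(A',B))$ and with $c$ inside it; then $y':=(c'',p)$ lies within $2D^{\max(A',B)}$ of $y$ (tour the ball), so it suffices to connect $x$ to $y'$ by a chain of circles. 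Thus we may assume $\Delta\subset X\setminus B(p,\max(A',B))$, and the final clause of the lemma about $y'$ is taken care of.

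The atomic step is the following single-circle construction. Given colourings $a,a'$ whose difference is supported on a finite set $S$, choose along a geodesic tour of $S$ a sequence of probe points $q_0,\dots,q_{s-1}$ that are pairwise separated by a distance depending only on $A'$ and $B$, yet whose balls of radius $\lfloor\log_D(A/2)\rfloor$ cover $S$; set $c_0:=a$ and $c_{i+1}:=c_i+L_i$, where $L_i$ is the colour change that, near $q_i$, turns $c_i$ into $a'$ on that small ball, so that $c_s=a'$. This is the forward half $X(c_0),\dots,X(c_s)$, from $X(a)$ to $X(a')$; Fact~\ref{fact:DistBall} gives $d(X(c_i),X(c_{i+1}))\leq A$, and ordering the probe points so that consecutive ones are $\geq B$ apart (interleaving, so that $L_i\cup L_{i+1}$ requires a tour of length $\geq B$) gives $d(X(c_i),X(c_{i+2}))\geq B$. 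The second half is the reverse sweep, $c_{s+j}:=c'-L_{s-1}-\dots-L_{s-j}$, which unflips the same small balls in reverse order; hence $c_{2s-1}=a+L_0$ differs from $c_0=a$ on one small ball, the circle closes up (the total colour change is $0$), and $d(X(c_{2s-1}),X(c_0))\leq A$ as required. Since the two halves are mutually reverse, it is enough to make the forward half $(A',B')$-geodesic: one does this by taking each step $L_i$ as large as the constraint $d(X(c_i),X(c_{i+1}))\leq A$ permits and by a shortening argument — if some sub-arc is beaten by a shorter $(A',B')$-line one replaces it, checking that the $(A,B)$-line conditions and the endpoints are preserved — possibly padding with a bounded number of dummy flips to avoid degenerate short circles. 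With $a=c$, $a'=c'$, the point $x\in X(c_0)$ is within $A$ of $X(c_0)$ and $X(c_{2s-1})$, and $y'\in X(c_s)$ is within $A$ of $X(c_{s-1})$ and $X(c_s)$, exactly as in the conclusion.

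It remains to iterate this and to bound the number of circles needed, and this is the technical heart of the proof. The point is that a single circle, constructed as above, only handles the colours on one ``piece'' of $\Delta$ — a subset whose points can serve as probe points, i.e. which is separated at the scale dictated by $A'$ and $B$. One therefore partitions $\Delta$ (after the reduction above) into pieces $\Delta^{(1)}\sqcup\dots\sqcup\Delta^{(r)}$, each separated at that scale, by a greedy colouring of the graph on $\Delta$ whose edges join pairs at distance $\leq$ that scale; the number of colours, hence $r$, is bounded by the cardinality of a ball of the corresponding radius in $X$, i.e. by a constant $N=N(X,A',B)$. One handles $\Delta^{(1)},\dots,\Delta^{(r)}$ successively, the $j$-th circle carrying the intermediate colouring obtained after modifying $\Delta^{(1)}\cup\dots\cup\Delta^{(j-1)}$ into the one after also modifying $\Delta^{(j)}$; the ``near-$x_{j+1}$'' leaves of circle $j$ and the ``near-$x_{j+1}$'' leaves of circle $j+1$ are then within $A$ of each other by construction. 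The two delicate points I expect to absorb most of the work are (i) showing that such a scale-separated partition into boundedly many pieces indeed exists and that one circle can be built per piece that is simultaneously an $(A,B)$-line and $(A',B')$-geodesic (the interplay between the four parameters, with $A,B,A',B'$ a priori unrelated, is where care is required), and (ii) the bookkeeping that pieces lying in $B(p,\max(A',B))$ are the ones thrown into the slack defining $y'$. Once these are in place, the chain of at most $N$ circles has all the listed properties.
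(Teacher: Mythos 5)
Your overall architecture (reduce to equal arrow position, absorb the part of the support near $p$ into $y'$, partition the remaining support into boundedly many pieces that are pairwise separated at scale $\max(A',B)$ via a bounded-degree colouring argument, and use one circle per piece) is the same as the paper's. But the atomic circle you build is not an $(A,B)$-circle. With $c_{s+j}:=a'-L_{s-1}-\cdots-L_{s-j}$ and $a'=a+L_0+\cdots+L_{s-1}$, one gets $c_{s+j}=c_{s-j}$ for every $j$: your ``second half'' is literally the first half retraced backwards. In particular $c_{s+1}=c_{s-1}$ and $c_{2s-1}=c_1$, so at both turning points the condition $d(X(c_i),X(c_{i+2}))\geq B$ fails (the distance is $0$). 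This is not a cosmetic slip: the whole point of the circle, as exploited in Lemma~\ref{lem:CircleHeight}, is that the second half undoes the colour changes \emph{in the same order} as the first half made them; a path traversed forwards and then backwards certifies nothing, and the $B$-separation in the definition is exactly what rules it out. The correct closing of the circle (as in the paper) is $a_s=c_{i+1}-\delta(x_0)$, $a_{s+1}=c_{i+1}-\delta(x_0)-\delta(x_1)$, etc., i.e.\ the flips are removed first-in-first-out, and then two-apart leaves always differ at two of the pairwise far-apart flip points. Your remark that ``since the two halves are mutually reverse, it is enough to make the forward half $(A',B')$-geodesic'' is tied to the flawed construction; in the correct one the halves are not mutual reverses (though geodesicity of both follows by the same separation argument: a step of an $(A',B')$-line has support of diameter $\leq A'$, hence cannot contain two of the flip points).

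The second gap is the anchoring of the circles at the arrow. After your own reduction, every flip $L_i$ is supported outside $B(p,\max(A',B))$, so the leaf $X(c_{2s-1})=X(a+L_0)$ is at distance at least about $\max(A',B)$ from $x=(a,p)$ in the diligent metric; since no relation between the parameters is assumed (and in the application $B\gg A$), the required bullets ``$x$ is at distance $\leq A$ from $X(c_0^1)$ and $X(c_{2s_1-1}^1)$'' and the analogous chaining conditions between consecutive circles simply fail. This is why the paper forces each intermediate colouring $c_i$ to differ from $c_{i-1}$ \emph{at the arrow position} $x_0$: each circle then opens and closes with a flip $\delta(x_0)$ located where the arrow sits, putting the chaining points within distance $1$ of the relevant leaves. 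Finally, your ``probe points pairwise far apart whose radius-$\lfloor\log_D(A/2)\rfloor$ balls cover $S$'' cannot exist for a general finite $S$ (two nearby points of $S$ already forbid it); you implicitly abandon this when you pass to one circle per separated piece, where each flip should just be at a single point of the piece, and the vague ``shortening argument'' for geodesicity should be replaced by the direct separation argument above.
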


\begin{proof}
For convenience, set $C:=\max(A',B)$. 

\begin{claim}\label{claim:Partition}
There exists a partition $X=\mathcal{X}_1 \sqcup \cdots \sqcup \mathcal{X}_N$ such that, for every $1 \leq i \leq N$, any two points in $\mathcal{X}_i$ are at distance $> C$. 
\end{claim}

\noindent
Let $\Xi$ be the graph whose vertex-set is $X$ and such that two elements in $X$ are linked by an edge if and only if they are at distance $\leq C$ in $X$. Because $X$ has bounded degree, so does $\Xi$, i.e. there exists some $N \geq 1$ such that every vertex of $\Xi$ has degree $\leq N-1$. Then one can colour the vertices of $\Xi$ with $N$ colours so that any two adjacent vertices have different colours. The partition of $X$ induced by this colouring is the partition we are looking for. Thus, our claim is proved.

\medskip \noindent
Now, fix two points $x,y \in \mathcal{L}_n(X)$ satisfying $\pi(x)= \pi(y)$. Without loss of generality, we assume that $x=(0,x_0)$ for some $x_0\in X$. We can write $y$ as $(c,x_0)$ for some colouring $c \in \mathbb{Z}_n^{(X)}$. Up to reindexing our partition $\mathcal{X}_1 \sqcup \cdots \sqcup \mathcal{X}_N$, we can assume that there exists some $0 \leq r \leq N$ such that $\mathrm{supp}(c)$ contains at least one point in each $\mathcal{X}_1, \ldots, \mathcal{X}_r$ that is at distance $\geq C$ from $1$. 

\medskip \noindent
Notice that, if $r=0$, then $\mathrm{supp}(c) \subset B(x_0,C)$, which implies that $d(x,y) \leq 2D^{C}$. This is a contradiction with our assumptions, so we must have $r \geq 1$. 

\medskip \noindent
We define inductively a sequence of colourings in $\mathbb{Z}_n^{(X)}$ as follows:
\begin{itemize}
	\item Set $c_0=0$.
	\item For every $1 \leq i \leq r$, $c_i$ is a colouring that agrees with $c$ over $(\mathcal{X}_1 \cup \cdots \cup \mathcal{X}_i) \backslash B(x_0,C)$, that differs from $c_{i-1}$ at $x_0$, and that is zero elsewhere. 
\end{itemize}
Notice that $c_r$ and $c$ may only disagree in $B(x_0,C)$, hence $d(y,y') \leq 2D^C$ where $y':=(c_r,x_0)$. 

\medskip \noindent To prove our lemma, it suffices to show that, given an index $0 \leq i \leq r-1$, there exists an $(A,B)$-circle of leaves $X(a_0),\ldots,X( a_{2s-1})$ such that:
\begin{itemize}
	\item $X(a_0),\ldots, X(a_s)$ and $X(a_{s-1}), \ldots, X(a_{2s-1})$ are $(A',B')$-geodesic;
	\item $(c_i,x_0)$ is at distance $\leq A/2$ from both $X(a_0)$ and $X(a_{2s-1})$;
	\item $(c_{i+1},x_0)$ is at distance $\leq A/2$ from both $X(a_{s-1})$ and $X(a_s)$.
\end{itemize}
By construction, we can write $c_{i+1}=c_i+ \delta(x_0) + \delta(x_1) + \cdots + \delta(x_{s-1})$ for some $x_1, \ldots, x_{s-1} \in X$ such that $x_0,x_1, \ldots, x_{s-1}$ are pairwise at distance $>C$ in $X$, where $\delta(x_0),\delta(x_1),\ldots, \delta(x_{s-1})$ are respectively supported in $\{x_0\},\{x_1\},\ldots, \{x_{s-1}\}$. Set
\begin{itemize}
	\item $a_0:= c_i+ \delta(x_0)$ and $a_j:= c_i+\delta(x_0)+\delta(x_1)+ \cdots + \delta(x_{j})$ for every $1 \leq j \leq s-1$;
	\item $a_s:= c_i+\delta(x_1)+ \cdots + \delta(x_{s-1})$ and $a_{s+j}:= c_i + \delta(x_{j+1})+ \cdots + \delta(x_{s-1})$ for every $1 \leq j \leq s-1$.
\end{itemize}
Notice that, for every $0 \leq j \leq 2s-1$, we have
$$d(X(a_j,)X(a_{j+1})) = d(X(a_j), X(a_j + \delta)) =d(X, X(\delta)) \leq 1$$
where $\pm \delta \in \{\delta(x_0),\delta(x_1), \ldots, \delta(x_{s-1})\}$. Also,
$$d(X(a_j),X(a_{j+2}))= d(X(a_j),X(a_j+\delta_1+\delta_2)) = d(X,X(\delta_1+\delta_2)) \geq d(g_1,g_2) >C$$
where $\pm \delta_1, \pm \delta_2$ are two colourings that belong to $\{\delta(x_0),\delta(x_1), \ldots, \delta(x_{s-1})\}$ and that are supported at two distinct points $g_1,g_2 \in \{x_0,x_1, \ldots, x_{s-1}\}$. Consequently, our sequence $X(a_0), \ldots, X(a_{2s-1})$ defines an $(A,B)$-circle of leaves. Next, notice that $(c_i,x_0)$ is at distance $\leq 1$ from $X(a_0)=X(c_i+\delta(1))$ and it belongs to $X(a_{2s-1})=X(c_i)$. Also, $(c_{i+1},x_0)$ belongs to $X(a_{s-1})=X(c_{i+1})$ and is at distance $\leq 1$ from $X(a_s)=X(c_{i+1}-\delta(x_0))$. 

\medskip \noindent
Now, we claim that $X(a_0),\ldots, X(a_s)$ is $(A',B')$-geodesic. So let $X(b_0),\ldots, X(b_t)$ be an $(A',B')$-line of leaves from $X(a_p)$ to $X(a_q)$ for some $0 \leq p < q \leq s$. We need to show that $t \geq q-p-1$. For every $0 \leq j \leq t-1$, let $L_j \in \mathbb{Z}_n^{(X)}$ be such that $b_{j+1}=b_j+L_j$; we denote by $S_j$ the support of $L_j$. Notice that
$$A' \geq d(X(b_j),X(b_{j+1})) \geq \mathrm{diam}(S_j)$$
for every $0 \leq j \leq t-1$. As a consequence, two distinct elements among $x_0,x_1, \ldots, x_{s-1}$ cannot belong to the same $S_j$ since they are at least $C+1$ apart. But $\mathrm{supp}(a_{q}-a_p) \cap \{x_0,x_1, \ldots, x_{s-1}\}$ has cardinality $q-p-1$ and
$$\mathrm{supp}(a_{q}-a_p) = \mathrm{supp}(b_t-b_0) \subset \bigcup\limits_{j=0}^{t-1} S_j,$$
so we must have $t \geq p-q-1$, proving our claim. 

\medskip \noindent
One can prove similarly that $X(a_{s-1}), \ldots, X(a_{2s-1})$ is $(A',B')$-geodesic, concluding the proof of our lemma.
\end{proof}

\begin{proof}[Proof of Proposition \ref{prop:Height}.]
Without loss of generality, we assume that $q$ sends a leaf of $\mathcal{L}_n(X)$ to a leaf of $\mathcal{L}_m(Y)$. Let $C_1,C_2 \geq 1$ be such that $q$ is a $(C_1,C_2)$-quasi-isometry, i.e.
$$\frac{1}{C_1} d(x,y)-C_2 \leq d(q(x),q(y)) \leq C_1 d(x,y)+C_2$$
for all $x,y \in \mathcal{L}_n(X)$ and every point in $\mathcal{L}_m(Y)$ is at distance $\leq C_2$ from the image of $q$. Define a quasi-inverse $\bar{q} : \mathcal{L}_m(Y) \to\mathcal{L}_n(X)$ by fixing, for every $x \in \mathcal{L}_m(Y)$, a point $\bar{q}(x)$ in the preimage under $q$ of a point in $q(\mathcal{L}_n(X))$ within $C_2$ from $x$. A straightforward computation shows that $\bar{q}$ is a $(C_1,3C_1C_2)$-quasi-isometry. Another straightforward computation shows that:

\begin{claim}\label{claim:GeodToGeod}
For all integers $A,B,A',B' \geq 0$, $q$ sends an $(A,B)$-line of leaves that is $(A',B')$-geodesic to a $(C_1A+C_2, \frac{B}{C_1}-C_2)$-line of leaves that is $(\frac{A'}{C_1}-3C_2, C_1B'+3C_1^2C_2)$-geodesic. 
\end{claim}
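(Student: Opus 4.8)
The plan is to transport, via $q$ and its quasi-inverse $\bar q$, the two inequalities defining a line of leaves and the geodesicity condition, applying the Lipschitz (upper) quasi-isometry bound to the ``$\le$''-type conditions and the lower bound to the ``$\ge$''-type ones. By leaf-preservation the image of $X(c_1),\dots,X(c_k)$ under $q$ is the sequence $Y(\alpha(c_1)),\dots,Y(\alpha(c_k))$, which has the same length, so it suffices to verify the required estimates for this sequence. After the reduction made at the start of the proof we may assume $q(X(c))\subseteq Y(\alpha(c))$ for all $c$; moreover $\bar q$, being a quasi-inverse of a leaf-preserving quasi-isometry, is itself leaf-preserving with leaf-bijection $\alpha^{-1}$, so it maps each $Y(d)$ into a uniformly bounded neighbourhood of $X(\alpha^{-1}(d))$, and it is a $(C_1,3C_1C_2)$-quasi-isometry.

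For the line-of-leaves estimates I would argue as follows. For consecutive leaves, pick $p\in X(c_i)$ and $p'\in X(c_{i+1})$ realising $d(X(c_i),X(c_{i+1}))\le A$; then $q(p)\in Y(\alpha(c_i))$ and $q(p')\in Y(\alpha(c_{i+1}))$ are at distance $\le C_1A+C_2$, whence $d(Y(\alpha(c_i)),Y(\alpha(c_{i+1})))\le C_1A+C_2$. For leaves two apart, take arbitrary $r\in Y(\alpha(c_i))$ and $r'\in Y(\alpha(c_{i+2}))$ realising their distance; then $\bar q(r)$ and $\bar q(r')$ lie boundedly close to $X(c_i)$ and $X(c_{i+2})$, and the lower quasi-isometry bound for $\bar q$ yields $d(r,r')\ge \tfrac1{C_1}d(X(c_i),X(c_{i+2}))-C_2\ge \tfrac{B}{C_1}-C_2$. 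Hence $Y(\alpha(c_1)),\dots,Y(\alpha(c_k))$ is a $\bigl(C_1A+C_2,\tfrac{B}{C_1}-C_2\bigr)$-line of leaves. (Equivalently one runs the second estimate through the quasi-density of $q(X(c))$ in $Y(\alpha(c))$; either way this is the step that copes with $q$ not being onto a single leaf.)

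For geodesicity I would proceed directly. Fix $i\le j$ and let $Y(d_0),\dots,Y(d_\ell)$ be any $\bigl(\tfrac{A'}{C_1}-3C_2,\,C_1B'+3C_1^2C_2\bigr)$-line of leaves with $d_0=\alpha(c_i)$ and $d_\ell=\alpha(c_j)$; then $X(\alpha^{-1}(d_0)),\dots,X(\alpha^{-1}(d_\ell))$ has length $\ell$, starts at $X(c_i)$ and ends at $X(c_j)$. Running the previous computations with $q$ and $\bar q$ interchanged — the upper bound for $\bar q$ controlling the ``$\le$'' condition, and the lower bound for $q$ (which maps $X(\alpha^{-1}(d))$ into $Y(d)$) controlling the ``$\ge$'' condition — one checks that the parameters $\tfrac{A'}{C_1}-3C_2$ and $C_1B'+3C_1^2C_2$ are chosen precisely so that this pulled-back sequence is an $(A',B')$-line of leaves joining $X(c_i)$ to $X(c_j)$. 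Since $X(c_1),\dots,X(c_k)$ is $(A',B')$-geodesic, this line has length $\ge|i-j|$, hence $\ell\ge|i-j|$, which is exactly $\bigl(\tfrac{A'}{C_1}-3C_2,\,C_1B'+3C_1^2C_2\bigr)$-geodesicity of the image.

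The argument is genuinely routine, so there is no deep obstacle; the only two points needing care are applying each quasi-isometry inequality in the correct direction (and remembering to swap the roles of $q$ and $\bar q$ when passing from pushing a line forward to pulling one back), and tracking the additive constants coming from the bounded Hausdorff distance between $q(X(c))$, resp.\ $\bar q(Y(d))$, and the relevant leaf — the latter being what absorbs the failure of $q$ to restrict to a surjection on a single leaf, and what pins down the precise numerical parameters appearing in the statement.
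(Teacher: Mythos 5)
Your overall scheme (push the two defining inequalities of a line of leaves through $q$, and for geodesicity pull a candidate line back through $\bar q$ and invoke $(A',B')$-geodesicity of the source) is the natural one, and it is surely what the authors mean by their ``straightforward computation''; but your constant bookkeeping does not produce the parameters in the statement, and one step is wrong as written. For the condition $d\geq \frac{B}{C_1}-C_2$ you take $r,r'$ realising the distance between the full leaves $Y(\alpha(c_i)),Y(\alpha(c_{i+2}))$ and claim that ``the lower quasi-isometry bound for $\bar q$'' gives $d(r,r')\geq\frac{1}{C_1}d(X(c_i),X(c_{i+2}))-C_2$. This cannot be correct: $\bar q$ is only a $(C_1,3C_1C_2)$-quasi-isometry (so its additive constant is $3C_1C_2$, not $C_2$, and the lower bound for $\bar q$ runs in the wrong direction for this purpose anyway), and $\bar q(r),\bar q(r')$ do not lie on $X(c_i),X(c_{i+2})$ but only within some constant $E$ of them, where $E$ depends on the leaf-preserving constant $C$ of the hypothesis; what this route yields is a bound of the shape $\frac{1}{C_1}(B-2E)-3C_2$. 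The stated parameter comes from a $q$-only computation: for all $x\in X(c_i)$, $x'\in X(c_{i+2})$ one has $d(q(x),q(x'))\geq\frac{1}{C_1}d(x,x')-C_2\geq\frac{B}{C_1}-C_2$, i.e.\ the claim is really about the image sets $q(X(c_i))$ (which is how it is used afterwards, the circles being written $q(X(c_j^i))$); upgrading it to the distance between the full leaves $Y(\alpha(\cdot))$ costs an extra term of order $C$, which appears nowhere in the stated constants.

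The same issue is fatal to your geodesicity step as written. Applying $\bar q$ to a $\bigl(\frac{A'}{C_1}-3C_2,\,C_1B'+3C_1^2C_2\bigr)$-line $Y(d_0),\dots,Y(d_\ell)$ gives points at mutual distance at most $C_1\bigl(\frac{A'}{C_1}-3C_2\bigr)+3C_1C_2=A'$, but these points lie only in an $E$-neighbourhood of the leaves $X(\alpha^{-1}(d_t))$, so the pulled-back sequence of leaves is only an $(A'+2E,\cdot)$-line; since enlarging the first parameter enlarges the class of competing lines, $(A',B')$-geodesicity of the source says nothing about it. The identities $C_1\bigl(\frac{A'}{C_1}-3C_2\bigr)+3C_1C_2=A'$ and $\frac{1}{C_1}\bigl(C_1B'+3C_1^2C_2\bigr)-3C_1C_2=B'$ close exactly, with no slack for $E$ --- which is the opposite of your closing claim that the Hausdorff-distance constants ``pin down the precise numerical parameters'': those parameters involve only $C_1,C_2$, so the leaf-closeness constant must be eliminated, not tracked. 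A correct rendering only ever applies $q$'s bounds to points on source leaves: the ``$\geq$'' half of the pull-back works that way (for $x\in X(\alpha^{-1}(d_t))$, $x'\in X(\alpha^{-1}(d_{t+2}))$, $C_1d(x,x')+C_2\geq d(q(x),q(x'))\geq d(Y(d_t),Y(d_{t+2}))$ forces $d(x,x')\geq B'$), while the ``$\leq$'' half needs the $C$-density of $q(X(\alpha^{-1}(d_t)))$ in $Y(d_t)$ together with an a priori arrangement such as $C\leq C_2$ (harmless, but necessary for the stated constants to be literally attainable). None of this is in your write-up, so as it stands the argument does not prove the claim with its stated parameters.
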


\noindent
Define integers $A_1,B_1,A_1',B_1',A_2,B_2,A_2',B_2'$ such that:
\begin{itemize}
	\item $A_1,B_1' \geq 2$ and $A_1'=C_1(2+3C_2)$;
	\item $B_1:= \max \left( 1+C_1(C_1A_1+2C_2), C_1(2+C_2+2D^{C_1A_1+C_2}) \right)$;
	\item $A_2=C_1A_1+C_2$ and $B_2= \frac{B_1}{C_1}-C_2$;
	\item $A_2'= \max \left( \frac{A_1'}{C_1}-3C_2, 2 D^{3A_2} \right)$;
	\item $B_2'= \min \left( C_1B_1'+3C_1^2C_2, B_2-2D^{A_2} \right)$.
\end{itemize}
Also, set $C:=14NA_2+2C_1D^{\max(A_1',B_1)}+C_2$ where $N$ is the constant given by Lemma~\ref{lem:ChainCircles} applied to $\mathcal{L}_n(X)$ with respect to $A_1,B_1,A_1',B_1'$. 

\medskip \noindent
Fix two points $x,y \in \mathcal{L}_n(X)$ satisfying $\pi_X(x)=\pi_X(y)$. We want to prove that $d(\pi_Y(q(x)),\pi_Y(q(y))) \leq C$. If $d(x,y) \leq 2 D^{\max(A_1',B_1)}$ then 
$$d(\pi_Y(q(x)),\pi_Y(q(y))) \leq d(q(x),q(y)) \leq C_1d(x,y)+C_2 \leq 2C_1D^{\max(A_1',B_1)} +C_2 \leq C$$
and there is nothing to prove. Therefore, we assume from now on that $d(x,y) > 2 D^{\max(A_1',B_1)}$. According to Lemma \ref{lem:ChainCircles}, there exist $(A_1,B_1)$-circles of leaves
$$\begin{array}{c} X(c_0^1), \ X(c_1^1), \ldots, \ X(c_{2s_1-1}^1) \\ X(c_0^2), \ X(c_1^2), \ldots, \ X(c_{2s_2-1}^2) \\ \vdots \\ X(c_0^r), \ X(c_1^r), \ldots, \ X(c_{2s_r-1}^r) \end{array}$$
such that $r \leq N$ and such that
\begin{itemize}
	\item for every $1 \leq i \leq r$, $X(c_0^i), \ldots, X(c_{s_i}^i)$ and $X(c_{s_i-1}^i), \ldots, X(c_{2s_i-1}^i)$ are $(A_1',B_1')$-geodesic;
	\item for every $1 \leq i \leq r-1$, there exists a point $z_i$ in the $A_1$-neighbourhood of $X(c_{s_i-1}^i)$, $X(c_{s_i}^i)$, $X(c_0^{i+1})$ and $X(c_{2s_i-1}^i)$;
	\item $x$ is at distance $\leq A_1$ to both $X(c_0^1)$ and $X(c_{2s_1-1}^1)$;
	\item there exists a point $y'$ at distance $\leq 2D^{\max(A_1',B_1)}$ from $y$ and at distance $\leq A_1$ to both $X(c_0^r)$ and $X(c_{2s_r-1}^r)$.
\end{itemize}
As a consequence of Claim \ref{claim:GeodToGeod}, 
$$\begin{array}{c} q(X(c_0^1)), \ q(X(c_1^1)), \ldots, \ q(X(c_{2s_1-1}^1)) \\ q(X(c_0^2)), \ q(X(c_1^2)), \ldots, \ q(X(c_{2s_2-1}^2)) \\ \vdots \\ q(X(c_0^r)), \ q(X(c_1^r)), \ldots, \ q(X(c_{2s_r-1}^r)) \end{array}$$
is a sequence of $(A_2,B_2)$-circles of leaves such that
\begin{itemize}
	\item for every $1 \leq i \leq r$, $q(X(c_0^i)), \ldots, q(X(c_{s_i}^i))$ and $q(X(c_{s_i-1}^i)), \ldots, q(X(c_{2s_i-1}^i))$ are $(A_2',B_2')$-geodesic;
	\item for every $1 \leq i \leq r-1$, $q(z_i)$ lies in the $A_2$-neighbourhood of $q(X(c_{s_i-1}^i))$, $q(X(c_{s_i}^i))$, $q(X(c_0^{i+1}))$ and $q(X(c_{2s_i-1}^i))$;
	\item $q(x)$ is at distance $\leq A_2$ to both $q(X(c_0^1))$ and $q(X(c_{2s_1-1}^1))$;
	\item $q(y')$ at distance $\leq 2C_1D^{\max(A_1',B_1)}+C_2$ from $q(y)$ and at distance $\leq A_2$ to both $q(X(c_0^r))$ and $q(X(c_{2s_r-1}^r))$.
\end{itemize}
For convenience, set $z_0=x$ and $z_r=y'$. According to Lemma \ref{lem:CircleHeight}, we have
$$d(\pi_Y(q(z_i)),\pi_Y(q(z_{i+1}))) \leq 14 A_2 \text{ for all } 0 \leq i \leq r-1.$$
Consequently,
$$d(\pi_Y(q(x)),\pi_Y(q(y'))) = \sum\limits_{i=0}^{r-1} d(\pi_Y(q(z_i)),\pi_Y(q(z_{i+1}))) \leq 14NA_2.$$
We conclude that
$$\begin{array}{lcl} d(\pi_Y(q(x)),\pi_Y(q(y))) & \leq & d(\pi_Y(q(x)),\pi_Y(q(y'))) + d(\pi_Y(q(y')),\pi_Y(q(y))) \\ \\ & \leq & 14NA_2 + d(q(y'),q(y)) \\ \\ & \leq & 14NA_2 + 2C_1D^{\max(A_1',B_1)}+C_2 \leq C \end{array}$$
as desired.
\end{proof}

\section{An embedding theorem}\label{bigsection:Embedding}

\noindent
This section is the core of the article, dedicated to the embedding theorem provided by the following statement:

\begin{thm}\label{thm:FullEmbeddingThm}
Let $X$ be a graph of bounded degree, $Z$ a graph that is uniformly one-ended and coarsely one-connected, and $n \geq 2$ an integer. For every coarse embedding $\rho : Z \to \mathcal{L}_n(X)$, there exists some $K \geq 0$ such that $\rho(Z)$ lies in the $K$-neighbourhood of a leaf in $\mathcal{L}_n(X)$. Moreover, $K$ depends only on $X$, $Z$, and the parameters of $\rho$. 
\end{thm}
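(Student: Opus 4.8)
The plan is to implement the strategy sketched in the introduction: replace $\mathcal{L}_n(X)$ by a simply connected model in which walls collapse to bounded sets, lift $\rho$ to that model, and use one-endedness of $Z$ to trap the lift near a single leaf.

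First I would build a filled $2$-complex $\mathcal{W}$ from $\mathcal{L}_n(X)$ by attaching a disc along every combinatorial loop contained in a single leaf, and along every loop of length at most $R$, for a constant $R$ to be fixed. Since $Z$ is coarsely $1$-connected there is a function controlling how loops of $Z$ decompose into loops of bounded length; combined with the parameters of $\rho$, this shows that for $R$ large enough the $\rho$-image of every loop of $Z$ is null-homotopic in $\mathcal{W}$, i.e.\ $\rho(Z)$ is simply connected in $\mathcal{W}$. Lemma~\ref{lem:CoarseLift}, applied to the universal covering $\pi : \widetilde{\mathcal{W}} \to \mathcal{W}$, then produces a lift $\widetilde{\rho} : Z \to \widetilde{\mathcal{W}}^{(1)}$ which is again a coarse embedding with the same parameters.

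The heart of the proof is an understanding of the geometry of $\widetilde{\mathcal{W}}$. Describing its $1$-skeleton in terms of pointed cliques in a quasi-median graph, the lamp group $\bigoplus\mathbb{Z}_n$ appearing in $\mathcal{L}_n(X)$ gets replaced, after collapsing leaf-loops and short loops, by a graph product of copies of $\mathbb{Z}_n$ over a graph built from $X$, whose Cayley graph is quasi-median. The hyperplanes of this quasi-median graph induce a wallspace structure on $\widetilde{\mathcal{W}}$ with the crucial property that the $\pi$-image of each wall has uniformly bounded diameter in $\mathcal{L}_n(X)$, and any bounded neighbourhood of a \emph{leaf} (an $X$-coset) of $\widetilde{\mathcal{W}}$ is separated from the rest of $\widetilde{\mathcal{W}}$ by a family of such walls. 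Now suppose, for contradiction, that $\widetilde{\rho}(Z)$ is not contained in any bounded neighbourhood of a single $X$-coset of $\widetilde{\mathcal{W}}$. Then one can exhibit a wall $\mathfrak{w}$ both of whose halfspaces meet $\widetilde{\rho}(Z)$ in sets pulling back, via $\widetilde{\rho}$, to unbounded subsets of $Z$. Projecting by $\pi$, the bounded set $\pi(\mathfrak{w})$ — enlarged to a ball if necessary — becomes a bounded subset of $\mathcal{L}_n(X)$ separating $\rho(Z)$ into two pieces whose $\rho$-preimages are unbounded in $Z$; this contradicts the assumption that $Z$ is uniformly one-ended. Hence $\widetilde{\rho}(Z)$ lies in a bounded neighbourhood of some $X$-coset $\widetilde{L}$ of $\widetilde{\mathcal{W}}$. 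Since $\widetilde{L}$ is a filled, and hence simply connected, copy of $X$, it maps homeomorphically onto a leaf $L$ of $\mathcal{L}_n(X)$ under $\pi$; therefore $\rho(Z)=\pi(\widetilde{\rho}(Z))$ lies in a bounded neighbourhood of $L$, and tracking constants through all the steps shows that the resulting $K$ depends only on $X$, $Z$, and the parameters of $\rho$.

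The main obstacle is the structural analysis of $\widetilde{\mathcal{W}}$ and its wall system: setting up the pointed-clique/quasi-median description precisely, establishing the uniform boundedness of the $\pi$-images of walls, and — most delicately — quantifying how far $\widetilde{\rho}(Z)$ must stray from an $X$-coset before a separating wall with both halfspaces meeting $\widetilde{\rho}(Z)$ in $\widetilde{\rho}$-unbounded sets is forced to exist. By comparison, the topological lifting and the final one-endedness contradiction are routine once these separation statements are in place.
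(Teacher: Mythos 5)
Your outline follows the same route as the paper (it is essentially the introduction's sketch made slightly more precise): lift $\rho$ to the universal cover of a filled $2$-complex using coarse $1$-connectedness and Lemma~\ref{lem:CoarseLift}, exploit a wallspace on the cover whose walls have bounded images downstairs, and use uniform one-endedness of $Z$ to confine the lift near a leaf. However, as written the proposal leaves the two pieces that actually constitute the proof unargued. First, the structural claims about the cover -- that its one-skeleton is the pointed-clique model of a quasi-median graph coming from a graph product of copies of $\mathbb{Z}_n$, that the walls have uniformly bounded images in $\mathcal{L}_n(X)$, and that the induced metric along a wall is comparable to the ambient one -- are exactly Lemmas~\ref{lem:PSisQM}, \ref{lem:HypareBounded} and~\ref{lem:BiLipEquivMetrics}, which the paper obtains by identifying $W(n,X)$ with $\mathrm{QM}(X,\mathbb{Z}_n)/K(X\mathbb{Z}_n)$ via Proposition~\ref{prop:WreathIsPS}; you name these as ``the main obstacle'' but give no argument, and with your coarser filling (all loops inside leaves plus all loops of length $\leq R$, rather than the triangles/octagons of $\mathsf{PS}$ together with the passage from $X$ to $X'$) even the identification of the cover would have to be redone.

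Second, and more seriously, the logical pivot of your contradiction argument is a genuine gap: you assert that if $\widetilde{\rho}(Z)$ is not contained in a bounded neighbourhood of a single leaf, then some wall has both halfspaces meeting $\widetilde{\rho}(Z)$ in sets with unbounded $\widetilde{\rho}$-preimage. This implication does not follow from anything you have set up. What one-endedness gives (via the bounded wall images) is only the converse-type statement that \emph{each} wall has at most one ``essential'' side; it does not by itself prevent the image from drifting arbitrarily far from every leaf while crossing every individual wall inessentially, e.g.\ along an infinite chain of nested halfspaces. Closing this gap is precisely where the paper works: it orients every hyperplane by its essential sector (Claim~\ref{claim:Orientation}), proves that these sectors pairwise intersect and satisfy a descending chain condition (using that only finitely many walls separate two vertices, together with the $B$-bound on the inessential sides), applies the Helly-type Lemma~\ref{lem:InterSectors} (finite cubical dimension plus Ramsey) to extract a single vertex lying in all of them, and finally bounds the distance from $\widetilde{\eta}(Z)$ to the leaf over that vertex through Lemmas~\ref{lem:MetricInf} and~\ref{lem:BiLipEquivMetrics} and a connectivity argument (Claims~\ref{claim:Vertex} and~\ref{claim:InTwo}). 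Your proposal contains neither this argument nor a substitute for it, so the crucial step ``not near a leaf $\Rightarrow$ an essentially crossed wall exists'' remains unjustified.
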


\noindent
The strategy of the proof of Theorem~\ref{thm:FullEmbeddingThm} was outlined in the introduction. We now proceed to a more detailed description of its various steps. We refer to Section~\ref{section:approximation} for precise definitions.

\medskip \noindent First of all, we  introduce a new geometric interpretation of the lamplighter graph $\mathcal{L}_n(X)$. 
To every \emph{prism complex} $W$, i.e. to every cellular complex obtained by gluing products of simplices along faces, we associated a \emph{complex of pointed simplices} $\mathsf{PS}(W)$. Its vertices are pointed simplices in $W$ and two such pointed simplices are adjacent in $\mathsf{PS}(W)$ if one can pass from one to the other by some elementary moves, namely by either \emph{sliding} the point inside the simplex or by \emph{rotating} the simplex around its vertex. Then, we construct a prism complex $W(n,X)$ such that the lamplighter graph $\mathcal{L}_n(X)$ coincides with the one-skeleton of $\mathsf{PS}(W(n,X))$. 

\medskip \noindent Next, we exhibit some rigid structure on $W(n,X)$: its universal cover $\widetilde{W}(n,X)$ is a \emph{quasi-median complex}, which can be loosely thought of as the analogue of a CAT(0) cube complex where cubes are replaced with prisms. As a consequence of this structure, the complex $\mathsf{PS}(\widetilde{W}(n,X))$ inherits a wallspace structure. 

\medskip \noindent  The third step consists in showing that these walls satisfy nice properties, including the fundamental observation that the image of a wall in $\mathsf{PS}(\widetilde{W}(n,X))$ under the cover $\mathsf{PS}(\widetilde{W}(n,X)) \to \mathsf{PS}(W(n,X))$ induced by $\widetilde{W}(n,X) \to W(n,X)$ is bounded. 

\medskip \noindent  The combination of these three steps allows us to conclude as follows. We identify $\mathcal{L}_n(X)$ with the one-skeleton of $\mathsf{PS}(W(n,X))$. Using that $Z$ is coarsely one-connected, we show that the image of any loop in $Z$ by $\rho$  is homotopically trivial in $ \mathsf{PS}(W(n,X))$.  So $\rho$ lifts to a coarse embedding $\widetilde{\rho} : Z \to \mathsf{PS}(\widetilde{W}(n,X))$. A crucial property is that the walls in $\mathsf{PS}(\widetilde{W}(n,X))$ have bounded images in $\mathsf{PS}(W(n,X))$. Now since $Z$ is uniformly one-ended, we know that $\widetilde{\rho}(Z)$ cannot cross a wall of $\mathsf{PS}(\widetilde{W}(n,X))$ in an essential way. As a consequence, up to finite Hausdorff distance $\widetilde{\rho}(Z)$ is confined into a small region of $\mathsf{PS}(\widetilde{W}(n,X))$ that is not crossed by any wall. By construction, the image in $\mathsf{PS}(W(n,X))$ of such a region coincides with a leaf of $\mathcal{L}_n(X)$, allowing us to deduce the desired conclusion.

\subsection{Preliminaries on quasi-median geometry}\label{section:QM}

\noindent
Our proof of the embedding theorem provided by Theorem~\ref{thm:FullEmbeddingThm} relies fundamentally on quasi-median geometry, i.e. the geometry of quasi-median graphs and complexes. In this section, we record all the definitions and properties that will be needed later.

\paragraph{Quasi-median graphs.} There exist several equivalent definitions of quasi-median graphs. See for instance \cite{quasimedian}. Below, we give the definition used in \cite{QM}.

\begin{definition}
A connected graph $X$ is \emph{quasi-median} if it does not contain $K_4^-$ and $K_{3,2}$ as induced subgraphs, and if it satisfies the following two conditions:
\begin{description}
	\item[(triangle condition)] for every triplet of vertices $a, x,y \in X$, if $x$ and $y$ are adjacent and if $d(a,x)=d(a,y)$, then there exists a vertex $z \in X,$ which is adjacent to both $x$ and $y$, satisfying $d(a,z)=d(a,x)-1$;
	\item[(quadrangle condition)] for every quadruplet of vertices $a,x,y,z \in X$, if $z$ is adjacent to both $x$ and $y$ and if $d(a,x)=d(a,y)=d(a,z)-1$, then there exists a vertex $w \in X$ that is adjacent to both $x,y$ and that satisfies $d(a,w)=d(a,z)-2$.
\end{description}
\end{definition}

\noindent
The graph $K_{3,2}$ is the bipartite complete graph, corresponding to two squares glued along two adjacent edges; and $K_4^-$ is the complete graph on four vertices minus an edge, corresponding to two triangles glued along an edge.

\medskip \noindent
However, the definition of quasi-median graphs is only given for completeness. Similarly to median graphs (i.e. one-skeleta of CAT(0) cube complexes), the geometry of quasi-median graphs essentially reduces to the combinatorics of separating subspaces referred to as \emph{hyperplanes}. Understanding this interaction between hyperplanes and geometry will be sufficient for us.

\begin{definition}
Let $X$ be a quasi-median graph. A \emph{hyperplane} $J$ is an equivalence class of edges with respect to the transitive closure of the relation that identifies two edges in the same $3$-cycle and two opposite edges in the same $4$-cycle. The \emph{carrier} of $J$, denoted by $N(J)$, is the subgraph generated by the edges in $J$. The connected components of the graph $X\backslash \backslash J$ obtained from $X$ by removing the interiors of the edges in $J$ are the \emph{sectors delimited by $J$}; and the connected components of $N(J) \backslash \backslash J$ are the \emph{fibres} of $J$. Two distinct hyperplanes $J_1$ and $J_2$ are \emph{transverse} if $J_2$ contains an edge in $N(J_1) \backslash J_1$. 
\end{definition}
\begin{figure}
\begin{center}
\includegraphics[clip, trim= 0 16cm 10cm 0, scale=0.4]{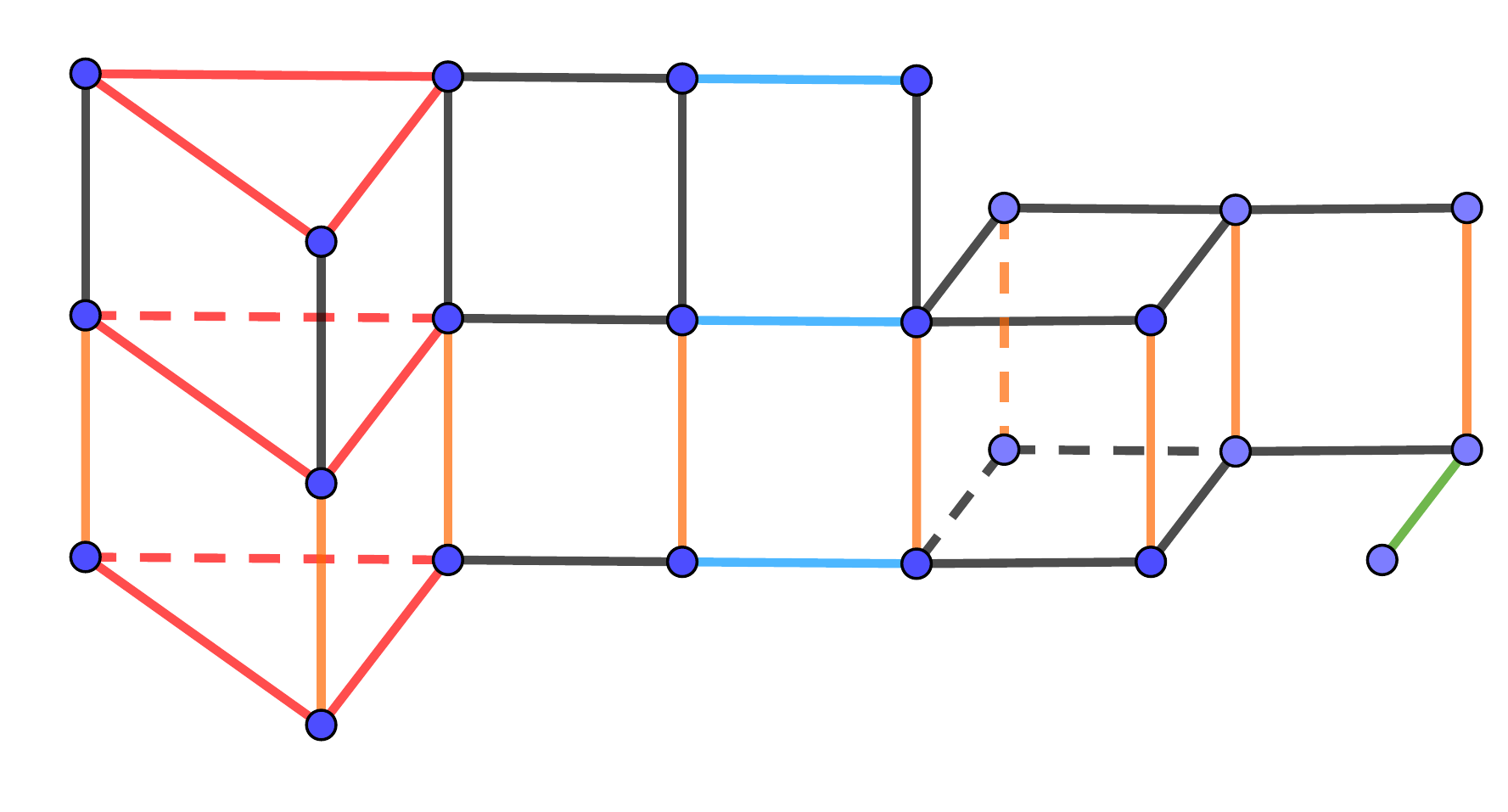}
\caption{Four hyperplanes in a quasi-median graphs. The orange hyperplane is transverse to the red and blue hyperplanes.}
\label{hyperplanes}
\end{center}
\end{figure}

\noindent
See Figure \ref{hyperplanes} for a few examples. The role of hyperplanes is highlighted by the following statement. For a self-contained proof, we refer to \cite[Propositions 2.15 and 2.30]{QM}; see also \cite[Theorem 3.19]{MR4033512} (or \cite[Theorem 2.14]{AutQM} for proofs specific to the quasi-median graphs that will be considered in the next sections). 

\begin{thm}\label{thm:MainQM}
Let $X$ be a quasi-median graph. The following assertions hold:
\begin{itemize}
	\item[(i)] For every hyperplane $J$, $X\backslash \backslash J$ contains at least two connected components.
	\item[(ii)] Carriers and fibres of hyperplanes are gated.
	\item[(iii)] A path is a geodesic if and only if it crosses each hyperplane at most once.
	\item[(iv)] The distance between two vertices coincides with the number of hyperplanes separating them.
\end{itemize}
\end{thm}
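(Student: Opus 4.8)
The plan is to reconstruct the standard combinatorial theory of quasi-median graphs, whose backbone is the notion of a \emph{gated} subgraph: $Y \subseteq X$ is gated if every vertex $x \in X$ admits a vertex $\mathfrak{g}(x) \in Y$ (its \emph{gate}) lying on a geodesic from $x$ to every vertex of $Y$. Gates are automatically unique and the associated \emph{gate projection} $\mathfrak{g}_Y : X \to Y$ is $1$-Lipschitz. I would first build up enough gated subgraphs to get a grip on hyperplanes, and only then deduce (i)--(iv). A fully self-contained development is in \cite{QM} (Propositions~2.15 and~2.30); what follows is how I would organise it.

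\emph{Step 1: cliques are gated.} Given a clique (maximal complete subgraph) $C$ and a vertex $x$, let $P \subseteq C$ be the set of vertices of $C$ nearest to $x$. Any two elements of $P$ are adjacent and equidistant from $x$, so the triangle condition yields a common neighbour strictly closer to $x$; exploiting that $X$ has no induced $K_4^-$ or $K_{3,2}$, one shows $|P| = 1$ and that this vertex is a gate for $x$. Since any two edges of a clique lie in a common $3$-cycle, all edges of $C$ belong to a single hyperplane $J_C$, and every hyperplane arises this way; thus hyperplanes correspond bijectively to cliques, and the edges crossed by $J$ are exactly the edges of the cliques $C$ with $J_C = J$.

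\emph{Step 2: carriers and fibres are gated (assertion (ii)).} This is the technical heart, and the step I expect to be the main obstacle. Starting from Step~1, one analyses how gatedness propagates along product structures: using the quadrangle condition repeatedly, one shows that the carrier $N(J)$ splits as a direct product $F_0 \times C$ of a fibre $F_0$ with a clique $C$ dual to $J$, that every fibre $F_0 \times \{c\}$ is gated, and that $N(J)$ itself is gated. One must also record that the gate projection onto a fibre or a carrier interacts well with hyperplanes: it sends each hyperplane either off $N(J)$ or to a hyperplane transverse to $J$, so in particular it carries edges not in $J$ to edges not in $J$.

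\emph{Step 3: deduce (i), (iii), (iv).} For (i): if $x$ lies in the sector of $X \backslash\backslash J$ containing the fibre $F_0 \times \{c_i\}$, then a geodesic from $x$ to $N(J)$ stays in that sector, so the gate of $x$ onto $N(J)$ lies in $F_0 \times \{c_i\}$; hence if $x,y$ belonged to the same component of $X \backslash\backslash J$ while having gates in distinct fibres $F_0 \times \{c_i\}$, $F_0 \times \{c_j\}$, then pushing a path from $x$ to $y$ avoiding the edges of $J$ through $\mathfrak{g}_{N(J)}$ would give a path in $F_0 \times C$ from $F_0 \times \{c_i\}$ to $F_0 \times \{c_j\}$ using no edge of $J$, which is impossible; so $X \backslash\backslash J$ has at least two components, one per vertex of $C$. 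For (iii): a geodesic cannot cross $J$ twice, since projecting onto $N(J) = F_0 \times C$ the subpath between two such edges would produce a non-geodesic inside $F_0 \times C$ (alternatively, a minimal-counterexample argument using the sliding and rotating moves defining $J$ together with the triangle and quadrangle conditions unfolds such a subpath into a strictly shorter path with the same endpoints); conversely, a path crossing each hyperplane at most once has length equal to the number of hyperplanes it crosses, each of which — being crossed exactly once — separates its endpoints, so that length is at most, and hence by (i) equal to, the number of hyperplanes separating the endpoints, which by (i) also lower-bounds the distance. This yields both the characterisation of geodesics in (iii) and the distance formula in (iv).
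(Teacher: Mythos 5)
There is no in-paper proof to compare against: the paper quotes this theorem from the literature and explicitly refers to \cite[Propositions~2.15 and~2.30]{QM} (and to \cite{MR4033512, AutQM}) for the proof. Your outline follows exactly the standard development of that cited source: gatedness of cliques, then the technical core that carriers decompose as $N(J) \cong F_0 \times C$ with gated fibres and carriers, then (i), (iii), (iv) via gate projections (a path avoiding the edges of $J$ projects into a single fibre, so endpoints of an edge of $J$ lie in distinct components; a hyperplane crossed exactly once separates the endpoints; counting crossed versus separating hyperplanes gives the geodesic characterisation and the distance formula). So in spirit you are doing the same thing the paper does, except that the paper outsources the whole argument, whereas you outsource only Step~2 --- which you correctly identify as the heart of the matter but do not actually prove; as written, your text is an accurate plan rather than a proof, on par with the paper's citation.

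One genuine inaccuracy to fix: the assertion that ``hyperplanes correspond bijectively to cliques'' is false. Each edge lies in a unique clique (two distinct cliques meet in at most a vertex), so $C \mapsto J_C$ is well defined and surjective, but it is far from injective: a hyperplane is in general a disjoint union of many pairwise parallel cliques (already in an induced $4$-cycle, the two opposite edges are distinct cliques lying in the same hyperplane), and this multiplicity is precisely what the product structure $N(J) \cong F_0 \times C$ encodes. Your subsequent sentence (``the edges crossed by $J$ are exactly the edges of the cliques $C$ with $J_C = J$'') treats the correspondence correctly as many-to-one, so this reads as a slip of wording rather than a structural error, but the bijectivity claim should be deleted since nothing in Steps~2--3 uses it. With that correction, and granting the deferred Step~2, the deductions in Step~3 are sound.
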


\noindent
Recall that a subgraph $Y \subset X$ is \emph{gated} if, for every vertex $x \in X$, there exists a vertex $y \in Y$ such that, for every $z \in Y$, there exists a geodesic between $x$ and $z$ passing through $y$. We refer to the vertex $y$ as the \emph{projection of $x$ onto $Y$}. The property of being gated can be thought of as a strong convexity condition. In particular, being gated implies being convex. Also, observe that the projection $y$ coincides with the unique vertex of $Y$ minimising the distance to $x$. A useful statement, showing how hyperplanes interact with projections, is the following. We refer to \cite[Lemma 2.34]{QM} or \cite[Corollary~3.20]{MR4033512} for a proof. 

\begin{lemma}\label{lem:ProjHyp}
Let $X$ be a quasi-median graph, $Y \subset X$ a gated subgraph, and $x \in X$ a vertex. If a hyperplane separates $x$ from its projection onto $Y$, then it separates $x$ from~$Y$.
\end{lemma}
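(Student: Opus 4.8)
\textbf{Proof proposal for Lemma \ref{lem:ProjHyp}.}

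The plan is to argue by contradiction using the characterisation of distance in terms of separating hyperplanes (Theorem~\ref{thm:MainQM}(iv)) together with the fact that the projection onto a gated subgraph is distance-minimising. Let $y$ denote the projection of $x$ onto $Y$, and suppose a hyperplane $J$ separates $x$ from $y$ but does \emph{not} separate $x$ from $Y$; then there is a vertex $z \in Y$ lying on the same side of $J$ as $x$. I would pick such a $z$ and then invoke the gatedness of $Y$: there is a geodesic from $x$ to $z$ passing through $y$. Concatenating a geodesic $[x,y]$ with a geodesic $[y,z]$ gives a geodesic from $x$ to $z$, so by Theorem~\ref{thm:MainQM}(iii) this path crosses each hyperplane at most once. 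Since $J$ separates $x$ from $y$, the subpath $[x,y]$ crosses $J$; but $x$ and $z$ lie on the same side of $J$, so the full geodesic $[x,z]$ crosses $J$ an even number of times, hence zero times — contradicting that its subpath $[x,y]$ already crosses it once.

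Spelling out the one subtle point: I need that the concatenation of $[x,y]$ and $[y,z]$, where $y$ is the projection of $x$ onto $Y$ and $z \in Y$, is genuinely a geodesic from $x$ to $z$. This is exactly the content of the definition of gatedness recalled just before the lemma — for every $z \in Y$ there is a geodesic from $x$ to $z$ through $y$ — so no extra work is required there; one simply takes that geodesic and notes its initial segment to $y$ is a geodesic $[x,y]$. The remaining ingredients are purely formal: a geodesic crosses a hyperplane $J$ at most once (Theorem~\ref{thm:MainQM}(iii)), and "separates'' means "the two vertices lie in distinct connected components of $X \backslash\backslash J$'', so a path between two vertices on the same side of $J$ must cross the interiors of edges of $J$ an even number of times, while a path between vertices on opposite sides crosses an odd number of times. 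Thus a geodesic between same-side vertices crosses $J$ zero times.

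The main (and essentially only) obstacle is bookkeeping about parity of hyperplane crossings along a path, i.e. making precise that "$J$ separates $a$ from $b$'' is equivalent to "any path from $a$ to $b$ crosses $J$ an odd number of times''; this is standard for quasi-median graphs (the sectors delimited by $J$ partition the vertex set and each edge of $J$ joins two fixed adjacent sectors, while edges not in $J$ stay within a sector), and I would simply cite it from \cite{QM} rather than reprove it. Once that is in hand the argument is three lines: $[x,y]$ crosses $J$ (since $J$ separates $x,y$), $[y,z]$ does not increase the count beyond one because the whole path $[x,z]$ is a geodesic, yet $x$ and $z$ lie on the same side of $J$ forcing an even total — contradiction. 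Hence no hyperplane can separate $x$ from $y$ without also separating $x$ from all of $Y$.
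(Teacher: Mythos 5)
Your overall strategy (take the geodesic from $x$ through the gate $y$ to a hypothetical vertex $z\in Y$ on the same side of $J$, and contradict the fact that a geodesic crosses each hyperplane at most once) is the standard argument, and most of it is fine: gatedness does give a geodesic from $x$ to $z$ through $y$, and since $J$ separates $x$ from $y$, the initial segment must cross $J$ (a path avoiding all edges of $J$ stays in one component of $X\backslash\backslash J$). Note that the paper itself does not prove Lemma~\ref{lem:ProjHyp}; it cites \cite[Lemma 2.34]{QM}, so there is no internal proof to compare against.

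However, the step you flag as "standard bookkeeping" is genuinely wrong as stated. In a quasi-median graph a hyperplane may delimit \emph{more than two} sectors (the paper emphasises this: cutting along a hyperplane can disconnect the graph into many components), so there is no parity statement of the form "$J$ separates $a$ from $b$ iff every path from $a$ to $b$ crosses $J$ an odd number of times", and it is false that "each edge of $J$ joins two fixed adjacent sectors". A triangle already gives a counterexample: its unique hyperplane consists of all three edges, its three vertices lie in three distinct sectors, and the length-two path between two of them crosses the hyperplane twice even though it separates them. So you cannot cite this equivalence from \cite{QM}, and your "even total number of crossings" contradiction does not go through as written. The fix is short: you only need that a geodesic crosses \emph{exactly} the hyperplanes separating its endpoints. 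This follows by combining Theorem~\ref{thm:MainQM}(iii) and (iv): the geodesic from $x$ to $z$ has $d(x,z)$ edges, crosses each hyperplane at most once, and must cross each of the $d(x,z)$ hyperplanes separating $x$ from $z$ at least once, so it crosses no others. (Equivalently, one can use the standard fact that the two endpoints of an edge of $J$ lie in distinct sectors, so a path crossing $J$ exactly once has its endpoints separated by $J$.) With that replacement, your argument is correct: the geodesic $x\to y\to z$ crosses $J$, hence $J$ separates $x$ from $z$, contradicting the choice of $z$ in the sector of $x$.
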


\noindent
Quasi-median graphs are quite similar to median graphs. Essentially, the only differences are that edges are replaced with \emph{cliques} (i.e. maximal complete subgraphs), that cubes are replaced with \emph{prisms} (i.e. subgraphs which are products of cliques), and that cutting along a hyperplane may disconnected the graph into more than two connected components (possibly infinitely many). 

\medskip \noindent
We conclude this subsection with a few statements that will be needed later.

\begin{lemma}\label{lem:SpanPrism}
Let $X$ be a quasi-median graph and $C_1,C_2$ two cliques. If $C_1 \cap C_2 \neq \emptyset$ and if the hyperplanes containing $C_1$ and $C_2$ are transverse, then $C_1$ and $C_2$ span a prism. 
\end{lemma}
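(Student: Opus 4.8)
The plan is to prove Lemma~\ref{lem:SpanPrism} by analyzing the local structure around a common vertex of $C_1$ and $C_2$ and bootstrapping from a single square to the full prism $C_1 \times C_2$. Fix a vertex $o \in C_1 \cap C_2$ and let $J_1, J_2$ be the (transverse) hyperplanes containing $C_1, C_2$ respectively. Since $J_1$ and $J_2$ are transverse, by definition $J_2$ contains an edge in $N(J_1)\backslash J_1$; the plan is first to use this transversality, together with the triangle and quadrangle conditions, to show that any pair of edges $e_1 \subset C_1$, $e_2 \subset C_2$ with a common endpoint spans a $4$-cycle in $X$ whose opposite edges lie in $J_1$ and $J_2$ — i.e.\ a square. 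The key point here is that $X$ contains no induced $K_{3,2}$ and no induced $K_4^-$, so once we produce a $4$-cycle on edges from $C_1$ and $C_2$ it is necessarily an induced square (a chord would create a triangle; but edges of $C_1$ and $C_2$ belong to distinct cliques, hence distinct hyperplanes, so they cannot lie in a common triangle).

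First I would establish the ``square lemma'': for edges $[o,a]\subset C_1$ and $[o,b]\subset C_2$ there is a vertex $c$ adjacent to both $a$ and $b$ with $[a,c]\in J_2$ and $[b,c]\in J_1$. To get this, pick a basepoint witnessing the transversality and use Theorem~\ref{thm:MainQM}(iii)--(iv): $a$ and $b$ are both at distance $1$ from $o$ and at distance $2$ from one another (they cannot be adjacent, else $o,a,b$ would span a triangle forcing $C_1$ and $C_2$ into the same clique); moreover the two hyperplanes separating $a$ from $b$ are exactly $J_1$ and $J_2$. Applying the quadrangle/triangle condition at a suitable vertex on the far side of $J_1$ and $J_2$, or more directly invoking that gated subgraphs (carriers $N(J_1)$, $N(J_2)$, which are gated by Theorem~\ref{thm:MainQM}(ii)) together with Lemma~\ref{lem:ProjHyp} force the existence of the fourth vertex $c \in N(J_1)\cap N(J_2)$, one obtains the square. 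I expect the cleanest route is: the intersection $N(J_1)\cap N(J_2)$ is itself gated (intersection of gated subgraphs), nonempty (it contains $o$), and carries both hyperplanes restricted to it, so it is a product of a clique from $J_1$ and a clique from $J_2$; since cliques in a hyperplane's carrier are translates of one another along the hyperplane, $C_1$ and $C_2$ both lie in this intersection.

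Next I would promote the square lemma to the full prism. Consider the subgraph $P$ induced on the vertex set $\{$ all vertices obtained by the square construction, ranging $a$ over $C_1$ and $b$ over $C_2$ $\}$; concretely $P$ should be shown to be isomorphic to $C_1 \times C_2$. For this I would verify that for each $a \in C_1$ the set $\{c : c \text{ adjacent to } a,\ [a,c]\in J_2\}$ is a clique isomorphic to $C_2$ (it is a clique since it lies in a single fibre of $J_2$ and any two such vertices are joined — a non-edge would, with $a$ and $o$, violate the $K_{3,2}$-freeness after applying the square lemma twice), and symmetrically; then check that the adjacencies between these parallel copies of $C_2$ are exactly the ``horizontal'' edges of a product, again using $K_4^-$- and $K_{3,2}$-freeness to rule out extra or missing edges. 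That $P$ is actually a prism in $X$ (i.e.\ an induced, isometrically embedded product subgraph, not just an abstract product) follows from Theorem~\ref{thm:MainQM}(iii): any path in $P$ crossing each of the finitely many hyperplanes through $P$ at most once is a geodesic in $X$, so distances agree.

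The main obstacle I anticipate is the passage from ``every pair of edges spans a square'' to ``the whole thing is an induced product subgraph'' — that is, ruling out spurious edges among the newly constructed vertices and confirming that the constructed vertex $c$ does not depend on choices in a way that breaks the product structure. This is exactly where the combinatorial hypotheses (no $K_4^-$, no $K_{3,2}$) and the hyperplane machinery of Theorem~\ref{thm:MainQM} do the real work, so I would organize the argument so that each potential bad configuration is immediately contradicted by one of those two forbidden subgraphs or by the ``a geodesic crosses each hyperplane at most once'' criterion. Everything else (nonemptiness, gatedness of the relevant carriers and their intersection) is a direct citation of Theorem~\ref{thm:MainQM}(ii) and Lemma~\ref{lem:ProjHyp}.
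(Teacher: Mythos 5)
Your overall architecture --- first a ``square lemma'' for a pair of edges of $C_1$ and $C_2$ at the common vertex, then a bootstrap to the full product $C_1\times C_2$ using $K_4^-$- and $K_{3,2}$-freeness --- is the same as the paper's. The gap is in the square lemma itself, which is the only place where the transversality hypothesis can do any work (without it the statement is false: two edges at a vertex of a tree never span a square, and your distance computations for $a,o,b$ hold there just as well). You say transversality must be used, but you never give the mechanism that produces the fourth vertex: ``apply the quadrangle condition at a suitable vertex on the far side'' presupposes a vertex $w$ with $d(w,a)=d(w,b)=d(w,o)-1$, which is exactly what has to be extracted from transversality; and ``gatedness of the carriers plus Lemma~\ref{lem:ProjHyp} force the existence of $c$'' is asserted, not carried out (note also that a hyperplane in a quasi-median graph may delimit many sectors, so knowing $N(J_2)$ meets two sectors of $J_1$ does not yet contradict ``$J_1$ separates $a$ from $N(J_2)$''). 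Worse, the route you actually commit to is circular: the claims that $N(J_1)\cap N(J_2)$ ``is a product of a clique from $J_1$ and a clique from $J_2$'' and that ``$C_1$ and $C_2$ both lie in this intersection'' are essentially the conclusion of the lemma ($C_1\subset N(J_2)$ already says every vertex of $C_1$ lies on a $J_2$-clique), parallelism of the cliques inside $J_1$ does not transport $C_1$ into $N(J_2)$, and no statement quoted in the paper provides such a product decomposition of the intersection of two carriers.

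For comparison, the paper's mechanism is short but specific: fix $b_1\in C_1\setminus\{a\}$, $b_2\in C_2\setminus\{a\}$, let $F$ be the fibre of $J_2$ containing $b_2$ (gated by Theorem~\ref{thm:MainQM}(ii)), and let $c$ be the gate of $b_1$ in $F$. Since $d(b_1,b_2)=2$ one has $d(b_1,c)\in\{1,2\}$; if it were $2$ then $c=b_2$, and Lemma~\ref{lem:ProjHyp} would force $J_1$ to separate $b_1$ from all of $F$, which is exactly what transversality of $J_1$ and $J_2$ forbids. Hence $c$ is adjacent to $b_1$, and a count of separating hyperplanes shows $c$ is adjacent to $b_2$, giving the square. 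Your second half (assembling the squares into parallel copies of $C_2$, excluding spurious or missing edges via the forbidden subgraphs and the distinctness of $J_1,J_2$) does match the paper's verification that $(x,y)\mapsto c(x,y)$ extends to an embedding of $C_1\times C_2$ with induced image, though one parenthetical there is off: the vertices $c$ with $[a,c]$ dual to $J_2$ lie in pairwise \emph{distinct} fibres of $J_2$, so ``it lies in a single fibre'' cannot be the reason they form a clique. In short: the endgame is fine in outline, but the central step where transversality is converted into the existence of the square is missing, and the substitute you propose assumes what is to be proved.
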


\begin{proof}
Fix a vertex $a \in C_1 \cap C_2$ and let $J_1,J_2$ denote the hyperplanes containing respectively $C_1,C_2$. We begin by proving the following observation:

\begin{claim}
For all $b_1 \in C_1 \backslash \{a\}$ and $b_2 \in C_2 \backslash \{a\}$, the edges $[a,b_1]$ and $[a,b_2]$ span a square. 
\end{claim}

\noindent
Let $F$ denote the fibre of $J_2$ that contains $b_2$ and let $c$ denote the projection of $b_1$ onto $F$. Observe that $[b_1,a] \cup [a,b_2]$ is a geodesic as it crosses only two distinct hyperplanes (namely, $J_1$ and $J_2$). As a consequence, $b_1$ does not belong to $F$, since otherwise the convexity of $F$ would imply that $a \in [b_1,a] \cup [a,b_2] \subset F$. On the other hand, we know that $d(b_1,c) = d(b_1,F) \leq d(b_1,b_2) = 2$, so $d(b_1,c) \in \{1,2\}$. If $d(b_1,c)=2$ then we must have $c=b_2$, which is impossible according to Lemma \ref{lem:ProjHyp} because $J_1$ separates $b_1$ from $b_2$ but not from $F$. Therefore, $d(b_1,c)=1$; in other words, $c$ is adjacent to $b_1$. Because $J_1$ contains the edges $[b_1,c],[a,b_2]$ and does not separate $c,b_2$, necessarily the hyperplanes separating $c$ and $b_2$ must separate $a$ and $b_1$. In other words, $J_2$ is the only hyperplane separating $c$ and $b_2$, which implies that $c$ is also adjacent to $b_2$. We conclude that $[a,b_1]$ and $[a,b_2]$ span a square, namely $\{a,b_1,b_2,c\}$, as desired.

\medskip \noindent
For all $b_1 \in C_1 \backslash \{a\}$ and $b_2 \in C_2 \backslash \{a\}$, we denote by $c(b_1,b_2)$ the vertex of the square spanned by $[a,b_1]$ and $[a,b_2]$ opposite to $a$. Consider the map
$$\Phi : \left\{ \begin{array}{ccc} C_1 \times C_2 & \to & X \\ (x,y) & \mapsto & \left\{ \begin{array}{cl} c(x,y) & \text{if $x \neq a$ and $y \neq a$} \\ x & \text{if $y=a$} \\ y & \text{if $x=a$} \end{array} \right. \end{array} \right..$$
We claim that $\Phi$ is a graph embedding with an induced image, which will conclude the proof of our lemma since it will follow that $C_1$ and $C_2$ span the prism $\Phi(C_1 \times C_2)$. 

\medskip \noindent
So let $(x_1,y_1),(x_2,y_2) \in C_1 \times C_2$ be two adjacent vertices. Up to symmetry, we assume without loss of generality that $x_1,x_2$ are adjacent in $C_1$ and that $y_1=y_2$ (from now on, we denote by $y$ this common vertex). 
\begin{itemize}
	\item If $y=a$, then $\Phi(x_1,y)=x_1$ and $\Phi(x_2,y)=x_2$ are adjacent in $X$.
	\item If $y \neq a$ and $x_1=a$, then $\Phi(x_1,y)=y$ and $\Phi(x_2,y)=c(x_2,y)$ are adjacent.
	\item If $y \neq a$ and $x_2=a$, then $\Phi(x_1,y)=c(x_1,y)$ and $\Phi(x_2,y)=y$ are adjacent.
	\item If $y,x_1,x_2 \neq a$, then $\Phi(x_1,y)=c(x_1,y)$ and $\Phi(x_2,y)=c(x_2,y)$ are adjacent since otherwise the two squares $\{a,x_1,y,c(x_1,y)\}$ and $\{a,x_2,y,c(x_2,y)\}$ would define an induced copy of $K_{3,2}$ in $X$.
\end{itemize}
Therefore, $\Phi$ sends an edge to an edge. Next, observe that $c : (C_1 \backslash \{a\} ) \times (C_2 \backslash \{a\} ) \to X$ is injective since otherwise $X$ would contain an induced copy of $K_{3,2}$. Consequently, $\Phi$ is also injective. Finally, notice that, if $\Phi(C_1 \times C_2)$ contains an edge that does not come from $C_1 \times C_2$, then a square in $C_1 \times C_2$ would have a diagonal in $X$, implying that $J_1=J_2$, a contradiction. This concludes the proof of our lemma.
\end{proof}

\noindent
For our next lemma, we need the following definition. The \emph{cubical dimension} of a quasi-median graph $X$, denoted by $\dim_\square(X)$, is the maximal size of a collection of pairwise transverse hyperplanes. Although this observation will not be used in the sequel, we know from \cite[Proposition 2.79]{QM} that the cubical dimension coincides with the maximal number of factors of a prism in $X$. 

\begin{lemma}\label{lem:MetricInf}
Let $X$ be a quasi-median graph and $x,y \in X$ two vertices. Let $N$ denote the maximal number of pairwise non-transverse hyperplanes separating $x$ and $y$. Then $d(x,y) \leq \dim_\square(X) \cdot N$.
\end{lemma}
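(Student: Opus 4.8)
The plan is to bound the number of hyperplanes separating $x$ and $y$, which equals $d(x,y)$ by Theorem~\ref{thm:MainQM}(iv). Write $\mathcal{H}$ for this set of hyperplanes, so that the statement becomes $|\mathcal{H}|\le \dim_\square(X)\cdot N$. The key idea is to organise $\mathcal{H}$ by a partial order whose incomparability relation coincides with transversality; an elementary partition argument then finishes the proof.

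Concretely, for $J\in\mathcal{H}$ let $\sigma(J)$ denote the sector of $J$ (i.e. the connected component of $X\backslash\backslash J$) containing $y$; note that $x\notin\sigma(J)$ since $J$ separates $x$ from $y$. Define $J\preceq J'$ if and only if $\sigma(J')\subseteq\sigma(J)$. Reflexivity and transitivity are immediate. For antisymmetry, observe that if $\sigma(J)=\sigma(J')=:S$ then every edge joining $S$ to its complement must be cut both when removing $J$ and when removing $J'$ (as $S$ is a connected component of $X\backslash\backslash J$ and of $X\backslash\backslash J'$), hence belongs to $J\cap J'$; since $S\neq X$ such an edge exists, and as a hyperplane is determined by any one of its edges, $J=J'$. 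Thus $\preceq$ is a partial order on $\mathcal{H}$.

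The heart of the argument is the following dictionary: two distinct hyperplanes $J,J'\in\mathcal{H}$ are transverse if and only if they are $\preceq$-incomparable. For the ``only if'' direction, transversality produces, by the structure theory of hyperplanes (using for instance Lemma~\ref{lem:SpanPrism} to span a prism and then extracting a $4$-cycle), a square with one pair of opposite edges in $J$ and the other pair in $J'$; tracking on which side of each hyperplane the four vertices lie shows that $\sigma(J)$ and $\sigma(J')$ each contain a vertex of the square that the other does not, so they are incomparable. For the ``if'' direction, if $J$ and $J'$ are non-transverse then the general theory of hyperplanes in quasi-median graphs (see \cite{QM}, \cite{MR4033512}) gives that the carrier of one of them, say $N(J')$, is contained in a single sector of $J$; since any geodesic from $x$ to $y$ crosses $J'$ (Theorem~\ref{thm:MainQM}(iii)) and hence meets $N(J')$, while crossing $J$ exactly once, that sector must be $\sigma(J)$; then the connected set $\sigma(J')$, which contains $y\in\sigma(J)$ and avoids the edges of $J$, lies inside $\sigma(J)$, so $J\preceq J'$. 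This step, where one has to handle sectors of multiplicity larger than two, is the main obstacle and is where we rely most heavily on the structural results of \cite{QM}.

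With the dictionary in hand we conclude as follows. A $\preceq$-antichain is precisely a family of pairwise transverse hyperplanes, so it has at most $\dim_\square(X)$ elements by definition of the cubical dimension; a $\preceq$-chain is precisely a family of pairwise non-transverse hyperplanes, so it has at most $N$ elements by hypothesis. By Mirsky's theorem, $\mathcal{H}$ can be partitioned into $h$ antichains, where $h$ is the length of a longest chain, so $h\le N$; as each antichain has at most $\dim_\square(X)$ elements, we obtain $d(x,y)=|\mathcal{H}|\le h\cdot\dim_\square(X)\le N\cdot\dim_\square(X)$, as desired.
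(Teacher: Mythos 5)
Your overall scheme is the same as the paper's: the paper partially orders the sectors of the separating hyperplanes (on the $x$-side) by inclusion, uses the dictionary ``incomparable $\leftrightarrow$ transverse'', and applies Dilworth's theorem, while you order the $y$-side sectors by nesting and apply Mirsky's theorem -- a dual but equivalent decomposition giving the identical bound. Your antisymmetry argument and the final Mirsky count are fine. The problem is in the one place where you try to actually prove the dictionary: your justification of ``non-transverse $\Rightarrow$ comparable'' does not work as written. You argue that, since a geodesic from $x$ to $y$ crosses $J'$ and hence meets $N(J')$, the unique sector of $J$ containing $N(J')$ ``must be $\sigma(J)$''. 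This ignores the order in which the geodesic crosses the two hyperplanes: the geodesic meets $N(J')$ at the endpoints of its $J'$-edge, and these lie on the $y$-side of $J$ only if $J$ is crossed \emph{before} $J'$; if $J'$ is crossed first, $N(J')$ sits in the sector of $J$ containing $x$. (Already in the path $x - a - y$, with $J'$ the edge $\{x,a\}$ and $J$ the edge $\{a,y\}$, one has $N(J')=\{x,a\}$ on the $x$-side of $J$.) Taken literally, your reasoning applies symmetrically with the roles of $J$ and $J'$ exchanged and would yield both $J\preceq J'$ and $J'\preceq J$, hence $J=J'$ by your antisymmetry -- an absurdity. The correct statement is a dichotomy: since $J,J'$ are non-transverse, $N(J')$ lies in a single sector of $J$ and $N(J)$ in a single sector of $J'$; if no edge of $J$ lies in $\sigma(J')$, then $\sigma(J')$ is a connected subgraph of $X\backslash\backslash J$ containing $y$, so $\sigma(J')\subseteq\sigma(J)$; otherwise $N(J)\subseteq\sigma(J')$, in which case the crossing-order argument shows $N(J')\not\subseteq\sigma(J)$ (a geodesic cannot cross each hyperplane after the other), so no edge of $J'$ lies in $\sigma(J)$ and the reverse containment $\sigma(J)\subseteq\sigma(J')$ holds. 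Either way the two sectors are nested, which is all you need.

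A smaller imprecision of the same flavour occurs in your ``transverse $\Rightarrow$ incomparable'' step: because a hyperplane of a quasi-median graph may delimit more than two sectors, an arbitrary square with opposite edges in $J$ and in $J'$ need not contain any vertex of $\sigma(J)$ or of $\sigma(J')$, so ``tracking the sides of the four vertices'' does not by itself give incomparability. You should either choose the square inside the prism spanned by intersecting cliques dual to $J$ and $J'$ so that it does meet these sectors (possible since a clique dual to a hyperplane meets every one of its sectors), or invoke directly the standard fact that every sector of $J$ intersects every sector of $J'$ when $J$ and $J'$ are transverse: then $\sigma(J')$ meets the sector of $J$ containing $x$, so $\sigma(J')\not\subseteq\sigma(J)$, and symmetrically. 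With these two repairs your proof is correct and coincides, up to the Dilworth/Mirsky duality, with the paper's argument.
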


\begin{proof}
Let $\mathscr{S}$ denote the collection of the sectors delimited by hyperplanes separating $x,y$ that contain $x$, partially ordered by the inclusion. Let $\mathscr{S}= \mathscr{S}_1 \sqcup \cdots \sqcup \mathscr{S}_r$ be a partition with $r$ minimal such that each $\mathscr{S}_i$ is totally ordered by the inclusion. Observe that a subcollection of sectors that are pairwise incomparable with respect to the inclusion corresponds to a collection of pairwise transverse hyperplanes, so it must have cardinality $\leq \dim_\square(X)$. It follows from Dilworth's theorem that $r \leq \dim_\square(X)$. We conclude that
$$d(x,y) = | \mathscr{S}| = | \mathscr{S}_1| + \cdots + | \mathscr{S}_r| \leq \dim_\square(X) \cdot N,$$
as desired.
\end{proof}

\begin{lemma}\label{lem:InterSectors}
Let $X$ be a quasi-median graph of finite cubical dimension. For every hyperplane $J$, choose a sector $J^+$ delimited by $J$. Assume that
\begin{itemize}
	\item for any two hyperplanes $J_1$ and $J_2$, $J_1^+ \cap J_2^+ \neq \emptyset$;
	\item every non-increasing sequence $J_1^+ \supset J_2^+ \supset \cdots$ is eventually constant. 
\end{itemize}
Then $\bigcap\limits_{\text{$J$ hyperplane}} J^+$ is non-empty and reduced to a single vertex.
\end{lemma}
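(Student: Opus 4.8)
The plan is to adapt the classical ``Helly + finite-dimension'' argument for CAT(0) cube complexes to the quasi-median setting, using only the structural facts recorded above (Theorem~\ref{thm:MainQM} and Lemma~\ref{lem:MetricInf}). First I would fix a base vertex $x_0\in X$ and, among all hyperplanes $J$, consider the sectors $J^+$ that do \emph{not} contain $x_0$; call these the \emph{relevant} hyperplanes. Every geodesic from $x_0$ towards $\bigcap_J J^+$ must cross each relevant hyperplane, so the key finiteness input is: the set of relevant hyperplanes, and hence the distance from $x_0$ to the intersection, is bounded. To see this, I would invoke Lemma~\ref{lem:MetricInf}: a collection of pairwise \emph{non-transverse} relevant hyperplanes is linearly ordered by the nesting of their $(\,\cdot\,)^+$-sectors (two disjoint or nested half-spaces on disjoint hyperplanes), so by the second hypothesis — every non-increasing chain $J_1^+\supset J_2^+\supset\cdots$ stabilises — such a collection must be finite; then $\dim_\square(X)<\infty$ together with Lemma~\ref{lem:MetricInf} bounds the total number of relevant hyperplanes.

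Next I would produce the vertex. Let $\mathcal{H}$ be the (now finite) family of relevant hyperplanes, and choose a vertex $v$ minimising $|\{J\in\mathcal{H}: v\notin J^+\}|$, i.e. lying in as many of the prescribed sectors as possible. I claim $v\in J^+$ for every $J\in\mathcal{H}$. Suppose not: there is $J_0\in\mathcal{H}$ with $v$ in the ``wrong'' sector. Using the triangle/quadrangle machinery — concretely, Lemma~\ref{lem:ProjHyp} applied to the gated carrier $N(J_0)$ (gatedness is Theorem~\ref{thm:MainQM}(ii)) — I would push $v$ across $J_0$ to its projection $v'$ onto the fibre on the $J_0^+$ side. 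Any hyperplane separating $v$ from $v'$ must separate $v$ from that fibre; I would argue that such a hyperplane is either $J_0$ itself or is transverse to $J_0$, and that moving from $v$ to $v'$ cannot leave any sector $J^+$ with $J\in\mathcal H$ that $v$ already belonged to. The first hypothesis — pairwise nonempty intersections $J_1^+\cap J_2^+\ne\emptyset$ — is exactly what guarantees that crossing $J_0$ keeps us on the right side of everything transverse to $J_0$. Hence $v'$ lies in strictly more of the prescribed sectors than $v$, contradicting minimality. Therefore $v\in\bigcap_{J}J^+$, where the intersection over \emph{all} hyperplanes equals the intersection over $\mathcal H$ since $x_0\in J^+$ for every non-relevant $J$; replacing $x_0$ by $v$ shows the intersection is nonempty regardless of the base vertex. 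Uniqueness is immediate: if $v_1\ne v_2$ both lay in every $J^+$, then by Theorem~\ref{thm:MainQM}(iv) some hyperplane $J$ separates them, so $J^+$ contains at most one of $v_1,v_2$ — contradiction.

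The main obstacle I anticipate is the ``push across $J_0$'' step: making precise that the projection onto the correct fibre of $N(J_0)$ strictly improves the count without spoiling any sector. This requires carefully identifying which hyperplanes separate $v$ from $v'$ (they are $J_0$ and hyperplanes transverse to $J_0$, via Lemma~\ref{lem:ProjHyp} and the structure of carriers), and then checking, using $J^+\cap J_0^+\ne\emptyset$ for each relevant $J$ transverse to $J_0$, that $v'$ stays in $J^+$. A secondary subtlety is verifying the linear-order claim for pairwise non-transverse relevant hyperplanes so that the stabilising-chain hypothesis yields finiteness — i.e. ruling out that two disjoint hyperplanes could have ``crossing'' chosen sectors incompatible with a single chain; this follows because non-transverse hyperplanes have one contained in a single sector of the other, so their chosen sectors are automatically $\subseteq$-comparable or disjoint, and disjointness is excluded by the first hypothesis.
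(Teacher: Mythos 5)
Your overall plan (a Helly-type minimisation over a finite set of violated sectors, plus a crossing argument) is viable and genuinely different from the paper's proof, which instead assumes the intersection empty, iterates gated projections onto the nested finite intersections of chosen sectors, extracts a pairwise non-transverse subsequence by Ramsey using $\dim_\square(X)<\infty$, and contradicts the stabilisation hypothesis with an infinite strictly descending chain. But as written your argument has concrete gaps, the most serious being in the crossing step: the hyperplanes separating $v$ from its gate $v'$ in the fibre of $J_0$ are \emph{not} ``$J_0$ and hyperplanes transverse to $J_0$''; the situation is exactly the reverse. A hyperplane transverse to $J_0$ meets every fibre of $J_0$ (propagate one of its edges through the product structure $N(J_0)\cong \text{clique}\times\text{fibre}$ and use that the gated fibre contains the whole clique of that edge, which meets every sector), so by Lemma~\ref{lem:ProjHyp} it can never separate $v$ from $v'$; whereas hyperplanes non-transverse to $J_0$ --- already on a geodesic line, every hyperplane between $v$ and $N(J_0)$ --- routinely do separate them. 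Hence the hypothesis $J^+\cap J_0^+\neq\emptyset$ is needed precisely for the non-transverse separating hyperplanes, which your sketch leaves untreated: for such a $J$ with $v\in J^+$ and $v'\notin J^+$ one must run the case analysis on the two chosen sectors (nested one way, nested the other way, ``facing'', or disjoint) and kill each case --- disjointness by the first hypothesis, $J^+\subset J_0^+$ because $v\notin J_0^+$, $J_0^+\subset J^+$ because the fibre lies in $J_0^+$, and the facing configuration because then $N(J_0)$, hence the fibre, lies in $J^+$. Without this, ``$v'$ lies in strictly more prescribed sectors'' is not established.

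Second, the concluding identity ``the intersection over all hyperplanes equals the intersection over $\mathcal{H}$'' is false: that $x_0\in J^+$ for non-relevant $J$ does not make those sectors redundant (on a line one can choose sectors so that the global intersection is $\{x_0\}$ and $\mathcal{H}=\emptyset$, yet most vertices miss many chosen sectors), so minimising only over the $x_0$-relevant hyperplanes does not produce a vertex of $\bigcap_J J^+$. The fix is to minimise $|\mathcal{H}(v)|$ where $\mathcal{H}(v)=\{J : v\notin J^+\}$, using that this set is finite for \emph{every} vertex $v$ (your finiteness argument run with $v$ as basepoint); the corrected crossing step then forces $\mathcal{H}(v)=\emptyset$ at a minimiser. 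Finally, the finiteness argument itself has two holes: for non-transverse hyperplanes the chosen sectors are not ``automatically comparable or disjoint'' --- in the facing configuration $J_1\subset J_2^+$, $J_2\subset J_1^+$ they overlap without nesting --- and you need the relevance of both hyperplanes (facing would force $x_0\in J_2^+$) to exclude it; moreover the second hypothesis only forbids infinite descending chains, so infinite ascending chains $J_1^+\subsetneq J_2^+\subsetneq\cdots$ of relevant sectors must be ruled out separately (each $J_n$ with $n\geq 2$ then separates $x_0$ from a fixed vertex of $J_1^+$, contradicting Theorem~\ref{thm:MainQM}(iv)); and passing from these two bounds to finiteness of the whole family is an infinite Ramsey argument rather than an application of Lemma~\ref{lem:MetricInf}. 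Your uniqueness argument is correct and coincides with the paper's.
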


\begin{proof}
Assume for contradiction that the intersection is empty. We define by induction a sequence $x_0,x_1,\ldots$ of vertices and a sequence $J_1,J_2,\ldots$ of hyperplanes as follows:
\begin{itemize}
	\item Fix an arbitrary vertex $x_0 \in X$. 
	\item Assume that $x_n$ and $J_1,\ldots, J_n$ are defined. There must exist some hyperplane $J_{n+1}$ such that $x_n \notin J_{n+1}^+$. Define $x_{n+1}$ as the projection of $x_n$ onto the intersection $I_{n+1}:= \bigcap\limits_{i=1}^{n+1} J_{i}^+$. 
\end{itemize}
Observe that $(I_n)$ defines a decreasing sequence of gated subgraphs such that, for every $n \geq 1$, $x_n$ belongs to $I_n$ and, according to Lemma \ref{lem:ProjHyp}, $J_{n+1}$ separates $x_n$ from $I_{n+1}$. As a consequence, $x_m \in J_n^+$ for every $m \geq n$. Because every collection of pairwise transverse hyperplane has cardinality at most $\dim_\square(X)<\infty$, it follows from Ramsey's theorem that there exists a subsequence $(J_{k_n})$ of pairwise non-transverse hyperplanes. We deduce from the previous observation that $J_{k_1}^+ \supset J_{k_2}^+ \supset \cdots$, contradicting our assumptions. 

\medskip \noindent
Thus, we have proved that $\bigcap\limits_{\text{$J$ hyperplane}} J^+$ is non-empty. Fix a vertex $a$ in this intersection and let $b \in X$ be an arbitrary vertex distinct from $a$. Because $a \neq b$, there exists some hyperplane $J$ separating $a$ and $b$; and, because $a \in J^+$, necessarily $b \notin J^+$, hence $b \notin \bigcap\limits_{\text{$J$ hyperplane}} J^+$. Thus, we have proved that $a$ is the unique vertex in $\bigcap\limits_{\text{$J$ hyperplane}} J^+$, concluding the proof of our lemma.
\end{proof}

\paragraph{Prism complexes.} In the same way that median graphs can be thought of as one-skeleta of cubical complexes, quasi-median graphs are naturally one-skeleta of \emph{prism complexes}. Here, a \emph{prism} refers to a product of simplices and a \emph{prism complex} to a cellular complex obtained by gluing prisms along \emph{faces} (i.e. products of maximal simplices). We emphasize that, with our definition, gluing two triangles along a common edge does not define a prism complex since the faces of a triangle, when thought of as a prism, are its vertices and itself. 

\medskip \noindent
Because two intersecting cliques in a quasi-median graph either coincide or intersect along a single vertex (see \cite[Lemma 2.11]{QM}), filling in the prisms of the graph with products of simplices yields a prism complex. We refer to such a prism complex as a \emph{quasi-median complex}. The only thing we need to know about quasi-median complexes is that they are simply connected. In fact, it is not difficult to deduce from the product structure of hyperplanes that they are contractible. A less straightforward property is that quasi-median complexes can be endowed with CAT(0) metrics \cite[Theorem~2.120]{QM}, which also implies that they are contractible.

\paragraph{Graph products of groups.} Given a simplicial graph $\Gamma$ and a collection $\mathcal{G}=\{G_u \mid u \in V(\Gamma)\}$ of groups indexed by the vertex-set $V(\Gamma)$ of $\Gamma$, the \emph{graph product} $\Gamma \mathcal{G}$ is the quotient
$$\left( \underset{u \in V(\Gamma)}{\ast} G_u \right) / \langle \langle [g,h]=1 \text{ if $g \in G_u$ and $h \in G_v$ for some $\{u,v\} \in E(\Gamma)$} \rangle \rangle$$
where $E(\Gamma)$ denotes the edge-set of $\Gamma$. The groups in $\mathcal{G}$ are referred to as \emph{vertex-groups}. If all the vertex-groups are isomorphic to a single group $G$, we denote the graph product by $\Gamma G$ instead of $\Gamma \mathcal{G}$. 

\medskip \noindent
Vertex-groups embed into the graph product. More generally, given an induced subgraph $\Lambda \subset \Gamma$, the subgroup generated by the vertex-groups indexed by the vertices in $\Lambda$, which we denote by $\langle \Lambda \rangle$, is naturally isomorphic to the graph product $\Lambda \mathcal{G}_{|V(\Lambda)}$ where $\mathcal{G}_{|V(\Lambda)}:= \{ G_u \mid u \in V(\Lambda) \} \subset \mathcal{G}$. 

\medskip \noindent
As observed in \cite{QM}, graph products naturally act on quasi-median graphs. More precisely, it acts by left-multiplication on the following Cayley graph:

\begin{prop}\label{prop:QMisquasimedian}
Given a simplicial graph $\Gamma$ and a collection of groups $\mathcal{G}$ indexed by $V(\Gamma)$, consider the Cayley graph
$$\mathrm{QM}(\Gamma, \mathcal{G}):= \mathrm{Cayl} \left( \Gamma \mathcal{G}, \bigcup\limits_{u \in V(\Gamma)} G_u \backslash \{1\} \right).$$
Then $\mathrm{QM}(\Gamma, \mathcal{G})$ is a quasi-median graph of cubical dimension $\mathrm{clique}(\Gamma)$.
\end{prop}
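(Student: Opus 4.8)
The plan is to read off the geometry of $\mathrm{QM}(\Gamma,\mathcal{G})$ from the normal form theorem for graph products (due to Green), following the approach of \cite{QM}. Recall that an expression $g=g_1\cdots g_\ell$ of an element of $\Gamma\mathcal{G}$ as a product of \emph{syllables} --- each $g_i$ a non-trivial element of some vertex-group $G_{u_i}$ --- is \emph{reduced} if no sequence of \emph{shuffles}, i.e.\ of transpositions of consecutive syllables $g_i,g_{i+1}$ whose indices $u_i,u_{i+1}$ are adjacent in $\Gamma$, ever brings two syllables lying in a common vertex-group next to each other. The normal form theorem states that all reduced expressions of $g$ have the same number $|g|$ of syllables, that $|g|$ equals the word length of $g$ with respect to $\bigcup_{u\in V(\Gamma)}G_u\setminus\{1\}$, and that any two reduced expressions of $g$ differ by a sequence of shuffles. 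From this I would extract two facts used throughout: (a) the edge of $\mathrm{QM}(\Gamma,\mathcal{G})$ between $a$ and $as$, with $s\in G_u\setminus\{1\}$, lies in a \emph{unique} maximal complete subgraph, namely the coset $aG_u$ --- call such cosets \emph{cliques}; and (b) for a vertex $a$ and a clique $C=gG_u$, inspecting whether some reduced expression of $a^{-1}g$ ends in a $G_u$-syllable produces a \emph{gate} $z\in C$ of $a$, i.e.\ a vertex with $d(a,z)=\min_{w\in C}d(a,w)$ and $d(a,w)=d(a,z)+1$ for every $w\in C\setminus\{z\}$.

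Next I would rule out the two forbidden induced subgraphs. A $3$-cycle $a,as_1,as_1s_2$ forces $s_1\in G_u$, $s_2\in G_v$ and $s_1s_2$ a single syllable, which by the normal form theorem is possible only when $u=v$; hence every triangle lies in a clique, and by the uniqueness in (a) two triangles sharing an edge lie in the same clique, so their remaining two vertices are adjacent and there is no induced $K_4^-$. Likewise, a $4$-cycle not contained in a clique must have the form $g,gs,gst,gt$ with $s\in G_u$, $t\in G_v$, $u\neq v$ and $\{u,v\}\in E(\Gamma)$; a short case analysis on the six edges of a hypothetical induced $K_{3,2}$, using that each edge determines via (a) the vertex-group realising it, forces two syllables from a common vertex-group to sit consecutively in some reduced word, contradicting the normal form theorem.

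I would then verify the triangle and quadrangle conditions using (b). For the triangle condition, given $a,x,y$ with $x\sim y$ and $d(a,x)=d(a,y)$, let $C$ be the clique carrying $\{x,y\}$ and $z$ the gate of $a$ in $C$; since $z\in\{x,y\}$ would force $d(a,x)\neq d(a,y)$, the vertex $z$ is a common neighbour of $x$ and $y$ with $d(a,z)=d(a,x)-1$. For the quadrangle condition, given $a,x,y,z$ with $z$ adjacent to both $x$ and $y$ and $d(a,x)=d(a,y)=d(a,z)-1$: if the edges $\{z,x\}$ and $\{z,y\}$ lie in a common clique $C$, then the gate of $a$ in $C$ --- which cannot be $x$, $y$ or $z$ for distance reasons --- is the required vertex; otherwise these edges lie in distinct cliques $zG_u$ and $zG_v$, and writing $x=zs^{-1}$, $y=zt^{-1}$ and analysing the reduced expressions of $a^{-1}z$, the hypotheses force $u\neq v$, $\{u,v\}\in E(\Gamma)$, and the existence of a reduced expression of $a^{-1}z$ terminating in two commuting syllables from $G_u$ and $G_v$; deleting them exhibits $w:=zs^{-1}t^{-1}=zt^{-1}s^{-1}$ as a common neighbour of $x$ and $y$ with $d(a,w)=d(a,z)-2$. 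Finally, for the cubical dimension, every complete subgraph $\Lambda\subseteq\Gamma$ yields a prism $g\langle\Lambda\rangle\cong\prod_{u\in\Lambda}G_u$ and hence $|\Lambda|$ pairwise transverse hyperplanes, so $\dim_\square(\mathrm{QM}(\Gamma,\mathcal{G}))\geq\mathrm{clique}(\Gamma)$; conversely, a family of pairwise transverse hyperplanes, after translating them to pass through a common vertex, corresponds to a set of pairwise adjacent vertices of $\Gamma$, giving the reverse inequality.

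The main obstacle I expect is the quadrangle condition in the case of two distinct cliques through $z$: there one must manipulate reduced expressions carefully to pull two commuting end-syllables to the tail of $a^{-1}z$ and to check that they are precisely the syllables determined by the edges $\{z,x\}$ and $\{z,y\}$. More generally, the gate property (b) is the linchpin of the whole argument --- the triangle and quadrangle conditions, the classification of $3$- and $4$-cycles, and the dimension count all rest on it --- so setting it up rigorously from the normal form theorem (equivalently, proving directly that cliques, and then prisms, are gated) is where the real work lies.
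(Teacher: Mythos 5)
The paper does not actually prove this proposition: it cites \cite[Proposition~8.2]{QM}, and the argument there runs along exactly the lines you propose (Green's normal form theorem, cliques are cosets of vertex-groups and are gated, then verification of the triangle and quadrangle conditions and of the absence of induced $K_4^-$ and $K_{3,2}$). So your route is the intended one, and the bulk of your sketch is correct: facts (a) and (b) do follow from the normal form theorem as you say, the forbidden-subgraph analysis works, the triangle condition is immediate from gatedness of cliques, and the main case of the quadrangle condition is correctly reduced to pulling two commuting end-syllables (those determined by the edges $\{z,x\}$ and $\{z,y\}$) to the tail of a reduced expression of $a^{-1}z$.

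Two points need repair. First, your subcase of the quadrangle condition where $\{z,x\}$ and $\{z,y\}$ lie in a common clique $C$ is misargued: on a clique the distance to $a$ takes the value $m$ at the gate and $m+1$ at every other vertex, so $d(a,x)=d(a,y)=d(a,z)-1$ forces $x$ and $y$ both to equal the gate; hence this subcase is vacuous once $x\neq y$, and the gate certainly does not satisfy $d(a,\cdot)=d(a,z)-2$ as you assert. Second, and more substantively, the upper bound $\dim_\square(\mathrm{QM}(\Gamma,\mathcal{G}))\leq\mathrm{clique}(\Gamma)$ is not justified as written: ``translating a family of pairwise transverse hyperplanes to pass through a common vertex'' presupposes that pairwise transverse hyperplanes all cross a common prism, which is a genuine Helly-type theorem about quasi-median graphs (cf.\ \cite[Proposition~2.79]{QM}), not something available for free; moreover a single group translation moves all the hyperplanes simultaneously, so it cannot arrange them individually. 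The standard fix is pairwise: each hyperplane carries a well-defined label in $V(\Gamma)$ (triangles lie in cliques, and opposite edges of $4$-cycles have equal labels, cf.\ Lemma~\ref{lem:CliqueLabel}), and two transverse hyperplanes have distinct, adjacent labels --- which one extracts from the description of hyperplanes and their carriers as in Lemma~\ref{lem:QMhyp}, or from a direct normal-form computation; a family of $k$ pairwise transverse hyperplanes then gives $k$ pairwise adjacent vertices of $\Gamma$. With these two repairs your proof is complete and matches the cited one.
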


\noindent
We refer to \cite[Proposition 8.2]{QM} for a proof. The cliques, prisms, and hyperplanes of $\mathrm{QM}(\Gamma, \mathcal{G})$ are described as follows (see \cite[Lemma~8.6 and Corollaries~8.7 and~8.10]{QM} or \cite[Lemmas~2.4 and~2.6, and Theorem~2.10]{AutQM}):
\begin{figure}
\begin{center}
\includegraphics[scale=0.4]{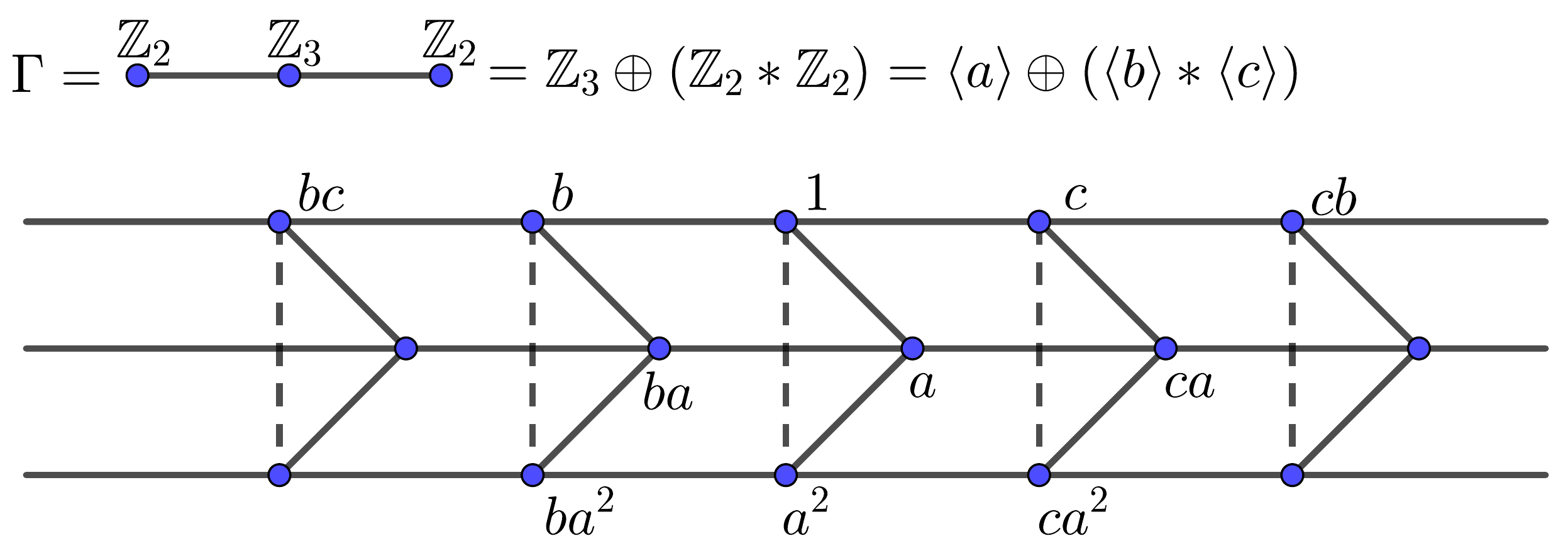}
\caption{Example of a quasi-median graph $\mathrm{QM}(\Gamma, \mathcal{G})$.}
\label{Cayl}
\end{center}
\end{figure}

\begin{lemma}\label{lem:QMcliques}
Let $\Gamma$ be a simplicial graph and $\mathcal{G}$ a collection of groups indexed by $V(\Gamma)$. The cliques of $\mathrm{QM}(\Gamma, \mathcal{G})$ coincide with the cosets of vertex-groups.
\end{lemma}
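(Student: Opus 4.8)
The plan is to show that every clique in $\mathrm{QM}(\Gamma, \mathcal{G})$ is a coset $gG_u$ of a vertex-group, and conversely that every such coset spans a clique. For the converse direction, note that $G_u \backslash \{1\}$ is part of the generating set, so any two distinct elements $g, g'$ of a coset $gG_u$ differ by an element of $G_u \backslash \{1\}$, hence are adjacent in the Cayley graph; thus $gG_u$ spans a complete subgraph. One then needs to check this complete subgraph is \emph{maximal}, i.e. it is a clique: if some vertex $h$ were adjacent to every element of $gG_u$, then $h^{-1}g$ and $h^{-1}g'$ would both be generators (elements of some vertex-groups minus the identity), and their quotient $g^{-1}g' \in G_u \backslash \{1\}$ would also be a generator; a short normal-form argument in graph products (two generators whose ratio is again a generator must lie in a common vertex-group) forces $h \in gG_u$.

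For the forward direction, I would take a clique $C$ and a vertex $g \in C$; translating by $g^{-1}$ we may assume $1 \in C$. Every neighbour of $1$ in $C$ is then an element of $\bigcup_{u} G_u \backslash \{1\}$. The key point is that if $a \in G_u \backslash \{1\}$ and $b \in G_v \backslash \{1\}$ are both in $C$ with $u \neq v$, then $a$ and $b$ must be adjacent, so $a^{-1}b$ is a generator; but $a^{-1}b$ has a normal form of length $2$ unless $u = v$ (it cannot be shortened since $a \notin G_v$, $b \notin G_u$), contradicting that it is a single generator. Hence all neighbours of $1$ in $C$ lie in a single vertex-group $G_u$, so $C \subseteq G_u$, and by maximality $C = G_u$. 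Back-translating, $C = gG_u$.

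The main technical obstacle is the normal-form bookkeeping in graph products: one must invoke the standard fact (Green's thesis, or the reduced-word theory for graph products) that an element of $\Gamma\mathcal{G}$ lies in a vertex-group $G_u$ if and only if its reduced expressions all consist of a single syllable from $G_u$, and that the product of two single-syllable elements from distinct, non-adjacent... — actually here adjacency in $\Gamma$ is irrelevant since we only use that $a, b$ lie in the \emph{vertex} set, not that $u,v$ span an edge — the product of two single-syllable elements from distinct vertex-groups is reduced of length $2$. I expect that in the paper this is handled by simply citing \cite{QM} (the cited Lemma~8.6), since the quasi-median structure of $\mathrm{QM}(\Gamma,\mathcal{G})$ already encodes exactly this combinatorics: cliques correspond to the ``local'' complete subgraphs coming from single vertex-groups, and the hyperplane theory of Theorem~\ref{thm:MainQM} guarantees no larger complete subgraph can occur. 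So the cleanest write-up is: cosets of vertex-groups visibly span complete subgraphs; they are maximal by the normal-form argument above; and any clique, being complete and containing some vertex $g$, must after translation be contained in a single vertex-group by the same argument, hence equals a coset.
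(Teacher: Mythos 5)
Your proof is correct. The paper does not actually argue Lemma \ref{lem:QMcliques} at all: it simply cites \cite[Lemma~8.6]{QM} (equivalently the corresponding statements of \cite{AutQM}), and the content behind those citations is precisely the reduced-word argument you spell out, so your write-up is an unpacking of the citation rather than a genuinely different route. Two small points to make the write-up airtight. First, both halves of your argument rest on the normal-form (reduced word) theory for graph products: you use that an element of syllable length two cannot equal a single syllable, and also, implicitly, that distinct vertex-groups intersect trivially when you pass from $g^{-1}g' \in G_v \cap (G_u \setminus \{1\})$ to $u=v$ in the maximality step; the latter fact is itself a consequence of the same theory and should be cited alongside it. Second, the statement (and your maximality argument, which needs two distinct elements of the coset $gG_u$) requires the vertex-groups to be non-trivial; this is the standing convention for graph products and holds in the paper, where every vertex-group is $\mathbb{Z}_n$ with $n \geq 2$, but it is worth saying. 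Finally, your closing suggestion that the hyperplane theory of Theorem \ref{thm:MainQM} ``guarantees no larger complete subgraph can occur'' is not needed and is not really a substitute for the normal-form computation (in \cite{QM} the clique description is established from normal forms, not deduced from the hyperplane machinery); since your argument does not actually use it, nothing breaks.
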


\begin{lemma}\label{lem:QMprisms}
Let $\Gamma$ be a simplicial graph and $\mathcal{G}$ a collection of groups indexed by $V(\Gamma)$. The prisms of $\mathrm{QM}(\Gamma, \mathcal{G})$ coincide with the cosets of the $\langle \Lambda \rangle$, where $\Lambda \subset \Gamma$ is a complete subgraph.
\end{lemma}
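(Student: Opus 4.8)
The statement to prove is Lemma~\ref{lem:QMprisms}: the prisms of $\mathrm{QM}(\Gamma,\mathcal{G})$ coincide with the cosets of subgroups $\langle\Lambda\rangle$ where $\Lambda\subset\Gamma$ is a complete subgraph.

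\textbf{Overall approach.} The plan is to proceed in two directions. First I would show that every coset $g\langle\Lambda\rangle$ with $\Lambda$ complete spans a prism; then I would show conversely that every prism of $\mathrm{QM}(\Gamma,\mathcal{G})$ is of this form. Throughout I would lean heavily on Lemma~\ref{lem:QMcliques}, which already identifies the cliques as cosets of vertex-groups, and on the fact (from \cite{QM}) that in a quasi-median graph a prism is by definition a subgraph isomorphic to a product of cliques.

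\textbf{Cosets of $\langle\Lambda\rangle$ are prisms.} Since the action of $\Gamma\mathcal{G}$ on $\mathrm{QM}(\Gamma,\mathcal{G})$ is by graph automorphisms and is transitive on vertices, it suffices to treat the coset $\langle\Lambda\rangle$ itself. Write $V(\Lambda)=\{u_1,\dots,u_k\}$. Because $\Lambda$ is complete, any two vertex-groups $G_{u_i},G_{u_j}$ commute in $\Gamma\mathcal{G}$, so $\langle\Lambda\rangle$ is the internal direct product $G_{u_1}\times\cdots\times G_{u_k}$. I would argue that the induced subgraph of $\mathrm{QM}(\Gamma,\mathcal{G})$ on $\langle\Lambda\rangle$ is precisely the Cartesian product of the cliques $G_{u_1},\dots,G_{u_k}$: two elements $g=(g_1,\dots,g_k)$ and $h=(h_1,\dots,h_k)$ are adjacent in $\mathrm{QM}(\Gamma,\mathcal{G})$ iff $g^{-1}h$ lies in a single vertex-group minus $1$, which — using the normal form for graph products restricted to the parabolic $\langle\Lambda\rangle$ — happens iff $g_i=h_i$ for all but exactly one index $i$. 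This is exactly the product-of-cliques condition, and since $\langle\Lambda\rangle$ is a parabolic subgroup one also checks it is an induced subgraph (no extra edges appear). Hence $g\langle\Lambda\rangle$ spans a prism, namely $\prod_i gG_{u_i}$, which is a product of $k$ cliques.

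\textbf{Every prism is such a coset.} Conversely, let $P$ be a prism, say $P=C_1\times\cdots\times C_k$ as a product of cliques inside $\mathrm{QM}(\Gamma,\mathcal{G})$. Pick a vertex $a\in P$; translating by $a^{-1}$ we may assume $1\in P$, so each $C_i$ passes through $1$. By Lemma~\ref{lem:QMcliques}, each clique through $1$ is a vertex-group $G_{u_i}$ for some $u_i\in V(\Gamma)$; distinct factors give distinct cliques hence distinct vertices $u_i$ (two cliques through a common vertex either coincide or meet only at that vertex, \cite[Lemma~2.11]{QM}). Now I need to show the $u_i$'s span a complete subgraph $\Lambda$ and that $P=\langle\Lambda\rangle$. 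For completeness: the hyperplanes $J_i$ containing the $C_i$ are pairwise transverse because the $C_i$ pairwise span squares inside $P$; and in $\mathrm{QM}(\Gamma,\mathcal{G})$ two hyperplanes through vertex-groups $G_{u_i},G_{u_j}$ at the same vertex are transverse precisely when $\{u_i,u_j\}\in E(\Gamma)$ — this is part of the hyperplane description cited just before the lemma (\cite[Lemma~8.6, etc.]{QM}), and alternatively one can see directly that the square spanned forces the commutation $[G_{u_i},G_{u_j}]=1$, which by the defining relations of the graph product (together with the fact that $G_{u_i},G_{u_j}$ are not themselves trivial on the relevant generators) forces adjacency in $\Gamma$. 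Once $\Lambda$ is complete, the previous paragraph shows $\langle\Lambda\rangle$ spans a prism containing all the $C_i$; by maximality/gatedness of prisms (or simply comparing vertex sets via the product decomposition, since both $P$ and $\langle\Lambda\rangle$ are the product of the very same cliques $C_1,\dots,C_k$) we conclude $P=\langle\Lambda\rangle$, and translating back, the original prism is $a\langle\Lambda\rangle$.

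\textbf{Main obstacle.} The routine part is the forward direction, which is essentially the normal-form computation for parabolic subgroups. The delicate point is the converse: one must extract, from purely metric/combinatorial prism data, the algebraic statement that the indexing vertices are pairwise adjacent in $\Gamma$ — i.e. translating ``the cliques span a prism'' into ``the vertex-groups commute'' into ``the vertices are joined by an edge.'' I expect to lean on the explicit hyperplane description of $\mathrm{QM}(\Gamma,\mathcal{G})$ from \cite[Lemma~8.6 and Corollaries~8.7, 8.10]{QM} to make this clean, rather than redo that analysis; modulo that citation the argument is short.
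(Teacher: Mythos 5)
Your argument is essentially correct, but note that the paper does not prove Lemma~\ref{lem:QMprisms} at all: it is quoted from \cite[Lemma~8.6 and Corollaries~8.7--8.10]{QM} (or \cite{AutQM}), so there is no in-paper proof to compare with. Your two-step scheme is the standard one from those references: for the forward direction, the normal form in the parabolic $\langle \Lambda \rangle \cong G_{u_1}\times \cdots \times G_{u_k}$ shows that adjacency inside $\langle \Lambda\rangle$ is exactly ``differ in one coordinate'', so the coset is an induced product of the cliques $gG_{u_i}$ (Lemma~\ref{lem:QMcliques}); for the converse, cliques through the base vertex are vertex-groups, and the normal-form computation you sketch (a $4$-cycle $1, a_i, c, a_j$ with $a_i\in G_{u_i}$, $a_j \in G_{u_j}$ forces $c=a_ia_j$ with $u_i$ and $u_j$ adjacent) is correct and does the job. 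The one loose spot is your final identification $P=\langle\Lambda\rangle$: ``maximality/gatedness of prisms'' is not a valid justification as stated (prisms are neither maximal nor is their gatedness available here), but your parenthetical alternative is the right route and just needs one more ingredient: parallel cliques inside the prism lie in a common hyperplane and hence carry the same label (Lemma~\ref{lem:CliqueLabel}), so every edge of $P$ is labelled by some $u_i$; an induction along the product structure, using the commutations $[G_{u_i},G_{u_j}]=1$, then shows the vertex of $P$ with coordinates $(g_1,\dots,g_k)$ is the group element $g_1\cdots g_k$, giving $P=\langle\Lambda\rangle$ on the nose. With that repair (and writing out the graph-product normal-form steps you acknowledge as citations), the proof is complete.
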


\begin{lemma}\label{lem:QMhyp}
Let $\Gamma$ be a simplicial graph and $\mathcal{G}$ a collection of groups indexed by $V(\Gamma)$. Fix a vertex $u \in V(\Gamma)$ and let $J_u$ denote the hyperplane containing the clique $G_u$. The cliques in $J_u$ are the cosets $gG_u$ where $g \in \langle \mathrm{star}(u) \rangle$. Consequently, the carrier $N(J_u)$ coincides with the subgraph $\langle \mathrm{star}(u) \rangle$ and splits at the Cartesian product $G_u \times \langle \mathrm{link}(u) \rangle$; and the stabiliser of $J_u$ in $\Gamma \mathcal{G}$ coincides with the subgroup $\langle \mathrm{star}(u) \rangle$.
\end{lemma}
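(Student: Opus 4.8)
\emph{Plan of proof.} The argument will use only the descriptions of the cliques (Lemma~\ref{lem:QMcliques}) and prisms (Lemma~\ref{lem:QMprisms}) of $\mathrm{QM}(\Gamma,\mathcal{G})$, together with two standard normal-form facts about graph products: distinct vertex-groups intersect trivially, and two vertex-groups $G_v,G_w$ commute precisely when $\{v,w\}\in E(\Gamma)$. In particular each edge $\{g,gs\}$ of $\mathrm{QM}(\Gamma,\mathcal{G})$, with $s\in G_v\setminus\{1\}$, acquires a well-defined \emph{label} $v\in V(\Gamma)$, and since any two edges of a common clique are mutually equivalent via $3$-cycles, a clique is either disjoint from a given hyperplane or entirely contained in it.

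\emph{Step 1: the label is constant along hyperplanes.} A $3$-cycle has pairwise adjacent vertices, hence sits inside a single clique by Lemma~\ref{lem:QMcliques}, so its three edges share one label. A $4$-cycle with a chord has, by $K_4^-$-freeness, all four vertices forming a clique, so again all its edges share one label. An induced $4$-cycle is a square; by Lemma~\ref{lem:QMprisms} it is a coset $g(G_v\times G_w)$ with $\{v,w\}\in E(\Gamma)$, and using $[G_v,G_w]=1$ one checks that the two $v$-labelled edges of the square are opposite and the two $w$-labelled ones are opposite. Hence each elementary move in the definition of a hyperplane relates edges carrying the same label, so every edge of $J_u$ has label $u$; consequently every clique contained in $J_u$ is a coset $gG_u$.

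\emph{Step 2: identifying the cliques of $J_u$.} Put $\mathcal{C}_u:=\{\,gG_u\mid g\in\langle\mathrm{star}(u)\rangle\,\}$; since $\langle\mathrm{star}(u)\rangle=G_u\langle\mathrm{link}(u)\rangle$ this equals $\{\,kG_u\mid k\in\langle\mathrm{link}(u)\rangle\,\}$. For $\mathcal{C}_u\subseteq J_u$, induct on the syllable length of $k\in\langle\mathrm{link}(u)\rangle$: writing $k=k'b$ with $b\in G_q\setminus\{1\}$, $q\in\mathrm{link}(u)$, the square $k'(G_q\times G_u)$ has $k'G_u$ and $k'bG_u=kG_u$ as its two $G_u$-cliques, so a $u$-labelled edge of $k'G_u$ is opposite, in that square, to a $u$-labelled edge of $kG_u$; by induction the former lies in $J_u$, hence so does the latter, hence $kG_u\subseteq J_u$. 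For the converse, let $E_0$ be the set of edges lying in cliques of $\mathcal{C}_u$; it contains all edges of $G_u=1\cdot G_u$, and it is closed under the two elementary moves --- a $3$-cycle meeting a clique of $\mathcal{C}_u$ lies in it by Step 1, and if a $u$-labelled edge of an induced square $h(G_p\times G_q)$ lies in some $gG_u\in\mathcal{C}_u$ (forcing $u\in\{p,q\}$, say $p=u$) then the opposite $u$-labelled edge lies in a clique $gbG_u$ with $b\in G_q$, $q\in\mathrm{link}(u)$, and the commutation $[G_u,G_q]=1$ rewrites this as $g'G_u$ with $g'\in\langle\mathrm{star}(u)\rangle$. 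Since $J_u$ is the smallest edge-set containing the edges of $G_u$ and closed under the moves, $J_u\subseteq E_0$; combined with Step 1 and the first inclusion, the cliques of $J_u$ are exactly $\mathcal{C}_u$.

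\emph{Step 3: carrier, product structure, stabiliser, and the main difficulty.} The vertices incident to the cliques of $J_u$ form $\bigcup_{g\in\langle\mathrm{star}(u)\rangle}gG_u=\langle\mathrm{star}(u)\rangle$, so $N(J_u)$ is the subgraph $\langle\mathrm{star}(u)\rangle$; as $\mathrm{star}(u)$ is the join of $\{u\}$ with $\mathrm{link}(u)$, the corresponding (gated, hence convex) subgraph, a copy of $\mathrm{QM}(\mathrm{star}(u),\mathcal{G})$, splits as the Cartesian product $G_u\times\langle\mathrm{link}(u)\rangle$. Finally, any $g\in\Gamma\mathcal{G}$ stabilising $J_u$ must carry the clique $G_u$ to a clique of $\mathcal{C}_u$, i.e. $gG_u=g'G_u$ with $g'\in\langle\mathrm{star}(u)\rangle$, so $g\in\langle\mathrm{star}(u)\rangle$; conversely $\langle\mathrm{star}(u)\rangle$ visibly permutes $\mathcal{C}_u$, so the stabiliser of $J_u$ is $\langle\mathrm{star}(u)\rangle$. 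The one genuinely delicate point is the closure property in Step 2: one must keep careful track of the commutation relations $[G_u,G_q]=1$ for $q\in\mathrm{link}(u)$ to ensure that crossing a square always keeps the relevant $u$-clique of the form $gG_u$ with $g\in\langle\mathrm{star}(u)\rangle$; everything else is routine bookkeeping with graph-product normal forms.
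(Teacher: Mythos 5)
The paper does not actually prove Lemma~\ref{lem:QMhyp}: it is quoted from \cite{QM} and \cite{AutQM}, so there is no in-paper proof to compare against. Judged on its own, your argument is correct and follows the same standard route as those references: label each edge of $\mathrm{QM}(\Gamma,\mathcal{G})$ by the vertex of $\Gamma$ indexing its generator, check that the two elementary moves defining hyperplanes preserve the label, identify the cliques of $J_u$ with $\{kG_u \mid k \in \langle \mathrm{link}(u)\rangle\}$ by a syllable-length induction in one direction and a minimality/closure argument in the other, and then read off the carrier, the product splitting, and the stabiliser.

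Two places deserve tightening, though neither is a fatal gap. First, your Step~1 claim that an induced $4$-cycle is contained in a coset $g(G_v\times G_w)$ with $\{v,w\}\in E(\Gamma)$ does not follow from Lemma~\ref{lem:QMprisms} as you cite it (that lemma describes prisms, not arbitrary induced $4$-cycles); it requires the graph-product normal form theorem (uniqueness of reduced words up to shuffles of commuting syllables), which is stronger than the two facts you announce at the outset (trivial intersection of distinct vertex-groups and commutation iff adjacency). Writing the square as $1,s,c,t$ with $c=sx=ty$ and invoking shuffle-uniqueness forces $x=t$, $y=s$ and $\{v,w\}\in E(\Gamma)$, after which your statement about opposite labelled edges is immediate. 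Second, in the closure argument of Step~2 you should say explicitly that each edge lies in a \emph{unique} maximal clique (two vertex-group cosets sharing an edge coincide), so that a $3$-cycle sharing an edge with a clique of $\mathcal{C}_u$ indeed lies inside that same clique, and so that "the clique of the opposite edge" is well defined. With these two points made precise, the proof is complete and matches the arguments of \cite{QM} and \cite{AutQM}.
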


\noindent
Recall that, given a graph $\Gamma$ and a vertex $u \in V(\Gamma)$, the \emph{star} (resp.\ the \emph{link}) of $u$ is the subgraph induced by $u$ and its neighbours (resp.\ by its neighbours). We record the following straightforward consequence of Lemma~\ref{lem:QMhyp}:

\begin{fact}\label{fact:PrismGeneration}
Let $\Gamma$ be a simplicial graph and $\mathcal{G}$ a collection of groups indexed by $V(\Gamma)$. Fix a hyperplane $J$, a vertex $x \in N(J)$, and two distinct cliques $C_1,C_2 \subset N(J)$ containing $x$. If $C_1 \subset J$, then $C_1$ and $C_2$ span a prism.
\end{fact}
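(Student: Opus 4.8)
The plan is to unwind everything via the explicit description of cliques, prisms, hyperplanes, and carriers in the Cayley graph $\mathrm{QM}(\Gamma,\mathcal{G})$ furnished by Lemmas~\ref{lem:QMcliques}, \ref{lem:QMprisms}, and~\ref{lem:QMhyp}, after first normalising the basepoint. Since $\Gamma\mathcal{G}$ acts on $\mathrm{QM}(\Gamma,\mathcal{G})$ by left multiplication, hence by graph automorphisms permuting cliques, prisms, and hyperplanes, we may translate the whole configuration $(J, x, C_1, C_2)$ by $x^{-1}$ and thereby assume that $x$ is the identity vertex $1$.

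First I would identify $C_1$ and $J$. Since $C_1$ is a clique passing through $1$, Lemma~\ref{lem:QMcliques} gives $C_1 = G_u$ for some vertex $u \in V(\Gamma)$, and then $J$, being the hyperplane that contains the clique $C_1 = G_u$, is exactly $J_u$ in the notation of Lemma~\ref{lem:QMhyp}. That same lemma identifies the carrier $N(J) = N(J_u)$ with the subgraph $\langle \mathrm{star}(u)\rangle$.

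Next I would identify $C_2$. Again by Lemma~\ref{lem:QMcliques}, a clique of $\mathrm{QM}(\Gamma,\mathcal{G})$ through $1$ is a vertex-group $G_v$, and the hypothesis $C_2 \subset N(J) = \langle \mathrm{star}(u)\rangle$ forces $G_v \subseteq \langle \mathrm{star}(u)\rangle$, i.e. $v \in V(\mathrm{star}(u))$. The assumption $C_1 \neq C_2$ rules out $v = u$, so $v$ lies in $\mathrm{link}(u)$; in other words $\{u,v\}$ is an edge of $\Gamma$. Consequently $G_u$ and $G_v$ commute in $\Gamma\mathcal{G}$, so $\langle\{u,v\}\rangle = G_u \times G_v$, and since $\{u,v\}$ spans a complete subgraph of $\Gamma$, Lemma~\ref{lem:QMprisms} tells us that $G_u \times G_v$ is a prism of $\mathrm{QM}(\Gamma,\mathcal{G})$. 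This prism has $C_1 = G_u$ and $C_2 = G_v$ as its two clique-factors through $1$, so it is precisely the prism spanned by $C_1$ and $C_2$, which is the assertion. (Alternatively, without normalising $x$, one observes that the hyperplane $J'$ containing $C_2$ is distinct from $J$ --- two distinct cliques of a common hyperplane sharing a vertex would be equal cosets of a single vertex-group by Lemma~\ref{lem:QMhyp} --- and that $J'$ is transverse to $J$, since $C_2 \subset N(J)$ is made of edges lying outside $J$; then Lemma~\ref{lem:SpanPrism} applies directly.)

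I do not anticipate a genuine obstacle here: the only point that requires a bit of care is the bookkeeping translating the abstract cliques $C_1, C_2$ into concrete cosets of vertex-groups via Lemmas~\ref{lem:QMcliques} and~\ref{lem:QMhyp}, after which the conclusion reduces to the defining commutation relation of a graph product for an edge $\{u,v\}$.
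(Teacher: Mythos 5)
Your proposal is correct and follows essentially the same route as the paper: translate by $x^{-1}$ to put $x=1$, identify $C_1=G_u$, $J=J_u$, $C_2=G_v$ with $v\in\mathrm{star}(u)$ via Lemma~\ref{lem:QMhyp}, and use distinctness to get $v\in\mathrm{link}(u)$, so that $C_1$ and $C_2$ span the prism $G_u\oplus G_v$. Your parenthetical alternative via transversality and Lemma~\ref{lem:SpanPrism} is also valid, but the main argument coincides with the paper's proof.
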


\begin{proof}
Up to translating by $x^{-1}$, we assume that $x=1$. Consequently, there exist two vertices $u,v \in V(\Gamma)$ such that $J=J_u$, $C_1=G_u$, and $C_2=G_v$. It follows from Lemma~\ref{lem:QMhyp} that $v \in \mathrm{star}(u)$. Since the cliques $C_1$ and $C_2$ are distinct by assumption, we conclude that $v$ is adjacent to $u$, so $C_1$ and $C_2$ span the prism $G_u \oplus G_v$. 
\end{proof}

\noindent
In $\mathrm{QM}(\Gamma, \mathcal{G})$, like in any other Cayley graph, edges are naturally labelled by generators. As a consequence, edges in $\mathrm{QM}(\Gamma, \mathcal{G})$ are naturally labelled by vertices of $\Gamma$, corresponding to the vertex-groups the generators belong to. A direct consequence of Lemma~\ref{lem:QMhyp}~is:

\begin{lemma}\label{lem:CliqueLabel}
Let $\Gamma$ be a simplicial graph and $\mathcal{G}$ a collection of groups indexed by $V(\Gamma)$. In $\mathrm{QM}(\Gamma, \mathcal{G})$, two cliques in the same hyperplane have the same label.
\end{lemma}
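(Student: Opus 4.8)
The plan is to reduce everything to Lemma~\ref{lem:QMhyp} by exploiting the label-preserving action of $\Gamma\mathcal{G}$ on $\mathrm{QM}(\Gamma,\mathcal{G})$ by left multiplication. First, I would make precise what the "label of a clique" means. By Lemma~\ref{lem:QMcliques}, any clique $C$ of $\mathrm{QM}(\Gamma,\mathcal{G})$ is a coset $gG_u$ of a vertex-group $G_u$. Every edge of $C$ joins two vertices $gx,gy$ with $x,y\in G_u$, and is therefore labelled by the generator $x^{-1}y\in G_u\backslash\{1\}$, i.e. by the vertex $u\in V(\Gamma)$. Thus all edges of $C$ carry the same label $u$, and the label of $C$ is unambiguously this vertex $u$. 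I would also record the (immediate) observation that left multiplication by any $g\in\Gamma\mathcal{G}$ is an automorphism of the Cayley graph $\mathrm{QM}(\Gamma,\mathcal{G})$ sending an edge $\{x,xs\}$ to the edge $\{gx,gxs\}$ carrying the same generator $s$; hence it preserves edge labels, permutes cliques while preserving their labels, and (being defined purely from $3$- and $4$-cycles) permutes hyperplanes.

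The only genuine point is that every hyperplane of $\mathrm{QM}(\Gamma,\mathcal{G})$ is a translate of one of the canonical hyperplanes $J_u$ of Lemma~\ref{lem:QMhyp}. To see this, fix a hyperplane $J$ and choose any clique $C\subset J$; by the previous paragraph $C=gG_u$ for some $g\in\Gamma\mathcal{G}$ and $u\in V(\Gamma)$. Then $g^{-1}J$ is a hyperplane containing the clique $g^{-1}C=G_u$, and since each edge of a quasi-median graph lies in a unique hyperplane, $g^{-1}J$ must be the hyperplane $J_u$ from Lemma~\ref{lem:QMhyp}. Hence $J=gJ_u$.

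Finally I would combine these facts. Let $C_1,C_2$ be two cliques contained in a common hyperplane $J$, and write $J=gJ_u$ as above. Then $g^{-1}C_1$ and $g^{-1}C_2$ are cliques contained in $J_u$, so by Lemma~\ref{lem:QMhyp} each of them is a coset $hG_u$ with $h\in\langle\mathrm{star}(u)\rangle$; in particular both are labelled by $u$. Since left multiplication by $g$ preserves labels, $C_1$ and $C_2$ are both labelled by $u$, which is exactly the assertion of the lemma. I do not anticipate any real obstacle here; the only slightly delicate points are that the label of a clique is well defined and that the $\Gamma\mathcal{G}$-action is label-preserving, both of which are immediate from the Cayley-graph description of $\mathrm{QM}(\Gamma,\mathcal{G})$.
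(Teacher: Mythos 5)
Your argument is correct and is exactly the reasoning the paper leaves implicit when it calls the lemma ``a direct consequence of Lemma~\ref{lem:QMhyp}'': every hyperplane is a $\Gamma\mathcal{G}$-translate of some canonical hyperplane $J_u$, the cliques of $J_u$ are cosets of $G_u$ and hence labelled by $u$, and left multiplication preserves labels. So you have simply spelled out the paper's intended proof; no issues.
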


\subsection{An alternative viewpoint on lamplighter graphs}\label{section:approximation}

\noindent
In this section, we propose an alternative description of lamplighter graphs that will be central in the proof of our embedding theorem.

\begin{definition}
Let $W$ be a finite-dimensional prism complex. The \emph{graph of pointed simplices} $\mathsf{PS}_1(W)$ is the graph whose vertices are the pointed simplices $(S,x)$, where $S$ is a maximal simplex (with respect to the inclusion) and $x$ a vertex, and whose edges link two pointed simplices $(S_1,x_1),(S_2,x_2)$ if either $S_1=S_2$ and $x_1 \neq x_2$ or $x_1=x_2$ and $S_1,S_2$ span a prism in $W$. 
\end{definition}

\noindent
The idea to keep in mind is that we are moving a pointed simplex $(S,x)$ in $W$ by applying two elementary moves: either we slide the vertex $x$ to another vertex of $S$, or we rotate the simplex $S$ around the vertex $x$ through a prism. See Figure~\ref{PS} for an explicit example. 
\begin{figure}
\begin{center}
\includegraphics[width=0.6\linewidth]{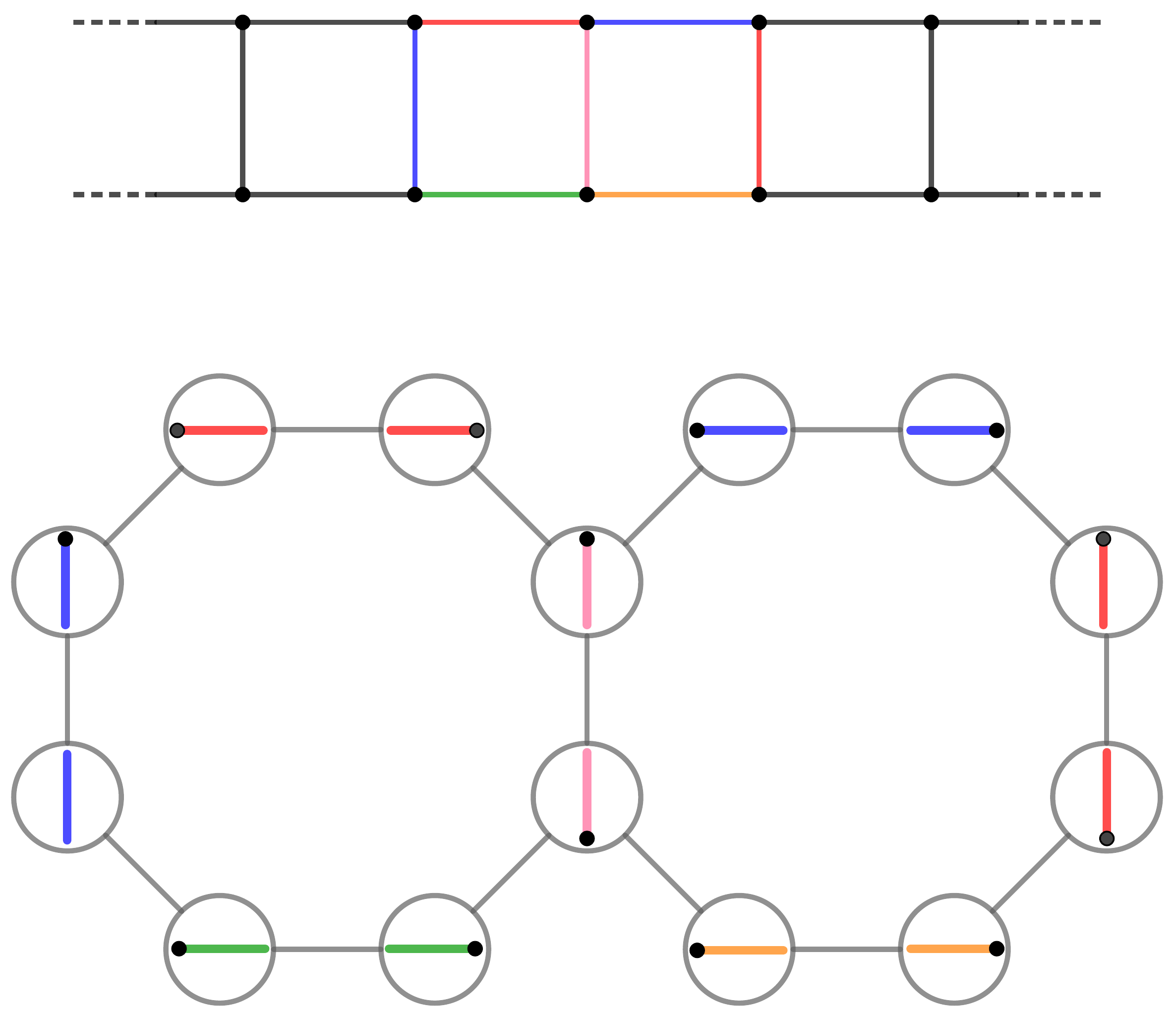}
\caption{A square complex and a piece of its graph of pointed edges.}
\label{PS}
\end{center}
\end{figure}

\medskip \noindent
Now, let us show that any lamplighter graph can be described as the graph of pointed simplices of some prism complex. So let $X$ be a locally finite graph and $n \geq 2$ an integer. Define the prism complex $W(n,X)$ as follows:
\begin{itemize}
	\item the vertices of $W(n,X)$ are the finitely supported colourings in $\mathbb{Z}_n^{(X)}$;
	\item two colourings are linked by an edge in $W(n,X)$ if they differ at a single vertex of~$X$;
	\item for every vertex $x \in X$ and every colouring $\varphi \in \mathbb{Z}_n^{(X)}$, the complete subgraph $\{ \psi \in \mathbb{Z}_n^{(X)} \mid \mathrm{supp}(\psi) \vartriangle \mathrm{supp}(\varphi)  \subset\{x\} \}$ in $W(n,X)$ spans a simplex $S(\varphi,x)$;
	\item simplices $S(\varphi, x_1), \ldots, S(\varphi,x_k)$ span a prism in $W(n,X)$ if $x_1,\ldots, x_k$ are pairwise adjacent in~$X$.
\end{itemize}
In other words, the prism complex $W(n,X)$ is a sub-complex of the prism $\bigoplus_X \Delta^n$: it has same underlying simplicial complex, but only contains prisms that are spanned by simplices labelled by pairwise adjacent vertices of $X$. While $\bigoplus_X \Delta^n$ has infinite dimension as soon as $X$ is infinite,  $W(n,X)$ has finite dimension as soon as $X$ has bounded degree. 

\medskip \noindent
We now state our main observation.

\begin{prop}\label{prop:WreathIsPS}
The map $(S,x) \mapsto (x, \text{ vertex of $X$ labelling $S$})$ induces a graph isomorphism $\mathsf{PS}_1(W(n,X)) \to \mathcal{L}_n(X)$. Moreover, it sends a leaf to a leaf.
\end{prop}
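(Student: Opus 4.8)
The plan is to verify directly that the proposed map is a well-defined graph isomorphism by unwinding the definitions of $\mathsf{PS}_1(W(n,X))$ and $\mathcal{L}_n(X)$. First I would describe the vertex map explicitly. A vertex of $\mathsf{PS}_1(W(n,X))$ is a pointed simplex $(S,x)$ where $S$ is a maximal simplex of $W(n,X)$ and $x$ a vertex of $W(n,X)$, i.e. a finitely supported colouring. By construction of $W(n,X)$, every maximal simplex has the form $S(\varphi,v) = \{\psi \in \mathbb{Z}_n^{(X)} \mid \mathrm{supp}(\psi-\varphi) \subset \{v\}\}$ for a uniquely determined vertex $v \in X$ — this $v$ is the ``label'' of $S$ — while $\varphi$ is only determined up to its value at $v$; since the point $x$ of the pointed simplex lies in $S$, the pair $(S,x)$ is the same data as the pair $(c,v)$ where $c := x \in \mathbb{Z}_n^{(X)}$ and $v \in X$ is the label. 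So I would define the map by $(S,x) \mapsto (x, v)$ where $v$ labels $S$, and observe this is a bijection onto the vertex set $\{(c,v) \mid c \in \mathbb{Z}_n^{(X)},\ v \in V(X)\}$ of $\mathcal{L}_n(X)$: injectivity because $(x,v)$ recovers both the vertex $x$ of $S$ and (together with $v$) the simplex $S = S(x,v)$; surjectivity because $(c,v)$ is the image of $(S(c,v),c)$.

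Next I would check that the map and its inverse send edges to edges. There are two edge types on each side. An edge of $\mathsf{PS}_1(W(n,X))$ of the first type joins $(S,x_1)$ and $(S,x_2)$ with $x_1 \neq x_2$: here $S = S(\varphi,v)$ for a common label $v$, so $x_1,x_2$ are two colourings in $S$, hence differ only at $v$; the images are $(x_1,v)$ and $(x_2,v)$, which by definition of $\mathcal{L}_n(X)$ are joined by an edge (same arrow position $v$, colourings differing only at $v$), and conversely. An edge of the second type joins $(S_1,x)$ and $(S_2,x)$ where $S_1,S_2$ span a prism in $W(n,X)$; by the defining clause of $W(n,X)$, $S_1 = S(x,v_1)$ and $S_2 = S(x,v_2)$ with $v_1,v_2$ adjacent in $X$ (and $v_1 \neq v_2$, since $S_1 \neq S_2$ for a genuine edge); the images are $(x,v_1)$ and $(x,v_2)$ with the same colouring and $v_1,v_2$ adjacent, which is precisely an edge of $\mathcal{L}_n(X)$ of the second type, and conversely any such edge of $\mathcal{L}_n(X)$ pulls back this way. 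One small point to nail down here is that two simplices $S(x,v_1), S(x,v_2)$ sharing the vertex $x$ with $v_1 \neq v_2$ adjacent in $X$ genuinely span a prism and not something degenerate — this is immediate from the third bullet in the definition of $W(n,X)$, which spans a prism on any family of simplices labelled by pairwise adjacent vertices, so the pair $\{v_1,v_2\}$ qualifies. Thus the bijection preserves adjacency in both directions and is a graph isomorphism.

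Finally, the ``moreover'' clause: I would recall that a leaf of $\mathcal{L}_n(X)$ is a subgraph $X(c) = \{(c,v) \mid v \in V(X)\}$ for a fixed colouring $c$. Under the inverse isomorphism this is the set $\{(S(c,v),c) \mid v \in V(X)\}$, i.e. the set of pointed simplices whose marked vertex is the fixed colouring $c$. It remains only to observe that this set is the natural copy of $X$ inside $\mathsf{PS}_1(W(n,X))$ obtained by fixing the point $c$ and letting the simplex rotate around it through prisms — which is exactly what the definition of $W(n,X)$ makes isomorphic to $X$ (the adjacency $S(c,v_1) \sim S(c,v_2)$ holds iff $v_1 \sim v_2$ in $X$). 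I do not expect a genuine obstacle here; the proof is a bookkeeping exercise in matching the two combinatorial descriptions. The only place demanding a little care is being precise about what data a maximal simplex $S$ carries (the label $v$ is well-defined, the ``reference colouring'' $\varphi$ is not, but once the marked vertex $x \in S$ is specified there is no ambiguity), so I would make sure that is stated cleanly before checking the edge conditions.
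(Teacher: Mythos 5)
Your proof is correct and takes essentially the same route as the paper's: identify the vertex bijection $(S,x) \leftrightarrow (x,\text{label of }S)$ and match the two edge types on each side (same simplex, different marked point $\leftrightarrow$ colourings differing only at the label; same marked point, prism-spanning simplices $\leftrightarrow$ adjacent labels). The paper's argument is just a terser version of this bookkeeping, leaving implicit the well-definedness of the label and the leaf statement, both of which you spell out correctly.
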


\noindent
Here, a leaf of $\mathsf{PS}_1(W(n,X))$ refers to a fibre of the canonical projection $\mathsf{PS}_1(W) \twoheadrightarrow W$ induced by $(S,x) \mapsto x$. In other words, a leaf is a subgraph spanned by $\{ (S,x) \mid \text{ $S$ maximal simplex}\}$ for some vertex $x$.

\begin{proof}[Proof of Proposition~\ref{prop:WreathIsPS}]
It is clear that our map induces a bijection from the vertices of $\mathsf{PS}_1(W(n,X))$ to the vertices of $\mathcal{L}_n(X)$. Let $(S_1,x_1),(S_2,x_2) \in \mathsf{PS}_1(W(n,X))$ be two vertices. Observe that $S_1=S_2$ and $x_1 \neq x_2$ amounts to saying that the colourings $x_1,x_2$ differ at a single vertex which is also the common label of $S_1,S_2$. Also, $x_1=x_2$ and $S_1,S_2$ span a prism amounts to saying that the colourings $x_1,x_2$ coincide and that the labels of $S_1,S_2$ are adjacent. Consequently, $(S_1,x_1)$ and $(S_2,x_2)$ are adjacent in $\mathsf{PS}_1(W(n,X))$ if and only if so are their images in $\mathcal{L}_n(X)$.
\end{proof}

\noindent
In the rest of the section, we study in more details graphs of pointed simplices of prism complexes. Our main objective is to endow them with structures of $2$-complexes, in a way that is sufficiently natural so that a covering map between prism complexes will induce a covering map between the corresponding complexes of pointed simplices. 

\begin{definition}
Let $W$ be a finite-dimensional prism complex. The \emph{complex of pointed simplices} $\mathsf{PS}(W)$ is the $2$-complex obtained from $\mathsf{PS}_1(W)$ by gluing triangles and octagons along the following cycles in $\mathsf{PS}_1(W)$:
\begin{itemize}
	\item $((S,x_1),(S,x_2),(S,x_3))$ where $x_1,x_2,x_3$ are pairwise distinct vertices of a maximal simplex $S$;
	\item $((S_1,x),(S_2,x),(S_3,x))$ where $S_1,S_2,S_3$ are three pairwise distinct maximal simplices that contain a vertex $x$ and that span a prism;
	\item $((S_1,x_1), (S_2,x_1), (S_2,x_2), (S_3,x_2), (S_3,x_3), (S_4,x_3), (S_4,x_4), (S_1,x_4))$ where the vertices $x_1,\ldots, x_4$ are pairwise distinct and where $S_1,S_2$ span a prism such that $S_3$ (resp.\ $S_4$) is parallel to $S_1$ (resp.\ $S_2$). 
\end{itemize} 
\end{definition}

\noindent
As an illustration, the $3$- and $8$-cycles in the graphs of pointed simplices given by Figures~\ref{PS} and~\ref{C16} bound polygons.

\medskip \noindent
A natural generalisation would be to investigate the structure of $\mathsf{PS}_1(\text{prism})$ in order to define a higher dimensional complex structure on $\mathsf{PS}_1(W)$. Figure~\ref{C16} illustrates $\mathsf{PS}_1(3-\text{cube})$, which turns out to coincides with the one-skeleton of a convex polyhedron. More generally, it can be shown that $\mathsf{PS}_1(n-\text{cube})$ coincides with the one-skeleton of a convex polytope in $\mathbb{E}^n$, namely a \emph{truncated $n$-cube}. As a consequence, the graph of pointed edges of a cube complex can be naturally endowed with the structure of cellular complex. However, the situation is less clear when triangles are allowed. For instance, observe that the link of a vertex in $\mathsf{PS}(\text{triangle}^2)$ is a $3$-cycle, and not the expected complete graph $K_4$ for a convex polytope in $\mathbb{E}^4$. Anyway, no higher dimensional structure on $\mathsf{PS}(W)$ is required in the sequel, so we do not pursue further these questions and restrict ourselves to the following observation:
\begin{figure}
\begin{center}
\includegraphics[trim={0 15cm 43cm 0},clip,width=0.3\linewidth]{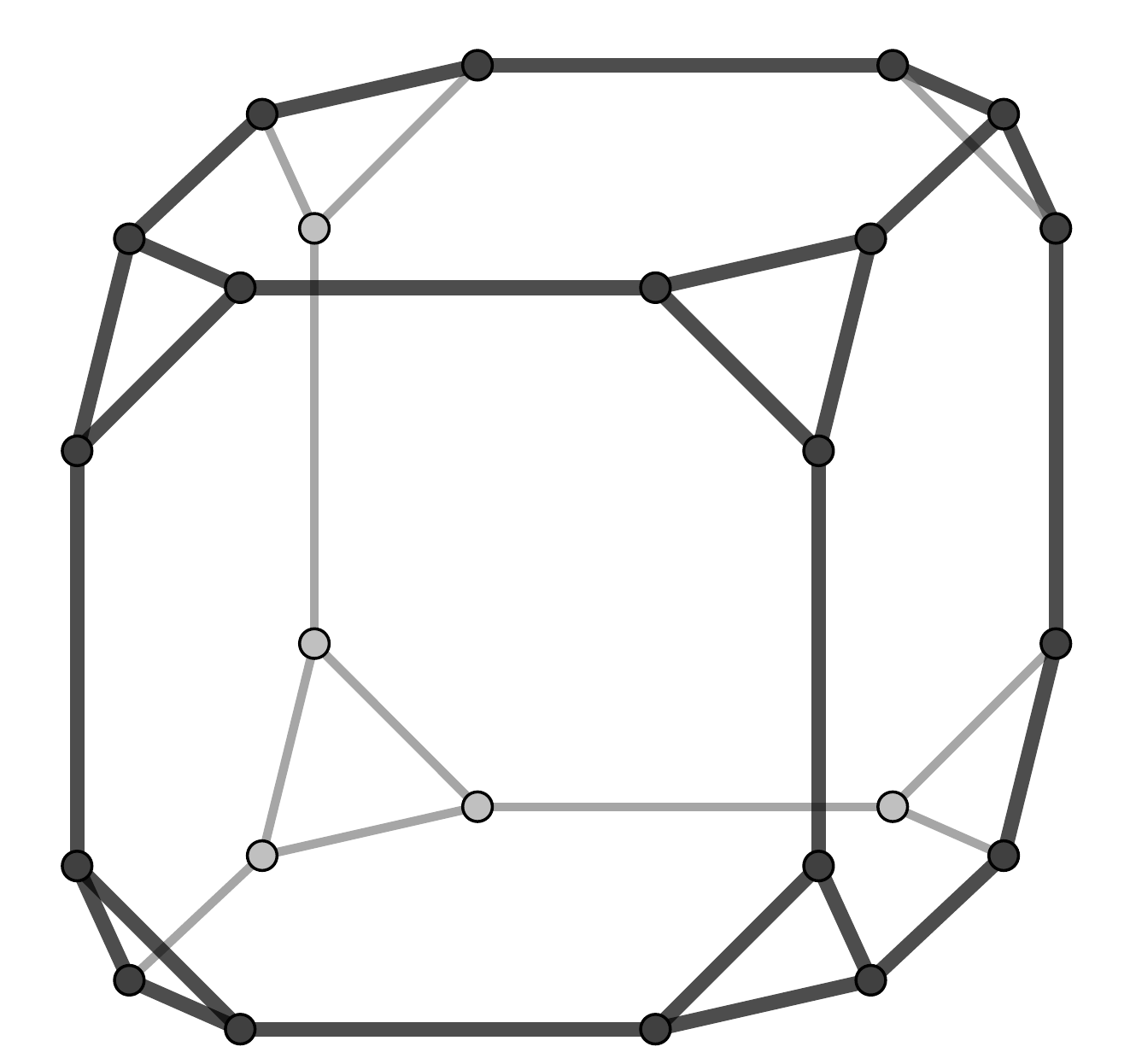}
\caption{The graph of pointed edges of a $3$-cube.}
\label{C16}
\end{center}
\end{figure}

\begin{lemma}\label{lem:PSconnected}
If $W$ is a prism, then $\mathsf{PS}(W)$ is a simply connected $2$-complex.
\end{lemma}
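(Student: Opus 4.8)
The plan is to make the combinatorics of $\mathsf{PS}(W)$ completely explicit and then induct on the number of simplicial factors of $W$. Write $W=\Delta^{n_1}\times\cdots\times\Delta^{n_r}$ as a product of simplices (discarding the one-vertex factors, which change nothing). A vertex of $W$ is a tuple $x=(x_1,\dots,x_r)$ of vertices of the factors, a maximal simplex of $W$ is a \emph{direction} $i\in\{1,\dots,r\}$ together with a choice of the remaining coordinates, and a pointed simplex $(S,x)$ (with $x\in S$) is therefore encoded by a pair $(x,i)$. Under this encoding the edges of $\mathsf{PS}_1(W)$ are the \emph{slides} $(x,i)\sim(x',i)$ (with $x,x'$ differing only in coordinate $i$) and the \emph{rotations} $(x,i)\sim(x,j)$ ($i\neq j$); the triangular $2$-cells fill in, for each $x$, the ``lamp-cliques'' $\{(x',i): x' \text{ varies only in coordinate } i\}\cong K_{n_i}$ and the ``direction-cliques'' $\{(x,1),\dots,(x,r)\}\cong K_r$; and the octagonal $2$-cells are exactly the relations expressing that changing coordinate $i$ and changing coordinate $j$ commute. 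Connectivity of $\mathsf{PS}_1(W)$ is immediate, so it remains to prove $\pi_1(\mathsf{PS}(W))=1$.

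The base case $r\le 1$ is direct: $\mathsf{PS}(\Delta^{n_1})$ is the complete graph $K_{n_1}$ with all its triangles filled, i.e.\ the $2$-skeleton of a simplex, which is simply connected. For the inductive step write $W=W'\times\Delta^{n_{r+1}}$ with $W'=\Delta^{n_1}\times\cdots\times\Delta^{n_r}$, fix a base vertex $o$ of $W$, and let $A\subseteq\mathsf{PS}(W)$ be the full subcomplex spanned by the pointed simplices whose marked vertex has last coordinate equal to $o_{r+1}$. The sub-subcomplex $B\subseteq A$ spanned by those additionally marked in a direction $\le r$ is canonically isomorphic to $\mathsf{PS}(W')$ — one checks that no $2$-cell of $\mathsf{PS}(W)$ that involves the direction $r+1$ or that moves the last coordinate can lie in $B$ — so $\pi_1(B)=1$ by the induction hypothesis. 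Moreover $A$ is obtained from $B$ by coning off, for each vertex $x'$ of $W'$, the direction-clique at $x'$ (a filled complete graph, hence simply connected), so van Kampen gives $\pi_1(A)=1$.

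It then suffices to prove that $A\hookrightarrow\mathsf{PS}(W)$ is $\pi_1$-surjective. To this end I assign to each pointed simplex $(x,i)$ a ``descent path'' into $A$ that pushes the last coordinate of $x$ down to $o_{r+1}$ — a single slide if $i=r+1$, and a rotation to direction $r+1$ followed by a slide and a rotation back otherwise — and to each edge $e$ of $\mathsf{PS}(W)$ a path $q_e$ in $A$ joining the descent-images of its two endpoints. A short case analysis on the type of $e$ shows that in every case the loop formed by $e$, $q_e$ and the two descent paths bounds a disc in $\mathsf{PS}(W)$: a slide in a direction $\le r$ bounds exactly one octagonal $2$-cell, a rotation bounds a combination of direction-triangles, and an edge involving the direction $r+1$ either bounds a lamp-triangle or is killed by elementary backtracking. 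A telescoping argument then rewrites any loop of $\mathsf{PS}(W)$, up to homotopy, as a loop lying in $A$; together with $\pi_1(A)=1$ this gives $\pi_1(\mathsf{PS}(W))=1$.

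The step I expect to be the main obstacle is precisely this last case analysis, since it is where the shape of the $2$-cells of $\mathsf{PS}(W)$ is genuinely used: the octagons encode that ``lowering the last coordinate'' commutes with slides in the other directions, while the direction-triangles guarantee that the descent does not depend on the auxiliary direction chosen. I also note that one should not expect a deformation retraction of $\mathsf{PS}(W)$ onto $A$ (equivalently, onto a point): $\mathsf{PS}(W)$ is in general only simply connected and not contractible — for instance a Euler-characteristic count gives $\mathsf{PS}(\Delta^3\times\Delta^2)\simeq S^2$ — which is exactly why the argument must be phrased through $\pi_1$-surjectivity rather than a retraction.
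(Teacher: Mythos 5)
Your proof is correct, but it follows a genuinely different route from the paper's. The paper's argument is a three-line collapsing trick: for a prism $W$, the leaves $\mathsf{PS}(x)=\{(S,x)\mid S \text{ maximal}\}$ are exactly your filled direction-cliques, i.e.\ $2$-skeleta of simplices, hence simply connected; they are pairwise disjoint, so collapsing each of them to a point does not change $\pi_1$, and the quotient complex is visibly the $2$-skeleton of $W$ (slides become the edges of $W$, lamp-triangles become triangles inside the simplicial factors, octagons become the squares), which is simply connected. Your induction on the number of factors, with the subcomplex $A$ of pointed simplices marked at the bottom level, the identification $B\cong\mathsf{PS}(W')$, the coning argument for $\pi_1(A)=1$, and the descent-path/case analysis for $\pi_1$-surjectivity of $A\hookrightarrow\mathsf{PS}(W)$, checks out (the three edge cases do close up exactly as you say: one octagon for a horizontal slide, direction-triangles for a rotation, a lamp-triangle or backtracking for anything in the last direction), but it is considerably longer; what it buys is a completely explicit, self-contained combinatorial verification that makes transparent exactly which $2$-cells kill which loops, while the paper's collapsing argument is shorter and in fact identifies $\pi_1(\mathsf{PS}(W))$ with $\pi_1$ of the $2$-skeleton of $W$ in one stroke. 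Your closing remark that no deformation retraction onto $A$ (or a point) should be expected is sound and consistent with the paper's care to claim only $\pi_1$-invariance under collapsing, though your specific example is off with the standard indexing: for $\Delta^3\times\Delta^2$ (tetrahedron times triangle) the count gives $\chi=16$, i.e.\ a wedge of $15$ two-spheres, whereas a triangle times a segment does give $\chi=2$ and hence $S^2$; this side remark is not load-bearing for the proof.
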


\begin{proof}
Because any two intersecting maximal simplices in $W$ span a prism, for every vertex $x \in W$ the subcomplex $\mathsf{PS}(x)$ spanned by $\{ (S,x) \mid \text{ $S$ maximal simplex}\}$, called \emph{leaf} below, coincides with the $2$-skeleton of a simplex, and so is simply connected. Notice that the subcomplexes $\mathsf{PS}(x)$ are pairwise disjoint, so we can collapse them without modifying the fundamental group of the space. The complex thus obtained coincides with the $2$-skeleton of $W$, which is simply connected.
\end{proof}

\noindent
Observe that, given a prism complex $W$, the canonical projection $\mathsf{PS}(W) \twoheadrightarrow W$ induced by $(S,x) \mapsto x$ is \emph{combinatorial}, i.e. it sends a cell to a cell. As shown by Proposition~\ref{prop:WreathIsPS}, a lamplighter graph can be described as a graph of pointed simplices of some prism complex, and, under such an identification, leaves of the lamplighter graph will correspond to the fibres of the previous projection. This motivates the following terminology: 

\begin{definition}
Let $W$ be a finite-dimensional prism complex. For every vertex $x \in W$, we refer to the subgraph (resp. the subcomplex) $\mathsf{PS}(x)$ generated by $\{(S,x) \mid \text{ $S$ simplex}\}$ in $\mathsf{PS}_1(W)$ (resp. $\mathsf{PS}(W)$) as a \emph{leaf}. 
\end{definition}

\noindent
Observe that $\mathsf{PS}(x)$ can be thought of as a \emph{link} of $x$ in $W$. Indeed, the vertices of $\mathsf{PS}(x)$ are given by the maximal simplices of $W$ containing $x$ and two such simplices are linked by an edge in $\mathsf{PS}(x)$ if they span a prism in $W$. In $\mathsf{PS}(W)$, the vertices of a leaf are obtained by only rotating a given pointed simplex around its distinguished vertex.

\medskip \noindent
Next, observe that our construction of complexes of pointed simplices is compatible with covering maps. More precisely:

\begin{lemma}\label{lem:PScovering}
Let $W_1,W_2$ be two finite-dimensional prism complexes and $\xi : W_1 \to W_2$ a covering map. Then the map $\overline{\xi} : \mathsf{PS}(W_1) \to \mathsf{PS}(W_2)$ induced by $(S,x) \mapsto (\xi(S),\xi(x))$ also defines a covering map. Moreover, it sends a leaf to a leaf.
\end{lemma}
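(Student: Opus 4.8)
\textbf{Proof strategy for Lemma~\ref{lem:PScovering}.}

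The plan is to verify directly that $\overline{\xi}$ is a covering map by checking the defining local homeomorphism property on the $2$-complex $\mathsf{PS}(W_1)$, using that $\xi$ is a covering map of prism complexes together with the purely local nature of the construction $\mathsf{PS}(\cdot)$. First I would check that $\overline{\xi}$ is well-defined and combinatorial: a covering map $\xi$ of prism complexes sends maximal simplices to maximal simplices bijectively on stars, and sends prisms to prisms; hence if $(S,x)$ is a pointed simplex of $W_1$ then $(\xi(S),\xi(x))$ is a pointed simplex of $W_2$, and the three types of defining cycles (the two triangles and the octagon) are sent to cycles of the same type. So $\overline{\xi}$ maps $\mathsf{PS}_1(W_1)$ to $\mathsf{PS}_1(W_2)$ sending edges to edges and cells to cells.

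Next I would establish the key local statement: for every vertex $v=(S,x)$ of $\mathsf{PS}(W_1)$, the map $\overline{\xi}$ restricts to an isomorphism from the star of $v$ (the union of closed cells containing $v$) onto the star of $\overline{\xi}(v)$ in $\mathsf{PS}(W_2)$. This reduces to a statement about $\xi$ near the vertex $x$ of $W_1$: since $\xi$ is a covering map, it restricts to an isomorphism from the star $\mathrm{St}(x)$ of $x$ in $W_1$ onto $\mathrm{St}(\xi(x))$ in $W_2$. The edges of $\mathsf{PS}(W_1)$ at $v=(S,x)$ are of two kinds --- ``slide'' edges $(S,x')$ with $x'$ another vertex of $S$, and ``rotation'' edges $(S',x)$ with $S'$ a maximal simplex through $x$ spanning a prism with $S$ --- and both kinds, together with the $2$-cells filling them, are determined by the data of $\mathrm{St}(x)$ and of $S$ inside it. Because $\xi$ identifies $\mathrm{St}(x)$ with $\mathrm{St}(\xi(x))$ carrying $S$ to $\xi(S)$, slides correspond bijectively to slides, rotations to rotations, and the defining $2$-cells at $v$ to the defining $2$-cells at $\overline{\xi}(v)$; so $\overline{\xi}$ is a local isomorphism of $2$-complexes. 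Together with the fact that $\overline{\xi}$ is surjective (any pointed simplex $(T,y)$ of $W_2$ lifts, since $\xi$ is surjective and a local isomorphism lets one lift $T$ through a point over $y$) and with $\mathsf{PS}(W_i)$ being connected wherever it needs to be, this shows $\overline{\xi}$ is a covering map. Finally, the ``leaf to leaf'' assertion is immediate: a leaf $\mathsf{PS}(x)$ is the fibre over $x$ of the projection $\mathsf{PS}(W_1)\twoheadrightarrow W_1$, and since $\xi\circ(\text{proj}_1)=(\text{proj}_2)\circ\overline{\xi}$ by construction, $\overline{\xi}$ maps the fibre over $x$ into the fibre over $\xi(x)$, i.e. $\overline{\xi}(\mathsf{PS}(x))\subseteq \mathsf{PS}(\xi(x))$, and in fact onto it by surjectivity of $\overline{\xi}$ restricted using the local isomorphism property.

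The main obstacle I anticipate is being careful with the $2$-cells in the local isomorphism check --- in particular the octagonal cells, whose eight vertices involve two ``parallel'' pairs of simplices $S_1\parallel S_3$, $S_2\parallel S_4$ spanning prisms; one must confirm that $\xi$, being a covering of prism complexes, preserves this parallelism relation between simplices locally, so that octagons lift correctly. This is where one uses that parallelism of simplices is witnessed by prisms (a local feature visible in bounded neighbourhoods) and that $\xi$ is a local isomorphism on a large enough ball; since a covering map is a local isomorphism at every scale this causes no real trouble, but it is the point requiring the most attention. Everything else is a routine unwinding of the definition of $\mathsf{PS}(\cdot)$ and of covering maps.
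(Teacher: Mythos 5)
Your proposal is correct and takes essentially the same route as the paper, whose proof consists precisely in checking that $\overline{\xi}$ induces an isomorphism from the link of each pointed simplex $(S,x)$ onto the link of $(\xi(S),\xi(x))$ — the same local verification you carry out with stars, using that the covering $\xi$ identifies the slide neighbours (vertices of $S$), the rotation neighbours (maximal simplices through $x$ spanning prisms with $S$), and hence the triangles and octagons at $(S,x)$ with the corresponding data at $(\xi(S),\xi(x))$. The extra points you spell out (surjectivity, preservation of parallelism for the octagons, and the leaf-to-leaf claim via compatibility with the projections $\mathsf{PS}(W_i)\twoheadrightarrow W_i$) are left implicit in the paper's proof but agree with it.
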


\begin{proof}
It suffices to show that, given an arbitrary pointed simplex $(S,x)$, $\overline{\xi}$ induces an isomorphism from the link of $(S,x)$ to the link of $\xi(S,x)$. The link of $(S,x)$ is the graph whose vertices are $(S,x_1), \ldots, (S,x_p)$, where $x_1,\ldots, x_p$ are the vertices in $S \backslash \{x\}$, and  $(S_1,x), \ldots, (S_q,x)$, where $S_1, \ldots, S_q$ are the maximal simplices containing $x$ and spanning prisms with $S$; and whose edges connect $(S,x_i)$ with $(S,x_j)$ for all $1 \leq i<j \leq p$, $(S,x_i)$ with $(S_j,x)$ for all $1 \leq i \leq p$ and $1 \leq j \leq q$, and $(S_i,x)$ with $(S_j,x)$ whenever $S, S_i, S_j$ span a prism in $W_1$. The link of $\overline{\xi}(S,x)= (\xi(S),\xi(x))$ is described similarly. Because $\xi$ is a covering map, $\xi(x_1),\ldots, \xi(x_p)$ are the vertices in $\xi(S) \backslash \{\xi(x)\}$ and $\xi(S_1), \ldots, \xi(S_q)$ are the maximal simplices containing $\xi(x)$ and spanning prisms with $\xi(S)$. The desired conclusion follows.
\end{proof}

\subsection{Proof of the embedding theorem}\label{section:EmbeddingProof}

\noindent
Recall from Proposition~\ref{prop:WreathIsPS} that the lamplighter graph $\mathcal{L}_n(X)$ can be described as the graph of pointed simplices of some prism complex $W(n,X)$. The first ingredient towards the proof of Theorem~\ref{thm:FullEmbeddingThm} is that the universal cover $\widetilde{W}(n,X)$ of $W(n,X)$ has a very rigid structure, namely it turns out to be quasi-median complex. 

\begin{lemma}\label{lem:PSisQM}
$\widetilde{W}(n,X)$ is a quasi-median complex of cubical dimension $\mathrm{clique}(X)$.
\end{lemma}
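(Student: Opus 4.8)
The strategy is to identify $\widetilde W(n,X)$ with a subcomplex of the quasi-median complex associated to a suitable graph product of finite cyclic groups, and to argue that this subcomplex is convex, hence itself quasi-median. Concretely, let $\Gamma$ denote the underlying simplicial graph of $X$ (so the vertex set of $\Gamma$ is $V(X)$ and two vertices are joined by an edge exactly when they are adjacent in $X$), and let $\mathcal G = \{\mathbb Z_n \mid u\in V(\Gamma)\}$ be the constant collection of cyclic groups of order $n$. The graph product $\Gamma\mathcal G$ acts on $\mathrm{QM}(\Gamma,\mathcal G)$ as in Proposition~\ref{prop:QMisquasimedian}, and this is a quasi-median graph of cubical dimension $\mathrm{clique}(\Gamma) = \mathrm{clique}(X)$. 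First I would observe that the one-skeleton of $W(n,X)$ is exactly the Cayley graph $\mathrm{Cayl}(\bigoplus_X\mathbb Z_n,\,\bigcup_{u}(\mathbb Z_n\setminus\{1\}))$, i.e.\ the analogue of $\mathrm{QM}$ but for the restricted direct sum rather than the graph product; its prisms are precisely the cosets of $\langle\Lambda\rangle$ for $\Lambda$ a clique of $\Gamma$, by the same reasoning as Lemmas~\ref{lem:QMcliques} and~\ref{lem:QMprisms}. The natural quotient $\Gamma\mathcal G \twoheadrightarrow \bigoplus_X\mathbb Z_n$ (abelianising within each "commuting block", i.e.\ the map killing all relations making non-adjacent generators commute only after they already commute — more precisely, the map induced by sending each $G_u$ identically to its copy in the direct sum) is, I expect, the covering map $\mathrm{QM}(\Gamma,\mathcal G)\to W(n,X)^{(1)}$ on one-skeleta, and after filling prisms it is a covering map of prism complexes. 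Since $\mathrm{QM}(\Gamma,\mathcal G)$ is simply connected (being a quasi-median complex, it is contractible by the CAT(0) metric of \cite[Theorem~2.120]{QM}), this exhibits $\mathrm{QM}(\Gamma,\mathcal G)$ as (a copy of) the universal cover $\widetilde W(n,X)$, and the dimension count gives $\mathrm{clique}(X)$.

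The key steps, in order, are: (1) verify that $W(n,X)$ as defined is genuinely a prism complex and that its one-skeleton is the stated Cayley graph of the direct sum, with prisms = cosets of clique subgroups; (2) construct the natural surjection $\pi\colon\Gamma\mathcal G\to\bigoplus_{V(\Gamma)}\mathbb Z_n$ and check it induces a combinatorial covering map $\mathrm{QM}(\Gamma,\mathcal G)\to W(n,X)$ of prism complexes, using that the kernel acts freely and that $\pi$ restricts to an isomorphism on each vertex-group and on each clique subgroup $\langle\Lambda\rangle$ (so it is locally injective on stars of vertices, which gives the covering property — here one uses Lemma~\ref{lem:QMhyp} to understand carriers and the local structure); (3) invoke simple connectedness of $\mathrm{QM}(\Gamma,\mathcal G)$ to conclude it is the universal cover, so $\widetilde W(n,X)\cong\mathrm{QM}(\Gamma,\mathcal G)$ as prism complexes; (4) read off from Proposition~\ref{prop:QMisquasimedian} that this is quasi-median of cubical dimension $\mathrm{clique}(\Gamma)=\mathrm{clique}(X)$.

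I expect the main obstacle to be step~(2): making precise that the map on one-skeleta is a covering and that it interacts correctly with the prism-filling, i.e.\ that prisms of $\mathrm{QM}(\Gamma,\mathcal G)$ map bijectively onto prisms of $W(n,X)$ locally. The subtlety is that $\pi$ collapses many generators together globally, so $\mathrm{QM}(\Gamma,\mathcal G)\to W(n,X)^{(1)}$ is far from injective; one must check it is nonetheless a local isomorphism on the link of each vertex. By vertex-transitivity of the actions it suffices to check this at the identity, where the link of $1$ in $\mathrm{QM}(\Gamma,\mathcal G)$ is built from the vertex-groups $G_u$ (cliques through $1$) and the clique subgroups (prisms through $1$), and $\pi$ is injective on each such finite subgroup — indeed $\pi$ is injective on $\langle\Lambda\rangle$ for every clique $\Lambda$ since within a clique the graph product already is the direct sum. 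This local injectivity on links, combined with surjectivity of $\pi$, is exactly what Lemma~\ref{lem:CoarseLift}-style covering-space arguments require, and gives that $\overline\pi$ is a covering map; simple connectedness of the source then finishes the identification. An alternative, perhaps cleaner, route for step~(2) is to note directly that $\widetilde W(n,X)$ is a prism complex whose one-skeleton satisfies the triangle and quadrangle conditions with no induced $K_4^-$ or $K_{3,2}$ — these can be checked combinatorially from the local picture of $W(n,X)$, lifting loops being unobstructed because each such forbidden configuration already lives in a single prism — but the graph-product identification is more transparent and makes the dimension statement immediate.
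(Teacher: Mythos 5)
Your proposal is correct and follows essentially the same route as the paper: the paper also identifies $\widetilde{W}(n,X)$ with $\mathrm{QM}(X,\mathbb{Z}_n)$ via the quotient by the kernel of the canonical map $X\mathbb{Z}_n \to \bigoplus_X \mathbb{Z}_n$, using precisely your key observation that this map is injective on clique subgroups $\langle\Lambda\rangle$ (so the kernel meets prism stabilisers trivially and acts freely), together with simple connectedness of the quasi-median complex, and then reads off the cubical dimension from Proposition~\ref{prop:QMisquasimedian}. The only cosmetic difference is that the paper phrases the identification as an isomorphism between $W(n,X)$ and the quotient complex $\mathrm{QM}(X,\mathbb{Z}_n)/K(X\mathbb{Z}_n)$ rather than verifying the covering property on links directly, but the content is the same.
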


\noindent
As a consequence, the hyperplanes of $\widetilde{W}(n,X)$ induce a wallspace structure on the graph $\mathsf{PS}_1(\widetilde{W}(n,X))$. The second ingredient towards the proof of Theorem~\ref{thm:FullEmbeddingThm} is that these walls satisfy convenient properties. First, the walls in $\mathsf{PS}_1(\widetilde{W}(n,X))$ have bounded images in $\mathsf{PS}_1(W(n,X))$:

\begin{lemma}\label{lem:HypareBounded}
For every hyperplane $J$ in $\widetilde{W}(n,X)$, the image of $\{(S,x) \mid S \subset J \} \subset \mathsf{PS}_1(\widetilde{W}(n,X))$ in $\mathsf{PS}_1(W(n,X))$ has bounded diameter. 
\end{lemma}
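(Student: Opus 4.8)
The plan is to unwind the definitions on both sides and trace a hyperplane $J$ of $\widetilde{W}(n,X)$ through the covering map down to $W(n,X)$, then exploit the explicit combinatorial description of prisms and hyperplanes. First I would recall from Lemma~\ref{lem:PSisQM} that $\widetilde{W}(n,X)$ is a quasi-median complex, and from Lemma~\ref{lem:PScovering} that the universal covering $\pi : \widetilde{W}(n,X) \to W(n,X)$ induces a covering map $\overline{\pi} : \mathsf{PS}(\widetilde{W}(n,X)) \to \mathsf{PS}(W(n,X))$ sending leaves to leaves. A pointed simplex $(S,x)$ with $S \subset J$ projects to $(\pi(S),\pi(x))$, and by Proposition~\ref{prop:WreathIsPS} this corresponds to the vertex $(\pi(x), v_J)$ of $\mathcal{L}_n(X)$ where $v_J \in V(X)$ is the label of the clique/simplex $S$. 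The key structural input is Lemma~\ref{lem:CliqueLabel}: in a graph product Cayley graph, all cliques in the same hyperplane carry the same label, so in $\widetilde{W}(n,X)$ all simplices contained in $J$ share a common label, which therefore descends to a single well-defined vertex $v_J \in V(X)$ depending only on $J$. Hence the second coordinates of the images under $\overline{\pi}$ of all the $(S,x)$ with $S \subset J$ are constant, equal to $v_J$.

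It then remains to bound the spread of the first coordinates, i.e. the set of colourings $\pi(x) \in \mathbb{Z}_n^{(X)}$ as $(S,x)$ ranges over pointed simplices with $S \subset J$. The idea is that the carrier $N(J)$ of the hyperplane $J$ in $\widetilde{W}(n,X)$ has a product structure $J_{\mathrm{fibre}} \times (\text{transverse part})$ — more precisely, by the graph-product description (Lemma~\ref{lem:QMhyp} applied to the relevant graph product of finite groups, which is how $\widetilde{W}(n,X)$ arises), the carrier splits as a clique times a product of cliques indexed by the link of $v_J$ in $X$. Since $X$ has bounded degree, the number of such cliques is at most $\mathrm{clique}(X) \leq $ (max degree $+1$), each clique has $n$ vertices, and the vertices of $N(J)$ that are endpoints of edges dual to $J$ all project under $\pi$ to colourings of $X$ that agree outside a set of size at most $\dim_\square(\widetilde{W}(n,X)) = \mathrm{clique}(X)$: they differ only at $v_J$ and its neighbours in $X$. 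Two colourings differing only on a fixed bounded subset of $V(X)$ lie within bounded $\mathcal{L}_n(X)$-distance of each other (this is essentially Fact~\ref{fact:DistBall}, or a direct estimate in the diligent metric), and in addition sliding the distinguished point $x$ within a simplex $S \subset J$ or rotating $S$ within a prism inside $N(J)$ moves $(S,x)$ a bounded amount in $\mathsf{PS}_1$. Combining these, every $(S,x)$ with $S \subset J$ maps under $\overline{\pi}$ into a ball of radius $O(\mathrm{clique}(X))$ (with the implicit constant depending on $n$) around a fixed vertex of $\mathcal{L}_n(X)$, giving the claimed bounded diameter.

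Concretely, the steps in order: (1) fix $J$, let $v_J$ be the common label of simplices in $J$ (justified by Lemma~\ref{lem:CliqueLabel}), and observe $\overline{\pi}$ sends $\{(S,x) \mid S \subset J\}$ into the leaf-fibre over $v_J$ is false in general — rather its image has second coordinate $v_J$; (2) describe $N(J)$ via the graph-product / Lemma~\ref{lem:QMhyp} splitting and note $\dim_\square \widetilde{W}(n,X) = \mathrm{clique}(X) < \infty$ since $X$ has bounded degree; (3) show that for any two pointed simplices $(S,x),(S',x')$ with $S,S' \subset J$, the vertices $x,x'$ lie in a common prism of $N(J)$, so $\pi(x)$ and $\pi(x')$ are colourings of $X$ that differ only on $\mathrm{star}_X(v_J)$, a set of size $\leq \mathrm{clique}(X) \leq \Delta(X)+1$; (4) conclude via Fact~\ref{fact:DistBall} (or a one-line diligent-metric estimate) that $d_{\mathcal{L}_n(X)}(\overline{\pi}(S,x),\overline{\pi}(S',x'))$ is bounded by a function of $n$ and the maximal degree of $X$ only. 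The main obstacle I anticipate is step (3): one must argue carefully that any simplex $S$ contained in the hyperplane $J$ sits inside the carrier $N(J)$, which is a single prism-fiber translate, and that the distinguished points of two such pointed simplices can be connected through prisms of $N(J)$ so that the corresponding colourings differ only near $v_J$ — this requires invoking the gatedness of carriers (Theorem~\ref{thm:MainQM}(ii)) and the explicit clique/prism description (Lemmas~\ref{lem:QMcliques} and~\ref{lem:QMprisms}) rather than anything delicate, but it is where the geometry actually has to be pinned down.
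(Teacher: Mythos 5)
Your proposal is essentially correct, but it takes a genuinely different route from the paper. The paper proves Lemmas~\ref{lem:PSisQM}, \ref{lem:HypareBounded} and \ref{lem:BiLipEquivMetrics} simultaneously by first identifying the covering $\widetilde{W}(n,X) \to W(n,X)$ with the quotient of $\mathrm{QM}(X,\mathbb{Z}_n)$ by the kernel $K(X\mathbb{Z}_n)$ of the abelianisation $\dag \colon X\mathbb{Z}_n \to \bigoplus_X \mathbb{Z}_n$; Lemma~\ref{lem:HypareBounded} then follows from a short equivariance argument: by Lemma~\ref{lem:QMhyp} the stabiliser $\langle \mathrm{star}(u) \rangle$ of a hyperplane has finite image under $\dag$, so its intersection with $K(X\mathbb{Z}_n)$ has finite index in it, and since the full stabiliser acts transitively on the relevant pointed cliques, the image in $\mathsf{PS}_1(W(n,X))$ is finite, hence bounded. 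You instead compute the image directly: constant second coordinate $v_J$ (the common label, via Lemma~\ref{lem:CliqueLabel}), first coordinates that are colourings agreeing off $\mathrm{star}_X(v_J)$, and then Fact~\ref{fact:DistBall}-type estimates. This is a legitimate alternative and even yields an explicit bound depending only on $n$ and the maximal degree of $X$, where the paper's argument gives boundedness with no explicit constant.

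Two points in your outline need repair. First, the assertion in step (3) that the distinguished vertices $x,x'$ of two pointed simplices contained in $J$ lie in a \emph{common prism} of $N(J)$ is false: the carrier is a translate of $\langle \mathrm{star}(v_J) \rangle \cong \mathbb{Z}_n \times \langle \mathrm{link}(v_J) \rangle$, which is infinite as soon as $v_J$ has two non-adjacent neighbours. What you actually need — and what your ``main obstacle'' paragraph correctly gestures at — is weaker: connect $x$ to $x'$ by a path inside $N(J)$ (which is connected, being gated by Theorem~\ref{thm:MainQM}) and check that every edge of $N(J)$ is labelled by a vertex of $\mathrm{star}_X(v_J)$, either via Lemma~\ref{lem:QMhyp} or intrinsically, since an edge of $N(J)$ not dual to $J$ spans a prism with an edge of $J$ (Lemma~\ref{lem:SpanPrism}, Fact~\ref{fact:PrismGeneration}) and simplices spanning a prism in $W(n,X)$ have pairwise adjacent labels. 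This gives that $\pi(x)$ and $\pi(x')$ differ only on $\mathrm{star}_X(v_J)$, a set of size at most the maximal degree of $X$ plus one (not $\mathrm{clique}(X)$, a harmless slip since only boundedness matters). Second, your appeal to Lemma~\ref{lem:QMhyp} presupposes the identification of $\widetilde{W}(n,X)$ with $\mathrm{QM}(X,\mathbb{Z}_n)$, with the covering realised on vertices by $\dag$; this is true, but it is not available for free — it is precisely the Claim the paper establishes inside the proof of these lemmas (freeness of the $K(X\mathbb{Z}_n)$-action and the isomorphism $\mathrm{qm}(X,\mathbb{Z}_n) \cong W(n,X)$) — so a complete write-up must either include that identification or run the intrinsic carrier argument sketched above entirely within $\widetilde{W}(n,X)$.
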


\noindent
And second, along a given wall, the metrics induced by $\widetilde{W}(n,X)$ and $\mathsf{PS}_1(\widetilde{W}(n,X))$ turn out to be biLipschitz equivalent:

\begin{lemma}\label{lem:BiLipEquivMetrics}
For every hyperplane $J$ in $\widetilde{W}(n,X)$ and pointed simplices $(S_1,x_1),(S_2,x_2) \in \mathsf{PS}_1(\widetilde{W}(n,X))$ satisfying $S_1,S_2 \subset J$, the inequalities
$$d_{\widetilde{W}} (x_1,x_2) \leq d_{\mathsf{PS}}((S_1,x_1),(S_2,x_2)) \leq 3 d_{\widetilde{W}}(x_1,x_2)$$
hold.
\end{lemma}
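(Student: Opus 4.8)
The lower bound is easy: the projection $\mathsf{PS}_1(\widetilde{W}(n,X)) \twoheadrightarrow \widetilde{W}(n,X)$, $(S,x) \mapsto x$, is $1$-Lipschitz (sliding the point along a simplex does not move the projection, and rotating a simplex around $x$ does not move $x$), so $d_{\widetilde W}(x_1,x_2) \le d_{\mathsf{PS}}((S_1,x_1),(S_2,x_2))$ for any two pointed simplices, with no hypothesis on $J$ needed. The content is the upper bound, which exploits the hypothesis $S_1,S_2 \subset J$.

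For the upper bound, I would first reduce to the case where $x_1$ and $x_2$ are adjacent in $\widetilde{W}(n,X)$. Indeed, fix a geodesic $x_1 = y_0, y_1, \ldots, y_\ell = x_2$ in $\widetilde{W}(n,X)$ realizing $d_{\widetilde W}(x_1,x_2) = \ell$; by Theorem~\ref{thm:MainQM}(iii) this geodesic crosses each hyperplane at most once, and since it goes from $x_1 \in N(J)$ to $x_2 \in N(J)$ it in fact stays inside the carrier $N(J)$ (the carrier is gated, hence convex, by Theorem~\ref{thm:MainQM}(ii)). For each $i$, let $C_i$ be the clique of $N(J)$ lying in $J$ that contains $y_i$ (this exists because $N(J) \to$ fibre$\,\times\, J$-clique respects the product structure — more concretely, write $N(J)$ as a union of cliques of $J$ over the fibres). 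Then $(C_i, y_i)$ and $(C_{i+1}, y_{i+1})$ are pointed simplices with simplices contained in $J$, and I claim $d_{\mathsf{PS}}((C_i,y_i),(C_{i+1},y_{i+1})) \le 3$: the edge $[y_i,y_{i+1}]$ and the clique $C_i$ both lie in $N(J)$ and contain $y_i$; if the edge lies in $C_i$ we slide within $C_i$ (cost $1$, landing at $(C_i, y_{i+1})$, then rotate to $(C_{i+1},y_{i+1})$ — but if $[y_i,y_{i+1}]\subset C_i$ then $y_{i+1}\in C_i$ and $C_i = C_{i+1}$ since cliques in a hyperplane through a common vertex coincide when they share two vertices, cost $\le 1$); otherwise the clique $D$ containing $[y_i,y_{i+1}]$ is transverse to $J$ (its edge is in $N(J)\setminus J$), so by Lemma~\ref{lem:SpanPrism} (or Fact~\ref{fact:PrismGeneration}) $C_i$ and $D$ span a prism, and in that prism we can: rotate $(C_i,y_i) \to (D, y_i)$, slide $(D,y_i)\to(D,y_{i+1})$, rotate $(D,y_{i+1}) \to (C_{i+1}, y_{i+1})$, total cost $3$. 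Here I should double-check that $C_{i+1}$, defined as the clique of $N(J)$ in $J$ through $y_{i+1}$, is indeed the clique parallel to $C_i$ across $D$; this follows from Lemma~\ref{lem:CliqueLabel} together with the product structure of the carrier $N(J) \cong G_u \times \langle \mathrm{link}(u)\rangle$ from Lemma~\ref{lem:QMhyp} (applied in the universal cover, which is a $\mathrm{QM}(\Gamma,\mathcal{G})$-type graph by Lemma~\ref{lem:PSisQM}). Summing the triangle inequality over the $\ell$ steps gives $d_{\mathsf{PS}}((C_0,y_0),(C_\ell,y_\ell)) \le 3\ell = 3 d_{\widetilde W}(x_1,x_2)$.

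It remains to replace $(C_0, x_1)$ by $(S_1,x_1)$ and $(C_\ell, x_2)$ by $(S_2,x_2)$. But $S_1$ and $C_0$ are two cliques contained in the same hyperplane $J$ and both containing $x_1$; by Lemma~\ref{lem:CliqueLabel} they have the same label, and two cliques through a common vertex with the same label in a graph product Cayley graph must coincide (distinct cliques through a vertex have distinct vertex-group labels). Hence $S_1 = C_0$ and likewise $S_2 = C_\ell$, so in fact no replacement is needed and the estimate above is exactly the claimed upper bound. I would phrase the argument slightly more carefully to make sure the maximal simplex $S_1$ of $\mathsf{PS}$ and the clique $C_0$ of $\widetilde{W}(n,X)$ are literally the same object under the identification of the one-skeleton with the quasi-median graph.

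\textbf{Main obstacle.} The delicate point is the per-step bound $d_{\mathsf{PS}} \le 3$ together with the identification of $C_{i+1}$ as the clique parallel to $C_i$: one must be sure that the clique in $J$ through $y_{i+1}$ is reached from $(C_i, y_i)$ by exactly the rotate–slide–rotate move inside a single prism, i.e. that $C_i$ and the "diagonal" clique $D$ carrying the geodesic edge really do span a prism and that the opposite face of that prism parallel to $C_i$ is $C_{i+1}$. This is where Lemma~\ref{lem:SpanPrism}, Fact~\ref{fact:PrismGeneration}, Lemma~\ref{lem:QMhyp}, and Lemma~\ref{lem:CliqueLabel} all get used; getting the parallelism right (as opposed to merely "some clique of $J$ through $y_{i+1}$") is the crux, but it is forced by the carrier's product decomposition.
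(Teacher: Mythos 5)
Your proposal is correct and follows essentially the same route as the paper: project for the lower bound, take a geodesic from $x_1$ to $x_2$ (which lies in the gated carrier $N(J)$), and replace each geodesic edge by a rotate--slide--rotate path of length at most $3$ through the clique of the edge and the cliques of $J$ at its endpoints, using Fact~\ref{fact:PrismGeneration} (or Lemma~\ref{lem:SpanPrism}) to produce the prisms. The only difference is that your worry about identifying $C_{i+1}$ as the \emph{parallel} clique across the prism is unnecessary: as in the paper, one only needs that the clique of $J$ through $y_{i+1}$ and the clique carrying the geodesic edge span a prism, which Fact~\ref{fact:PrismGeneration} gives directly at the vertex $y_{i+1}$.
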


\noindent
We postpone the proofs of these three lemmas to the next section, and show how to deduce Theorem~\ref{thm:FullEmbeddingThm} from them.

\begin{proof}[Proof of Theorem \ref{thm:FullEmbeddingThm}.]
Without loss of generality, we assume that $\rho$ is continuous. According to Proposition~\ref{prop:WreathIsPS}, there exist a prism complex $W(n,X)$ and an isomorphism $\tau : \mathcal{L}_n(X) \to \mathsf{PS}_1(W(n,X))$ that sends leaves to leaves. Set $\eta:= \tau \circ \rho$. We think of $\mathsf{PS}_1(W(n,X))$ as the one-skeleton of $\mathsf{PS}(W(n,X))$. According to Lemma~\ref{lem:PScovering}, the universal cover $\widetilde{W}(n,X) \to W(n,X)$ induces a covering map $\pi : \mathsf{PS}(\widetilde{W}(n,X)) \to \mathsf{PS}(W(n,X))$.

\medskip \noindent
The first step of the proof is to notice that we can assume without loss of generality that the image of any loop in $Z$ under $\eta$ is homotopically trivial in $\mathsf{PS}(W(n,X))$.

\medskip \noindent
Let $R\geq 0$ be such that filling in cycles of $Z$ of length $\leq R$ with discs produces a simply connected $2$-complex. Let $L \geq 0 $ be such that the image under $\rho$ of every cycle of length $\leq R$ in $Z$ has diameter at most $L$ in $\mathcal{L}_n(X)$. Observe that $L$ depends only on $R$ and the parameters of $\rho$. Let $X'$ denote the graph obtained from $X$ by adding an edge between any two vertices at distance $\leq L$. The inclusion $X \hookrightarrow X'$ induces a $(L,0)$-quasi-isometry $q : \mathcal{L}_n(X) \to \mathcal{L}_n(X')$ that sends leaves to leaves. Therefore, up to replacing $X$ with $X'$ and $\rho$ with $q \circ \rho$, we can assume without loss of generality that
\begin{itemize}
	\item[$(\ast)$] for every cycle $\gamma \subset Z$ of length $\leq R$, there exist a colouring $\varphi \in \mathbb{Z}_n^{(X)}$ and a complete subgraph $Y \subset X$ such that $\rho(\gamma) \subset \{ (\psi,x) \mid x \in Y, \; \mathrm{supp}(\psi) \vartriangle \mathrm{supp}(\varphi)  \subset Y \}$.
\end{itemize}
As a consequence of $(\ast)$, $\eta$ sends every cycle of length $\leq R$ in $Z$ inside $\mathsf{PS}(P) \subset \mathsf{PS}(W(n,X))$ for some prism $P \subset W(n,X)$. We conclude from Lemma~\ref{lem:PSconnected} that $\eta(Z)$ is simply connected in $\mathsf{PS}(W(n,X))$, as desired.

\medskip \noindent
Therefore, $\eta : Z \to \mathsf{PS}(W(n,X))$ lifts to $\widetilde{\eta} : Z \to \mathsf{PS}(\widetilde{W}(n,X))$. So we have the following commutative diagram:

\begin{center}
\hspace{0cm} \xymatrix{
\mathcal{L}_n(X) \ar[r]^\tau & \mathsf{PS}_1(W(n,X)) \ar[r]^\iota & \mathsf{PS}(W(n,X)) & \ar[l]_\pi \mathsf{PS}(\widetilde{W}(n,X)) \\ Z \ar[u]^\rho \ar[ur]^\eta \ar[urrr]_{\widetilde{\eta}}  & & &
}
\end{center}

\noindent
According to Lemma~\ref{lem:CoarseLift}, $\widetilde{\eta}$ is a coarse embedding whose parameters depends only on those of $\rho$. Because the maps $\tau$ and $\pi$ send leaves to leaves, it suffices to show that $\widetilde{\eta}(Z)$ lies in the neighbourhood of a leaf in $\mathsf{PS}(\widetilde{W}(n,X))$ in order to conclude the proof of our theorem. Recall from Lemma~\ref{lem:PSisQM} that $\widetilde{W}(n,X)$ is a quasi-median complex of cubical dimension $\mathrm{clique}(X)$.

\begin{claim}\label{claim:Orientation}
There exists a constant $B \geq 0$ only depending on $X$, $Z$, and the parameters of $\rho$ such that the following holds. Every hyperplane $J$ in $\widetilde{W}(n,X)$ delimits a sector $H$ such that $\widetilde{\eta}(Z) \cap \{ \text{pointed simplices in $H$} \}$ is unbounded and $\widetilde{\eta}(Z) \cap \{ \text{pointed simplices in $H'$}\}$ has diameter $\leq B$ for every sector $H' \neq H$ delimited by~$J$.
\end{claim}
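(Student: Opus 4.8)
The plan is to attach to every hyperplane $J$ of the quasi-median complex $\widetilde{W}(n,X)$ (recall it is quasi-median by Lemma~\ref{lem:PSisQM}) a ``wall'' inside the graph $\mathsf{PS}_1(\widetilde{W}(n,X))$ and to show that $\widetilde{\eta}(Z)$ is essentially confined to one side of it. First I would set up the combinatorics. Call a pointed simplex $(S,x)$ a \emph{$J$-wall vertex} if $S \subset J$, and write $W_J$ for the set of such vertices; otherwise $S$ is disjoint from the edges of $J$, hence contained in a single sector of $J$, and I assign $(S,x)$ to that sector (equivalently, to the sector containing $x$). Inspecting the two elementary moves defining the edges of $\mathsf{PS}_1(\widetilde W(n,X))$ --- sliding $x$ inside $S$, and rotating $S$ around $x$ through a prism --- one checks that each edge either stays within one sector, stays within $W_J$, or joins $W_J$ to one sector: a slide edge $[(S,x_1),(S,x_2)]$ lies in $W_J$ if $S\subset J$ and inside one sector otherwise (all vertices of a clique not in $J$ lie on the same side of $J$), while a rotation edge fixes $x$ and hence its sector. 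In particular $W_J$ separates the distinct sectors in $\mathsf{PS}_1(\widetilde W(n,X))$.

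The core point is that $W_J$ has bounded image downstairs, which will force $\widetilde{\eta}(Z)$ to meet $W_J$ in a bounded set. Writing $\eta = \pi \circ \widetilde{\eta}$ with $\pi : \mathsf{PS}(\widetilde W(n,X)) \to \mathsf{PS}(W(n,X))$ the covering map of Lemma~\ref{lem:PScovering}, the map $\eta$ is a coarse embedding and therefore proper, while by Lemma~\ref{lem:HypareBounded} the set $\pi(W_J)$ has diameter bounded by a constant depending only on $n$ and $X$ (uniformly in $J$). Since $\widetilde{\eta}^{-1}(W_J) \subseteq \eta^{-1}(\pi(W_J))$, the subset $\widetilde{\eta}^{-1}(W_J)$ of $Z$ is bounded, with a bound depending only on $X$, $n$ and the parameters of $\rho$; because $\widetilde{\eta}$ is a coarse embedding, $\widetilde{\eta}(Z) \cap W_J$ is then bounded in $\mathsf{PS}_1(\widetilde W(n,X))$. (To avoid assuming $\widetilde{\eta}$ cellular one works throughout with a fixed bounded neighbourhood of $W_J$ in place of $W_J$; this changes nothing, a bounded neighbourhood of $\pi(W_J)$ being again bounded.)

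It remains to invoke one-endedness of $Z$. Set $D := \widetilde{\eta}^{-1}(W_J)$, a bounded subset of $Z$, and let $Z_\infty$ be the unbounded component of $Z \setminus D$; it is unique since $Z$ is one-ended, and by uniform one-endedness all other components of $Z \setminus D$ lie in a bounded neighbourhood of $D$ whose size is controlled by $\mathrm{diam}(D)$ and $Z$ alone. The image $\widetilde{\eta}(Z_\infty)$ is connected and disjoint from $W_J$, hence, by the separation property of the first paragraph, contained in the set of pointed simplices assigned to a single sector $H = H(J)$. As $Z_\infty$ is unbounded and $\widetilde{\eta}$ is a coarse embedding, $\widetilde{\eta}(Z_\infty)$ is unbounded, so $\widetilde{\eta}(Z)$ meets the pointed simplices lying in $H$ in an unbounded set. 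For any other sector $H'$, those pointed simplices meet $\widetilde{\eta}(Z)$ only inside $\widetilde{\eta}(Z \setminus Z_\infty)$ together with possibly $\widetilde{\eta}(Z) \cap W_J$; both are bounded --- the former because $Z \setminus Z_\infty$ lies in a bounded neighbourhood of $D$, the latter by the previous paragraph --- so their union has diameter at most some $B$ depending only on $X$, $Z$ and the parameters of $\rho$, uniformly in $J$ thanks to the uniformity in Lemma~\ref{lem:HypareBounded}.

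The step I expect to be the real obstacle is the first, combinatorial one: verifying cleanly that $W_J$ genuinely separates the sectors of $J$ inside $\mathsf{PS}_1(\widetilde W(n,X))$ --- i.e.\ that no edge of $\mathsf{PS}_1$ connects two distinct sectors directly --- and organising the estimates so that every constant that appears (including $B$) is uniform in the hyperplane $J$. Note that Lemma~\ref{lem:BiLipEquivMetrics} is not needed for this claim; it enters only later, when the per-hyperplane information is assembled (via Lemma~\ref{lem:InterSectors}) to locate $\widetilde{\eta}(Z)$ near an actual leaf.
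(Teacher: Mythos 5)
Your proposal is correct and takes essentially the same approach as the paper: its own (much terser) proof sets $\mathcal{E}:=\{(S,p)\in\widetilde{\eta}(Z)\mid p\in N(J),\ S\subset J\}$, observes that the connected components of $\widetilde{\eta}(Z)\setminus\mathcal{E}$ are exactly the traces of the sectors of $J$, and concludes from the boundedness of $\pi(\mathcal{E})$ (Lemma~\ref{lem:HypareBounded}) combined with the uniform one-endedness of $Z$. Your separation check for $W_J$ and the properness/uniformity bookkeeping are precisely the details the paper leaves implicit, so there is nothing substantively different to flag.
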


\noindent
Set $\mathcal{E}:= \{ (S,p) \in \widetilde{\eta}(Z) \mid p \in N(J), S \subset J\}$. The connected components of $\widetilde{\eta}(Z) \backslash \mathcal{E}$ are
$$\{ (S,p) \in \widetilde{\eta}(Z) \mid S \subset K\}, \text{ $K$ sector delimited by $J$}.$$
Our claim follows from the combination of the facts that $\pi(\mathcal{E})$ is bounded, according to Lemma~\ref{lem:HypareBounded}, and that $Z$ is uniformly one-ended.

\medskip \noindent
For every hyperplane $J$ of $\mathrm{QM}(\Gamma,X)$, let $J^+$ denote the sector given by Claim \ref{claim:Orientation}. 

\begin{claim}\label{claim:Vertex}
The intersection $\bigcap\limits_{\text{$J$ hyperplane}} J^+$ is reduced to a single vertex.
\end{claim}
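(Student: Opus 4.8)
The plan is to apply Lemma~\ref{lem:InterSectors} to the family $\{J^+\}$ of sectors provided by Claim~\ref{claim:Orientation}. That lemma requires two hypotheses: that any two chosen sectors intersect, and that every non-increasing chain $J_1^+ \supset J_2^+ \supset \cdots$ is eventually constant. Since $\widetilde{W}(n,X)$ has finite cubical dimension $\mathrm{clique}(X)$ (Lemma~\ref{lem:PSisQM}), both checks reduce to the single ``unboundedness'' property built into Claim~\ref{claim:Orientation}: for each hyperplane $J$, the set of pointed simplices of $\widetilde{\eta}(Z)$ lying in $J^+$ is unbounded, while in any other sector it has diameter $\leq B$.

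First I would verify pairwise intersection. Suppose $J_1^+ \cap J_2^+ = \emptyset$ for two hyperplanes $J_1,J_2$. Then every pointed simplex of $\widetilde{\eta}(Z)$ lying in $J_1^+$ must lie in some sector of $J_2$ other than $J_2^+$ (because the sectors of $J_2$ partition the complement of $J_2$, up to carriers, and $J_1^+$ meets none of $J_2^+$). Hence the unbounded set $\widetilde{\eta}(Z) \cap \{\text{pointed simplices in } J_1^+\}$ is covered by the finitely many — actually boundedly many, using finite cubical dimension and the fact that only sectors adjacent to the carrier of $J_1$ along $J_1^+$ can be relevant — sets $\widetilde{\eta}(Z) \cap \{\text{pointed simplices in } H'\}$ with $H' \neq J_2^+$ a sector of $J_2$, each of diameter $\leq B$. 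Some care is needed here because a priori $J_2$ could have infinitely many sectors; the point is that only those sectors of $J_2$ that intersect $J_1^+$ contribute, and these are controlled by the carrier of $J_1$, which is gated (Theorem~\ref{thm:MainQM}(ii)) and whose intersection pattern with $J_2$ is bounded. Once we know the relevant collection is finite, a finite union of bounded sets is bounded, contradicting unboundedness of $\widetilde{\eta}(Z) \cap \{\text{pointed simplices in } J_1^+\}$.

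Next I would verify the chain condition. Given $J_1^+ \supset J_2^+ \supset \cdots$, the sets $\widetilde{\eta}(Z) \cap \{\text{pointed simplices in } J_k^+\}$ form a non-increasing sequence, each unbounded by Claim~\ref{claim:Orientation}. If the chain of sectors were strictly decreasing infinitely often, then $J_1^+ \setminus J_2^+$, $J_2^+ \setminus J_3^+$, $\ldots$ would be infinitely many pairwise disjoint nonempty sets, each obtained by crossing a distinct hyperplane $J_k$ away from $J_k^+$; but any pointed simplex of $\widetilde{\eta}(Z)$ in $J_1^+ \setminus J_k^+$ lies in a sector of $J_k$ other than $J_k^+$, hence is within a $B$-neighbourhood argument of being confined. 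More directly: since all the $J_k^+$ contain the unbounded tail of $\widetilde{\eta}(Z)$, and each hyperplane $J_k$ with $J_{k+1}^+ \subsetneq J_k^+$ would have to separate a point of $\widetilde{\eta}(Z)$ from this tail, one produces infinitely many hyperplanes separating two fixed faraway points of $\widetilde{\eta}(Z)$; combined with finite cubical dimension and Lemma~\ref{lem:MetricInf}, this forces those two points to be infinitely far apart, impossible. So the chain stabilises.

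With both hypotheses verified, Lemma~\ref{lem:InterSectors} gives that $\bigcap_{J} J^+$ is a single vertex, which is exactly Claim~\ref{claim:Vertex}. The main obstacle I anticipate is the first step: controlling the potentially infinite set of sectors of a hyperplane and making the ``finite union of bounded sets'' argument rigorous. The right tool is the structure of carriers in quasi-median graphs — carriers are gated and split as products (Lemma~\ref{lem:QMhyp} in the Cayley-graph model, and more generally Theorem~\ref{thm:MainQM}(ii)) — together with finite cubical dimension, which bounds how many sectors of $J_2$ can be ``adjacent'' to a given sector $J_1^+$ through the interaction of the two carriers. Once this combinatorial bookkeeping is set up, the rest is a routine application of the previously established lemmas.
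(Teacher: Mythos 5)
Your overall strategy is the same as the paper's: apply Lemma~\ref{lem:InterSectors} to the sectors $J^+$ furnished by Claim~\ref{claim:Orientation}, checking pairwise intersection and the descending-chain condition. Your pairwise-intersection check is essentially sound and uses the same input as the paper (which instead runs a trichotomy on transversality and on whether $J_1^+$ contains $J_2$): two remarks, though. The finiteness you worry about is automatic, since every hyperplane of $\widetilde{W}(n,X)\cong\mathrm{QM}(X,\mathbb{Z}_n)$ delimits exactly $n$ sectors; and your covering statement is not quite right as written, because a pointed simplex in $J_1^+$ could have its simplex contained in $J_2$ itself rather than in a sector of $J_2$ --- this is repaired either by adding the set of pointed simplices on $J_2$ (bounded in $\widetilde{\eta}(Z)$ by Lemma~\ref{lem:HypareBounded}) to your cover, or by noting that $J_1^+\cap J_2^+=\emptyset$ forces the hyperplanes to be non-transverse with $J_2\not\subset J_1^+$, whence all of $J_1^+$ lies in a single non-preferred sector of $J_2$ and one bounded set already suffices.

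The genuine gap is in the chain condition, which is the harder half. Your ``more direct'' argument wants two fixed points of $\widetilde{\eta}(Z)$ separated by infinitely many of the hyperplanes $J_k$; such a configuration cannot exist: by Theorem~\ref{thm:MainQM}(iv) any two vertices of a quasi-median graph are separated by only finitely many hyperplanes, so every fixed vertex eventually falls outside $J_k^+$, and the nested unbounded traces $\widetilde{\eta}(Z)\cap\{\text{simplices in }J_k^+\}$ need not have a common ``unbounded tail'' inside $\bigcap_k J_k^+$ (nested unbounded sets can have empty intersection). Moreover Lemma~\ref{lem:MetricInf} is an upper bound on distance, so it cannot be used to force two points to be far apart. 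The contradiction must be quantitative, playing the constant $B$ of Claim~\ref{claim:Orientation} against the number of pairwise non-transverse separating hyperplanes, which is exactly what the paper does: pick $(S,x)\in\widetilde{\eta}(Z)$ with $S\subset J_1^+$; since only finitely many hyperplanes separate two given vertices, there is $r$ with $S$ disjoint from $J_r^+$; by Claim~\ref{claim:Orientation} pick $(Q,y)\in\widetilde{\eta}(Z)$ with $Q\subset J_s^+$ for some $s>r+B$, and then $t\geq s$ with $Q$ disjoint from $J_t^+$. The pairwise non-transverse hyperplanes $J_r,\ldots,J_s$ all separate $x$ from $y$, so $d((S,x),(Q,y))\geq d(x,y)\geq s-r>B$, while both pointed simplices lie outside the preferred sectors of the relevant hyperplanes, so Claim~\ref{claim:Orientation} bounds $d((S,x),(Q,y))$ by $B$ --- a contradiction. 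This finite, $B$-calibrated argument is the missing idea in your sketch; your first, vaguer formulation (``within a $B$-neighbourhood argument of being confined'') gestures at it but is never carried out.
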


\noindent
Our goal is to apply Lemma \ref{lem:InterSectors}.

\medskip \noindent
Let $J_1,J_2$ be two hyperplanes. If $J_1$ and $J_2$ are transverse, then clearly $J_1^+$ and $J_2^+$ intersect. Next, if $J_1$ and $J_2$ are not transverse and if $J_1^+$ contains $J_2$, then clearly $J_1^+$ and $J_2^+$ intersect. Finally, if $J_1$ and $J_2$ are not transverse and if $J_1^+$ does not contain $J_2$, then $J_2^+$ must be the sector delimited by $J_2$ that contains $J_1$ as a direct consequence of Claim \ref{claim:Orientation}. It follows that $J_1^+$ and $J_2^+$ intersect. 

\medskip \noindent
Next, assume for contradiction that $J_1^+ \supsetneq J_2^+ \supsetneq \cdots$ for some collection of hyperplanes $J_1,J_2, \ldots$ in $\widetilde{W}(n,X)$. Fix a vertex $(S,x) \in \widetilde{\eta}(Z)$ such that $S \subset J_1^+$. Because two vertices in $\widetilde{W}(n,X)$ are always separated by only finitely many hyperplanes, there must exist some $r \geq 1$ such that $S$ is disjoint from $J_r^+$. Next, fix an $s > r+B$ and a vertex $(Q,y) \in \widetilde{\eta}(Z)$ such that $Q \subset J_s^+$. As before, there exists  $t \geq s$ such that $Q$ is disjoint from $J_t^+$. We conclude that
$$B< s-r \leq d(x,y) \leq d((S,x),(Q,y)) \leq B,$$
where the last inequality is justified by Claim~\ref{claim:Orientation}, a contradiction.

\medskip \noindent
Lemma \ref{lem:InterSectors} completes the proof of Claim \ref{claim:Vertex}.

\medskip \noindent
Let $x$ denote the vertex of $\widetilde{W}(n,X)$ given by Claim \ref{claim:Vertex}, and consider the corresponding leaf of $ \mathsf{PS}_1(\widetilde{W}(n,X))$:
$$\mathcal{H}:=\{(T,x) \mid \text{$T$ maximal simplex containing $x$} \}.$$
In order to conclude the proof of our theorem, it suffices to show that every vertex in $\widetilde{\eta}(Z)$ lies at bounded distance from $\mathcal{H}$. 

\medskip \noindent
Let $(S,y)$ be a vertex in $\widetilde{\eta}(Z)$ and pick a geodesic path from $y$ to $x$ in the one-skeleton of $\widetilde{W}(n,X)$. Let $K$ denote the maximal simplex containing the last edge of that geodesic. Note that $(K,x)\in \mathcal{H}$. Because the hyperplane $J$ containing $K$ separates $y$ and $x$, it follows from the definition of $J^+$ that $J$ separates $y$ from a pointed simplex $(S',x')\in \widetilde{\eta} (Z)$. 
Recall that, since $\widetilde{\eta}$ is continuous, $\widetilde{\eta}(Z)$ is connected. Every path in $\widetilde{\eta}(Z)$ joining $(S,y)$ to $(S',x')$ must therefore cross $J$. We let $(Q,z) \in \widetilde{\eta}(Z)$ be the first vertex along that path such that  $Q \subset J$.
The initial segment of that path that ends just before hitting $J$ is contained in a single sector delimited by $J$. Since this sector is distinct from $J^+$, we deduce that $d((S,y),(Q,z)) \leq B+1$. Thanks to Claim~\ref{claim:InTwo} below, we have
$$\begin{array}{lcl} d((S,y),\mathcal{H}) & \leq & d((S,y),(K,x)) \leq d((S,y),(Q,z))+d((Q,z),(K,x))\\ \\ & \leq & B+1 + 3d(z,x) \leq 2(B+2) + \mathrm{clique}(X) \cdot (B+3), \end{array}$$
where penultimate inequality is justified by Lemma~\ref{lem:BiLipEquivMetrics}. This concludes the proof of our theorem.

\begin{claim}\label{claim:InTwo}
$d(z,x) \leq \mathrm{clique}(X) \cdot (B+3)+B+2$.
\end{claim}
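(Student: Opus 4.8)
The plan is to bound $d(z,x)$ by controlling the number of hyperplanes separating $z$ and $x$ inside $\widetilde W(n,X)$, using Lemma~\ref{lem:MetricInf}: since $\dim_\square(\widetilde W(n,X)) = \mathrm{clique}(X)$ by Lemma~\ref{lem:PSisQM}, it suffices to show that every family of pairwise non-transverse hyperplanes separating $z$ and $x$ has size $\leq B+3$ (then $d(z,x) \le \mathrm{clique}(X)\cdot(B+3)$, which is even a little sharper than what is claimed). First I would set up the geometric picture: recall from the construction of $(Q,z)$ that $Q \subset J$ and that $J = J^+$-side contains $x$ (because $J$ separates $y$ from $x$, and $(S,y)$ was shown to sit in the sector distinct from $J^+$, so $x \in J^+$; more precisely $K \subset J$ too and $(K,x) \in \mathcal H$). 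The key point is that $z$ and $x$ both lie in $N(J)$: indeed $Q \subset J$ means $z \in N(J)$, and $K \subset J$ with $x \in K$ means $x \in N(J)$.

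The main step is then to analyze which hyperplanes can separate two vertices of the carrier $N(J)$. Here I would invoke the product structure of carriers coming from quasi-median geometry (Lemma~\ref{lem:QMhyp} in the group case, and its analogue that $\widetilde W(n,X)$ is a quasi-median complex): $N(J)$ splits as a product of $J$ (more precisely a fibre of $J$) with the "clique direction" transverse to $J$. A hyperplane $J'$ separating $z$ and $x$ inside $N(J)$ is either $J$ itself, or a hyperplane transverse to $J$ (hence crossing $N(J)$ "horizontally"), or a hyperplane of a fibre of $J$. The hyperplanes transverse to $J$ that separate $z$ and $x$: there can be at most $\mathrm{clique}(X)-1$ of them in a pairwise-non-transverse family, but more to the point, such a hyperplane $J'$ also has bounded image downstairs by the same mechanism as in Lemma~\ref{lem:HypareBounded} (or its image is forced to be close to the image of $\mathcal E$), and crossing it moves $\widetilde\eta(Z)$ between sectors. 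The crucial quantitative input is Claim~\ref{claim:Orientation}: for any hyperplane $J'$, the intersection of $\widetilde\eta(Z)$ with the "wrong" sectors of $J'$ has diameter $\le B$. Since $z$ and $x$ are endpoints related to points of $\widetilde\eta(Z)$ (namely $(Q,z) \in \widetilde\eta(Z)$, and $(K,x) \in \mathcal H = \bigcap J^+$ so $x$ is on the $J'^+$ side of every $J'$), any hyperplane $J'$ separating $z$ from $x$ with $x \in J'^+$ puts $z$ on a wrong side, so the relevant portion of $\widetilde\eta(Z)$ near $(Q,z)$ has diameter $\le B$; combined with the bound $d((S,y),(Q,z)) \le B+1$ already established in the main proof, one gets that the number of such "essential" hyperplanes is controlled by $B + \text{(a small additive constant)}$.

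I expect the main obstacle to be the bookkeeping that separates hyperplanes into the three types above and shows that only $O(B)$-many of the "fibre" type and only $\mathrm{clique}(X)$-many of the "transverse" type can occur in a pairwise non-transverse chain. The cleanest route is probably: let $\mathcal F = \{J_1, \ldots, J_N\}$ be a maximal pairwise non-transverse family separating $z$ and $x$, orient each $J_i$ so that $x \in J_i^+$. Then $z$ lies outside $J_i^+$ for each $i$; using that the sectors form a totally ordered chain (pairwise non-transverse, all containing $x$), choose the innermost one whose $(+)$-sector still meets $\widetilde\eta(Z)$ in an unbounded set — at most one of the $J_i$ can have $z$ strictly inside — and for all the others $(Q,z)$ sits in a sector of diameter $\le B$, forcing $d(z, \text{that hyperplane's carrier}) \le B$, hence $N \le B + c$ where $c$ absorbs the at most $\mathrm{clique}(X)$ hyperplanes that can be "transverse-type" plus $J$ itself plus the small slack $B+1+1$ coming from the path from $(S,y)$ to $(Q,z)$. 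Plugging $N \le B+3$ into Lemma~\ref{lem:MetricInf} gives $d(z,x) \le \mathrm{clique}(X)\cdot(B+3)$, which implies the stated bound $d(z,x) \le \mathrm{clique}(X)\cdot(B+3) + B+2$.
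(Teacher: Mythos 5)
Your route is essentially the paper's own argument, just run on a different pair of points: the paper takes a maximal pairwise non-transverse chain $J_1,\dots,J_k$ separating $y$ and $x$, uses connectedness of $\widetilde{\eta}(Z)$ and Claim~\ref{claim:Orientation} to find $(P,w)\in\widetilde{\eta}(Z)$ with $P\subset J_k$ at distance $\leq B+2$ from $(S,y)$, deduces $k\leq B+3$, applies Lemma~\ref{lem:MetricInf} to get $d(y,x)\leq \mathrm{clique}(X)(B+3)$, and finishes with $d(z,x)\leq d(z,y)+d(y,x)$. You propose to run the identical mechanism directly on $(z,x)$ using $(Q,z)\in\widetilde{\eta}(Z)$; this does work (and even yields the slightly sharper bound $\mathrm{clique}(X)(B+3)$), so the ingredients and the structure are the same as in the paper.

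However, as written the plan has one genuine unjustified step and some incorrect side remarks. The inference ``$(Q,z)$ sits in a wrong sector whose $\widetilde{\eta}(Z)$-portion has diameter $\leq B$, forcing $d(z,N(J_i))\leq B$, hence $N\leq B+c$'' does not follow: the diameter bound by itself says nothing about the distance from $z$ to the hyperplane. What is needed (and what the paper does) is: for the \emph{outermost} hyperplane $J_N$ of the nested chain, since $\widetilde{\eta}(Z)$ is connected and $\widetilde{\eta}(Z)\cap J_N^+$ is unbounded, a path in $\widetilde{\eta}(Z)$ from $(Q,z)$ to the $J_N^+$-side first hits a vertex $(P,w)$ with $P\subset J_N$, and its initial segment stays in the single wrong sector containing $Q$, so $d((Q,z),(P,w))\leq B+1$; then, because the chain is nested and $J_i$ is not transverse to $J_N$, each of $J_1,\dots,J_{N-1}$ separates $z$ from $N(J_N)$, so Theorem~\ref{thm:MainQM}(iv) gives $N-1\leq d(z,w)\leq B+1$, whence $N\leq B+3$. (Note also that $J$ itself, if it separates $z$ from $x$, can only be the innermost element of the chain since $z\in N(J)$, so it causes no trouble with the crossing argument.) Finally, two of your auxiliary claims should be dropped: a pairwise non-transverse family may contain arbitrarily many hyperplanes transverse to $J$ (transversality to $J$ does not make them transverse to one another), and every $J_i^+$ meets $\widetilde{\eta}(Z)$ in an unbounded set by the very definition of the orientation in Claim~\ref{claim:Orientation}, so there is nothing to choose there; the product structure of $N(J)$ plays no role in the argument. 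With these repairs your plan becomes a correct proof, essentially identical to the paper's.
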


\noindent
Let $J_1, \ldots, J_k$ be a maximal collection of pairwise non-transverse hyperplanes separating $y$ and $x$. Up to reindexing our collection, we assume that $J_i$ separates $J_{i-1}$ and $J_{i+1}$ for every $2 \leq i \leq k-1$ and that $J_1$ separates $y$ from $J_k$. Because $S$ is disjoint from $J_k^+$ and that $\widetilde{\eta}(Z)$ is connected, there exists a vertex $(P,w) \in \widetilde{\eta}(Z)$ such that $P \subset J_k$. According to Claim \ref{claim:Orientation}, $d((S,y),(P,w)) \leq B+2$. On the other hand, $d((S,y),(P,w)) \geq d(y,w)$. Because $y$ and $w$ are separated by $J_1, \ldots, J_{k-1}$, we deduce that
$$\begin{array}{lcl} d(y,x) & \leq & \mathrm{clique}(X) \cdot k \leq \mathrm{clique}(X) \cdot (d(y,w)+1) \\ \\ & \leq & \mathrm{clique}(X) \cdot (d((S,y),(P,w))+1) \leq \mathrm{clique}(X) \cdot (B+3), \end{array}$$
where the first inequality is justified by Lemma \ref{lem:MetricInf}. We also have
$$d(z,y) \leq d((Q,z),(C,y)) \leq B+2,$$
where the second inequality has been observed earlier. Therefore,
$$d(z,x) \leq d(z,y)+d(y,x) \leq \mathrm{clique}(X) \cdot (B+3) + B+2$$
as desired.
\end{proof}

\subsection{Proofs of the lemmas}\label{section:ProofsLemmas}

\noindent
This section is dedicated to the proofs of Lemmas \ref{lem:PSisQM}, \ref{lem:HypareBounded}, and \ref{lem:BiLipEquivMetrics}. Given a locally finite graph $X$ and an integer $n \geq 2$, the quasi-median structure claimed by Lemma~\ref{lem:PSisQM} of the universal cover $\widetilde{W}(n,X)$ of $W(n,X)$ can be easily shown by verifying on $W(n,X)$ the local condition given in \cite[Section~2.12]{QM}. However, it will be more convenient to identify $W(n,X)$ with the quotient of the quasi-median complex $\mathrm{QM}(X,\mathbb{Z}_n)$ by some specific subgroup of the graph product $X \mathbb{Z}_n$ in order to deduce easily Lemmas~\ref{lem:HypareBounded} and~\ref{lem:BiLipEquivMetrics}, thanks to the description of quasi-median complexes given in Section~\ref{section:QM}.

\begin{proof}[Proofs of Lemmas \ref{lem:PSisQM}, \ref{lem:HypareBounded}, and \ref{lem:BiLipEquivMetrics}.]
Consider the graph product $X\mathbb{Z}_n$ and its subgroup $K(X\mathbb{Z}_n)$ defined as the kernel of the morphism $\dag : X \mathbb{Z}_n \to \bigoplus_X \mathbb{Z}_n$ (which coincides with the epimorphism from $X\mathbb{Z}_n$ to its abelianisation). Observe that, for every complete subgraph $Y \subset X$, $\dag$ is injective on the subgroup $\langle Y \rangle \leq X \mathbb{Z}_n$. Consequently, we deduce from Lemma~\ref{lem:QMprisms} that $K(X \mathbb{Z}_n)$ intersects trivially the prism stabilisers of the action $X \mathbb{Z}_n \curvearrowright \mathrm{QM}(X, \mathbb{Z}_n)$. In other words, $K(X \mathbb{Z}_n)$ acts freely on $\mathrm{QM}(X, \mathbb{Z}_n)$, so the quotient map
$$\rho : \mathrm{QM}(X, \mathbb{Z}_n) \twoheadrightarrow \mathrm{qm}(X, \mathbb{Z}_n):= \mathrm{QM}(X, \mathbb{Z}_n)/ K(X \mathbb{Z}_n)$$
is a universal covering map. Let us observe that:

\begin{claim}
The prism complexes $\mathrm{qm}(X,\mathbb{Z}_n)$ and $W(n,X)$ are isomorphic.
\end{claim}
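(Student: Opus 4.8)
The plan is to exhibit an explicit isomorphism of prism complexes between $\mathrm{qm}(X,\mathbb{Z}_n)=\mathrm{QM}(X,\mathbb{Z}_n)/K(X\mathbb{Z}_n)$ and $W(n,X)$. On the level of vertices, note that the vertices of $\mathrm{qm}(X,\mathbb{Z}_n)$ are the $K(X\mathbb{Z}_n)$-cosets in $X\mathbb{Z}_n$, and since $K(X\mathbb{Z}_n)=\ker(\dag)$ with $\dag: X\mathbb{Z}_n\to\bigoplus_X\mathbb{Z}_n$ the abelianisation map, these cosets are precisely the fibres of $\dag$; hence $\dag$ induces a bijection from $V(\mathrm{qm}(X,\mathbb{Z}_n))$ to $\bigoplus_X\mathbb{Z}_n=\mathbb{Z}_n^{(X)}$, which is exactly $V(W(n,X))$. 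So I would first define $\Psi : \mathrm{qm}(X,\mathbb{Z}_n)\to W(n,X)$ on vertices as the map induced by $\dag$.

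Next I would check that $\Psi$ is a graph isomorphism on one-skeleta. Edges of $\mathrm{QM}(X,\mathbb{Z}_n)$ are labelled by vertices of $X$ (Lemma~\ref{lem:CliqueLabel} and the discussion preceding it): an edge joining $g$ and $gs$ with $s\in\mathbb{Z}_n\setminus\{1\}$ sitting in the $u$-th vertex group changes only the $u$-coordinate of $\dag(g)$. Passing to the quotient, an edge of $\mathrm{qm}(X,\mathbb{Z}_n)$ joins two cosets whose $\dag$-images differ at exactly one vertex $u\in V(X)$ — which is exactly the adjacency relation defining $W(n,X)$. For surjectivity of $\Psi$ on edges one uses that any two colourings of $\mathbb{Z}_n^{(X)}$ differing at a single vertex $u$ lift (after choosing a preimage of the first) to adjacent vertices of $\mathrm{QM}(X,\mathbb{Z}_n)$, because the vertex group $G_u$ surjects onto the $u$-coordinate. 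Then I would identify the higher cells: cliques of $\mathrm{QM}(X,\mathbb{Z}_n)$ are cosets of vertex groups (Lemma~\ref{lem:QMcliques}) and their images under the covering map are still cliques with the same label, so $\Psi$ carries the clique through $g$ labelled $u$ onto the simplex $S(\dag(g),u)$ of $W(n,X)$. Similarly prisms of $\mathrm{QM}(X,\mathbb{Z}_n)$ are cosets of $\langle\Lambda\rangle$ for $\Lambda\subset X$ complete (Lemma~\ref{lem:QMprisms}); since $\dag$ is injective on $\langle\Lambda\rangle$ (the key fact already recorded in the proof), such a prism maps isomorphically onto the prism of $W(n,X)$ spanned by the simplices $S(\dag(g),x)$, $x\in V(\Lambda)$ — which is precisely the prism-filling rule in the definition of $W(n,X)$. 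Thus $\Psi$ is a combinatorial bijection matching cells of each dimension, i.e.\ an isomorphism of prism complexes.

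The only point requiring genuine care — the main (mild) obstacle — is verifying that $K(X\mathbb{Z}_n)$ acts \emph{freely} on $\mathrm{QM}(X,\mathbb{Z}_n)$, equivalently that the quotient map is a covering of prism complexes rather than merely a graph quotient with possible identifications inside cells; but this is exactly the content of the sentence preceding the claim in the excerpt: $\dag$ is injective on each $\langle Y\rangle$ with $Y$ complete, hence $K(X\mathbb{Z}_n)$ meets every prism stabiliser trivially, so no cell is folded onto itself. Granting this, the combinatorial identification above is routine bookkeeping. I would therefore organise the write-up as: (1) $\dag$ induces the vertex bijection; (2) edges correspond under $\Psi$, using the label structure; (3) cliques correspond, using Lemma~\ref{lem:QMcliques} and that covering maps preserve cliques and labels; (4) prisms correspond, using Lemma~\ref{lem:QMprisms} and injectivity of $\dag$ on $\langle\Lambda\rangle$; conclude that $\Psi$ is an isomorphism of prism complexes.
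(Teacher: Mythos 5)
Your proposal is correct and follows essentially the same route as the paper: the map induced by $\dag$ gives the vertex bijection onto $\mathbb{Z}_n^{(X)}$, edges correspond because an edge of $\mathrm{QM}(X,\mathbb{Z}_n)$ changes exactly one coordinate of $\dag$, and prisms correspond because cosets of $\langle \Lambda\rangle$ for $\Lambda\subset X$ complete (on which $\dag$ is injective) match the prism-filling rule of $W(n,X)$. Your extra remark about freeness of the $K(X\mathbb{Z}_n)$-action is indeed the point settled just before the claim in the paper, not part of the claim's proof itself.
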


\noindent
Because $\dag$ is the quotient map $X\mathbb{Z}_n \twoheadrightarrow \bigoplus_X \mathbb{Z}_n$, quotienting by the kernel yields an isomorphism $X\mathbb{Z}_n / K(X\mathbb{Z}_n) \to \bigoplus_X \mathbb{Z}_n$. By identifying $\bigoplus_X \mathbb{Z}_n$ with $\mathbb{Z}_n^{(X)}$, we get a map $\ddag : \mathrm{qm}(X,\mathbb{Z}_n) \to W(n,X)$ that induces a bijection between the vertices. Because moving a vertex in $\mathrm{QM}(X,\mathbb{Z}_n)$ amounts to right-multiplying by a generator, moving a vertex in $\mathrm{qm}(X,\mathbb{Z}_n)$ amounts to modifying one coordinate in $\bigoplus_X \mathbb{Z}_n$. Therefore, moving a vertex in the image of $\ddag$ amounts to modifying a colouring at a single point. It follows that $\ddag$ induces an isomorphism between the one-skeleta of $\mathrm{qm}(X,\mathbb{Z}_n)$ and $W(n,X)$. Finally, spanning a prism in $\mathrm{QM}(X,\mathbb{Z}_n)$ amounts to right-multiplying by pairwise commuting generators, so spanning a prism in $\mathrm{qm}(X,\mathbb{Z}_n)$ amounts to modifying coordinates in $\bigoplus_X \mathbb{Z}_n$ indexed by pairwise adjacent vertices of $X$. Therefore, spanning a prism in the image of $\ddag$ amounts to modifying a colouring at pairwise adjacent vertices of $X$. We conclude that $\ddag$ induces an isomorphism between the prism complexes $\mathrm{qm}(X,\mathbb{Z}_n)$ and $W(n,X)$, as desired.

\medskip \noindent
As a consequence of the claim, we have a commutative diagram

\begin{center}
\hspace{0cm} \xymatrix{
\mathrm{QM}(X,\mathbb{Z}_n) \ar[r]^\sim \ar[d]_\rho & \widetilde{W}(n,X) \ar[d]^\pi \\ \mathrm{qm}(X,\mathbb{Z}_n) \ar[r]^\sim & W(n,X)
}
\end{center}

\noindent
Thus, it suffices to prove Lemmas \ref{lem:PSisQM}, \ref{lem:HypareBounded}, and \ref{lem:BiLipEquivMetrics} for $\mathrm{QM}(X,\mathbb{Z}_n) \twoheadrightarrow \mathrm{qm}(X,\mathbb{Z}_n)$. First, Lemma~\ref{lem:PSisQM} follows from Proposition~\ref{prop:QMisquasimedian}. Next, let $J$ be a hyperplane of $\mathrm{QM}(X,\mathbb{Z}_n)$. It follows from Lemma~\ref{lem:QMhyp} that the image under $\dag$ of the stabiliser of $J$ in $X\mathbb{Z}_n$ is finite, so the stabiliser of $J$ in $K(X \mathbb{Z}_n)$ has finite index in the stabiliser of $J$ in $X\mathbb{Z}_n$. Because the latter acts transitively on the vertices of $J$, Lemma~\ref{lem:HypareBounded} follows. 

\medskip \noindent
We now turn to the proof of Lemma \ref{lem:BiLipEquivMetrics}. The inequality $d_{\mathsf{PS}}((S_1,x_1),(S_2,x_2)) \geq d_{\mathrm{QM}}(x_1,x_2)$ is clear, so we focus on the other one. 
Let $(S_1,x_1),(S_2,x_2) \in \mathsf{PS}_1(\mathrm{QM}(X,\mathbb{Z}_n))$ be two pointed simplices such that $S_1$ and $S_2$ are contained in $J$. 
Fix a geodesic $y_1, \ldots, y_k \in \mathrm{QM}(X,\mathbb{Z}_n)$ from $x_1$ to $x_2$, and, for every $1 \leq i \leq k-1$, let $T_i$ denote the unique maximal simplex that contains the edge connecting $y_i$ and $y_{i+1}$. As a consequence of Theorem~\ref{thm:MainQM}, our geodesic lies in $N(J)$. So, for every $2 \leq i \leq k-1$, there exists a maximal simplex $U_i \subset J$ containing $y_i$. Observe that, as a consequence of Fact~\ref{fact:PrismGeneration}, $U_i$ spans two prisms with $T_{i-1}$ and $T_i$ for every $2 \leq i \leq k-1$, and, similarly, $S_1$ and $T_1$ (resp. $S_k$ and $T_{k-1}$) span a prism. 
It follows that
$$(S_1,x_1)=(S_1,y_1), (T_1,y_1), (T_1,y_2), (U_2,y_2), (T_2,y_2), \ldots, (T_{k-1},y_{k}), (S_2,y_k)=(S_2,x_2)$$
defines a path of length $\leq 3(k-1)$ in $\mathsf{PS}_1(\mathrm{QM}(X,\mathbb{Z}_n)$, hence the inequality 
$$d_{\mathsf{PS}}((S_1,x_1),(S_2,x_2)) \leq 3 \cdot d_{\mathrm{QM}}(x_1,x_2).$$
 This concludes the proofs of our lemmas.
\end{proof}

\section{Theorems of rigidity}

\subsection{Coarse simple connectivity of lamplighters}

\noindent
In this section, our goal is to distinguish geometrically lamplighters over one-ended and multi-ended groups. Our argument is based on the following characterisation of coarsely $1$-connected lamplighter graphs.

\begin{prop}\label{prop:LampGraph}
Let $X$ be a graph and $n \geq 2$ an integer. The lamplighter graph $\mathcal{L}_n(X)$ is coarsely simply connected if and only if $X$ is bounded.
\end{prop}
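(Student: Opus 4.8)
The plan is to prove both implications directly, working with the graph metric (equivalently the diligent metric) on $\mathcal{L}_n(X)$. The easy direction is that if $X$ is bounded then $\mathcal{L}_n(X)$ is bounded too, since a bounded graph of bounded degree has finitely many vertices and hence finitely many finitely supported colourings; a bounded space is trivially coarsely simply connected. Conversely, I would suppose $X$ is unbounded and produce, for every scale $R\ge 0$, a loop in $\mathcal{L}_n(X)$ that cannot be filled by cycles of length $\le R$ — more precisely, a loop that is homotopically non-trivial in the $2$-complex obtained from $\mathcal{L}_n(X)$ by filling in all cycles of length $\le R$.

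The key construction: fix a base vertex $x_0\in X$, and pick a vertex $p\in X$ with $d(x_0,p)$ very large compared to $R$ (possible since $X$ is unbounded). Let $\delta$ denote the colouring supported at $\{x_0\}$ with value $1$, and let $\delta'$ denote the colouring supported at $\{p\}$ with value $1$. Consider the loop $\gamma$ in $\mathcal{L}_n(X)$ that: starting from $(0,x_0)$, toggles the lamp at $x_0$ (reaching $(\delta,x_0)$), moves the arrow from $x_0$ to $p$ along a geodesic without touching lamps (reaching $(\delta,p)$), toggles the lamp at $p$ (reaching $(\delta+\delta',p)$), moves the arrow back from $p$ to $x_0$ (reaching $(\delta+\delta',x_0)$), toggles the lamp at $x_0$ again (reaching $(\delta',x_0)$), moves the arrow out to $p$ again (reaching $(\delta',p)$), toggles the lamp at $p$ back to $0$ (reaching $(0,p)$), and finally moves the arrow back to $x_0$ (reaching $(0,x_0)$). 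This is a genuine loop. The claim is that for $d(x_0,p)$ large it is not null-homotopic once we have filled only cycles of length $\le R$.

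To establish non-triviality I would exploit a coarse invariant detecting the loop. The cleanest approach: use the projection $\pi_X:\mathcal{L}_n(X)\twoheadrightarrow X$, which is $1$-Lipschitz, together with the observation that any cycle of length $\le R$ in $\mathcal{L}_n(X)$ projects to a cycle of length $\le R$ in $X$, hence has $\pi_X$-image of diameter $\le R/2$. Thus in the filled $2$-complex, the $\pi_X$-image of any disc attached along such a cycle stays within a bounded set. Meanwhile, track the ``winding'' of $\gamma$: consider the region $A\subset X$ of vertices within distance $d(x_0,p)/3$ of $x_0$, and its complement. Along $\gamma$, the lamp at $x_0$ gets toggled while the arrow is near $x_0$ and the lamp at $p$ gets toggled while the arrow is far from $x_0$; reading off whether the lamp-configuration restricted to $A$ is zero or not, paired with whether the arrow lies in $A$, gives a $\mathbb{Z}_2$-valued (or $\mathbb{Z}_n$-valued) state that changes a net odd number of times around $\gamma$ in a way that cannot be undone by any cycle of length $\le R$ once $d(x_0,p)>2R$ — this is exactly the mechanism already used in Proposition~\ref{prop:NonAptolic} and in the ``Warm up'' discussion. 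Formally, I would set up a homomorphism from the fundamental group of the filled complex to $\mathbb{Z}$ (or $\mathbb{Z}_2$) by counting signed crossings of the ``wall'' $\{$arrow in $A$, lamps off in $A\}\leftrightarrow\{$otherwise$\}$, check it kills all relators of length $\le R$ (because such a relator, being of small diameter in $X$, either stays entirely on one side of the wall or crosses it an even number of times), and check $\gamma$ maps to a nonzero element. The main obstacle is making this crossing-count a well-defined homomorphism: one must verify carefully that the relevant subset of $\mathcal{L}_n(X)$ genuinely separates the space into pieces so that small cycles cross it an even number of times, and that the boundary $\partial$ of a small-diameter disc is mapped to $0$. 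This is combinatorial but requires the precise definition of the ``wall'' and a short argument that a cycle of length $\le R<d(x_0,p)$ whose $X$-projection has small diameter cannot reach both $x_0$ and $p$, hence sees the wall only in a controlled way.

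An alternative, possibly slicker route would be to quote the earlier machinery: identify $\mathcal{L}_n(X)$ with $\mathsf{PS}_1(W(n,X))$ via Proposition~\ref{prop:WreathIsPS}, note that if $\mathcal{L}_n(X)$ were coarsely simply connected then the embedding theorem's lifting argument (Lemma~\ref{lem:CoarseLift} applied to the universal cover $\mathsf{PS}(\widetilde W(n,X))\to\mathsf{PS}(W(n,X))$) would force large combinatorial balls of $\mathcal{L}_n(X)$ to embed in a single leaf — contradicting the fact that the diameter of a ball in $\mathcal{L}_n(X)$ grows unboundedly whenever $X$ is unbounded while leaves are isometrically $X$-like only for the lamp-preserving part. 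However, I expect the direct loop construction above to be shorter and self-contained, so I would write that as the main proof and relegate the quasi-median perspective to a remark. The crux either way is the same: producing, for each $R$, an explicit loop whose class survives filling in all $\le R$-cycles, which ultimately rests on the ``lamps far apart cannot be toggled in the wrong order'' phenomenon.
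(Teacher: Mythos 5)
There are two genuine gaps. The first is in your ``easy'' direction: you argue that if $X$ is bounded then $\mathcal{L}_n(X)$ is bounded, ``since a bounded graph of bounded degree has finitely many vertices''. But the proposition has no bounded-degree hypothesis, and in the paper it is applied (in the proof of Corollary~\ref{cor:NumberOfEnds}) precisely to a cone-off graph that is bounded but has infinitely many vertices; for such $X$ the graph $\mathcal{L}_n(X)$ is unbounded, since colourings with large support are far from the zero colouring. So the bounded case is not trivial, and your argument only covers finite $X$. The paper handles it by observing that when $X$ is bounded the projection $(c,x)\mapsto c$ is a quasi-isometry onto the graph $P(X,n)$ of finitely supported colourings (edges: change one lamp), which is a connected component of the $1$-skeleton of an infinite-dimensional prism and hence $4$-coarsely simply connected.

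The second gap is in the unbounded direction, where the heart of the matter --- a homomorphism from $\pi_1$ of the $R$-filled complex that kills every cycle of length $\leq R$ and is nonzero on your loop $\gamma$ --- is exactly the step you leave open, and the invariant you sketch cannot do the job. Any count of changes of a vertex-valued state, such as the pair (lamps in $A$ nonzero, arrow in $A$), is automatically even around every closed walk, since the state returns to its initial value; and the natural refinements also fail: along $\gamma$ the arrow crosses $\partial A$ with lamps-in-$A$ on exactly twice, and the signed crossing count weighted by the value of the lamp at $x_0$ (or at $p$) is $0$, so none of these detects $\gamma$. What actually works is an interleaving-sensitive invariant obtained by forgetting all lamps except those at the two far-apart points: this is the paper's $1$-Lipschitz retraction of $\mathcal{L}_n(X)$ onto the subgraph $Y_k$ spanned by the four leaves $X$, $X(\alpha_k)$, $X(\beta_k)$, $X(\alpha_k+\beta_k)$, which are joined by exactly four edges in a cyclic pattern; every simple loop of length $<4(d(a_k,b_k)+1)$ in $Y_k$ lies in a single leaf, while $\gamma$ winds once around this cycle of leaves, and since the retraction is $1$-Lipschitz any filling of $\gamma$ by cycles of length $\leq R$ in $\mathcal{L}_n(X)$ would push to such a filling in $Y_k$, a contradiction once $d(a_k,b_k)$ is large. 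Your scheme would need to rediscover this winding-number mechanism (or an equivalent of the retraction) to be complete; as written, the key verification is missing and the proposed wall does not separate in the required way.
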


\begin{proof}
First, assume that $X$ is unbounded. So, for every $k \geq 1$, there exist two vertices $a_k,b_k \in X$ satisfying $d(a_k,b_k) \geq k$. Denote by $\alpha_k$ (resp. $\beta_k$) the colouring that is $1$ at $a_k$ (resp. $b_k$) and $0$ elsewhere. Also, denote by $Y_k \subset \mathcal{L}_n(X)$ the subgraph generated by $X \cup X(\alpha_k) \cup X(\beta_k) \cup X(\alpha_k + \beta_k)$. Notice that the map
$$\left\{ \begin{array}{ccc} \mathcal{L}_n(X) & \to & Y_k \\ (c,x) & \mapsto & \left( c_k : p \mapsto \left\{ \begin{array}{cl} c(a_k) & \text{if $p=a_k$} \\ c(b_k) & \text{if $p=b_k$} \\ 0 & \text{otherwise} \end{array} \right., \ x \right) \end{array} \right.$$
is $1$-Lipschitz. Therefore, if $\mathcal{L}_n(X)$ is coarsely simply connected, then the subgraphs $Y_k$ must be uniformly coarsely simply connected. However, $Y_k$ is the disjoint union of $X$, $X(\alpha_k)$, $X(\beta_k)$ and $X(\alpha_k+\beta_k)$, connected together by four edges: one between $(0,a_k)$ and $(\alpha_k,a_k)$, one between $(0,b_k)$ and $(\beta_k,b_k)$, one between $(\alpha_k,b_k)$ and $(\alpha_k+\beta_k,b_k)$, and one between $(\beta_k,a_k)$ and $(\alpha_k+\beta_k,a_k)$. Clearly, every simple loop of length $<4(d(a_k,b_k)+1)$ in $Y_k$ must lie inside $X$, $X(\alpha_k)$, $X(\beta_k)$ or $X(\alpha_k+\beta_k)$, so $Y_k$ is not $(4d(a_k,b_k)+3)$-coarsely simply connected, and a fortiori not $(4k+3)$-coarsely simply connected. We conclude that $\mathcal{L}_n(X)$ is not coarsely simply connected, as desired.

\medskip \noindent
Now, assume that $X$ is bounded. Let $P(X,n)$ denote the graph whose vertices are the finitely supported colourings $X \to \mathbb{Z}_n$ and whose edges connect two colourings if they differ at a single vertex. Notice that the distance between two colourings in $P(X,n)$ coincides with the number of vertices where they differ. The map
$$\left\{ \begin{array}{ccc} \mathcal{L}_n(X) & \to & P(X,n) \\ (c,x) & \mapsto & c \end{array} \right.$$
is a quasi-isometry since
$$d(c_1,c_2) \leq d((c_1,x_1),(c_2,x_2)) \leq \mathrm{diam}(X) \left( 2 d(c_1,c_2)+1 \right)$$
for all $(c_1,x_1),(c_2,x_2) \in \mathcal{L}_n(X)$. Therefore, it suffices to show that $P(X,n)$ is coarsely simply connected in order to deduce that $\mathcal{L}_n(X)$ is coarsely simply connected. But $P(X,n)$ is just (a connected component of) a product of infinitely many complete graphs (in other words, it is the $1$-skeleton of an infinite-dimensional prism), so it is $4$-coarsely simply connected.
\end{proof}

\begin{cor}\label{cor:NumberOfEnds}
Let $F_1,F_2$ be two finite groups and $H_1,H_2$ two finitely presented groups. If $H_1$ is one-ended and $H_2$ multi-ended, then $F_1 \wr H_1$ and $F_2 \wr H_2$ are not quasi-isometric.
\end{cor}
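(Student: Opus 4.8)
The plan is to reduce the statement to the geometric fact that being coarsely simply connected is a quasi-isometry invariant, together with Proposition \ref{prop:LampGraph}, which characterises coarse simple connectivity of lamplighter graphs in terms of boundedness of the base. First I would recall that since $H_1$ and $H_2$ are finitely presented, each $\mathrm{Cayl}(H_i,S_i)$ (for a finite generating set $S_i$) is coarsely $1$-connected; indeed, filling in the relators of a finite presentation with discs produces a simply connected $2$-complex, so the corresponding Cayley graph is $R$-coarsely simply connected for $R$ the maximal length of a relator. In particular both $\mathrm{Cayl}(H_i, S_i)$ are unbounded graphs (as $H_1$ is one-ended, hence infinite, and $H_2$ is multi-ended, hence also infinite).

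Next I would observe that, by the identification $\mathrm{Cayl}(F_i \wr H_i, F_i \cup S_i) = \mathcal{L}_{|F_i|}(\mathrm{Cayl}(H_i,S_i))$ recorded in Section \ref{section:prel}, both wreath products have Cayley graphs that are lamplighter graphs over \emph{unbounded} base graphs. Applying Proposition \ref{prop:LampGraph} in the contrapositive direction, neither $\mathcal{L}_{|F_1|}(\mathrm{Cayl}(H_1,S_1))$ nor $\mathcal{L}_{|F_2|}(\mathrm{Cayl}(H_2,S_2))$ is coarsely simply connected. This alone does not distinguish them, so the one-endedness/multi-endedness hypothesis must be used differently: the point is that $F_1 \wr H_1$ and $F_2 \wr H_2$ cannot be quasi-isometric because they have a \emph{different number of ends}. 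By Stallings' theorem, $H_1$ one-ended forces $F_1 \wr H_1$ to be one-ended (the wreath product $F_1 \wr H_1$ surjects onto $H_1$ with infinite kernel, hence has one end whenever $H_1$ does; more directly, $\bigoplus_{H_1} F_1$ is an infinite normal subgroup so $F_1 \wr H_1$ is not virtually $\mathbb{Z}$ or free product, and it has the same number of ends as $H_1$). Dually, $F_2 \wr H_2$ has the same number of ends as $H_2$, which is either $2$ or $\infty$. Since the number of ends is a quasi-isometry invariant and $1 \neq 2, \infty$, the two groups are not quasi-isometric.

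The main obstacle I expect is the bookkeeping around the number of ends of a wreath product: one must justify cleanly that $F \wr H$ has the same number of ends as $H$ when $F$ is finite and $H$ is infinite. The cleanest route is to use the quotient $F \wr H \twoheadrightarrow H$, which has infinite kernel $\bigoplus_H F$, together with the standard fact that an extension $1 \to N \to G \to Q \to 1$ with $N$ infinite has $\mathrm{ends}(G) = \mathrm{ends}(Q)$ when $Q$ is infinite (and $G$ has one end when $Q$ is finite and $N$ is infinite). Alternatively, one may invoke Proposition \ref{prop:LampGraph} more cleverly in concert with the companion Theorem \ref{thm:QIfactors} (if $F_1 \wr H_1$ and $F_2\wr H_2$ are quasi-isometric then so are $H_1$ and $H_2$): a quasi-isometry between $H_1$ and $H_2$ is impossible since one-endedness is a quasi-isometry invariant, giving the result immediately. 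I would present the argument via Theorem \ref{thm:QIfactors} as the short proof, noting it is the cleanest: if $F_1 \wr H_1$ and $F_2 \wr H_2$ were quasi-isometric, then $H_1$ and $H_2$ would be quasi-isometric, contradicting the fact that $H_1$ is one-ended while $H_2$ is not.
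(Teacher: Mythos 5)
Both routes you propose break down. The first rests on a false ``standard fact'': it is not true that an extension $1 \to N \to G \to Q \to 1$ with $N$ and $Q$ infinite satisfies $\mathrm{ends}(G)=\mathrm{ends}(Q)$ --- the correct statement is that such a finitely generated $G$ has exactly \emph{one} end (compare $\mathbb{Z}\times\mathbb{Z}$, which is one-ended although the quotient $\mathbb{Z}$ is two-ended, or $\mathbb{Z}\times F_2$, which is one-ended although $F_2$ is infinitely-ended). In particular $F\wr H$ is one-ended for every non-trivial finite $F$ and every infinite $H$: the lamplighter $\mathbb{Z}_2\wr\mathbb{Z}$ is one-ended, and so is $\mathbb{Z}_2\wr F_2$. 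Hence $F_1\wr H_1$ and $F_2\wr H_2$ always have the same number of ends, and counting ends cannot distinguish them; the whole point of the corollary is that the multi-endedness of the base must be detected by a finer invariant. Your fallback, invoking Theorem~\ref{thm:QIfactors}, is circular: in the paper that theorem is proved \emph{from} Corollary~\ref{cor:NumberOfEnds} (its proof begins, in both cases, by applying this corollary to conclude that $H_2$ has the same type of end behaviour as $H_1$), so it cannot be assumed here.

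Your instinct that Proposition~\ref{prop:LampGraph} should be the distinguishing tool is right, but it has to be applied to auxiliary spaces, not to the Cayley graphs themselves (as you noticed, both lamplighter graphs over unbounded bases fail coarse simple connectivity, so nothing is gained directly). The paper's argument runs as follows: given a quasi-isometry $q : F_1\wr H_1 \to F_2\wr H_2$, the embedding theorem (Theorem~\ref{thm:MainThm}) forces $q(H_1)$ into a neighbourhood of an $H_2$-coset, and in fact of a coset of a one-ended factor $M$ of the Stallings--Dunwoody decomposition of $H_2$; applying the embedding theorem to a quasi-inverse and using that distinct leaves do not fellow-travel, one gets that $q$ sends $H_1$ at finite Hausdorff distance from $M$. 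Coning off the $H_1$-cosets in $F_1\wr H_1$ and the corresponding $M$-cosets in $F_2\wr H_2$, $q$ induces a quasi-isometry between a lamplighter graph over a \emph{bounded} graph and a lamplighter graph over an \emph{unbounded} graph (unbounded precisely because $H_2$ is multi-ended, so $M$ has infinite index and infinitely many cosets survive the coning); this contradicts Proposition~\ref{prop:LampGraph}, since one space is coarsely simply connected and the other is not. So the missing ingredients in your proposal are the use of Theorem~\ref{thm:MainThm} together with the Stallings--Dunwoody decomposition and the coning-off construction.
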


\begin{proof}
Assume towards a contradiction that there exists a quasi-isometry $q : F_1 \wr H_1 \to F_2 \wr H_2$. According to Theorem \ref{thm:MainThm}, $q(H_1)$ lies in a neighbourhood of some $H_2$-coset, say $H_2$ itself. In fact, $q(H_1)$ must lie in a neighbourhood of some coset of a one-ended factor $M \leq H_2$ coming from the Stallings-Dunwoody decomposition of $H_2$, say $M$ itself. Notice that, because there exists a Lipschitz quasi-retraction $H_2 \to M$, $M$ must be finitely presented. By applying Theorem \ref{thm:MainThm} once again, it follows that the image of $M$ under a quasi-inverse $\bar{q}$ of $q$ lies in a neighbourhood of some coset $hH_1$. Notice that $\bar{q}(q(H_1))$ lies in a neighbourhood of both $H_1$ and $hH_1$. But the intersection in $F_1 \wr H_1$ of two neighbourhoods of distinct $H_1$-cosets has finite diameter, so we must have $hH_1=H_1$. We also deduce that $q$ sends $H_1$ at finite Hausdorff distance from $M$. 

\medskip \noindent
As a consequence, $q$ induces a quasi-isometry from the space $X$ obtained from $F_1 \wr H_1$ by conning-off the cosets of $H_1$ and the space $Y$ obtained from $F_2 \wr H_2$ by conning-off the cosets of $M$ which are at finite Hausdorff distance from the images under $q$ of the cosets of $H_1$.  Observe that $X$ coincides with the lamplighter graph over the conning-off of $H_1$ over $H_1$ (which is bounded), and $Y$ with the lamplighter graph over the conning-off of $H_2$ over cosets of $M$ (which is unbounded since $H_2$ is multi-ended). Thus, we get a contradiction with Proposition \ref{prop:LampGraph}, proving that $F_1 \wr H_1$ and $F_2 \wr H_2$ cannot be quasi-isometric, as desired.
\end{proof}

\subsection{Proofs of the theorems}\label{section:Proofs}

\noindent
We are finally ready to prove the main result of this article, namely Theorem \ref{thm:LampRigid} from the introduction, as well as all its corollaries, by combining the various statements proved in Sections~\ref{bigsection:Aptolic},~\ref{section:CosetAptolic} and~\ref{bigsection:Embedding}. We begin by proving a quantitative version of Theorem \ref{thm:StructureQI}.

\begin{thm}\label{thm:CloseToAptolic}
Let $n,m \geq 2$ be two integers and $X,Y$ two coarsely $1$-connected uniformly one-ended graphs of bounded degree. For all $A,B \geq 0$, there exists a constant $Q \geq 0$ such that every $(A,B)$-quasi-isometry $\mathcal{L}_n(X) \to \mathcal{L}_m(Y)$ lies at distance $\leq Q$ from an aptolic quasi-isometry.
\end{thm}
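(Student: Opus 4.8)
The plan is to combine the qualitative rigidity results of the previous sections with a standard compactness/uniformity argument, tracking constants throughout. First I would invoke Theorem~\ref{thm:FullEmbeddingThm}: given an $(A,B)$-quasi-isometry $q : \mathcal{L}_n(X) \to \mathcal{L}_m(Y)$, the restriction of $q$ to each leaf $X(c) \cong X$ is a coarse embedding of the uniformly one-ended, coarsely $1$-connected graph $X$ into $\mathcal{L}_m(Y)$ whose parameters depend only on $A,B$; hence its image lies in the $K$-neighbourhood of a leaf of $\mathcal{L}_m(Y)$, with $K$ depending only on $X$, $Y$, $A$ and $B$ (this is the ``moreover'' part of that theorem). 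Running the same argument for a quasi-inverse $\bar q$ of $q$ — which can be chosen to be an $(A', B')$-quasi-isometry with $A', B'$ controlled by $A,B$ — shows that $\bar q$ is also leaf-preserving with a uniform constant. Therefore $q$ is leaf-preserving in the sense of the definition in Section~\ref{section:CosetAptolic}, with a constant $C$ depending only on $n,m,X,Y,A,B$.

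Next I would apply Theorem~\ref{thm:CosetAptolic}: since $q$ is leaf-preserving, it lies at finite distance from an aptolic quasi-isometry, and — crucially — the ``moreover'' clause of that theorem states that this distance is bounded above by a constant depending only on $n,m$, the graphs $X,Y$, the parameters of $q$, and the constant $C$. Chaining the two dependencies, the distance from $q$ to an aptolic quasi-isometry is bounded by a constant $Q = Q(n,m,X,Y,A,B)$. This is exactly the asserted quantitative statement. The logical skeleton is thus short: \emph{embedding theorem $\Rightarrow$ leaf-preserving with uniform constant $\Rightarrow$ close to aptolic with uniform constant}.

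The step I expect to require the most care is verifying the uniformity at the passage from ``$q$ restricted to each leaf is leaf-confined'' to ``$q$ (and $\bar q$) is leaf-preserving with a single constant $C$''. One must check that the leaf of $\mathcal{L}_m(Y)$ near $q(X(c))$ is genuinely unique and that the assignment $c \mapsto \alpha(c)$ it determines is a well-defined bijection with $q(X(c))$ at uniformly bounded Hausdorff distance from $Y(\alpha(c))$ — this uses Fact~\ref{fact:DistBall}-type estimates together with the fact (Fact~\ref{fact:...}, the one stating leaves do not fellow-travel) that distinct leaves have unbounded, indeed eventually unbounded-with-uniform-rate, pairwise distance, so that for $K$ small relative to the relevant thresholds the containing leaf is forced. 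Producing a quasi-inverse $\bar q$ whose parameters are explicitly controlled by $(A,B)$ is routine but must be done so that the constant fed into the second application of the embedding theorem is itself a function of $A,B$ only. Once these bookkeeping points are settled, the proof is essentially a concatenation of the two cited ``moreover'' clauses.

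\begin{proof}
Let $q : \mathcal{L}_n(X) \to \mathcal{L}_m(Y)$ be an $(A,B)$-quasi-isometry, and fix a quasi-inverse $\bar q : \mathcal{L}_m(Y) \to \mathcal{L}_n(X)$; a standard construction (choosing for each point a preimage of a nearby point of the image) yields $\bar q$ that is an $(A', B')$-quasi-isometry with $A', B'$ depending only on $A,B$, and with $q \circ \bar q$, $\bar q \circ q$ within a constant of the identities, again depending only on $A,B$. For each colouring $c \in \mathbb{Z}_n^{(X)}$, the map $q$ restricted to the leaf $X(c)$ is a coarse embedding of $X$ into $\mathcal{L}_m(Y)$ with parameters depending only on $A,B$; since $X$ is coarsely $1$-connected and uniformly one-ended, Theorem~\ref{thm:FullEmbeddingThm} gives a constant $K_1 = K_1(X, Y, A, B)$ and a leaf of $\mathcal{L}_m(Y)$ whose $K_1$-neighbourhood contains $q(X(c))$. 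Because distinct leaves of $\mathcal{L}_m(Y)$ do not fellow-travel, for a fixed $c$ there is a unique such leaf provided $q(X(c))$ has large enough diameter, which holds after enlarging $K_1$ if necessary; write it $Y(\alpha(c))$. Applying the same argument to $\bar q$ (which is leaf-confined with a constant $K_2 = K_2(X,Y,A,B)$ by Theorem~\ref{thm:FullEmbeddingThm}) and using that $q, \bar q$ are quasi-inverse to each other together with the non-fellow-travelling of leaves, one checks that $\alpha : \mathbb{Z}_n^{(X)} \to \mathbb{Z}_m^{(Y)}$ is a bijection and that $\bar q$ sends each leaf of $\mathcal{L}_m(Y)$ to within bounded Hausdorff distance of a leaf of $\mathcal{L}_n(X)$. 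Hence $q$ is leaf-preserving with a constant $C = C(n,m,X,Y,A,B)$.

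By Theorem~\ref{thm:CosetAptolic}, since $q$ is leaf-preserving, $q$ lies at finite distance from an aptolic quasi-isometry; moreover, by the last assertion of that theorem, this distance is bounded above by a constant depending only on $n$, $m$, $X$, $Y$, the parameters of $q$ (i.e. $A$ and $B$), and the constant $C$. Substituting the bound $C = C(n,m,X,Y,A,B)$ obtained above, we conclude that $q$ lies at distance at most $Q$ from an aptolic quasi-isometry, for some $Q = Q(n,m,X,Y,A,B)$ independent of $q$. This proves the theorem.
\end{proof}
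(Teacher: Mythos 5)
Your proposal is correct and follows essentially the same route as the paper's own proof: apply the embedding theorem (Theorem~\ref{thm:FullEmbeddingThm}) with uniform constants to $q$ and a controlled quasi-inverse $\bar q$ restricted to leaves, use the bounded intersection of neighbourhoods of distinct leaves to upgrade this to leaf-preservation with a uniform constant and a bijective $\alpha$, and then invoke the quantitative clause of Theorem~\ref{thm:CosetAptolic}. The only difference is one of detail: where you write ``one checks,'' the paper spells out that $\bar q(q(L_1))$ lies near both $L_1$ and the leaf $L_3$ associated to $\bar q(L_2)$, forcing $L_3=L_1$ and hence the two-sided Hausdorff bound and the identities $\alpha\circ\bar\alpha=\bar\alpha\circ\alpha=\mathrm{id}$.
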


\begin{proof}
Fix an $(A,B)$-quasi-isometry $q : \mathcal{L}_n(X) \to \mathcal{L}_m(Y)$ and one of its quasi-inverses $\bar{q} : \mathcal{L}_m(Y) \to \mathcal{L}_n(X)$ (whose parameters depend only of $A,B$). As a consequence of Theorem \ref{thm:FullEmbeddingThm}, for every leaf $L_1$ in $\mathcal{L}_n(X)$, the image $q(L_1)$ lies in $K$-neighbourhood of some leaf $L_2$ in $\mathcal{L}_m(Y)$ for some constant $K \geq 0$, which only depends on $X$, $Y$, $A$ and $B$. Similarly, $\bar{q}(L_2)$ lies in the $K$-neighbourhood of some leaf $L_3$ in $\mathcal{L}_n(X)$ (up to increasing the constant $K$). Consequently, $\bar{q}(q(L_1))$ lies in a neighbourhood of both $L_1$ and $L_3$. But the intersection in $\mathcal{L}_n(X)$ of two neighbourhoods of distinct leaves has finite diameter, so we must have $L_3=L_1$. We conclude that $q$ sends $L_1$ at Hausdorff distance at most $C$ from $L_2$, for some constant $C\geq 0$, which only depends on $X$, $Y$, $A$ and $B$. Thus, we have proved that $q$ sends every leaf of $\mathcal{L}_n(X)$ at Hausdorff distance $\leq C$ from a leaf of $\mathcal{L}_m(Y)$. In other words, there exists a map $\alpha : \mathbb{Z}_n^{(X)} \to \mathbb{Z}_m^{(Y)}$ such that the Hausdorff distance between $q(X(c))$ and $Y(\alpha(c))$ is $\leq C$ for every $c \in \mathbb{Z}_n^{(X)}$. 

\medskip \noindent
Similarly, there must exist a map $\bar{\alpha} : \mathbb{Z}_m^{(Y)} \to \mathbb{Z}_n^{(X)}$ such that the Hausdorff distance between $\bar{q}(Y(c))$ and $X(\bar{\alpha}(c))$ is $\leq C$ for every $c \in \mathbb{Z}_m^{(Y)}$. For every $c \in \mathbb{Z}_n^{(X)}$, the Hausdorff distance between $\bar{\alpha} \circ \alpha(X(c))$ and $Y(c)$ must be finite; and, for every $c \in \mathbb{Z}_m^{(Y)}$, the Hausdorff distance between $\alpha \circ \bar{\alpha} (Y(c))$ and $X(c)$ must be finite as well. Hence $\alpha \circ \bar{\alpha}= \mathrm{id}$ and $\bar{\alpha} \circ \alpha = \mathrm{id}$. In other words, $\alpha$ is a bijection.

\medskip \noindent
We conclude from Theorem \ref{thm:CosetAptolic} that $q$ lies at finite distance from an aptolic quasi-isometry, as desired, where the constant depends only on $A,B$. 
\end{proof}

\begin{proof}[Proof of Theorem \ref{thm:LampRigid}.]
Assume that there exists a quasi-isometry $q : \mathcal{L}_n(X) \to \mathcal{L}_m(Y)$. As a consequence of Theorem \ref{thm:StructureQI}, we can suppose without loss of generality that $q$ is aptolic. If $X$ is amenable, then the desired conclusion follows from Theorem \ref{thm:Amenable} and Proposition \ref{prop:AmConverse}.

\medskip \noindent
Next, assume that $X$ is non-amenable. According to Proposition \ref{prop:general}, $n$ and $m$ must have the same prime divisors; and according to Proposition \ref{prop:AptoQI}, $X$ and $Y$ must be quasi-isometric. Conversely, if $X$ and $Y$ are quasi-isometric, then they are biLipschitz equivalent according to Theorem \ref{thm:QIvsBil}, so $\mathcal{L}_n(X)$ and $\mathcal{L}_m(Y)$ are quasi-isometric; and, if we know that $n,m$ have the same prime divisors, then $\mathcal{L}_n(Y)$ and $\mathcal{L}_m(Y)$ are quasi-isometric according to Proposition \ref{prop:QInonamenable}. The desired conclusion follows.
\end{proof}

\begin{proof}[Proof of Theorem \ref{thm:QIfactors}.]
If $F_1$ is trivial, then $F_2 \wr H_2$ must be finitely presented (because quasi-isometric to the finitely presented group $H_1$) which implies that either $F_2$ is trivial or $H_2$ is finite. In both cases, $H_1$ and $H_2$ are quasi-isometric. The same conclusion holds if $F_2$ is trivial, so from now on we assume that $F_1$ and $F_2$ are both non-trivial. The conclusion is also clear if $H_1$ or $H_2$ is finite, so we assume that they are both infinite. We distinguish two cases.

\medskip \noindent
If $H_1$ is one-ended, it follows from Corollary \ref{cor:NumberOfEnds} that $H_2$ is one-ended as well. So Theorem~\ref{thm:StructureQI} applies and shows that there exists an aptolic quasi-isometry $F_1 \wr H_1 \to F_2 \wr H_2$. We conclude from Proposition \ref{prop:AptoQI}(ii) that $H_1$ and $H_2$ are quasi-isometric. 

\medskip \noindent
Next, assume that $H_1$ is multi-ended. According to Corollary \ref{cor:NumberOfEnds}, $H_2$ is multi-ended as well. If $H_1$ is two-ended (or equivalently if $H_1$ is virtually infinite cyclic), then $F_1 \wr H_1$ is amenable and so must be $F_2 \wr H_2$. Since infinitely-ended groups contain non-abelian free subgroups, necessarily $H_2$ must be two-ended (or equivalently virtually infinite cyclic). A fortiori, $H_1$ and $H_2$ are quasi-isometric. The same conclusion holds if $H_2$ is two-ended, so from now on we assume that $H_1$, $H_2$ are both infinitely-ended. According to \cite{MR1898396}, it suffices to show that $H_1$, $H_2$ have the same one-ended factors (up to quasi-isometry) in their Stallings-Dunwoody decompositions in order to deduce that they are quasi-isometric. Let $M \leq H_1$ be such a factor. Notice that, since there exists a Lipschitz quasi-retraction $H_1 \to M$, the subgroup $M$ must be finitely presented. Therefore, Theorem \ref{thm:MainThm} applies and shows that $q(M)$ lies in a neighbourhood of a coset of $H_2$, say $H_2$. In fact, $q(M)$ must lie in a neighbourhood of a coset of a one-ended factor $M'$ of $H_2$, say $M'$ itself. Similarly, the image of $M'$ under a quasi-inverse $\bar{q}$ lies in a neighbourhood of a coset $hM''$ of some one-ended factor of $H_1$. But the intersection in $F_1 \wr H_1$ of two neighbourhoods of distinct $H_1$-cosets has finite diameter, and the intersection in $H_1$ of two neighbourhoods of distinct cosets of one-ended factors has finite diameter as well, so necessarily $hM''=M$. In other words, $q$ sends $M$ at finite Hausdorff distance from $M'$. Thus, we have proved that every one-ended factor of $H_1$ is quasi-isometric to a one-ended factor of $H_2$. The converse follows by symmetry, proving that $H_1$ and $H_2$ are quasi-isometric, as desired. 
\end{proof}

\noindent
Now, we focus on the corollaries mentioned in the introduction.

\begin{proof}[Proof of Corollary \ref{cor:LampRigid}.]
Assume that there exists a quasi-isometry $q : F_1 \wr H_1 \to F_2 \wr H_2$. Notice that, as a consequence of Theorem \ref{thm:QIfactors}, $H_2$ must be one-ended. Therefore, Theorem \ref{thm:LampRigid} applies and leads to the desired conclusion.
\end{proof}

\begin{proof}[Proof of Corollary \ref{cor:Abelian}.]
According to Corollary \ref{cor:QIkappa}, it suffices to show that, for every $n \geq 1$, the inclusion $\mathbb{Q}_{>0} \subset \kappa(\mathbb{Z}^n)$ holds. Notice that, for every $k \geq 1$, the embedding $k \mathbb{Z} \oplus \mathbb{Z}^{n-1} \hookrightarrow \mathbb{Z}^n$ induces a quasi-isometry $q_k : \mathbb{Z}^n \to \mathbb{Z}^n$ that is quasi-$(1/k)$-to-one (apply for instance Proposition \ref{prop:kappa}(iii)); let $\bar{q}_k$ denote a quasi-inverse of $q_k$. It follows from Proposition \ref{prop:EasyKappa} that, for every $a,b \geq 1$, the quasi-isometry $\bar{q}_a \circ q_b$ is quasi-$(a/b)$-to-one, concluding the proof.
\end{proof}

\begin{proof}[Proof of Corollary \ref{cor:NotBiL}.]
If $|F_1|=|F_1|$, then $F_1 \wr H$ and $F_2 \wr H$ admits isomorphic Cayley graphs, namely $\mathrm{Cayl}(F_1 \wr H, F_1 \cup S)$ and $\mathrm{Cayl}(F_2 \wr H, F_2 \cup S)$ where $S$ is an arbitrary finite generating set of $H$. A fortiori, these wreath products must be biLipschitz equivalent. Conversely, assume that there exists a biLipschitz equivalence $q : F_1 \wr H \to F_2 \wr H$. According to Theorem \ref{thm:StructureQI}, $q$ is at finite distance from an aptolic quasi-isometry $\tilde{q} : F_1 \wr H \to F_2 \wr H$; and, according to Theorem \ref{thm:Amenable}, there exist $k,n_1,n_2 \geq 1$ such that $|F_1|=k^{n_1}$, $|F_2|=k^{n_2}$, and such that $\tilde{q}$ is quasi-$(n_2/n_1)$-to-one. We conclude from Lemma \ref{lem:KappaWellDefined} and Proposition \ref{prop:EasyKappa} that $n_1=n_2$, hence $|F_1|=|F_2|$ as desired.
\end{proof}

\begin{proof}[Proof of Corollary \ref{cor:kappatrivial}.]
Let $q : F \wr H \to F\wr H$ be a quasi-isometry. According to Theorem \ref{thm:StructureQI}, $q$ lies at finite distance from an aptolic quasi-isometry $\tilde{q}  : F \wr H \to F \wr H$; and it follows from Theorem \ref{thm:Amenable} that $\tilde{q}$ is quasi-one-to-one. We conclude from Proposition \ref{prop:QIdistBij} that $\tilde{q}$, and a fortiori $q$, lies at finite distance from a bijection. 
\end{proof}

\noindent
We now turn our attention to Corollary \ref{cor:Commensurability}. Actually, we are going to prove a more general statement, but we need to introduce some vocabulary first. Given two finitely generated groups $A,B$, a \emph{biLipschitz commensurability} is the data of two finite-index subgroups $\dot{A} \leq A$, $\dot{B} \leq B$ and a biLipschitz equivalence $\varphi : \dot{A} \to \dot{B}$. When $\varphi$ is an isomorphism, we recover the usual notion of commensurability. The \emph{index} of a biLipschitz commensurability is the quotient $[A:\dot{A}]/[B:\dot{B}]$. 

\begin{prop}\label{prop:commensurable}
Let $F$ be a non-trivial finite group and $H$ a finitely presented one-ended amenable group. Fix two groups $G_1,G_2$ in the biLipschitz commensurability class of $F \wr H$. If $G_1$ and $G_2$ are biLipschitz equivalent, then every biLipschitz commensurability between $G_1$ and $G_2$ has index one.
\end{prop}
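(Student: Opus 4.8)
The plan is to reduce everything to a statement about the invariant $\kappa$. First I would set up notation: suppose we have a biLipschitz commensurability between $G_1$ and $G_2$, given by finite-index subgroups $\dot G_1 \leq G_1$, $\dot G_2 \leq G_2$ and a biLipschitz equivalence $\varphi : \dot G_1 \to \dot G_2$, and I want to show $[G_1:\dot G_1] = [G_2:\dot G_2]$. The key point is that a finite-index inclusion $K \leq G$ is a quasi-isometry that is quasi-$(1/[G:K])$-to-one (this is recalled right after Proposition~\ref{prop:kappa}), and a biLipschitz equivalence lies at bounded distance from a bijection, hence is quasi-one-to-one by Proposition~\ref{prop:QIdistBij}. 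So along the chain of quasi-isometries
\[
G_1 \; \hookleftarrow \; \dot G_1 \; \xrightarrow{\ \varphi\ } \; \dot G_2 \; \hookrightarrow \; G_2
\]
the composition $G_1 \to G_2$ (using a quasi-inverse of the first inclusion) is, by Proposition~\ref{prop:EasyKappa}, quasi-$\left( [G_1:\dot G_1] \cdot 1 \cdot \tfrac{1}{[G_2:\dot G_2]} \right)$-to-one, i.e. quasi-$\left( [G_1:\dot G_1]/[G_2:\dot G_2] \right)$-to-one.

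Next I would bring in the hypothesis that $G_1$ and $G_2$ are biLipschitz equivalent. By Corollary~\ref{cor:kappatrivial} (which requires $G_1$, or equivalently any member of its biLipschitz commensurability class since these are all quasi-isometric to $F \wr H$, to be quasi-isometric to $F \wr H$ with $H$ finitely presented one-ended amenable), every quasi-isometry $G_1 \to G_1$ lies at finite distance from a bijection; more to the point, I want to know that $G_1$ is amenable and that $\kappa(G_1) = \{1\}$. Amenability is clear since $F \wr H$ is amenable ($H$ amenable, $F$ finite) and amenability is a quasi-isometry invariant among finitely generated groups. The triviality $\kappa(G_1) = \{1\}$: if $\psi : G_1 \to G_1$ is any quasi-$\kappa$-to-one quasi-isometry, then by Corollary~\ref{cor:kappatrivial} applied to $F \wr H$ and transported along a quasi-isometry $G_1 \simeq F \wr H$ (using Proposition~\ref{prop:EasyKappa} to track the $\kappa$-values through conjugation by this quasi-isometry, whose quasi-inverse composed back contributes factor $1$ on each side since each is quasi-one-to-one), $\psi$ lies at finite distance from a bijection, hence is quasi-one-to-one, and then Lemma~\ref{lem:KappaWellDefined} (valid since $G_1$ is amenable) forces $\kappa = 1$.

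Now I would combine these: a biLipschitz equivalence $G_2 \to G_1$ is quasi-one-to-one, so composing our quasi-$\left([G_1:\dot G_1]/[G_2:\dot G_2]\right)$-to-one map $G_1 \to G_2$ with it yields a quasi-$\left([G_1:\dot G_1]/[G_2:\dot G_2]\right)$-to-one self-quasi-isometry of $G_1$ (by Proposition~\ref{prop:EasyKappa}(ii)). Hence $[G_1:\dot G_1]/[G_2:\dot G_2] \in \kappa(G_1) = \{1\}$, which gives $[G_1:\dot G_1] = [G_2:\dot G_2]$, i.e. the biLipschitz commensurability has index one. Corollary~\ref{cor:Commensurability} is then the special case $G_1, G_2 \leq F \wr H$ finite-index, with the commensurability taken to be the pair $(G_1, G_2)$ themselves inside $F\wr H$ — or rather one applies the proposition with $\dot G_i = G_i$ after noting isomorphic (or merely biLipschitz equivalent) finite-index subgroups fit the hypothesis.

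The main obstacle I anticipate is bookkeeping the $\kappa$-calculus cleanly: one must be careful that $\kappa$ is only well-defined (as a single number) for a quasi-isometry whose source is amenable, that it is multiplicative under composition and inverts under quasi-inversion (Proposition~\ref{prop:EasyKappa}), and that transporting the statement "$\kappa(\,\cdot\,)=\{1\}$" from $F\wr H$ to an arbitrary group $G$ quasi-isometric to it requires conjugating by a fixed quasi-isometry $G \to F\wr H$ and its quasi-inverse, each of which is itself quasi-$\kappa$-to-one for some $\kappa$ whose contributions cancel — this cancellation needs the amenability of $G$ (so that each of these quasi-isometries has a well-defined $\kappa$) and a short argument that $\kappa$ of a quasi-isometry composed with its quasi-inverse is $1$. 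Once this "$\kappa$ is a biLipschitz-commensurability invariant that vanishes on $F\wr H$" principle is firmly in place — essentially the content of \cite[Kappa]{Kappa} combined with Corollaries~\ref{cor:kappatrivial} and Lemma~\ref{lem:KappaWellDefined} — the rest is formal.
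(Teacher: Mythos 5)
Your strategy is essentially the paper's: encode finite-index inclusions as quasi-$(1/\mathrm{index})$-to-one maps and biLipschitz equivalences as quasi-one-to-one maps, compose via Proposition~\ref{prop:EasyKappa} to get a quasi-isometry whose $\kappa$ is the ratio of the two indices, and then kill this ratio using Corollary~\ref{cor:kappatrivial} together with Lemma~\ref{lem:KappaWellDefined}. The one step that does not hold as written is the transport of ``$\kappa(\cdot)=\{1\}$'' from $F\wr H$ to $G_1$ along an \emph{arbitrary} quasi-isometry $G_1\to F\wr H$. Your first justification (the conjugating quasi-isometries are quasi-one-to-one) is false in general: for $G_1=\mathbb{Z}_4\wr\mathbb{Z}^2$, which lies in the biLipschitz commensurability class of $\mathbb{Z}_2\wr\mathbb{Z}^2$, every quasi-isometry $G_1\to\mathbb{Z}_2\wr\mathbb{Z}^2$ is quasi-$\tfrac12$-to-one by Theorems~\ref{thm:StructureQI} and~\ref{thm:Amenable}, hence not quasi-one-to-one by Lemma~\ref{lem:KappaWellDefined}. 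Your fallback (amenability of $G_1$ guarantees that the conjugator is quasi-$\kappa$-to-one for \emph{some} $\kappa$) is also unjustified: Lemma~\ref{lem:KappaWellDefined} gives uniqueness of $\kappa$ when it exists, not existence, and an arbitrary quasi-isometry between amenable graphs need not admit any $\kappa$ at all (e.g.\ the self-quasi-isometry of $\mathbb{Z}$ that is $2$-to-one on one half-line and the identity on the other forces incompatible values of $\kappa$ on finite intervals in the two half-lines). So, as written, $\kappa(G_1)=\{1\}$ is not established.

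The repair is immediate from your own hypotheses, and it is exactly what the paper does: conjugate not by an arbitrary quasi-isometry but by the one built from the biLipschitz commensurability between $F\wr H$ and $G_1$, namely $c=\iota_{\dot G_1,G_1}\circ\varphi\circ\bar{\iota}_{L,F\wr H}$ where $\varphi:L\to\dot G_1$ is the given biLipschitz equivalence between finite-index subgroups $L\leq F\wr H$ and $\dot G_1\leq G_1$. By Proposition~\ref{prop:kappa}(iii) and Proposition~\ref{prop:EasyKappa}, $c$ is quasi-$\kappa_0$-to-one with $\kappa_0=[F\wr H:L]/[G_1:\dot G_1]$ and its quasi-inverse is quasi-$(1/\kappa_0)$-to-one, so conjugating your self-quasi-isometry of $G_1$ by $c$ yields a self-quasi-isometry of $F\wr H$ that is quasi-$([G_1:\dot G_1]/[G_2:\dot G_2])$-to-one; Corollary~\ref{cor:kappatrivial} and Proposition~\ref{prop:QIdistBij} make it quasi-one-to-one, and Lemma~\ref{lem:KappaWellDefined} (applied in the amenable group $F\wr H$) forces the ratio to be $1$. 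With this substitution your argument coincides with the paper's proof, which composes all the maps into a single self-quasi-isometry of $F\wr H$ rather than passing through the intermediate claim $\kappa(G_1)=\{1\}$.
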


\begin{proof}
By assumption, there exist finite-index subgroups $\dot{G}_1 \leq G_1$, $L \leq F\wr H$, $K_1 \leq G_1$, $K_2 \leq G_2$, and biLipschitz equivalences $\varphi : L \to \dot{G}_1$, $\phi : K_1 \to K_2$, $\psi : G_2 \to G_1$. For every group $B$ and every finite-index subgroup $A \leq B$, we denote by $\iota_{A,B}$ the inclusion $A \hookrightarrow B$ and we fix a quasi-inverse $\bar{\iota}_{A,B} : B \to A$. Observe that $\iota_{A,B}$ is quasi-$(1/[B:A])$-to-one and $\bar{\iota}_{A,B}$ quasi-$[B:A]$-to-one. (For instance, apply Proposition \ref{prop:kappa}(iii).) As a consequence of Proposition \ref{prop:EasyKappa}, the quasi-isometry $F\wr H \to F \wr H$ defined by
$$\Psi := \iota_{L,F\wr H} \circ \varphi^{-1} \circ \bar{\iota}_{\dot{G}_1,G_1} \circ \left( \psi \circ \phi \circ \bar{\iota}_{K_1,G_1}  \right) \circ \iota_{\dot{G}_1,G_1} \circ \varphi \circ \bar{\iota}_{L,F\wr H}$$
is quasi-$([G_1:K_1]/[G_2:K_2])$-to-one. The combination of Corollary \ref{cor:kappatrivial} and Lemma~\ref{lem:KappaWellDefined} implies that $[G_1:K_1]=[G_2:K_2]$ as desired.
\end{proof}

\subsection{Permutational wreath products}

\noindent
It is worth noticing that Theorem \ref{thm:LampRigid} also applies to another kind of wreath products.

\begin{definition}
Let $F,H$ be two groups and $S$ a set on which $H$ acts. The \emph{permutational product} of $F,H$ with respect to $H \curvearrowright S$ is
$$F \wr_S H:= \left( \bigoplus\limits_S F \right) \rtimes H$$
where $H$ acts on the direct sum by permuting the coordinates through its action on $S$.
\end{definition}

\noindent
Observe that, if $H$ acts freely and transitively on $S$, then $F \wr_S H$ coincides with $F \wr H$. Corollary \ref{cor:PermutationalW} below shows that the classification provided by Corollary \ref{cor:LampRigid} extends to the case where $H$ acts on $S$ transitively and with finite stabilisers, which coincides with a permutational wreath product $F \wr_{H/L} H$ for some finite subgroup $L \leq H$.

\begin{cor}\label{cor:PermutationalW}
Let $F_1,F_2$ be two finite groups, $H_1,H_2$ two finitely presented one-ended groups, and $X_1,X_2$ two sets on which $H_1,H_2$ respectively act with finitely many orbits (say $m_1,m_2$) and with finite stabilisers. For every $1 \leq i \leq m_1$ (resp. $1 \leq i \leq m_2$), let $p_i$ (resp. $q_i$) denote the size of point-stabilisers in the $i$th $H_1$-orbit of $X_1$ (resp. in the $i$th $H_2$-orbit of $X_2$). 
\begin{itemize}
	\item If $H_1$ is amenable, then $F_1 \wr_{X_1} H_1$ and $F_2 \wr_{X_2} H_2$ are quasi-isometric if and only if $|F_1|=k^{n_1}$, $|F_2|=k^{n_2}$ for some $k,n_1,n_2 \geq 1$ and if there exists a quasi-$\kappa$-to-one quasi-isometry $H_1 \to H_2$ where $\kappa:=\frac{n_2}{n_1} \left( \frac{1}{p_1}+ \cdots + \frac{1}{p_{m_1}} \right) \left( \frac{1}{q_1} + \cdots + \frac{1}{q_{m_2}} \right)^{-1}$.
	\item If $H_1$ is non-amenable, then $F_1 \wr_{X_1} H_1$ and $F_2 \wr_{X_2} H_2$ are quasi-isometric if and only if $|F_1|,|F_2|$ have the same prime divisors and $H_1,H_2$ are quasi-isometric.
\end{itemize}
\end{cor}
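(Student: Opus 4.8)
The plan is to reduce Corollary~\ref{cor:PermutationalW} to Corollary~\ref{cor:LampRigid} (equivalently, Theorem~\ref{thm:LampRigid}) by identifying, up to quasi-isometry, a permutational wreath product $F\wr_X H$ with an honest lamplighter over $H$ with a modified finite group. The key algebraic observation is that if $H$ acts on $X$ with finitely many orbits $X=\bigsqcup_{i=1}^{m}H/L_i$ with finite point-stabilisers $L_i$ of order $p_i$, then
\[
\bigoplus_X F \;=\; \bigoplus_{i=1}^m \bigoplus_{H/L_i} F \;\cong\; \bigoplus_{i=1}^m \mathrm{Ind}_{L_i}^{H}\big(\textstyle\bigoplus_{L_i}F\big)
\]
as $H$-modules, where $H$ permutes each $\bigoplus_{H/L_i}F$ through $H/L_i$. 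Since the $H$-set $H/L_i$ is, up to the free $H$-set $H$, a ``$1/p_i$-to-one'' copy of $H$ (each $H$-orbit in $H/L_i$ being $H$ modulo a set of size $p_i$), the upshot is that $F\wr_X H$ is quasi-isometric to $\mathcal{L}_{|F|}(X')$ for an appropriate graph $X'$ quasi-isometric to $H$, with a controlled ``multiplicity'' of lamps. Making this precise is exactly what Proposition~\ref{prop:kappa} is built for: choose, for each $i$, a bijective-up-to-bounded-error model of $H/L_i$ obtained by partitioning a copy of $H$ into uniformly bounded pieces of size $p_i$.

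Concretely, here is how I would carry it out. First, reduce to the transitive case: a direct sum over a disjoint union of $H$-sets is the lamplighter over the disjoint union of the corresponding ``lamp graphs'', and a disjoint union of graphs each quasi-isometric to $H$ with prescribed lamp-multiplicities is handled by the same partition bookkeeping; so it suffices to treat $X=H/L$ with $|L|=p$. Second, fix a finite generating set $S$ of $H$ and consider the Schreier graph $\Gamma=\mathrm{Cayl}(H,S)/L$ of the action $H\curvearrowright H/L$; the coset-graph quotient map $\mathrm{Cayl}(H,S)\to\Gamma$ is a $p$-to-one graph morphism lying at bounded distance from a ``$p$-to-one quasi-isometry'' in the sense of Proposition~\ref{prop:kappa}(iii), i.e.\ it is quasi-$(1/p)$-to-one. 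Third, observe that $\mathrm{Cayl}(F\wr_{H/L}H,\,F\cup S)=\mathcal{L}_{|F|}(\Gamma)$: indeed a vertex of the left side is a finitely supported colouring of $H/L$ together with an arrow in $H/L$, which is exactly a vertex of $\mathcal{L}_{|F|}(\Gamma)$, and the generators $F\cup S$ produce precisely the lamp-moves and arrow-moves of $\mathcal{L}_{|F|}(\Gamma)$. Fourth, note that $\Gamma$ is coarsely $1$-connected and uniformly one-ended precisely when $H$ is, since it is quasi-isometric to $H$ (the quotient map being a quasi-isometry as $L$ is finite) and these are QI-invariants of graphs of bounded degree. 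Now Theorem~\ref{thm:LampRigid} applies with $X=\Gamma_1$, $Y=\Gamma_2$ the Schreier graphs.

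It remains to extract the stated value of $\kappa$. In the amenable case, Theorem~\ref{thm:LampRigid} tells us $\mathcal{L}_{|F_1|}(\Gamma_1)$ and $\mathcal{L}_{|F_2|}(\Gamma_2)$ are quasi-isometric if and only if $|F_1|=k^{n_1}$, $|F_2|=k^{n_2}$ and there is a quasi-$(n_2/n_1)$-to-one quasi-isometry $\Gamma_1\to\Gamma_2$. But $\Gamma_i$ is built from $H_i$ by a partition into pieces of size $p_1^{(i)},\dots,p_{m_i}^{(i)}$ — that is, $\Gamma_i$ is the disjoint union over orbits of Schreier graphs $\mathrm{Cayl}(H_i,S_i)/L^{(i)}_j$, and the quotient map $H_i\to\Gamma_i$ is quasi-$\big(1/(\tfrac1{p_1^{(i)}}+\cdots+\tfrac1{p_{m_i}^{(i)}})\big)$-to-one by additivity of the counting defect over the orbits together with Proposition~\ref{prop:EasyKappa}. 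Composing a quasi-$(n_2/n_1)$-to-one quasi-isometry $\Gamma_1\to\Gamma_2$ with the quotient $H_1\to\Gamma_1$ and a quasi-inverse of $H_2\to\Gamma_2$, and multiplying the three quasi-constants via Proposition~\ref{prop:EasyKappa}(ii)--(iii), yields a quasi-isometry $H_1\to H_2$ that is quasi-$\kappa$-to-one with
\[
\kappa=\frac{n_2}{n_1}\Big(\tfrac1{p_1}+\cdots+\tfrac1{p_{m_1}}\Big)\Big(\tfrac1{q_1}+\cdots+\tfrac1{q_{m_2}}\Big)^{-1},
\]
and conversely such a quasi-isometry of $H_1$ with $H_2$ reverses the chain to produce the required quasi-$(n_2/n_1)$-to-one quasi-isometry $\Gamma_1\to\Gamma_2$, hence the QI of the permutational wreath products by Theorem~\ref{thm:LampRigid} and Proposition~\ref{prop:AmConverse}. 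In the non-amenable case, amenability, one-endedness and the prime-divisor condition are all transparent under these quasi-isometries, and the QI-invariance of $H_i\simeq\Gamma_i$ gives the statement directly. The main obstacle I anticipate is the additivity-of-multiplicity bookkeeping in the third step of the last paragraph: one must check carefully that the quotient map $H\to\Gamma$ is genuinely quasi-$\big(1/\sum 1/p_j\big)$-to-one — this requires combining Proposition~\ref{prop:kappa}(iii) on each orbit (partitioning $H$ into pieces of size $p_j$ inside each of the $m$ parallel copies) with a disjoint-union argument, and verifying the boundary-term estimates are uniform; everything else is a bookkeeping of constants via Proposition~\ref{prop:EasyKappa} and an appeal to Lemma~\ref{lem:KappaWellDefined} to pin down $\kappa$ uniquely in the amenable case.
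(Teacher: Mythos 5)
Your overall architecture is the paper's: model $F\wr_X H$ by a lamplighter graph over an auxiliary graph quasi-isometric to $H$ built from the coset spaces $H/L_i$, transfer the quasi-$\kappa$-to-one data with Proposition~\ref{prop:EasyKappa}, and invoke Theorem~\ref{thm:LampRigid} (this is exactly Lemma~\ref{lem:QIperm} plus the short deduction in the paper). However, two steps fail as written. First, your ``third step'' is not an equality: a vertex of $\mathrm{Cayl}(F\wr_{H/L}H,\,F\cup S)$ is a pair (colouring of $H/L$, element of $H$) --- the arrow lives in $H$, not in $H/L$ --- so for $L\neq 1$ this Cayley graph is not $\mathcal{L}_{|F|}(\Gamma)$; one needs an explicit quasi-isometry collapsing the $L$-ambiguity of the arrow via a section $H/L\to H$, which is precisely what the maps $\Phi,\Psi$ in the paper's Lemma~\ref{lem:QIperm} provide. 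More seriously, in the multi-orbit case your lamp graph is a disjoint union of Schreier graphs: that graph is disconnected, $\mathcal{L}_{|F|}$ of it is disconnected (the arrow cannot change components, so lamps in other components are frozen), it is neither coarsely $1$-connected nor uniformly one-ended, and it is not quasi-isometric to the connected Cayley graph of $F\wr_X H$. So Theorem~\ref{thm:LampRigid} cannot be applied to it, and ``the same partition bookkeeping'' does not address this; one must join the orbit copies (the paper adds edges between $hL_i$ and $hL_j$ over each $h\in H$) and re-verify the quasi-isometry with the Cayley graph accordingly.

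Second, the quantitative bookkeeping --- the entire content of the amenable bullet --- is internally inconsistent. Under Definition~\ref{def:quasione} an $n$-to-one map is quasi-$n$-to-one, so the $p$-to-one quotient $\mathrm{Cayl}(H,S)\to\Gamma$ is quasi-$p$-to-one (also via Proposition~\ref{prop:kappa}(iii): pieces of size $p$ in the source, singletons in the target), not quasi-$(1/p)$-to-one as you assert; and the natural quasi-isometry $H\to Y$, with $Y$ the connected graph over $\bigsqcup_i H/L_i$, is quasi-$\bigl(\sum_i 1/p_i\bigr)^{-1}$-to-one, which is the formula you state in the multi-orbit case and which contradicts your one-orbit claim. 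With the constants you yourself assign, the product formed via Proposition~\ref{prop:EasyKappa}(ii)--(iii) is $\bigl(\sum_i 1/p_i\bigr)^{-1}\cdot\frac{n_2}{n_1}\cdot\bigl(\sum_j 1/q_j\bigr)=\frac{n_2}{n_1}\bigl(\sum_j 1/q_j\bigr)\bigl(\sum_i 1/p_i\bigr)^{-1}$, which is not the value you announce, so your final line does not follow from your intermediate claims, and Lemma~\ref{lem:KappaWellDefined} cannot repair a mismatch between two different maps. You should fix the direction convention once and for all and recompute; the free-action test case $p_i=q_j=1$, where $F\wr_X H\cong F^{m}\wr H$ and Corollary~\ref{cor:LampRigid} applies directly with exponents $n_1m_1$ and $n_2m_2$, forces the condition ``quasi-$\frac{n_2m_2}{n_1m_1}$-to-one'' and should be used to calibrate whichever formula you end up with (note that it agrees with the product computed above, with the $q$-sum in the numerator, and not with the exponent written in your final display).
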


\noindent
The assertion will be an easy consequence of the following observation:

\begin{lemma}\label{lem:QIperm}
Let $F$ be a finite group, $H$ a group generating by finite set $S$, and $X$ a set on which $H$ acts with finite stabilisers and with finitely many (say $n$) orbits. Then there exist a graph $Y$ and a quasi-$\left( \sum\limits_{i=1}^n \frac{1}{\ell_i} \right)$-to-one quasi-isometry $H \to Y$ such that $\mathrm{Cayl}(F \wr_{X} H, F \cup S)$ and $\mathcal{L}_{|F|}( Y)$ are quasi-isometric, where $\ell_i$ denotes the size of point-stabilisers in the $i$th $H$-orbit of $X$. 
\end{lemma}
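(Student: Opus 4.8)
\textbf{Proof plan for Lemma \ref{lem:QIperm}.}

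The plan is to build the graph $Y$ directly from the action $H \curvearrowright X$ and then identify $F \wr_X H$ with a lamplighter graph over $Y$ up to quasi-isometry. First I would pick orbit representatives $x_1, \dots, x_n \in X$ and let $L_i = \mathrm{Stab}_H(x_i)$, so that the $i$th orbit is in bijection with $H/L_i$ and $X$ is in bijection with $\bigsqcup_{i=1}^n H/L_i$. Correspondingly $\bigoplus_X F \cong \bigoplus_{i=1}^n \bigoplus_{H/L_i} F$ as $H$-modules, so $F \wr_X H$ is the semidirect product $\left( \bigoplus_{i=1}^n \bigoplus_{H/L_i} F \right) \rtimes H$. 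The natural candidate for $Y$ is the disjoint union over $i$ of the Schreier coset graphs $\mathrm{Schr}(H, L_i, S)$ (i.e. the quotient of $\mathrm{Cayl}(H,S)$ by the right $L_i$-action), glued to a single basepoint — or, more cleanly, the graph obtained from $\bigsqcup_i \mathrm{Schr}(H,L_i,S)$ by adding edges of bounded length connecting the $n$ sheets over each $H$-orbit so as to make it connected and still quasi-isometric to each sheet. Each sheet $\mathrm{Schr}(H,L_i,S)$ is quasi-isometric to $H$ (via the covering $\mathrm{Cayl}(H,S) \to \mathrm{Schr}(H,L_i,S)$, whose fibres have the fixed finite size $\ell_i = |L_i|$, making it an $\ell_i$-to-one map at bounded distance), and the disjoint-then-reconnect operation does not change the quasi-isometry type, so there is a quasi-isometry $H \to Y$. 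The key point is that over a single orbit this quasi-isometry is $\ell_i$-to-one up to bounded error, and over all $n$ orbits simultaneously the fibre over a point of $H$ — thinking of $Y$ as receiving $n$ copies of $H$ folded by the $L_i$ — the counting gives that the map $H \to Y$ is quasi-$\kappa$-to-one with $\kappa = \sum_{i=1}^n \frac{1}{\ell_i}$; this is a direct verification against Definition \ref{def:quasione}, most transparently via the characterisation in Proposition \ref{prop:kappa}(iii) applied separately to each orbit and then assembled.

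Next I would identify the Cayley graph $\mathrm{Cayl}(F \wr_X H, F \cup S)$ with $\mathcal{L}_{|F|}(Y)$ up to quasi-isometry. An element of $F \wr_X H$ is a pair $(c,h)$ where $c : X \to F$ is finitely supported and $h \in H$; under $X \cong \bigsqcup_i H/L_i$ and the quasi-isometry $H/L_i \to (\text{$i$th sheet of } Y)$, the colouring $c$ pushes forward to a finitely supported colouring $\bar c : Y \to F$ (on each sheet, $\bar c$ records the value of $c$ on the corresponding $L_i$-orbit; here one uses that the $L_i$ are finite so each point of $Y$ carries boundedly much information, and one fixes once and for all a bijection $F^{\ell_i} \to F \times (\text{finite set})$ if one wants the colours to land in $\mathbb{Z}_{|F|}$ exactly — or one simply works with the lamplighter graph $\mathcal{L}_{|F|}(Y)$ up to the biLipschitz flexibility afforded by the observation in \cite{MR1800990}). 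The arrow $h \in H$ pushes forward to a vertex of $Y$. One then checks that the generating set $F \cup S$ of $F \wr_X H$ corresponds, under this correspondence, to the two types of moves defining $\mathcal{L}_{|F|}(Y)$ — moving the arrow along an edge of $Y$, and modifying the colouring at the current vertex — up to a bounded multiplicative and additive error coming from: (a) the bounded diameter of the $L_i$-orbits, (b) the bounded-length reconnecting edges between sheets, and (c) the finitely many extra colour-bits. This is the same kind of routine metric bookkeeping as in the proof of Proposition \ref{prop:LampGraph} and in Section~2.1, where the diligent and lazy metrics are compared.

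Finally I would assemble: the quasi-isometry $\beta : H \to Y$ just constructed is quasi-$\kappa$-to-one with $\kappa = \sum_i 1/\ell_i$, and the correspondence $(c,h) \mapsto (\bar c, \beta(h))$ is a quasi-isometry $\mathrm{Cayl}(F \wr_X H, F \cup S) \to \mathcal{L}_{|F|}(Y)$; combining gives the statement. I expect the main obstacle to be item (b)/(c) above — making the definition of $Y$ precise enough that it is simultaneously connected, genuinely quasi-isometric to $H$ (not just coarsely, with the right quasi-$\kappa$-to-one constant), and such that $\mathcal{L}_{|F|}(Y)$ is honestly quasi-isometric to the permutational wreath product, all while keeping the colour alphabet equal to $\mathbb{Z}_{|F|}$ (or invoking \cite{MR1800990} to absorb the discrepancy). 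The quasi-$\kappa$-to-one bookkeeping over several orbits at once, using Proposition \ref{prop:kappa}(iii) orbit-by-orbit, is the other place where care is needed, but it is conceptually straightforward once $Y$ is pinned down.
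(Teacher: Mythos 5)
Your route is the same as the paper's: take $Y$ with vertex set $H/L_1\sqcup\cdots\sqcup H/L_n$ (Schreier sheets joined by edges $hL_i$--$hL_j$), identify $\mathrm{Cayl}(F\wr_X H,F\cup S)$ with $\mathcal{L}_{|F|}(Y)$, and get the constant $\sum_i 1/\ell_i$ by counting fibres orbit by orbit. The outline is correct, but two of your worries are misdirected and one step is left genuinely underspecified. First, the colour bookkeeping: with this choice of $Y$ the vertex $hL_i$ corresponds to the \emph{single} point $h\cdot x_i\in X$, so $V(Y)$ is in bijection with $X$ and a colouring $c:X\to F$ transfers verbatim to $\bar c:Y\to F$; there is nothing of size $\ell_i$ to repackage, and your proposed bijection $F^{\ell_i}\to F\times(\text{finite set})$ conflates collapsing $\ell_i$ group elements of $H$ with collapsing $\ell_i$ lamps (only the former happens). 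The alphabet issue is handled at the start as in the paper: the Cayley graph $\mathrm{Cayl}(F\wr_X H,F\cup S)$ depends only on $|F|$, so one may take $F=\mathbb{Z}_{|F|}$ outright, with no appeal to biLipschitz flexibility. Second, the quasi-$\kappa$-to-one step: you never actually define the quasi-isometry $H\to Y$, and ``thinking of $Y$ as receiving $n$ copies of $H$'' does not describe a single map in that direction (e.g.\ $h\mapsto hL_1$ is a quasi-isometry but its fibres ignore the other sheets). The clean way, which is what the paper does, is to fix a section $\sigma$ of the cosets, check that $f:Y\to H$, $hL_i\mapsto\sigma(hL_i)$, satisfies $|f^{-1}(A)|=\sum_i|f_i^{-1}(A)|\approx\sum_i|A|/\ell_i$ for finite $A\subset H$ (each restriction $f_i$ being a quasi-inverse of the $\ell_i$-to-one projection $H\to H/L_i$, hence quasi-$(1/\ell_i)$-to-one), so $f$ is quasi-$\bigl(\sum_i 1/\ell_i\bigr)^{-1}$-to-one, and then invert via Proposition~\ref{prop:EasyKappa} to get the quasi-$\bigl(\sum_i 1/\ell_i\bigr)$-to-one quasi-isometry $H\to Y$. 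With those two corrections your argument matches the paper's proof.
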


\begin{proof}
It is clear that the graphs $\mathrm{Cayl}(F\wr_X H, F \cup S)$ and $\mathrm{Cayl}(\mathbb{Z}_{|F|} \wr_X H, \mathbb{Z}_{|F|} \cup S)$ are isomorphic. Consequently, from now on we assume that $F$ is a cyclic group, say of order $r$. Let $x_1, \ldots, x_n \in X$ be representatives modulo the $H$-action. For every $1 \leq i \leq n$, let $L_i$ denote $\mathrm{stab}_H(x_i)$. We define $Y$ as the graph
\begin{itemize}
	\item whose vertex-set is the disjoint union $H/L_1 \sqcup \cdots \sqcup H/L_n$;
	\item whose edges link $hL_i,hs^{\pm 1}L_i$ for all $1 \leq i \leq n$, $h \in H$, $s \in S$ and $hL_i,hL_j$ for all $h \in H$, $1 \leq i,j \leq n$.
\end{itemize}
The graph $Y$ is clearly quasi-isometric to $H$. Fix an arbitrary map $\sigma : H/L_1 \sqcup \cdots \sqcup H/L_n \to H$ satisfying $\sigma(hL_i) \in hL_i$ for all $h \in H$ and $1 \leq i \leq n$. Finally, notice that the map $\beta : Y \to X$ defined by $hL_i \mapsto h \cdot x_i$ ($h \in H$ and $1 \leq i \leq n$) is clearly a bijection. We claim that
$$\Phi : \left\{ \begin{array}{ccc} \mathcal{L}_r(Y) & \to & \mathbb{Z}_r \wr_X H \\ (c,hL_i) & \mapsto & \left( c \circ \beta^{-1}, \sigma(hL_i) \right) \end{array} \right. \text{ and } \Psi : \left\{ \begin{array}{ccc} \mathbb{Z}_r \wr_X H & \to & \mathcal{L}_r(Y) \\ (c,h) & \mapsto & \left( c \circ \beta, hL_1 \right) \end{array} \right.$$
are quasi-isometries, quasi-inverses of each other. Let $a,b \in \mathbb{Z}_r \wr_X H$ be two adjacent vertices (in our Cayley graph). Two cases may happen:
\begin{itemize}
	\item There exist $c \in \mathbb{Z}_r^{(X)}$, $h \in H$, $s \in S$ such that $a=(c,h)$ and $b=(c,hs^{\pm 1})$. Then $\Psi(a)=(c \circ \beta,hL_1)$ and $\Psi(b)=(c \circ \beta,hs^{\pm 1}L_1)$ are either identical or adjacent in~$Y$.
	\item There exist $c \in \mathbb{Z}_r^{(X)}$, $h \in H$, $1 \leq i \leq n$ and $d \in \mathbb{Z}_r^{(X)}$ supported at $h \cdot x_i$ such that $a=(c,h)$ and $b=(c+d,h)$. Then $\Psi(a)=(c \circ \beta, hL_1)$ and $\Psi(b)=(c \circ \beta + d',hL_1)$, where $d' \in \mathbb{Z}_r^{(Y)}$ is supported at $hL_i$, are two adjacent vertices of $Y$.
\end{itemize}
Thus, we have proved that $\Psi$ sends an edge to either a point or an edge, which implies that it is $1$-Lipschitz. Similarly, given two adjacent vertices $a,b \in Y$, three cases may happen:
\begin{itemize}
	\item There exist $c \in \mathbb{Z}_r^{(Y)}$, $h \in H$, $1 \leq i,j \leq n$ such that $a=(c,hL_i)$ and $b=(c,hL_j)$. Then $\Phi(a)=(c \circ \beta^{-1}, \sigma(hL_i))$ and $\Phi(b)=(c \circ \beta^{-1}, \sigma(hL_j))$ are at distance at most $\mathrm{diam}_H(L_i \cup L_j)$ in $\mathbb{Z}_r \wr_X H$. 
	\item There exist $c \in \mathbb{Z}_r^{(Y)}$, $h \in H$, $1 \leq i \leq n$, $s \in S$ such that $a=(c,hL_i)$ and $b=(c,hs^{\pm 1}L_i)$. Then $\Phi(a)=(c \circ \beta^{-1}, \sigma(hL_i))$ and $\Phi(b)=(c \circ \beta^{-1}, \sigma(hs^{\pm 1}L_i))$ are at distance at most $\mathrm{diam}_H(L_i \cup s^{\pm 1}L_j)$ in $\mathbb{Z}_r \wr_X H$.
	\item There exist $c \in \mathbb{Z}_r^{(Y)}$, $h \in H$, $1 \leq i \leq n$ and $d \in \mathbb{Z}_r^{(Y)}$ supported at $hL_i$ such that $a=(c,hL_i)$ and $b=(c+d,hL_i)$. Then $\Phi(a)=(c \circ \beta^{-1},\sigma(hL_i))$ and $\Phi(b)=(c \circ \beta^{-1} + d', \sigma(hL_i))$, where $d' \in \mathbb{Z}_r^{(Y)}$ is supported at $h \cdot x_i$, are at distance at most $1+ \mathrm{diam}_H(L_i)$ in $\mathbb{Z}_r \wr_X H$.
\end{itemize}
Thus, we have also proved that $\Phi$ is Lipschitz. Finally, observe that
$$\Phi \circ \Psi(c,h)=(c, \sigma(hL_1)) \text{ for all $(c,h) \in \mathbb{Z}_r \wr_X H$,}$$
proving that $\Phi \circ \Psi$ lies at distance $\leq \mathrm{diam}_H(L_1)$ from the identity; and that
$$\Psi \circ \Phi(c,hL_i) = (c, \sigma(hL_i)L_1) \text{ for all $c \in \mathbb{Z}_r^{(Y)}$, $h \in H$ and $1 \leq i \leq n$,}$$
proving that $\Psi \circ \Phi$ lies at distance $\leq 1+\mathrm{diam}_H(L_1)$ from the identity. We conclude that $\Phi, \Psi$ are quasi-isometries and that $\Phi$ is a quasi-inverse of $\Psi$ (and vice-versa). 

\medskip \noindent
Now, set $\kappa:= (1/|L_1| + \cdots + 1/|L_n|)^{-1}$. We claim that the map $f : Y \to H$ defined by $hL_i \mapsto \sigma(hL_i)$, which is clearly a quasi-isometry, is quasi-$\kappa$-to-one. As a consequence of Proposition \ref{prop:EasyKappa}, this will imply that there exists a quasi-$(1/\kappa)$-to-one quasi-isometry $H \to Y$, as desired.

\medskip \noindent
For every $1 \leq i \leq n$, let $f_i : H/L_i \to H$ denote the restriction of $f$ to the subgraph $H/L_i \subset Y$. Observe that $f_i$ is a quasi-inverse of the $|L_i|$-to-one projection $H \to H/L_i$ (where $H/L_i$ is thought of as a subgraph of $Y$), so $f_i$ is quasi-$(1/|L_i|)$-to-one according to Proposition \ref{prop:EasyKappa}; and that, for every finite subset $A \subset H$, $f^{-1}(A)$ coincides with the disjoint union $f^{-1}_1(A) \sqcup \cdots \sqcup f^{-1}_n(A)$. Therefore,
$$\begin{array}{lcl} \displaystyle \left| \kappa |f^{-1}(A)| - |A| \right| & = & \displaystyle \kappa \left| \sum\limits_{i=1}^n |f_i^{-1}(A)| - \sum\limits_{i=1}^n \frac{|A|}{|L_i|} \right| \leq \kappa \sum\limits_{i=1}^n \left| |L_i| \cdot |f_i^{-1}(A) | - |A| \right| \\ \\ & \leq & \displaystyle \kappa \sum\limits_{i=1}^n \frac{C_i}{|L_i|} \cdot | \partial A| \end{array}$$
for some constants $C_1, \ldots, C_n$ that do not depend on $A$. Thus, we have shown that $f$ is quasi-$\kappa$-to-one, concluding the proof of our lemma.
\end{proof}

\begin{proof}[Proof of Corollary \ref{cor:PermutationalW}.]
According to Lemma~\ref{lem:QIperm}, there exist a graph $Y_1$ and a quasi-$\kappa_1$-to-one quasi-isometry $H_1 \to Y_1$, where $\kappa_1:= 1/p_1 + \cdots + 1/p_{m_1}$, such that $F_1 \wr_{X_1} H_1$ and $\mathcal{L}_{|F_1|}(Y_1)$ are quasi-isometric; and there exist a graph $Y_2$ and a quasi-$\kappa_2$-to-one quasi-isometry $H_2 \to Y_2$, where $\kappa_2 := 1/q_1+ \cdots +1/q_{m_2}$, such that $F_2 \wr_{X_2} H_2$ and $\mathcal{L}_{|F_2|}(Y_2)$ are quasi-isometric. Observe that, as a consequence of Proposition \ref{prop:EasyKappa}, there exists a quasi-$(n_2/n_1)$-to-one quasi-isometry $Y_1 \to Y_2$ if and only if there exist a quasi-$(n_2\kappa_1/n_1\kappa_2)$-to-one quasi-isometry $H_1 \to H_2$. The desired conclusion follows from Theorem~\ref{thm:LampRigid}.
\end{proof}

\begin{remark}\label{rem:locallycompact}
Let us indicate a further generalisation. Assume that $H$ is a locally compact group and that $X$ is a set on which $H$ acts with open stabilisers. If $F$ is a discrete group, then we observe that $F \wr_{X} H$ is a locally compact group. It turns out that  Corollary \ref{cor:PermutationalW} holds under the assumption that $H_1$ and $H_2$ are locally compact, compactly presented, one-ended, and assuming that the stabilisers are compact open (instead of finite). The statements and proofs are identical, using the definition of quasi-$\kappa$-to-one quasi-isometries between locally compact groups given in \cite{Kappa}. We leave the details to the reader. 
\end{remark}

\section{Further results and open questions}\label{section:Last}

\noindent
The main question addressed in the article was the following:

\begin{question}\label{QOne}
Let $F_1,F_2$ be two non-trivial finite groups and $H_1,H_2$ two finitely generated groups. When are $F_1 \wr H_1$ and $F_2 \wr H_2$ quasi-isometric?
\end{question}

\noindent
Let us discuss the state of this problem regarding the results proved in this article and the rest of the literature. We distinguish different cases according to the number of ends of $H_1$ and $H_2$.

\paragraph{Two-ended case.} When $H_1$ and $H_2$ are both infinite cyclic, the problem was solved by Eskin, Fisher, and Whyte: $F_1 \wr \mathbb{Z}$ and $F_2 \wr \mathbb{Z}$ are quasi-isometric if and only if $|F_1|$ and $|F_2|$ are powers of a common number \cite{EFWI, EFWII}. In fact, because virtually infinite cyclic groups are always biLipschitz equivalent, their theorem answers Question~\ref{QOne} when $H_1$ and $H_2$ are both two-ended. Moreover, as a consequence of the following observation, assuming that only $H_1$ is two-ended suffices.

\begin{prop}\label{prop:cones}
Let $F$ be a non-trivial finite group and $H$ a finitely generated group. The asymptotic cones of $F \wr H$ have finite topological dimension if and only if $H$ is virtually cyclic.
\end{prop}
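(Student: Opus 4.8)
The plan is to prove both directions by analysing the asymptotic cones of $F \wr H$ directly. The key dichotomy is between $H$ virtually cyclic and $H$ having linear growth in a strong sense versus $H$ being "big" (i.e. not virtually cyclic, hence having at least quadratic growth or infinitely many ends).

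\textbf{The easy direction.} First I would treat the case where $H$ is virtually cyclic. Then $F \wr H$ is quasi-isometric to $F \wr \mathbb{Z}$ (since virtually infinite cyclic groups are biLipschitz equivalent, and $F \wr (\text{finite})$ is finite so we may assume $H$ infinite), and $F \wr \mathbb{Z} = \mathbb{Z}_{|F|} \wr \mathbb{Z}$ has a Cayley graph quasi-isometric to the Diestel--Leader graph $\mathrm{DL}(|F|)$, as recalled in the discussion following Theorem~\ref{thm:EFW}. The asymptotic cones of Diestel--Leader graphs are well understood: they are $\mathbb{R}$-trees of a certain kind, or more precisely have topological dimension at most $2$ (in fact they are homeomorphic to products of two $\mathbb{R}$-trees after suitable identifications, or one can cite the explicit description). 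In any case the topological dimension is finite. Alternatively, and more self-containedly, I would argue that $F \wr \mathbb{Z}$ admits a free cocompact action on $\mathrm{DL}(|F|)$, whose asymptotic cone embeds in a product of two $\mathbb{R}$-trees and hence has topological dimension $\leq 2$.

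\textbf{The hard direction.} Now suppose $H$ is not virtually cyclic; I must show the asymptotic cones of $F \wr H$ have infinite topological dimension. The core idea is that inside $F \wr H$ one can find, at every scale, arbitrarily large ``grids of lamps'': if $H$ is not virtually cyclic, then (by a standard fact) for every $n$ there is a ball $B(1,R_n)$ in $H$ containing $n$ points that are pairwise $R_n$-far apart, with $R_n$ comparable to the diameter. Assigning independent lamp configurations at these $n$ sites produces a subset of $F \wr H$ that is biLipschitz to an $\ell^1$-product $\prod_{i=1}^n (\text{interval of length} \sim R_n)$ — essentially a scaled copy of the $\ell^1$ unit ball in $\mathbb{R}^n$ (the travelling-salesman heuristic mentioned in the introduction is exactly what controls the metric here). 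Passing to the asymptotic cone, these grids survive and yield a biLipschitz embedded copy of an $n$-dimensional cube (or more carefully, a subset that surjects onto $[0,1]^n$ via a Lipschitz map admitting a Lipschitz section, which forces topological dimension $\geq n$). Since $n$ is arbitrary, the topological dimension is infinite.

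\textbf{The main obstacle.} The delicate point is making the ``grid of lamps gives an $\ell^1$-box'' estimate precise and uniform in scale, so that it passes cleanly to the asymptotic cone. Concretely, I need: (i) a combinatorial lemma that if $p_1, \dots, p_n \in H$ are pairwise at distance $\geq 4r$ then the map $F^n \to F \wr H$ sending a tuple of configurations to the corresponding element (arrow at basepoint) distorts the $\ell^1$ metric — where each $F$ is weighted so that changing coordinate $i$ costs $\sim d_H(1,p_i) \sim r$ — by a bounded factor, using that an optimal ``lamplighter route'' must visit each $p_i$ and the $p_i$ are far apart so the route length is $\sim \sum_i d_H(1,p_i)$ plus lower-order terms; and (ii) the observation that in a non-virtually-cyclic group one can take $n$ such points inside a ball of radius $O(r)$ with $r \to \infty$ and $n \to \infty$ simultaneously — this follows from the fact that non-virtually-cyclic finitely generated groups have at least quadratic growth (by Gromov's polynomial growth theorem, groups of linear growth are virtually cyclic), so a ball of radius $Cr$ contains $\gtrsim r^2$ points, and a greedy $2r$-separated subset of it has cardinality $\gtrsim r$, giving $n \sim r \to \infty$. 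Then in the asymptotic cone (rescaling by $r$) these boxes converge to genuine $n$-cubes for every $n$, so $\dim = \infty$. I expect step (i), the uniform quasi-isometric control of the $\ell^1$-box embedding, to be the technical heart of the argument; everything else is assembling standard facts about asymptotic cones and growth.
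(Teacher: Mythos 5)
Your ``easy'' direction is correct and is essentially the paper's own argument: reduce to $F\wr\mathbb{Z}$, use the quasi-isometric embedding into a product of two simplicial trees, and conclude that the cones embed into a product of two $\mathbb{R}$-trees, hence have dimension at most $2$. (The cones of Diestel--Leader graphs are not $\mathbb{R}$-trees, but your fallback via the product of trees is the right statement.) The hard direction, however, has two genuine gaps, and they occur precisely in the cases the proposition is mainly about, namely amenable $H$ such as $\mathbb{Z}^m$. First, the separated sites you need do not exist. In $\mathbb{Z}^2$ any subset of $B(1,Cr)$ whose points are pairwise at distance $\geq 2r$ has cardinality bounded by a constant depending only on $C$ (a doubling/volume bound), not growing like $r$; your greedy count implicitly divides $|B(1,Cr)|$ by an upper bound for $|B(1,2r)|$, and in a group of polynomial growth of degree $d$ this ratio is about $(C/2)^d$, a constant. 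In particular ``$n$ points pairwise $R_n$-far apart in a ball of radius comparable to $R_n$'' is impossible for large $n$ in any virtually nilpotent group, so the construction already fails for $H=\mathbb{Z}^2$. Second, even where many separated sites exist (say in a free group), your $i$-th coordinate is a single $F$-valued lamp of ``weight'' $\sim r$: the set you build has only $|F|^n$ elements, and after rescaling by the cone parameter its points remain at pairwise distance bounded below, so its ultralimit is uniformly discrete; a finite set weighted by $r$ is not an interval of length $r$, and no copy of $[0,1]^n$ arises in the cone this way.

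The fix — and this is what the paper does — is to make each direction consist of a number of lamps that grows like the scaling factor, rather than of one lamp placed far away. For a fixed $k$, choose inside a ball $B(o_n,R_n)$ (with $R_n$ minimal so that $|B(o_n,R_n)|\geq k\lambda_n$) $k$ disjoint sets $L_n(1),\dots,L_n(k)$ of $\lambda_n$ lamp positions, and let the $i$-th coordinate of the would-be cube be the number of lit lamps in $L_n(i)$ (lighting them in a fixed order). Since $H$ is not virtually cyclic it has super-linear growth, so $R_n/\lambda_n\to 0$. The $\ell^1$ lower bound $d\geq\sum_i|r_i-s_i|$ is then automatic, because the lamplighter must visit every lamp where the two configurations differ — no separation between lamp positions is needed at all — while the upper bound only costs a travel overhead of order $kR_n=o(\lambda_n)$ on top of (a bounded multiple of) $\sum_i|r_i-s_i|$, provided the sets and their enumerations are chosen so that consecutive lamps in each $L_n(i)$ are uniformly close (your worry (i) resurfaces here, but at the level of ordering lamps inside one ball, not of a travelling-salesman estimate between distant sites). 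Rescaling by $\lambda_n$ then yields bi-Lipschitz copies of $[0,1]^k$ in every asymptotic cone for every $k$. So the correct mechanism is ``many cheap lamps in a small ball,'' powered by super-linear growth, whereas your ``few expensive, far-apart lamps'' mechanism cannot work in the polynomial-growth case and, even when the sites exist, produces only finite, uniformly discrete sets in the cone.
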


\noindent
We were informed by private communication that this phenomenon has been also noticed by Y. Cornulier, independently.

\begin{proof}[Proof of Proposition \ref{prop:cones}.]
If $H$ is finite then there is nothing to prove, so from now on we assume that $H$ is infinite. We distinguish two cases. First, assume that $H$ has linear growth. In other words, $H$ is virtually cyclic, and it must be biLipschitz equivalent to $\mathbb{Z}$ so that $F \wr H$ is quasi-isometric to $F \wr \mathbb{Z}$. It is well-known that $F \wr \mathbb{Z}$ quasi-isometrically embed into a product of two simplicial trees, so the asymptotic cones of $F \wr \mathbb{Z}$ topologically embed into a product of two real trees. We conclude that the asymptotic cones of $F \wr \mathbb{Z}$ have finite topological dimension.

\medskip \noindent
Next, assume that $H$ has super-linear growth. Fix an ultrafilter $\omega$ over $\mathbb{N}$, a sequence of basepoints $o=(o_n)$, and a sequence of scaling factors $\lambda=(\lambda_n)$. Without loss of generality, we assume that $\lambda_n \in \mathbb{N}$ for every $n \geq 0$. Given a $k \geq 1$, our goal is to construct a topological embedding of $[0,1]^k$ into $\mathrm{Cone}_\omega(F\wr H,o,\lambda)$. Taking $k$ arbitrarily large will prove that the topological dimension of the asymptotic cone is infinite.

\medskip \noindent
For every $n \geq 0$, let $R_n$ denote the smallest integer such that the ball $B(o_n,R_n)$ has cardinality $\geq k \lambda_n$. Notice that $(R_n/\lambda_n)$ tends to zero as $n \to + \infty$ because the fact that $H$ has super-linear growth implies that
$$\frac{R_n}{\lambda_n}= \frac{R_n}{ |B(o_n,R_n-1)|} \cdot \frac{| B(o_n,R_n-1)|}{\lambda_n} \leq k \frac{R_n}{ |B(o_n,R_n-1)|} \underset{n \to + \infty}{\longrightarrow} 0.$$
Now, fix an index $n \geq 0$. Because $B(o_n,R_n)$ has size $\geq k\lambda_n$, there exist $k$ pairwise disjoint subsets $L_n(1), \ldots, L_n(k) \subset B(o_n,R_n)$ of size $\lambda_n$. For every $1 \leq i \leq k$, we fix an enumeration $L_n(i)= \{\ell_n^1(i), \ldots, \ell_n^{\lambda_n}(i) \}$. Define
$$\Psi : \left\{ \begin{array}{ccc} [0,\lambda_n]^k & \to & F\wr H \\ (r_1,\ldots, r_k) & \mapsto & \left( h \mapsto \left\{ \begin{array}{cl} 1& \text{if $h=\ell_n^j(i)$ for some $1 \leq i \leq k$, $1 \leq j \leq r_i$} \\ 0 & \text{otherwise} \end{array} \right., \ o_n \right) \end{array} \right..$$
Observe that
$$\sum\limits_{i=1}^k |r_i-s_i| \leq d \left( \Psi(r), \Psi(s) \right) \leq \sum\limits_{k=1}^k |r_i-s_i| + 2kR_n$$
for all $r=(r_i),s=(s_i) \in [0,\lambda_n]^k$. As a consequence, $\Psi$ induces a biLipschitz embedding
$$\Psi_\infty : \underset{\simeq [0,1]^k}{\underbrace{\mathrm{Cone}_\omega \left( [1,\lambda_n]^k, 0, \lambda \right)}} \hookrightarrow \mathrm{Cone}_\omega (F \wr H, o,\lambda),$$
concluding the proof of our proposition.
\end{proof}

\noindent
In summary, Question \ref{QOne} is completely solved in the two-ended case:

\begin{thm}
Let $F_1,F_2$ be two finite groups and $H_1,H_2$ two finitely generated groups. Assume that $H_1$ is two-ended. The groups $F_1 \wr H_1$ and $F_2 \wr H_2$ are quasi-isometric if and only if $H_2$ is two-ended and $|F_1|,|F_2|$ are powers of a common number. 
\end{thm}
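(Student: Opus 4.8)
The plan is to split the statement into two implications and handle them by reducing everything to results already proved in the excerpt. The ``if'' direction is the easy one: if $H_1,H_2$ are both two-ended, then they are both virtually infinite cyclic, hence biLipschitz equivalent to $\mathbb{Z}$ (by the standard fact that all two-ended groups are biLipschitz equivalent to $\mathbb{Z}$), so $F_i \wr H_i$ is biLipschitz equivalent to $F_i \wr \mathbb{Z}$. If moreover $|F_1|,|F_2|$ are powers of a common number, then Theorem~\ref{thm:EFW} (the Eskin--Fisher--Whyte classification, quoted in the excerpt) gives a quasi-isometry $F_1 \wr \mathbb{Z} \to F_2 \wr \mathbb{Z}$, and composing with the two biLipschitz equivalences yields a quasi-isometry $F_1 \wr H_1 \to F_2 \wr H_2$. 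This is already contained verbatim in Theorem~\ref{thm:EFW} once we know $H_2$ is two-ended, so really the content is all in the ``only if'' direction.

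For the ``only if'' direction, suppose $q : F_1 \wr H_1 \to F_2 \wr H_2$ is a quasi-isometry, with $H_1$ two-ended. First I would dispose of degenerate cases: if $F_1$ or $F_2$ is trivial the statement follows from Theorem~\ref{thm:QIfactors} together with the observation that $F \wr H$ is finitely presented iff $F$ is trivial or $H$ is finite; and if $H_1$ is finite then $F_1 \wr H_1$ is finite, forcing $F_2 \wr H_2$ finite and the conclusion is immediate. So assume $F_1,F_2$ nontrivial and $H_1$ infinite two-ended, i.e. virtually $\mathbb{Z}$. The key step is to show $H_2$ is two-ended. Since $H_1$ is two-ended, $F_1 \wr H_1$ is virtually solvable, hence amenable, so $F_2 \wr H_2$ is amenable, so $H_2$ is amenable — in particular $H_2$ has one or two ends (an amenable group cannot be infinitely-ended, as infinitely-ended groups contain nonabelian free subgroups). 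It remains to rule out $H_2$ one-ended. For this I invoke Proposition~\ref{prop:cones}: because $H_1$ is virtually cyclic, the asymptotic cones of $F_1 \wr H_1$ have finite topological dimension; but asymptotic cones are a quasi-isometry invariant (finiteness of their topological dimension, at least, is preserved — biLipschitz homeomorphic cones have the same topological dimension), so the asymptotic cones of $F_2 \wr H_2$ also have finite topological dimension, and Proposition~\ref{prop:cones} then forces $H_2$ virtually cyclic, hence two-ended. (Alternatively one could cite Corollary~\ref{cor:NumberOfEnds} to exclude the mixed one-ended/multi-ended situation, but the cone argument is cleaner here since it directly handles ``$H_1$ two-ended, $H_2$ one-ended''.)

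Once we know $H_2$ is two-ended, Theorem~\ref{thm:EFW} applies directly: it says that if $H_1$ is two-ended then $F_1 \wr H_1 \sim_{QI} F_2 \wr H_2$ forces $H_1,H_2$ quasi-isometric and $|F_1|,|F_2|$ powers of a common number. Since two-ended groups are automatically quasi-isometric to each other, the only surviving conclusion is that $|F_1|,|F_2|$ are powers of a common number, which is exactly what we want. So the proof is a short assembly: (1) degenerate cases; (2) amenability of $F_2 \wr H_2$ to cut the ends of $H_2$ down to $\{1,2\}$; (3) Proposition~\ref{prop:cones} plus QI-invariance of finiteness of topological dimension of asymptotic cones to exclude one end; (4) quote Theorem~\ref{thm:EFW}.

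\textbf{Main obstacle.} The only nontrivial point is step (3): I need that ``the asymptotic cones have finite topological dimension'' is genuinely a quasi-isometry invariant. A quasi-isometry $G_1 \to G_2$ induces, for each choice of ultrafilter, basepoints and scaling factors, a biLipschitz homeomorphism between corresponding asymptotic cones (this is standard), and topological dimension is a topological — hence biLipschitz — invariant; so if \emph{every} asymptotic cone of $G_1$ has finite topological dimension, the same holds for $G_2$. One must be slightly careful that Proposition~\ref{prop:cones} is phrased as ``the asymptotic cones have finite topological dimension iff $H$ is virtually cyclic'', and its proof of the ``only if'' part produces, for $H$ of super-linear growth, a \emph{single} cone of infinite dimension — which is all that is needed here. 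I do not anticipate a real difficulty, just bookkeeping; everything substantive is already in the excerpt (Theorem~\ref{thm:EFW}, Theorem~\ref{thm:QIfactors}, Proposition~\ref{prop:cones}).
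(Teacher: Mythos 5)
Your proposal is correct and follows essentially the same route as the paper, which (without writing out a formal proof) assembles the statement exactly as you do: two-ended groups are biLipschitz equivalent to $\mathbb{Z}$, so the case where both $H_1,H_2$ are two-ended is the Eskin--Fisher--Whyte theorem, and Proposition \ref{prop:cones} combined with the quasi-isometry invariance of having finite-dimensional asymptotic cones forces $H_2$ to be virtually cyclic. Two minor remarks: your amenability step is redundant, since Proposition \ref{prop:cones} already excludes every infinite non-virtually-cyclic $H_2$ (one-ended or infinitely-ended alike, both having super-linear growth), and your appeal to Theorem \ref{thm:QIfactors} in the degenerate case is not literally available ($H_2$ is not assumed finitely presented) --- though that case lies outside the paper's implicit standing assumption that $F_1,F_2$ are non-trivial, and your parenthetical argument via quasi-isometry invariance of finite presentability is what actually does the work there.
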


\noindent
As a consequence of Proposition \ref{prop:cones}, the two-endedness of $H$ can be detected from the asymptotic geometry of $F\wr H$. And, according to Theorem \ref{thm:QIfactors}, the number of ends of $H$ can also be detected if we restrict ourselves to finitely presented groups. So a natural question is the following:

\begin{question}
Let $F_1,F_2$ be two non-trivial finite groups and $H_1,H_2$ two finitely generated groups. If $F_1 \wr H_1$ and $F_2 \wr H_2$ are quasi-isometric, do $H_1$ and $H_2$ have the same number of ends?
\end{question}

\paragraph{One-ended case.} For finitely presented groups, Theorem \ref{thm:LampRigid} provides a complete answer to Question \ref{QOne} in the one-ended case. Now, a natural problem is to determine what happens for infinitely presented groups. For instance, does Theorem \ref{thm:QIfactors} still holds? More precisely:

\begin{question}
Do there exist finitely generated groups $H_1$ and $H_2$ that are not quasi-isometric but such that $\mathbb{Z}_2 \wr H_1$ and $\mathbb{Z}_2 \wr H_2$ are quasi-isometric?
\end{question}

\noindent
In this perspective, iterated wreath products are examples of interest. For instance:

\begin{question}
Let $H$ be a finitely presented group and $n,m,p,q \geq 2$ four integers. When are $\mathbb{Z}_n \wr (\mathbb{Z}_m \wr H)$ and $\mathbb{Z}_p \wr (\mathbb{Z}_q \wr H)$ quasi-isometric?
\end{question}

\noindent
In particular, the cases $H=\mathbb{Z}$ and $H= \mathbb{Z}^2$ would be already interesting. Observe that iterated wreath products with different numbers of factors can be often distinguished. For instance, if $A$ is a finitely generated abelian group, then $\mathbb{Z}_2 \wr ( \mathbb{Z}_2 \wr A)$ and $\mathbb{Z}_2 \wr A$ are not quasi-isometric since they have different isoperimetric profiles (see \cite{MR2011120} for more information). However, it is not clear that such a distinction is always possible.

\begin{question}
Does there exist a finitely presented group $H$ such that $\mathbb{Z}_2 \wr (\mathbb{Z}_2 \wr H)$ and $\mathbb{Z}_2 \wr H$ are quasi-isometric? $\mathbb{Z}_2 \wr (\mathbb{Z}_2 \wr (\mathbb{Z}_2 \wr H))$ and $\mathbb{Z}_2 \wr (\mathbb{Z}_2 \wr H)$?
\end{question}

\paragraph{Infinitely-ended case.} We are not aware of any answer, even partial, regarding Question \ref{QOne} in the infinitely-ended case. The situation is different from \cite{EFWI, EFWII} since, as proved by Proposition \ref{prop:QInonamenable}, if $H$ is infinitely-ended and if $F_1,F_2$ are two finite groups whose cardinalities have the same prime divisors, then $F_1 \wr H$ and $F_2 \wr H$ are quasi-isometric. But our arguments do not seem to provide any valuable information either. The main reason is that, in the multi-ended case, quasi-isometries may not be at finite distance from aptolic quasi-isometries, as shown by Example \ref{ex:NonAptolic}. Even worse, it can be shown thanks to the quasi-isometries given by Proposition~\ref{prop:NonAptolic} that there exists a sequence of (uniform) quasi-isometries that are not at finite distance from aptolic quasi-isometries pointwise converging to the identity. As a consequence, quasi-isometries that are not at finite distance from aptolic quasi-isometries define a dense $G_\delta$ in the topological space of quasi-isometries (endowed with the topology of pointwise convergence). Loosely speaking, non-aptolic quasi-isometries are generic in the multi-ended case.

\medskip \noindent
The following problem should be the next step towards a full understanding of the asymptotic geometry of lamplighter groups:

\begin{question}
Let $\mathbb{F}$ be a finitely generated free group and $n,m \geq 2$ two integers. When are $\mathbb{Z}_n \wr \mathbb{F}$ and $\mathbb{Z}_m \wr \mathbb{F}$ quasi-isometric?
\end{question}

\paragraph{Quasi-isometric rigidity.} A natural question following our quasi-isometric classification is: given a finite group $F$ and a finitely presented one-ended group $H$, which finitely generated groups are quasi-isometric to the lamplighter group $F \wr H$? In the case $H=\mathbb{Z}$, it turns out that such a group must be a uniform lattice in the isometry group of a Diestel-Leader graph \cite{EFWI,EFWII} and these lattices are described in \cite{MR2991419} as \emph{cross-wired lamplighter groups}. Although a group $G$ quasi-isometric to $F \wr \mathbb{Z}$ may not be commensurable to a lamplighter group, its structure is well-understood and close to the structure of lamplighter groups over $\mathbb{Z}$. In the opposite direction, simple uniform lattices in automorphism groups of Cayley graphs of lamplighter groups over free groups are constructed in \cite{leBoudec}, exhibiting a strong lack of rigidity. Therefore, it is not clear whether it is reasonable to expect rigidity or flexibility among groups quasi-isometric to lamplighter groups in general. However, it is reasonable to expect some rigidity at least for free abelian groups:

\begin{question}
Let $n,m \geq 2$ be two integers. Which finitely generated groups are quasi-isometric to $\mathbb{Z}_n \wr \mathbb{Z}^m$?
\end{question}

\noindent
In the general one-ended case, it is not clear whether a strong rigidity, like for $\mathbb{Z}$, has to be expected. Nevertheless, strong restrictions on groups quasi-isometric to lamplighter groups over one-ended finitely presented groups can be deduced from our work. For instance:

\begin{thm}\label{thm:QItoFwrH}
Let $F$ be a non-trivial finite group, $H$ a finitely presented one-ended group, and $G$ a finitely generated group. If $G$ is quasi-isometric to $F \wr H$, then there exist finitely many subgroups $H_1, \ldots, H_n \leq G$ such that:
\begin{itemize}
	\item $H_1, \ldots, H_n$ are all quasi-isometric to $H$;
	\item the collection $\{H_1, \ldots, H_n\}$ is almost malnormal;
	\item for every finitely presented one-ended subgroup $K\leq G$, there exist $g \in G$ and $1 \leq i \leq n$ such that $K \leq gH_ig^{-1}$.
\end{itemize}
\end{thm}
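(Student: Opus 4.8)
The plan is to combine the geometric rigidity established in this paper (Theorem~\ref{thm:FullEmbeddingThm}, and its consequence that quasi-isometries between lamplighters over one-ended graphs are leaf-preserving) with the recent abstract transfer result \cite[Theorem 1.1]{QIsubgroup}, as already announced in the introduction. First I would set up the geometry. Identifying $F\wr H$ with the lamplighter graph $\mathcal{L}_{|F|}(\mathrm{Cayl}(H,S))$, a quasi-isometry $\varphi : F\wr H \to G$ transports the leaf structure to $G$: the images $\varphi(L)$ of leaves $L$ of $F\wr H$ form a family of uniformly quasi-isometrically embedded copies of $H$ in $G$, pairwise at infinite Hausdorff distance (by the Fact in Section~2.1, leaves do not fellow-travel), and this family is coarsely invariant. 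The key structural inputs are: (i) any finitely presented one-ended subgroup $K\le G$ is coarsely $1$-connected and uniformly one-ended, so $\varphi^{-1}(K)$ is a coarse embedding of such a space into $\mathcal{L}_{|F|}(\mathrm{Cayl}(H,S))$, and Theorem~\ref{thm:FullEmbeddingThm} forces $\varphi^{-1}(K)$ — hence $K$ itself — into a bounded neighbourhood of a single leaf-image $\varphi(L)$; and (ii) the collection of leaf-images is ``almost malnormal'' up to finite Hausdorff distance, because the intersection of neighbourhoods of two distinct leaves is bounded (again by the non-fellow-travelling Fact), and conjugation in $G$ permutes a natural enlargement of this family.

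Next I would invoke \cite[Theorem 1.1]{QIsubgroup}. That result (which is exactly tailored to promote a $G$-invariant, almost-malnormal family of coarse copies of a subgroup, containing all one-ended pieces, to an \emph{honest} finite family of subgroups with the same properties) converts the geometric data into the algebraic conclusion: there are finitely many subgroups $H_1,\dots,H_n\le G$, each at finite Hausdorff distance from one of the leaf-images hence quasi-isometric to $H$, forming an almost malnormal collection, and absorbing every finitely presented one-ended subgroup up to conjugacy. The finiteness of $n$ comes from the fact that $G$ acts on the set of leaf-images with finitely many orbits (because $F\wr H$ acts on its leaves with finitely many orbits — indeed transitively up to the $H$-action, so the orbit count is controlled by the quasi-action), and \cite[Theorem 1.1]{QIsubgroup} packages precisely this.

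The main obstacle, and the step I would spend the most care on, is verifying that the hypotheses of \cite[Theorem 1.1]{QIsubgroup} are met — in particular checking that \emph{every} finitely presented one-ended subgroup $K\le G$ (not merely those quasi-isometric to $H$) is captured by a leaf. For this I would argue that $\varphi^{-1}(K)$, being a finitely presented one-ended group, is coarsely $1$-connected and uniformly one-ended (this uses that finite presentability gives coarse $1$-connectedness and one-endedness gives uniform one-endedness for Cayley graphs), so Theorem~\ref{thm:FullEmbeddingThm} applies verbatim; the delicate point is the uniformity of the neighbourhood constant over all such $K$, which Theorem~\ref{thm:FullEmbeddingThm} supplies since its constant depends only on $X$, on the coarse-connectedness/one-endedness gauges, and on the parameters of the coarse embedding, and these are bounded once we fix $\varphi$. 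A secondary technical point is that ``almost malnormal'' in the conclusion must be matched with the coarse statement that distinct leaf-neighbourhoods meet in bounded sets together with the $G$-equivariance; here one must be slightly careful that the subgroups $H_i$ produced by \cite[Theorem 1.1]{QIsubgroup} inherit genuine (not merely coarse) almost malnormality, but this is exactly what that theorem asserts, so no extra work beyond citing it is needed.
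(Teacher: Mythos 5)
Your overall route is the one the paper takes: combine the rigidity results (Theorem~\ref{thm:StructureQI}, so that every self-quasi-isometry of $F\wr H$ coarsely permutes the $H$-cosets, and Theorem~\ref{thm:FullEmbeddingThm}, to capture one-ended finitely presented subgroups near a single coset) with \cite[Theorem~1.1]{QIsubgroup}. The genuine gap is in how much you delegate to that citation. In the paper, \cite[Theorem~1.1]{QIsubgroup} is invoked only after checking its hypotheses (that $H$ has finite index in its commensurator, via Lemma~\ref{lem:commensurator}, and that auto-quasi-isometries uniformly coarsely preserve $H$-cosets), and it delivers only two things for the quasi-action of $G$ on $F\wr H$: finitely many $H$-cosets $\mathcal{H}_1,\dots,\mathcal{H}_n$ up to the $G$-quasi-action, and the fact that each quasi-stabiliser quasi-acts cocompactly on its coset (whence $H_i:=\mathrm{qstab}(\mathcal{H}_i)$ is quasi-isometric to $H$). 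It does \emph{not} assert almost malnormality of $\{H_1,\dots,H_n\}$ in $G$, nor anything about finitely presented one-ended subgroups of $G$; your closing claim that ``this is exactly what that theorem asserts, so no extra work beyond citing it is needed'' is where the argument breaks. Both remaining bullets require separate (short) arguments, which the paper supplies: for malnormality, if $gH_ig^{-1}\cap H_j$ is infinite, this infinite set quasi-stabilises both $g\mathcal{H}_i$ and $\mathcal{H}_j$, and the bounded intersection of neighbourhoods of distinct cosets forces $g\mathcal{H}_i=\mathcal{H}_j$, hence $i=j$ and $g\in H_i$; for the last bullet, after Theorem~\ref{thm:FullEmbeddingThm} places $q(K)$ near a coset $\mathcal{H}$, one must still argue that every $k\in K$ quasi-stabilises $\mathcal{H}$ (because $k\cdot\mathcal{H}$ is coarsely an $H$-coset containing $q(K)$ in a neighbourhood, so it must coincide with $\mathcal{H}$), and then that $\mathrm{qstab}(\mathcal{H})=gH_ig^{-1}$ for the $g,i$ given by the finiteness statement. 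You have the geometric ingredients for both steps (non-fellow-travelling of leaves, coarse invariance of the leaf family), but the proposal as written leaves them unperformed.

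A secondary inaccuracy: the uniformity over all $K$ that you worry about is neither available nor needed. The parameters of the coarse embedding $q\circ\iota_K$ depend on the distortion of $K$ in $G$, which is not bounded over all finitely presented one-ended subgroups, so the capture constant is not uniform; but the argument only treats each $K$ individually, so this does not matter. Finally, note that the paper works with the quasi-action of $G$ on $F\wr H$ induced by $q$ and a quasi-inverse rather than pushing leaves forward into $G$; your viewpoint is equivalent, but the hypothesis of \cite[Theorem~1.1]{QIsubgroup} about the commensurator of $H$ (Lemma~\ref{lem:commensurator}) should be stated explicitly rather than absorbed into the coarse picture.
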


\noindent
The following elementary observation will be needed in our proof:

\begin{lemma}\label{lem:commensurator}
Let $F,H$ be two groups. Then $H$ is an almost malnormal subgroup in $F \wr H$. As a consequence, if $H$ is infinite, it coincides with its commensurator. 
\end{lemma}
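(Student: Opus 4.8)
The statement has two parts: first, that $H$ is almost malnormal in $F \wr H$; second, that this forces $H$ to equal its own commensurator when $H$ is infinite. The plan is to handle the first part by a direct computation inside the semidirect product structure $F \wr H = \left( \bigoplus_H F \right) \rtimes H$, and the second part as a general group-theoretic consequence.

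For the first part, fix $g \in (F\wr H) \setminus H$ and I want to show $H \cap gHg^{-1}$ is finite. Write $g = (c, h)$ with $c \in \bigoplus_H F$ and $h \in H$; since $g \notin H$ we have $c \neq 0$. The conjugate $gHg^{-1}$ consists of the elements $(c,h)(1,k)(c,h)^{-1}$ for $k \in H$, which one computes to equal $(c - k\cdot c, \, k)$ where $k \cdot c$ denotes the translate of the colouring $c$ by $k$ (the coordinate of $H$ being fixed at $1$ after the reduction). For such an element to lie in $H = \{(0,k) \mid k \in H\}$, we need $c = k \cdot c$, i.e. $k$ must fix $c$ under the translation action of $H$ on $\bigoplus_H F$. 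But this translation action is free away from $0$: if $c \neq 0$ and $k \cdot c = c$, then the (finite, nonempty) support $\mathrm{supp}(c) \subset H$ is invariant under left translation by $k$, which forces the cyclic group $\langle k \rangle$ to be finite (otherwise the orbit of any point of $\mathrm{supp}(c)$ would be infinite). Thus the stabiliser of $c$ in $H$ is a finite subgroup, and $H \cap gHg^{-1}$ is contained (via the projection to the $H$-coordinate) in this finite stabiliser, hence is finite. This establishes almost malnormality. I expect this computation to be the main piece of content; the only place to be careful is in correctly identifying $gHg^{-1}$ inside the semidirect product and in justifying the freeness of the translation action, but neither is genuinely difficult.

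For the second part, recall that the commensurator $\mathrm{Comm}_{F\wr H}(H)$ is the set of $g \in F \wr H$ such that $H \cap gHg^{-1}$ has finite index in both $H$ and $gHg^{-1}$. If $H$ is infinite and $g \notin H$, then by the first part $H \cap gHg^{-1}$ is finite, hence of infinite index in $H$; therefore $g \notin \mathrm{Comm}_{F\wr H}(H)$. Conversely $H \subset \mathrm{Comm}_{F\wr H}(H)$ always. Hence $\mathrm{Comm}_{F\wr H}(H) = H$, which is the claim. This step is immediate once the first is in hand.

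In summary, I would write the proof in two short paragraphs: one doing the conjugation computation and invoking freeness of the translation action to conclude almost malnormality, and one deducing the statement about the commensurator. There is no serious obstacle; the argument is elementary and self-contained, the only subtlety being the bookkeeping in the semidirect product.
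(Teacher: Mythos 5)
Your overall route is the same as the paper's: a direct computation in the semidirect product $\left(\bigoplus_H F\right)\rtimes H$, reducing almost malnormality to the finiteness of the stabiliser in $H$ of the nonzero colouring $c$, with the commensurator statement following formally. Two small bookkeeping remarks: the conjugate of $(0,k)$ by $g=(c,h)$ is $(c-(hkh^{-1})\cdot c,\,hkh^{-1})$, not $(c-k\cdot c,\,k)$ — harmless, since as $k$ runs over $H$ this describes the same subgroup (the paper avoids the issue by writing $g=ah$, so that $gHg^{-1}=aHa^{-1}$); and your additive notation tacitly assumes $F$ abelian, whereas the lemma is stated for arbitrary $F$ — replace $c-k\cdot c$ by $c\,(k\cdot c)^{-1}$ and nothing changes.

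The one genuine lapse is the finiteness step. You show that any $k$ fixing $c$ preserves the finite nonempty set $\mathrm{supp}(c)$ and hence has finite order, and then conclude that ``the stabiliser of $c$ in $H$ is a finite subgroup''. That inference is invalid: a subgroup all of whose elements are torsion can be infinite (infinite torsion groups exist, and $H$ is an arbitrary group here), so element-wise finite order does not bound the stabiliser. Relatedly, the translation action on nonzero colourings is not ``free away from $0$'': nontrivial finite-order elements can fix nonzero colourings. The correct, equally short argument is: if $k\cdot\mathrm{supp}(c)=\mathrm{supp}(c)$ and $s\in\mathrm{supp}(c)$, then $ks\in\mathrm{supp}(c)$, so $k\in\mathrm{supp}(c)\,s^{-1}$; hence the setwise stabiliser of $\mathrm{supp}(c)$, and a fortiori the stabiliser of $c$, has at most $|\mathrm{supp}(c)|$ elements. (This is exactly the paper's containment in $\mathrm{stab}_H(\mathrm{supp}(a))$.) With that one-line repair your proof is complete and coincides with the paper's.
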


\begin{proof}
Fix an element $g \in F \wr H$. We can write $g$ as a product $ah$ where $a \in \bigoplus_H F$ and $h \in H$. Observe that
$$\begin{array}{lcl} aHa^{-1} \cap H & = & \{aha^{-1} \mid h \in H\} \cap H= \{ aha^{-1} h^{-1} \cdot h \mid H \} \cap H \\ \\ & = & \{ h \in H \mid aha^{-1}h^{-1}=1 \} \subset \mathrm{stab}_H(\mathrm{supp}(a)) \end{array}$$
is finite unless $\mathrm{supp}(a)= \emptyset$, i.e. $g \in H$. 
\end{proof}

\begin{proof}[Proof of Theorem \ref{thm:QItoFwrH}.]
Fix a quasi-isometry $q : G \to F\wr H$ and a quasi-inverse $\bar{q} : F\wr H \to G$. Then $G$ quasi-acts properly and cocompactly on $F \wr H$ via
$$g \mapsto \left( (c,p) \mapsto q( g \cdot \bar{q}(c,p)) \right), \ g \in G, (c,p) \in F \wr H.$$
We know from Lemma \ref{lem:commensurator} that $H$ has finite index in its commensurator, and we deduce from Theorem~\ref{thm:StructureQI} that every auto-quasi-isometry of $F\wr H$ sends every $H$-coset at (uniform) finite Hausdorff distance from another $H$-coset. It follows from \cite[Theorem 1.1]{QIsubgroup} (and its proof) that, with respect to our quasi-action $G \curvearrowright F\wr H$,
\begin{itemize}
	\item[(i)] there exist a constant $C \geq 0$ and finitely many $H$-cosets $\mathcal{H}_1, \ldots, \mathcal{H}_n$ such that, for every $H$-coset $\mathcal{H}$, there exist $g \in G$ and $1 \leq i \leq n$ such that the Hausdorff distance between $\mathcal{H}$ and $g \mathcal{H}_i$ is finite;
	\item[(ii)] for every $H$-coset $\mathcal{H}$, the quasi-stabiliser $$\mathrm{qstab}(\mathcal{H}):= \left\{ g \in G \mid \text{$\mathcal{H}$ and $g \mathcal{H}$ at finite Hausdorff distance} \right\}$$ quasi-acts cocompactly on $\mathcal{H}$.
\end{itemize}
Without loss of generality, we assume that $\mathcal{H}_1, \ldots, \mathcal{H}_n$ are pairwise distinct. For every $1 \leq i \leq n$, let $H_i$ denote the quasi-stabiliser of $\mathcal{H}_i$. It follows from $(ii)$ that $H_1, \ldots, H_n$ are all quasi-isometric to $H$. 

\medskip \noindent
Let $g \in G$ and $1 \leq i,j \leq n$ be such that $gH_ig^{-1} \cap H_j$ is infinite. Notice that $gH_ig^{-1}$ (resp. $H_j$) quasi-stabilises $g \mathcal{H}_i$ (resp. $\mathcal{H}_j$). Because the intersection between two neighbourhoods of distinct $H$-cosets must be bounded in $F \wr H$, we must have $g \mathcal{H}_i= \mathcal{H}_j$. In other words, $i=j$ and $g \in H_i$. Thus, we have proved that $\{H_1, \ldots, H_n\}$ is almost malnormal.

\medskip \noindent
Finally, let $K \leq G$ be a finitely presented one-ended subgroup. If we denote by $\iota : K \hookrightarrow G$ the inclusion, then $q \circ \iota$ defines a coarse embedding $K \to F \wr H$. According to Theorem~\ref{thm:FullEmbeddingThm}, $q(\iota(K))$ lies in the neighbourhood of an $H$-coset $\mathcal{H}$. As a consequence, for every $k \in K$, $k \cdot \mathcal{H}$ and $\mathcal{H}$ both contain $q(\iota(K))$ in a neighbourhood. But $k \cdot \mathcal{H}$ lies at finite Hausdorff distance from an $H$-coset, and again the intersection between two neighbourhoods of two distinct $H$-cosets is bounded, so $k$ has to quasi-stabilise $\mathcal{H}$. Therefore, $K \leq g H_ig^{-1}$ where $g \in G$ and $1 \leq i \leq n$ are given by (i), i.e. the Hausdorff distance between $\mathcal{H}$ and $g \mathcal{H}_i$ is finite.
\end{proof}

\noindent
As an illustration of Theorem \ref{thm:QItoFwrH}, let us prove the following observation (which, together with Corollary \ref{cor:PermutationalW}, characterises which finitely generated permutational wreath products between finite and finitely presented one-ended groups are quasi-isometric to standard wreath products between finite and finitely presented one-ended groups):

\begin{cor}\label{cor:PermutationW}
Let $F_1,F_2$ be two non-trivial finite groups, $H_1,H_2$ two finitely presented one-ended groups, and $X_1,X_2$ two sets on which $H_1,H_2$ respectively act with finitely many orbits. Assume that $H_1$ acts on $X_1$ with finite stabilisers. If $F_1 \wr_{X_1} H_1$ and $F_2 \wr_{X_2} H_2$ are quasi-isometric, then $H_2$ acts on $X_2$ with finite stabilisers.
\end{cor}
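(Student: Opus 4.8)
The plan is to reduce Corollary~\ref{cor:PermutationW} to the embedding theorem by finding, inside $F_i \wr_{X_i} H_i$, a large supply of finitely presented one-ended subgroups whose presence forces the point-stabilisers of $H_i \curvearrowright X_i$ to be finite. First I would recall that, by Lemma~\ref{lem:QIperm}, the group $F_2 \wr_{X_2} H_2$ is quasi-isometric to a lamplighter graph $\mathcal{L}_{|F_2|}(Y_2)$, where $Y_2$ is built from the coset spaces $H_2/L^{(2)}_1, \ldots, H_2/L^{(2)}_{m_2}$ with $L^{(2)}_j$ the stabiliser of a point in the $j$th $H_2$-orbit of $X_2$. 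Similarly $F_1 \wr_{X_1} H_1$ is quasi-isometric to $\mathcal{L}_{|F_1|}(Y_1)$, and $Y_1$ is quasi-isometric to $H_1$ (since all the $L^{(1)}_i$ are assumed finite), hence to the one-ended finitely presented group $H_1$. So a quasi-isometry $F_1 \wr_{X_1} H_1 \simeq F_2 \wr_{X_2} H_2$ yields a quasi-isometry $\mathcal{L}_{|F_1|}(Y_1) \to \mathcal{L}_{|F_2|}(Y_2)$.

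Next I would observe that $Y_1$ is coarsely $1$-connected and uniformly one-ended: it is quasi-isometric to $H_1$, and these properties are quasi-isometry invariants of graphs. Therefore the copy of $Y_1 = Y_1(0)$, which is a leaf of $\mathcal{L}_{|F_1|}(Y_1)$, maps under the quasi-isometry to a coarse embedding of a coarsely $1$-connected uniformly one-ended graph into $\mathcal{L}_{|F_2|}(Y_2)$. By Theorem~\ref{thm:FullEmbeddingThm} (the embedding theorem), the image lies in a bounded neighbourhood of a leaf of $\mathcal{L}_{|F_2|}(Y_2)$, which is a copy of $Y_2$; since the quasi-isometry is coarsely surjective on that leaf, we get a quasi-isometric embedding of $Y_1$ onto a leaf, i.e.\ $Y_1$ quasi-isometrically embeds with quasi-dense image into $Y_2$, so $Y_1$ and $Y_2$ are quasi-isometric. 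Consequently $Y_2$ is one-ended (uniformly), and in particular \emph{not} quasi-isometric to a graph with unbounded ``branching'' coming from infinite point-stabilisers.

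The final step is to show that $Y_2$ being one-ended forces every $L^{(2)}_j$ to be finite. I would argue by contraposition: suppose some stabiliser $L^{(2)}_j$ is infinite. Then the subgraph $H_2/L^{(2)}_j \subset Y_2$ is a quotient of $H_2$ by an infinite subgroup, and the projection $H_2 \to H_2/L^{(2)}_j$ (as a subgraph of $Y_2$) is infinite-to-one with infinite fibres; more to the point, $Y_2$ contains, attached at each vertex $hL^{(2)}_j$, the whole $H_2$-coset $hL^{(2)}_j$ of infinite diameter, and these cosets meet $Y_2$ only along bounded ``hub'' pieces. A direct counting/cut argument, in the spirit of the proof of Proposition~\ref{prop:LampGraph} and of Corollary~\ref{cor:NumberOfEnds}, shows that removing a large enough bounded set from $Y_2$ disconnects it into many unbounded pieces — one sees this most cleanly by noting that $Y_2$ quasi-isometrically retracts onto $H_2$ but also maps onto each infinite coset, and an infinite subgroup of a finitely presented group together with its coset structure produces at least two ends after appropriate coning. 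Hence $Y_2$ is multi-ended, contradicting that it is quasi-isometric to the one-ended $Y_1 \simeq H_1$. Therefore all $L^{(2)}_j$ are finite, i.e.\ $H_2$ acts on $X_2$ with finite stabilisers.

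The main obstacle I expect is the last step: making precise, and rigorous, the claim that an infinite point-stabiliser $L^{(2)}_j$ forces $Y_2$ to be multi-ended. One has to be careful because $H_2$ itself is one-ended, so the extra ends have to come genuinely from the ``half-lamp'' structure of $Y_2$ (the disjoint union $\bigsqcup_i H_2/L^{(2)}_i$ with the added edges), not from $H_2$. I anticipate the cleanest route is to exhibit, for each $R$, a bounded subset $B_R \subset Y_2$ whose complement has at least two unbounded components, using that an infinite index-infinite nested pair inside a finitely presented group is itself multi-ended or at least has unbounded ``width'' — alternatively, to invoke Theorem~\ref{thm:QIfactors}-style reasoning via Lemma~\ref{lem:QIperm} to compare the coning-off of $F_2 \wr_{X_2} H_2$ along leaves with a lamplighter over a bounded graph versus one over an unbounded graph, and conclude with Proposition~\ref{prop:LampGraph} exactly as in the proof of Corollary~\ref{cor:NumberOfEnds}. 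Reconciling these two viewpoints and pinning down the quantifiers is where the real work lies.
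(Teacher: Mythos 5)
Your strategy breaks down at its very first step, and the breakdown is not repairable along the lines you sketch. Lemma~\ref{lem:QIperm} is only stated (and only true) for actions with \emph{finite} stabilisers: the maps $\Phi,\Psi$ in its proof have Lipschitz constants controlled by $\mathrm{diam}_H(L_i)$, so applying it to $H_2 \curvearrowright X_2$ assumes exactly the conclusion you are trying to prove. Worse, its conclusion is genuinely false when a stabiliser is infinite: take $H_2=\mathbb{Z}^2$ acting on $X_2=\mathbb{Z}$ by projection onto the first factor. Then $F_2\wr_{X_2}H_2 \cong (F_2\wr\mathbb{Z})\times\mathbb{Z}$, while the associated graph $Y_2$ is the Schreier graph $\mathbb{Z}$, and $(F_2\wr\mathbb{Z})\times\mathbb{Z}$ is not quasi-isometric to $\mathcal{L}_{|F_2|}(\mathbb{Z})$ (their asymptotic dimensions already differ). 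The point is that with an infinite stabiliser a given lamp can be toggled from infinitely many positions, so the word metric on $F_2\wr_{X_2}H_2$ is not that of any lamplighter graph $\mathcal{L}_{|F_2|}(Y_2)$; consequently Theorem~\ref{thm:FullEmbeddingThm} cannot be invoked on the target side as you do. Your final step is also aimed at the wrong property: one-endedness of $Y_2$ does \emph{not} force finite stabilisers (take $H_2=\mathbb{Z}^3$ acting on $\mathbb{Z}^2$ by projection: every stabiliser is infinite, yet the Schreier graph $Y_2=\mathbb{Z}^2$ is one-ended), so the contraposition you hope to establish in the last paragraph is simply false, quite apart from the unproved claim that the image of a leaf of $\mathcal{L}_{|F_1|}(Y_1)$ is coarsely dense in a leaf of the target (coarse surjectivity of the quasi-isometry on the whole space does not give density of $q(Y_1(0))$ in a single leaf; in the paper this kind of statement needs the embedding theorem applied in \emph{both} directions, hence one-endedness of both base graphs).

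For comparison, the paper's proof avoids any lamplighter model of $F_2\wr_{X_2}H_2$ altogether: it applies the quasi-isometric rigidity statement of Theorem~\ref{thm:QItoFwrH} to $G=F_2\wr_{X_2}H_2$ (legitimate because $G$ is quasi-isometric to $F_1\wr_{X_1}H_1$, which \emph{is} a lamplighter graph over a graph quasi-isometric to $H_1$), obtaining an almost malnormal collection $\mathcal{P}$ of subgroups quasi-isometric to $H_1$ that contain every finitely presented one-ended subgroup, in particular some $P\supseteq H_2$. An induction on supports, using almost malnormality and the infinitude of $\mathrm{stab}_{H_2}(h\cdot x)$, then shows that an infinite point-stabiliser would force $\bigoplus_{H_2\cdot x}F_2\le P$, hence $P=F_2\wr_{X_2}H_2$, contradicting that this group cannot be quasi-isometric to $H_1$ (it is not coarsely simply connected, cf.\ Proposition~\ref{prop:LampGraph}). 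If you want to salvage a geometric argument, it has to run through this algebraic rigidity on the $H_2$ side rather than through a lamplighter-graph model of $F_2\wr_{X_2}H_2$.
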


\begin{proof}
Let $\mathcal{P}$ denote the collection of the conjugates of the subgroups in $F_2 \wr_{X_2} H_2$ provided by Theorem \ref{thm:QItoFwrH}. As a consequence, there exists some $P \in \mathcal{P}$ such that $H_2 \leq P$. Assume that there exists a point $x \in X_2$ with infinite stabiliser in $H_2$. We claim that $\bigoplus_{H_2 \cdot x} F_2 \leq P$. 

\medskip \noindent
We fix an $a\in \bigoplus_{H_2 \cdot x} F_2$ and we argue by induction over the size of $\mathrm{supp}(a)$. If $\mathrm{supp}(a)$ is empty, there is nothing to prove. Otherwise, we can write $a$ as a product $bc$ where $b,c \in \bigoplus_{H_2 \cdot x} F_2$ are such that the support of $c$ has size the support of $a$ minus one and such that the support of $b$ has size one. We know from our induction hypothesis that $c \in P$. Moreover, we have
$$P \cap bPb^{-1} \supset H_2 \cap  bH_2b^{-1} \supset \mathrm{stab}_{H_2}(\mathrm{supp}(b))$$
where $\mathrm{stab}_{H_2}(\mathrm{supp}(b))$ is infinite since $\mathrm{supp}(b)$ contains only an $H_2$-translate of $x$. Because $\mathcal{P}$ is almost malnormal, necessarily $b \in P$. There, $a=bc \in P$ as desired.

\medskip \noindent
Therefore, $P$ contains the infinite normal subgroup $N:= \langle \bigoplus_{H_2 \cdot x} F_2 , H_2 \rangle \lhd F_2 \wr_{X_2} H_2$. For every $g \in F_2 \wr_{X_2} H_2$, we must have $P \cap gPg^{-1} \supset N$. And, because $\mathcal{P}$ is almost malnormal, this implies that $g \in P$. In other words, we have proved that $P = F_2 \wr_{X_2} H_2$. We get a contradiction since $F_2 \wr_{X_2} H_2$ is not quasi-isometric to $H_1$: otherwise, $H_1$ would be quasi-isometric to $F_1 \wr_{X_1} H_1$, which is impossible since $F_1 \wr_{X_1} H_1$ is not finitely presented. 
\end{proof}

\begin{remark}
Using Remark \ref{rem:locallycompact}, we deduce that a version of Corollary \ref{cor:PermutationW} holds (with the same proof) under the more general assumption that $H_1$ and $H_2$ are locally compact, compactly presented, one-ended, and that their actions on $X_1$ and $X_2$ have open stabilisers. Assuming that the stabilisers of $H_1$ are compact, we conclude that the same holds for $H_2$.  \end{remark}

\paragraph{Further applications.} Finally, let us mention that, although we mainly focused in this article on wreath products of the form $F \wr H$ where $F$ is a finite group and $H$ a finitely presented one-ended group, the techniques we developed can be generalised in several directions. 

\medskip \noindent
For instance, the embedding theorem provided by Theorem~\ref{thm:FullEmbeddingThm} can be proved for some wreath products $F \wr H$ where $F$ is infinite, providing interesting information about the quasi-isometries of $F \wr H$ when $H$ cannot be coarsely separated by a subspace of $F \times \cdots \times F$. We expect to investigate this subject in the future. 

\medskip \noindent
As another application, it is worth noticing that other infinitely presented groups can be studied in a similar way. For instance, given a group $G$, let $\mathscr{H}(G)$ be the semidirect product $\mathrm{Bij}_c(G) \rtimes G$ where $\mathrm{Bij}_c(G)$ denotes the set of all the finitely supported bijections $G \to G$ and where $G$ acts on $\mathrm{Bij}_c(G)$ by precomposition. Observe that $\mathscr{H}(\mathbb{Z})$ coincides with the second Houghton group $H_2$. In the same way that truncating a presentation of $\mathbb{Z}_2 \wr H$ leads to a semidirect product $C(\Gamma) \rtimes H$ for some right-angled Coxeter group $C(\Gamma)$, truncating a presentation of $\mathscr{H}(G)$ leads to a semidirect product $C(\Gamma) \rtimes G$ for some Coxeter group $C(\Gamma)$ (which is not right-angled). Following the construction sketched in the introduction, $C(\Gamma) \rtimes G$ can be thought of geometrically as a pointed edge moving by elementary moves in the canonical Cayley graph of $C(\Gamma)$. Since the latter graph has a natural wallspace structure, most of the arguments from Section~\ref{section:EmbeddingProof} apply, leading to a embedding theorem similar to Theorem~\ref{thm:FullEmbeddingThm}.

\addcontentsline{toc}{section}{References}

\bibliographystyle{alpha}
{\footnotesize\bibliography{AsGeomLamp}}

\Address

\end{document}